\documentclass{amsart}%
\usepackage{amsmath}
\usepackage{amsfonts}
\usepackage{amssymb}
\usepackage{graphicx}%
\usepackage{amssymb,amsmath,amsthm,amsfonts,commath,bm,mathtools}
\newtheorem{lemma}{Lemma}[section]
\newtheorem{theorem}[lemma]{Theorem}
\newtheorem{remark}[lemma]{Remark}

\newtheorem{coro}[lemma]{Corollary}
\newtheorem{definition}[lemma]{Definition}
\newtheorem{example}[lemma]{Example}

\title[Amerio's Theorem for Remotely Almost Periodic \ldots Equations]{Amerio's Theorem for Remotely Almost Periodic Differential Equations}

\author{David Cheban}
\address[D. Cheban]{State University of Moldova\\
Institute of Mathematics and Informatics "Vladimir Andrunachievici"\\
Laboratory of Differential Equations\\
str. Academiei 5\\
MD--2028 Chi\c{s}in\u{a}u, Moldova} \email[D.
Cheban]{david.ceban@usm.md, davidcheban@yahoo.com}

\date{\today}
\subjclass{37B20,37B25,37B55,34C27,34D05,34D20,34K14,34K20 }
\keywords{Amerio's theorem, Asymptotically Almost Periodic,
Remotely Almost Periodic Motions, Bohr-Flanders theorem}

\begin{document}
\begin{abstract}
The aim of this paper is to study the remotely almost periodic
motions of dynamical systems and solutions of nonlinear
differential equations. We establish some properties of remotely
almost periodic motions and generalize the well known Amerio's
theorem for abstract remotely almost periodic dynamical systems.
Application of our general results for different classes of
nonlinear differential/difference and algebraic equations is
given.
\end{abstract}

\maketitle
%\newpage
%\tableofcontents
%\newpage

\section{Introduction}\label{Sec1}

Let $(\mathfrak B, |\cdot|)$ be a Banach space with the norm
$|\cdot|$. Denote by $\mathbb R =(-\infty,+\infty)$, $\mathbb
R_{+}=[0,+\infty)$, $\mathbb T\in \{\mathbb R_{+},\mathbb R\}$ and
$C(\mathbb T,\mathfrak B)$ (respectively, $C(\mathbb T\times
\mathfrak B,\mathfrak B)$) the space of all continuous functions
$\varphi :\mathbb T\to \mathfrak B$(respectively, $f:\mathbb
T\times \mathfrak B\to \mathfrak B$) equipped with the compact
open topology \cite{Kel}. Denote by $\varphi^{h}$ (respectively,
$f^{h}$ ($h\in \mathbb \mathbb T$)) the $h$-translation of
$\varphi$ (respectively,$f$) w.r.t., i.e.,
$\varphi^{h}(t):=\varphi(t+h)$ for all $t\in \mathbb T$
(respectively, $f^{h}(t,x):=f(t+h,x)$ for every $(t,x)\in \mathbb
T\times \mathfrak B$). The triplet $(C(\mathbb T,\mathfrak
B),\mathbb T,\sigma)$, where $\sigma :\mathbb T\times C(\mathbb
T,\mathfrak B)\to C(\mathbb T,\mathfrak B)$ is a mapping defined
by the equality $\sigma(h,\varphi):=\varphi^{h}$ ($(h,\varphi)\in
\mathbb T\times C(\mathbb T,\mathfrak B)$), is a shift (or
Bebutov's) dynamical system \cite[Ch.I]{Che_2015}. Denote by
$H(\varphi):=\overline{\{\varphi^{h}:\ h\in \mathbb T\}}$
(respectively, $H^{+}(f):=\overline{\{f^{h}:\ h\in \mathbb
T_{+}\}}$) the closure of the set $\{\varphi^{h}:\ h\in \mathbb
T\}$ (respectively, $\{f^{h}:\ h\in \mathbb T_{+}\}$) in the space
$C(\mathbb T,\mathfrak B)$ (respectively, in the space $C(\mathbb
T_{+}\times \mathfrak B,\mathfrak B)$), where $\mathbb
T_{+}:=\{t\in \mathbb T|\ t\ge 0\}$.

A function $\psi \in C(\mathbb T,\mathfrak B)$ (respectively,
$g\in C(\mathbb T\times \mathfrak B,\mathfrak B)$) is called
$\omega$-limit for $\varphi\in C(\mathbb T,\mathfrak B)$
(respectively, for $f\in C(\mathbb T\times \mathfrak B,\mathfrak
B)$) if there exists a sequence $h_{k}\to +\infty$
($\{h_k\}\subset \mathbb T$) such that $\varphi^{h_k}\to \psi$
(respectively, $f^{h_k}\to g$) as $k\to \infty$ in the space
$C(\mathbb T,\mathfrak B)$ (respectively, in the space $C(\mathbb
T\times \mathfrak B,\mathfrak B)$). Denote by $\omega_{\varphi}$
(respectively, $\omega_{f}$) the set of all $\omega$-limit
functions for $\varphi$ (respectively, for $f$).

Recall that a set $A\subseteq \mathbb T$ is said to be relatively
dense in $\mathbb T$ if there exists a positive number $l$ such
that
\begin{equation}\label{eqI_1}
A\bigcap [a,a+l]\not= \emptyset \nonumber
\end{equation}
for all $a\in \mathbb T$, where $[a,a+l]:=\{t\in \mathbb T:\ a\le
t\le a+l\}$.

\begin{definition}\label{defI1} A function $\varphi \in C(\mathbb T,\mathfrak
B)$ is said to be
\begin{enumerate}
\item almost periodic if for every $\varepsilon
>0$ the set
\begin{equation}\label{eqI_2}
\mathcal F(\varphi,\varepsilon):=\{\tau \in \mathbb T:\
|\varphi(t+\tau)-\varphi(t)|<\varepsilon \ \ \forall \ t\in
\mathbb T\}\nonumber
\end{equation}
is relatively dense; \item asymptotically almost periodic if there
are two functions $p,r\in C(\mathbb T,\mathfrak B)$ such that
\begin{enumerate}
\item
\begin{equation}\label{eqI_3}
\varphi(t)=p(t)+r(t)\nonumber
\end{equation}
for all $t\in \mathbb T$ and \item the function $p$ is almost
periodic and $r\in C_{0}(\mathbb T,\mathfrak B):=\{\varphi \in
C(\mathbb T,\mathfrak B):\ \lim\limits_{t\to
+\infty}|\varphi(t)|=0\}$.
\end{enumerate}
\end{enumerate}
\end{definition}

Let $Q$ be a compact subset of $\mathfrak B$.

\begin{definition}\label{defI_1}
A function $f\in C(\mathbb T\times \mathfrak B,\mathfrak B)$ is
said to be
\begin{enumerate}
\item almost periodic in $t\in \mathbb T$ uniformly w.r.t. $x\in Q$ if the set
\begin{equation}\label{eqI_4}
\mathcal T(f,Q,\varepsilon):=\{\tau \in \mathbb T:\
\max\limits_{x\in Q}|f(t+\tau,x)-f(t,x)|<\varepsilon\ \ \forall \
t\in \mathbb T\}\nonumber
\end{equation}
is relatively dense; \item asymptotically almost periodic in $t\in
\mathbb T$ uniformly w.r.t. $x\in Q$ if there are two functions
$P,R\in C(\mathbb T\times \mathfrak B,\mathfrak B)$ such that
\begin{enumerate}
\item
\begin{equation}\label{eqI_5}
f(t,x)=P(t,x)+R(t,x)\nonumber
\end{equation}
for all $(t,x)\in \mathbb T\times \mathfrak B$ and \item the
function $P$ is almost periodic in $t\in \mathbb T$ uniformly
w.r.t. $x\in Q$ and
$$
\lim\limits_{t\to +\infty}\max\limits_{x\in Q}|R(t,x)|=0 .
$$
\end{enumerate}
\end{enumerate}
\end{definition}

\begin{remark}\label{remI2} Let $Q$ be a compact subset from $\mathfrak
B$ and $C(Q,\mathfrak B)$ (respectively, $C(\mathbb T\times
Q,\mathfrak B)$) be the Banach space of all continuous functions
$\varphi :Q\to \mathfrak B$ (respectively, the space of all
continuous functions $f:\mathbb T\times Q\to \mathfrak B$)
equipped with the norm $\|\varphi\|:=\max\limits_{x\in
Q}|\varphi(x)|$ (respectively, with the compact open topology).
\begin{enumerate}
\item the spaces $C(\mathbb T\times Q,\mathfrak B)$ and $C(\mathbb
T,C(Q,\mathfrak B))$ topologically isomorphic \cite{Che_2025},
i.e., there exists a homeomorphic mapping
$$
\Phi:C(\mathbb T\times
Q,\mathfrak B)\to C(\mathbb T,C(Q,\mathfrak B))
$$
such that $\Phi(f^{h})=\Phi(f)^{h}$ for all $(f,h)\in C(\mathbb
T\times Q,\mathfrak B)\times \mathbb T$: \item the function $f\in
C(\mathbb T\times \mathfrak B,\mathfrak B)$ is almost periodic
(respectively, asymptotically almost periodic) in $t\in \mathbb T$
uniformly w.r.t. $x\in Q$ if and only if the function $F_{Q}\in
C(\mathbb T,C(Q,\mathfrak B))$ is almost periodic (respectively,
asymptotically almost periodic) \cite{Che_2025}, where
$F_{Q}(t):=f_{Q}(t,\cdot)$ for all $t\in \mathbb T$ and $f_{Q}$ is
the restriction of the function $f\in C(\mathbb T\times \mathfrak
B,\mathfrak B)$ on the set $\mathbb T\times Q$, i.e.,
$f_{Q}:=f{\big{|}_{\mathbb T\times Q}}$.
\end{enumerate}
\end{remark}

Consider a differential equation
\begin{equation}\label{eqIn1}
x'=f(t,x),
\end{equation}
where $f\in C(\mathbb R\times \mathfrak B,\mathfrak B)$. Along
with the equation (\ref{eqIn1}) we will consider the family of
equations
\begin{equation}\label{eqIn2}
y'=g(t,y),
\end{equation}
where $g\in H(f):=\overline{\{f^{h}|\ h\in \mathbb R\}}$ and by
bar the closure in the space $C(\mathbb R\times \mathfrak
B,\mathfrak B)$ is denoted.

\begin{definition}\label{defR1}
A function $f$ is called regular (respectively, positively
regular), if for every $g\in H(f)$ (respectively, $g\in H^{+}(f)$)
the equation (\ref{eqIn2}) admits a unique solution
$\varphi(t,v,g)$ passing through the point $v\in \mathfrak B$ at
the initial moment $t=0$ and defined on $\mathbb R_{+}$.
\end{definition}

\begin{definition}\label{defD1} A solution $\varphi :\mathbb T\to Q$ of
the equation (\ref{eqIn1}) is said to be separated in $Q$ on the
set $\mathbb T$ \cite{Ame1955,Fin}, if there exists a number
$d(\varphi)>0$ so that if $\psi :\mathbb T\to Q$ is an other
solution of (\ref{eqIn1}), then
\begin{equation}\label{eqD_02}
|\varphi(t)-\psi(t)|\ge d(\varphi)\nonumber
\end{equation}
for all $t\in \mathbb T$.
\end{definition}

It is well known the following Amerio's result \cite{Ame1955} (see
also \cite{AP1971}, \cite{Fin}, \cite{Lev-Zhi} and the
bibliography therein).

\begin{theorem}\label{thI1} (\it{Amerio's theorem})
Let $Q$ be a compact subset of the Banach space $\mathfrak B$.
Assume that the following conditions are fulfilled:
\begin{enumerate}
\item the equation (\ref{eqIn1}) admits a solution $\varphi
:\mathbb R \to Q$; \item the function $f\in C(\mathbb R\times
\mathfrak B,\mathfrak B)$ is almost periodic in $t\in \mathbb R$
uniformly w.r.t. $x\in Q$; \item $f$ is positively regular; \item
each equation (\ref{eqIn2}) ($g\in H(f)$) has only separated on
$\mathbb R$ solutions in $Q$.
\end{enumerate}

Then the solution $\varphi$ is almost periodic.
\end{theorem}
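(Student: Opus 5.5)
We will use Bochner's criterion. By Remark~\ref{remI2} and the compactness of $Q$, it suffices to show that for every sequence $\{h_k'\}\subset\mathbb R$ there is a subsequence $\{h_k\}$ such that $\varphi^{h_k}$ converges uniformly on $\mathbb R$. The plan is to obtain compact-open convergence first by a compactness argument, then to upgrade it to uniform convergence using the separation hypothesis.

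\emph{Step 1 (compactness).} Since $\varphi(\mathbb R)\subset Q$ is relatively compact and $f$ is bounded on $\mathbb R\times Q$ (almost periodicity plus compactness of $Q$), the function $\varphi$ is uniformly Lipschitz. By Arzel\`a--Ascoli, $\{\varphi^{h}\}$ is relatively compact in $C(\mathbb R,\mathfrak B)$. By almost periodicity, $H(f)$ is compact, and $f^{h_k}\to g$ uniformly on $\mathbb R\times Q$ along a subsequence. Passing to a further subsequence, $\varphi^{h_k}\to\psi$ on compacts. Writing the integral form $\varphi^{h_k}(t)=\varphi^{h_k}(0)+\int_0^t f^{h_k}(s,\varphi^{h_k}(s))\,ds$ and passing to the limit shows that $\psi:\mathbb R\to Q$ is a solution of $y'=g(t,y)$.

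\emph{Step 2 (uniform convergence, the key step).} Suppose the convergence is not uniform. Then there exist $\varepsilon_0>0$, subsequences $\{p_k\},\{q_k\}$ of $\{h_k\}$, and times $t_k$ with
\[
|\varphi(t_k+p_k)-\varphi(t_k+q_k)|\ge\varepsilon_0 .
\]
Apply Step~1 to the shifted sequences $\{t_k+p_k\}$ and $\{t_k+q_k\}$. Since $f$ is almost periodic and $f^{p_k}$, $f^{q_k}$ tend uniformly to the same $g$, after passing to a subsequence both $f^{t_k+p_k}$ and $f^{t_k+q_k}$ converge to a common $\tilde g\in H(f)$. Uniformity in $t$ is essential here: it follows from $\sup_{t,x\in Q}|f(t+p_k,x)-f(t+q_k,x)|\to0$, which is exactly Bochner uniform convergence of $f^{h_k}$. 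At the same time, $\varphi^{t_k+p_k}\to u$ and $\varphi^{t_k+q_k}\to v$, where $u$ and $v$ are both solutions of $y'=\tilde g(t,y)$ on $\mathbb R$ with values in $Q$, and $|u(0)-v(0)|\ge\varepsilon_0$, so $u\ne v$.

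\emph{Step 3 (contradiction via separation).} The obstacle is that $u$ and $v$, being distinct, are separated by hypothesis (4). This alone gives no contradiction; we need $\inf_t|u(t)-v(t)|=0$. To obtain it, we use the standard Amerio argument. Choose $\varepsilon_0$ minimal-type, namely
\[
\varepsilon_0:=\limsup_{k,m\to\infty}\sup_t|\varphi^{h_k}(t)-\varphi^{h_m}(t)|>0,
\]
and pick $t_k$ nearly attaining the supremum. The limit pair then satisfies $\sup_t|u-v|=\varepsilon_0$. Next consider the set of all pairs of solutions of equations in $H(f)$ obtained this way, and apply a minimality/infimum argument on $d=\inf_t|u(t)-v(t)|$. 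Shifting along $s_j$ with $|u(s_j)-v(s_j)|\to d$ produces limit solutions $\bar u,\bar v$ of some $\bar g\in H(f)$ with $|\bar u(0)-\bar v(0)|=d$. The key input is uniqueness from positive regularity (hypothesis (3)): if $d=0$, then $\bar u(0)=\bar v(0)$, so $\bar u=\bar v$ on $\mathbb R_+$.

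Alternatively, following Amerio, use the uniform separation constant. From compactness of $H(f)$ and $Q$, separated solutions have a common lower bound $\rho>0$, obtained by a limit argument using uniqueness. One then shows that $\varphi^{h_k}$ is Cauchy within $\rho/2$ uniformly, since a gap between $\rho/2$ and $\rho$ would produce two solutions of the same limit equation closer than $\rho$, which is impossible. Conversely, a gap smaller than $\rho$ forces the gap to be identically $0$ by a continuity/connectedness argument in $k,m$. I expect establishing this uniform separation constant and the dichotomy ``distance $0$ or $\ge\rho$'' to be the main difficulty. Once it is in hand, uniform convergence follows, and Bochner's criterion gives that $\varphi$ is almost periodic.
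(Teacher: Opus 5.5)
\paragraph{There is a genuine gap in Step~3.} Your Steps 1--2 are sound. Bochner's criterion, applied to the hull of $f_Q$ (uniform limits on $\mathbb R\times Q$, since almost periodicity is only assumed on $Q$), reduces everything to producing two distinct solutions $u\neq v$ of one limit equation $y'=\tilde g(t,y)$ with values in $Q$. But Step~3, which is the entire content of Amerio's theorem, is not carried out.

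\begin{itemize}
\item Your first variant cannot produce a contradiction. Hypothesis (4) already forces $\inf_t|u(t)-v(t)|>0$. If that infimum were $0$, you would only conclude $\bar u=\bar v$, which is perfectly consistent.
\item Your second variant assumes a separation constant $\rho$ that is uniform over $H(f)$. The justification you suggest, a limit argument using uniqueness, does not work. If $u_k\neq v_k$ solve $y'=g_k(t,y)$ with $\inf_t|u_k-v_k|\to 0$, shifting and passing to the limit gives $\bar u(0)=\bar v(0)$, hence $\bar u=\bar v$. Uniqueness only tells you that two distinct solutions merged in the limit, and nothing you have said rules that out.
\item The ``continuity/connectedness argument in $k,m$'' is not the right mechanism. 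Connectedness has to be used in $t$.
\end{itemize}

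\paragraph{The missing idea: a counting argument on the minimal set $H(f)$.} This is what the paper's abstract version rests on: Lemma~\ref{l2.3.3}, Lemma~\ref{l2.3.4} and items (i)--(iii) of Theorem~\ref{thNDS1}. The paper itself only cites Amerio for Theorem~\ref{thI1}. For $g\in H(f)$, let $S_g$ be the set of solutions of $y'=g(t,y)$ on $\mathbb R$ with values in $Q$.

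\begin{enumerate}
\item[(a)] $S_g$ is finite. It is nonempty, since limits of translates of $\varphi$ lie in it. It is compact in the compact-open topology. By separation it is discrete: if $w\in S_g$ and $|w(0)-u(0)|<d(u)$, then $w=u$. Put $d_g:=\min_{u\neq v}\inf_t|u(t)-v(t)|>0$, with $d_g:=+\infty$ if $S_g$ is a singleton.
\item[(b)] Suppose $g^{\tau_k}\to g^*$. Along a common subsequence, the translates of the finitely many elements of $S_g$ converge to elements of $S_{g^*}$. Distinct $u,v\in S_g$ satisfy $|u(t)-v(t)|\ge d_g$ for all $t$, so their limits stay $d_g$-apart at every time. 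Hence $|S_{g^*}|\ge|S_g|$, and $d_{g^*}\ge d_g$ when equality holds. Since $H(g)=H(f)$ for every $g\in H(f)$, you may swap $g$ and $g^*$. This gives $|S_g|\equiv m$ and $d_g\equiv d>0$ on all of $H(f)$. Minimality is exactly what a naive limit argument cannot replace.
\item[(c)] Finish Step~2 with the intermediate value theorem in $t$. Put $\eta:=\min\{\varepsilon_0,d/2\}$. You have $|\varphi(p_k)-\varphi(q_k)|\to 0$, while $|\varphi(s_k+p_k)-\varphi(s_k+q_k)|\ge\varepsilon_0$ for some $s_k$. So you can pick $t_k$ with $|\varphi(t_k+p_k)-\varphi(t_k+q_k)|=\eta$ exactly. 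The limit solutions $u,v\in S_{\tilde g}$ then satisfy $|u(0)-v(0)|=\eta$. Thus $u\neq v$, yet $\inf_t|u-v|<d$, a contradiction.
\end{enumerate}

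Positive regularity is not needed anywhere in this argument. Separation on $\mathbb R$ already gives uniqueness inside $S_g$.
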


\begin{remark}\label{remI1} Note that Theorem \ref{thI1} remains
true in the following cases:
\begin{enumerate}
\item if we replace the condition of separation (on $\mathbb R$)
by semi-separation condition \cite{Fin_1972}, \cite[Ch.X]{Fin};
\item if the operators $f(t,\cdot):\mathfrak \to \mathfrak B$
($t\in \mathbb R$) are not obligatory continuous (see, for
example, \cite[Ch.VII,p.99]{Lev-Zhi}).
\end{enumerate}
\end{remark}

Amerio's theorem was generalized for asymptotically almost
periodic differential equations in our publication \cite{Che_1977}
(see also \cite[Ch.II,III]{Che_2009}).

\begin{theorem}\label{thI2} (\it{Amerio's theorem for asymptotically almost periodic equations})
Let $Q$ be a compact subset of the Banach space $\mathfrak B$.
Assume that the following conditions are fulfilled:
\begin{enumerate}
\item the equation (\ref{eqIn1}) admits a solution $\varphi
:\mathbb R_{+} \to Q$; \item the function $f\in C(\mathbb R\times
\mathfrak B,\mathfrak B)$ is asymptotically almost periodic in
$t\in \mathbb R$ uniformly w.r.t. $x\in Q$; \item $f$ is
positively regular; \item each equation (\ref{eqIn2}) ($g\in
\omega_{f}$) has only separated on $\mathbb R$ solutions in $Q$.
\end{enumerate}

Then the solution $\varphi$ is asymptotically almost periodic.
\end{theorem}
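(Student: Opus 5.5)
To prove Theorem \ref{thI2}, I will reduce it to Amerio's Theorem \ref{thI1} applied to the limiting equations, and to a compactness characterization of asymptotic almost periodicity. I use the following known criterion. A function $\varphi\in C(\mathbb R_{+},\mathfrak B)$ with precompact range is asymptotically almost periodic if and only if two things hold. First, for every sequence $t_k\to+\infty$ there is a subsequence such that $\varphi^{t_k}$ converges uniformly on $\mathbb R_{+}$. Equivalently, the motion $\varphi$ in the Bebutov system is positively Lagrange stable and its $\omega$-limit set $\omega_\varphi$ is a compact minimal set consisting of almost periodic functions, with $\varphi$ attracted to it uniformly. Second, there is the standard theorem that $\varphi$ is asymptotically almost periodic iff $\varphi$ is uniformly continuous, has precompact range, and every $\psi\in\omega_\varphi$ extends to an almost periodic function on $\mathbb R$ with the extensions "comparable" to $f$ in the sense of $\omega$-limits. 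I will work in the skew-product $(\varphi^{h},f^{h})$, viewing the pair as a motion of the Bebutov system on $C(\mathbb R_{+},\mathfrak B)\times C(\mathbb R\times Q,\mathfrak B)$, using Remark \ref{remI2} to identify $f_Q$ with an $\mathfrak B$-valued function of $t$ with values in $C(Q,\mathfrak B)$.

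\textbf{Step 1: compactness.} Since $\varphi(t)\in Q$ and $f$ is bounded on $\mathbb R_+\times Q$, we have $|\varphi'|=|f(t,\varphi)|$ bounded. So $\varphi$ is uniformly continuous and $\{\varphi^{h}\}_{h\ge0}$ is precompact by Arzel\`a–Ascoli. Take any $h_k\to+\infty$. Passing to a subsequence gives $f^{h_k}\to g\in\omega_f$ and $\varphi^{h_k}\to\psi$ uniformly on compacts of $\mathbb R$, using a diagonal argument on $[-h_k,\infty)$. Positive regularity and continuity of solutions with respect to the data show that $\psi:\mathbb R\to Q$ solves $y'=g(t,y)$.

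\textbf{Step 2: almost periodicity of limits.} Since $f=P+R$, we get $\omega_f=H(P)$, so every $g\in\omega_f$ is almost periodic uniformly on $Q$, and $H(g)=H(P)\subseteq\omega_f$. Hypothesis (4) gives separation for all equations with right side in $H(g)$, and positive regularity passes to $H^{+}(g)\subseteq H^+(f)$. Theorem \ref{thI1} then shows that $\psi$ is almost periodic.

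\textbf{Step 3: convergence in $C(\mathbb R_+,\mathfrak B)$, the main obstacle.} I must upgrade compact-open convergence $\varphi^{h_k}\to\psi$ to uniform convergence on $\mathbb R_+$. Equivalently, I must show that $\sup_{t\ge0}\operatorname{dist}(\varphi^{t},\omega_\varphi)$ in the sup metric tends to $0$ along every sequence. I argue by contradiction, following Amerio's separation mechanism. Suppose there are $\varepsilon>0$, $h_k\to\infty$ and $s_k\ge0$ with $|\varphi(h_k+s_k)-\psi(s_k)|\ge\varepsilon$. Here $s_k\to\infty$, since convergence on compacts already holds. Pass to a further subsequence so that $f^{h_k+s_k}\to\tilde g$, $\varphi^{h_k+s_k}\to\tilde\psi$ and $\psi^{s_k}\to\tilde\psi_1$, using the almost periodicity of $\psi$ and the joint convergence of $P^{h_k}$ and $P^{h_k+s_k}$ in the hull. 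Both $\tilde\psi$ and $\tilde\psi_1$ then solve $y'=\tilde g(t,y)$ on $\mathbb R$ in $Q$, with $|\tilde\psi(0)-\tilde\psi_1(0)|\ge\varepsilon$. To obtain a contradiction I need the two to coincide. The separation constant $d$ together with the minimality of the hull of $P$ should give a uniform separation: all solutions in $Q$ of limiting equations are almost periodic, and there are only finitely many of them with uniform bound $d$, as in Amerio's proof. One then chooses $\varepsilon$-stages carefully, first $\varepsilon<d/2$ with the first time of exceeding $\varepsilon$, so that on $[0,s_k]$ the two functions stay $\varepsilon$-close. In the limit, $\tilde\psi$ and $\tilde\psi_1$ stay within $\varepsilon<d$ on $(-\infty,0]$, and equal distance $\varepsilon$ at $0$. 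They are then distinct solutions closer than $d$ at some time, contradicting separation.

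This first-exit-time argument, ensuring that the limiting functions are distinct yet closer than $d(\cdot)$, and ensuring that $d$ is uniform over the finitely many solutions, is where the real work lies. With it, $\varphi^{h_k}\to\psi$ uniformly on $\mathbb R_+$. Finally, setting $p$ to be the almost periodic function determined by $\omega_\varphi$ (namely $p=\lim\psi^{-h_k}$, well-defined by separation) and $r=\varphi-p$ yields the decomposition, so $\varphi$ is asymptotically almost periodic.
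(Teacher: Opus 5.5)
Your argument is correct, but it is organized differently from the route the paper relies on. The paper does not reprove Theorem \ref{thI2}. Its abstract form is Theorem \ref{t2.3.6}, applied as follows:
\begin{enumerate}
\item Embed the equation in the skew-product NDS over $H^{+}(f)$. Then $f$ is an asymptotically almost periodic point of the base and $x=(\varphi(0),f)$ is positively Lagrange stable.
\item Read hypothesis (4) as separation of the points of $\omega_x$ in $\omega_x$.
\item Amerio's theorem for NDS (Theorem \ref{thNDS1}) over the minimal set $\omega_f$ gives finite, uniformly separated fibres and almost periodicity on $\omega_x$.
\item Lemma \ref{l2.3.5} then produces a unique almost periodic $p\in\omega_x$ with $\rho(\pi(t,x),\pi(t,p))\to 0$. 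The first component of $\pi(t,p)$ is the almost periodic part of $\varphi$.
\end{enumerate}

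You instead stay with functions. You prove Fr\'echet's criterion (sup-norm precompactness of $\{\varphi^{h}\}_{h\ge 0}$) by a first-exit-time contradiction, and this works. The limits $\tilde\psi$ and $\tilde\psi_1$ solve the same equation $y'=\tilde g(t,y)$ precisely because $P^{h_k}\to g$ uniformly on $\mathbb R\times Q$. They are then distinct solutions in $Q$ at distance exactly $\varepsilon$ at $t=0$.

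The uniform constant $d$ you leave as ``should give'' is exactly Lemma \ref{l2.3.4}. Apply it to the set $M=\{(\chi(0),g):\ g\in\omega_f,\ \chi:\mathbb R\to Q \mbox{ a solution of } y'=g(t,y)\}$ over the compact minimal base $\omega_f=H(P)$. The hypotheses hold: $M$ is compact because limits of solutions are solutions, positively invariant by positive regularity, and separated by hypothesis (4). You should cite this lemma rather than leave it open.

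Once that lemma is granted, your route is more elementary and transparent. Step 2 is used only to build $p$, and even that could be replaced by Fr\'echet's theorem. The paper's abstract route, by contrast, applies verbatim to every equation generating an NDS with compact base, namely the difference, functional, stochastic and algebraic equations of Section \ref{Sec6}. It also covers the asymptotically recurrent case. It delivers the extra structure (constant fibre cardinality, minimality of $\omega_x$, uniqueness of the asymptotic point) at no cost.

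Two small corrections:
\begin{itemize}
\item The ``second criterion'' in your preamble is not a valid characterization as stated. Almost periodicity of all $\omega$-limits is not enough: the paper's $\sin(t+\ln(1+t))$ has such an $\omega$-limit set yet is not asymptotically almost periodic. The vague ``comparability'' qualifier does not repair this, so drop it; you never use it.
\item Your $p$ is independent of the subsequence not ``by separation''. It is independent because two almost periodic functions with $|p(t)-p'(t)|\to 0$ as $t\to+\infty$ coincide.
\end{itemize}
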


\begin{definition}\label{defI3} A function $\varphi \in C(\mathbb T,\mathfrak
B)$ is said to be remotely almost periodic
\cite{Che_2024.1,Che_2025.01,RS_1986} if for arbitrary positive
number $\varepsilon$ there exists a relatively dense in subset
$\mathcal P(\varepsilon,\varphi)\subseteq \mathbb T$ such that for
every $\tau \in \mathcal P(\varepsilon,\varphi)$ there exists a
number $L(\varepsilon,\varphi,\tau)>0$ for which we have
\begin{equation}\label{eqRAP1_1}
|\varphi(t+\tau)-\varphi(t))<\varepsilon \nonumber
\end{equation}
for all $t\ge L(\varepsilon,\varphi,\tau)$.
\end{definition}

\begin{definition}\label{defI5}
A function $\varphi \in C(\mathbb T,\mathfrak B)$ (respectively,
$f\in C(\mathbb T\times \mathfrak B,\mathfrak B)$) is said to be
positively Lagrange stable if the set $\{\varphi^{h}:\ h\ge 0\}$
(respectively, $\{f^{h}:\ h\ge 0\}$) is precompact in the space
$C(\mathbb T,\mathfrak B)$ (respectively, in the space $C(\mathbb
T\times \mathfrak B,\mathfrak B)$).
\end{definition}

\begin{definition}\label{defI4} Let $Q$ be a compact subset of the Banach space $\mathfrak B$.
A function $f\in C(\mathbb T\times \mathfrak B,\mathfrak B)$ is
said to be remotely almost periodic in $t\in \mathbb T$ uniformly
w.r.t. $x\in Q$ if the function $f_{Q}\in C(\mathbb
T,C(Q,\mathfrak B))$ is remotely almost periodic.
\end{definition}

\begin{remark} Note that every asymptotically almost periodic
function $\varphi \in C(\mathbb T,\mathfrak B)$ (respectively,
asymptotically almost periodic in $t\in \mathbb T$ uniformly
w.r.t. $x\in Q$ function $f\in C(\mathbb T\times Q,\mathfrak B)$)
possesses the following properties:
\begin{enumerate}
\item it is remotely almost periodic (respectively, it is remotely
almost periodic in $t\in \mathbb T$ uniformly w.r.t. $x\in Q$)
\cite{Che_2024.1,Che_2025.01}; \item the function $\varphi$
(respectively, the function $f\in C(\mathbb T\times \mathfrak
B,\mathfrak B)$) is positively Lagrange stable; \item
$\omega_{\varphi}$ (respectively $\omega_{f}$) is a minimal set
consisting of almost periodic functions (respectively, almost
periodic in $t\in \mathbb T$ uniformly w.r.t. $x$ on every compact
subset from $\mathfrak B$).
\end{enumerate}

The converse statement, generally speaking, is not true
\cite{Che_2025.01}. To prove this statement it suffices to
consider the function $\varphi \in C(\mathbb R_{+},\mathbb R)$
defined by the equality $\varphi(t)=\sin (t+\ln (1+t)))$ for all
$t\ge 0$. This function is remotely almost periodic, positively
Lagrange stable, its $\omega$-limit set is minimal, but $\varphi$
is not asymptotically almost periodic.
\end{remark}

The aim of this paper is to generalize of Amerio's theorem for
remotely almost periodic differential equations (\ref{eqIn1}).

Namely, we study the following problem.

\textbf{Problem.} Assume that the following conditions hold:
\begin{enumerate}
\item the function $f\in C(\mathbb R\times \mathfrak B,\mathfrak
B)$ is positively Lagrange stable; \item $f$ is remotely almost
periodic in $t\in \mathbb T$ uniformly w.r.t. $x$ on every compact
subset from $\mathfrak B$; \item the $\omega$-limit set
$\omega_{f}$ of the function $f$ is minimal; \item the equation
(\ref{eqIn1}) admits a precompact on $\mathbb R_{+}$ solution
$\varphi$, i.e., set $\varphi(\mathbb R_{+})$ is precompact in
$\mathfrak B$; \item each equation (\ref{eqIn2}) ($g\in
\omega_{f}$) has only separated on $\mathbb R$ solutions in $Q$;
\item the function $f$ is positively regular.
\end{enumerate}
Will there be the solution $\varphi$ remotely almost periodic?

The main result of this paper in the following theorem is
contained.

\begin{theorem}\label{thI_2} ({\it Amerio's theorem for remotely almost
periodic differential equations}) Suppose that the following
conditions hold:
\begin{enumerate}
\item the equation (\ref{eqIn1}) admits a precompact on $\mathbb
R_{+}$ solution $\varphi$; \item the function $f\in C(\mathbb
R\times \mathfrak B,\mathfrak B)$ is positively Lagrange stable
and regular; \item $f$ is remotely almost periodic in $t\in
\mathbb T$ uniformly w.r.t. $x$ on every compact subset from
$\mathfrak B$; \item the $\omega$-limit set $\omega_{f}$ of the
function $f$ is minimal; \item each equation (\ref{eqIn2}) ($g\in
\omega_{f}$) has only separated on $\mathbb R_{+}$ solutions
defined on $\mathbb R$ with the values from $Q$.
\end{enumerate}

Then the solution $\varphi$ is remotely almost periodic.
\end{theorem}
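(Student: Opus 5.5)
The plan is to move to the skew-product (nonautonomous) dynamical system built from the equation and then use a characterization of remote almost periodicity in terms of $\omega$-limit sets. Set $Y:=H^{+}(f)$ with the shift flow $(Y,\mathbb R_{+},\sigma)$. Regularity of $f$ gives a cocycle $\varphi(t,v,g)$ over $(Y,\mathbb R_{+},\sigma)$ with fiber $\mathfrak B$. Let $X:=\mathfrak B\times Y$ and $\pi(t,(v,g)):=(\varphi(t,v,g),\sigma(t,g))$. The given solution corresponds to the motion $\pi(t,x_0)$ with $x_0=(\varphi(0),f)$. By condition (1) and the positive Lagrange stability of $f$, this motion is positively Lagrange stable in $X$. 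Hence its $\omega$-limit set $\omega_{x_0}$ is nonempty, compact and invariant, and it projects onto $\omega_f$, which is minimal by (4). The key abstract result I would aim to prove first is the following criterion, in the spirit of \cite{Che_2024.1,Che_2025.01}.

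\textbf{Criterion.} A positively Lagrange stable point $x$ is remotely almost periodic if and only if the motion $\pi(t,x)$ is ``uniformly close'' to $\omega_x$ along common shifts. Concretely: for every $\varepsilon>0$ there is a relatively dense set of $\tau$ such that $\rho(\pi(t+\tau,x),\pi(t,x))<\varepsilon$ for $t$ large. The sufficient condition I would use is that $\omega_x$ is a compact minimal set consisting of almost periodic points, together with a uniformity property.

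The main steps, in order, are as follows.

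(a) \emph{Show that $\omega_{x_0}$ is distal-type in the fibers.} Each point of $\omega_{x_0}$ lies over some $g\in\omega_f$ and, by invariance and compactness, extends to a full trajectory defined on $\mathbb R$ with values in $Q:=\overline{\varphi(\mathbb R_{+})}$. That is, it gives a solution of (\ref{eqIn2}) on $\mathbb R$ with values in $Q$. Condition (5) says distinct such solutions over the same $g$ are separated on $\mathbb R_{+}$. A compactness argument should then give a uniform separation constant $d>0$ and finitely many points in each fiber $\omega_{x_0}\cap p^{-1}(g)$.

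(b) \emph{Prove that $\omega_{x_0}$ is a finite-to-one distal extension of the minimal set $\omega_f$.} This step imitates the classical Amerio/Favard argument via the Bohr--Flanders approach. The fiber cardinality is constant, and the projection $\omega_{x_0}\to\omega_f$ is a covering. Since $\omega_f$ consists of almost periodic points (remote almost periodicity plus minimality plus Lagrange stability), the points of $\omega_{x_0}$ are almost periodic as well.

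(c) \emph{Transfer almost periodicity of the limit set back to the motion of $x_0$.} Given $\varepsilon$, take a relatively dense set of $\tau$ for which $f^{t+\tau}$ and $f^{t}$ are $\delta$-close for large $t$, using condition (3). I would argue by contradiction: suppose $t_k\to\infty$ with $|\varphi(t_k+\tau_k)-\varphi(t_k)|\ge\varepsilon$. Pass to limits along $\sigma(t_k,f)$. This yields two full solutions over the same $g\in\omega_f$, both in $Q$, that differ by at least $\varepsilon$ at time $0$. They are also obtained as limits of the translate and the original, so they are asymptotically close. That contradicts the separation in (5).

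I expect step (c) to be the main obstacle. The set of admissible $\tau$ must be chosen uniformly: the sequence $\tau_k$ can drift, so one must restrict to $\tau$ from a set of $\varepsilon$-almost periods of the almost periodic motions in $\omega_{x_0}$. Those periods are uniform in $\omega_{x_0}$ by minimality and compactness. Then the limits of $\pi(t_k+\tau_k,x_0)$ and $\pi(t_k,x_0)$ lie in the same fiber at almost-period distance. Combined with the uniform separation $d$ from (a), choosing $\varepsilon<d/2$ forces them to coincide, which yields the contradiction.
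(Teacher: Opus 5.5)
Your overall route is the paper's. You pass to the skew-product NDS over $H^{+}(f)$. You use Theorem~\ref{thNDS1} to get finite, uniformly separated fibres of $\omega_{x_0}$ over the minimal set $\omega_f$, and almost periodicity of all points of $\omega_{x_0}$ (because $\omega_f$ is equi-almost periodic by Lemma~\ref{lRAP1} applied to $f$). Then you transfer from $\omega_{x_0}$ back to the motion of $x_0$. There are, however, two problems. First, step (c) relies on $\omega_{x_0}$ being minimal (``uniform in $\omega_{x_0}$ by minimality and compactness''), and nothing in (a)--(b) gives this. Theorem~\ref{thNDS1} only splits $\omega_{x_0}$ into finitely many disjoint minimal sets $M^{1},\dots,M^{k}$. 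A finite-to-one distal extension of $\omega_f$ can have several minimal components (e.g.\ the trivial two-sheeted extension $\omega_f\times\{1,2\}$). The missing idea is that $\omega_{x_0}$, being the $\omega$-limit set of a positively Lagrange stable point, is invariantly connected (Lemma~\ref{lOLS1}), which forces $k=1$. Only then does Lemma~\ref{lSM1} supply a relatively dense set of $\varepsilon$-almost periods common to all of $\omega_{x_0}$. Alternatively, you would have to prove directly that a finite union of compact minimal sets of almost periodic points is equi-almost periodic.

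Second, the contradiction in (c) is misargued. Fix $\tau$ and take $t_k\to+\infty$. After passing to a subsequence, $\pi(t_k,x_0)\to p\in\omega_{x_0}$ and $\pi(t_k+\tau,x_0)\to\pi(\tau,p)$. These limits lie over $g$ and $\sigma(\tau,g)$, which are in general different fibres. So the separation constant $d$ says nothing about them, and they need not coincide ($\tau$ is an almost period, not a period). Your first version also assumes that the two limits are ``asymptotically close'', which is the conclusion you are trying to prove. In fact neither separation nor the remote almost periods of $f$ play any role at this stage. If $\tau$ is a common $\varepsilon$-almost period of $\omega_{x_0}$, then Lagrange stability and continuity give
\[
\limsup_{t\to+\infty}\rho\bigl(\pi(t+\tau,x_0),\pi(t,x_0)\bigr)\le\max_{p\in\omega_{x_0}}\rho\bigl(\pi(\tau,p),p\bigr)<\varepsilon ,
\]
which is exactly the direction of Lemma~\ref{lRAP1} that the paper uses. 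Separation is needed only inside Theorem~\ref{thNDS1}.
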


In this article, we continue the research started in the author's
works \cite{Che_2024_00}-\cite{Che_2026} in which remotely almost
periodic motions of dynamical systems and solutions of
differential equations are studied. A number of properties of
remotely almost periodic movements are established and an
important concept of remote comparability by the character of
recurrence (at the infinity) for remotely almost periodic motions
is introduced. This concept plays an important role in the study
of remotely almost periodic movements.

The notion of remotely almost periodicity (on the real axis
$\mathbb R$) for scalar function was introduced and studied by
Sarason \cite{Sar_1984}. Recall that a continuous and bounded
function $f:\mathbb R \to \mathbb R$ is said to be remotely almost
periodic if for every $\varepsilon >0$ there exists a positive
number $l$ such that on every segment $[a,a+l]\subset \mathbb R$
there exists at least one number $\tau \in [a,a+l]$ such that
\begin{equation}\label{eqI01}
    d_{\infty}(f^{\tau},f)<\varepsilon ,\nonumber
\end{equation}
where $d_{\infty}(f,g):=\limsup\limits_{|t|\to
    +\infty}|f(t)-g(t)|$ and $f^{\tau}(t):=f(t+\tau)$ for all $t\in
\mathbb R$. The remotely almost periodic ($RAP$) functions form a
closed subalgebra of $BUC$ (the algebra of bounded and uniformly
continuous complex valued functions on $\mathbb R$). The main
result of Sarason is that $RAP$ is generated as a Banach algebra,
by $AP$ (the algebra of Bohr almost periodic functions) and
another algebra $SO$ (consisting of functions which oscillate
slowly at $\infty$).

Remotely almost periodic functions on the semi-axis $\mathbb
R_{+}$ with values in the Banach space were introduced by Ruess
and Summers \cite{RS_1986} (see also Baskakov \cite{Bas_2013}).

Many interesting results by Baskakov were obtained (see, for
example, \cite{Bas_2013}-\cite{BSS_2019}). In particular, he
obtained profound results on the harmonic analysis of remotely
almost periodic functions. Finally. note that there are a number
of publications \cite{KK_2023,KK_2022,MCKP_2021} in which remotely
almost periodic solutions of some classes of differential and
difference equations are studied.

The paper is organized as follows.

In the Section \ref{Sec2}, we collect some known notions of
dynamical systems and facts about remotely almost periodic motions
and functions that we use in this paper.

The Section \ref{Sec3} is dedicated to the Amerio's theorem for
abstract almost periodic dynamical systems and  their
generalization for asymptotically almost periodic systems.

In the Section \ref{Sec4} we study the remotely almost periodic
 (abstract) nonautonomous dynamical systems and  we generalize
 the Amerio's theorem for this class of nonautonomous dynamical systems.

The Section \ref{Sec5} is dedicated to the study of asymptotically
almost periodic and remotely almost periodic  nonautonomous
dynamical systems with positively uniformly asymptotically stable
trajectories.

Finally, in the Section \ref{Sec6} we apply our general results
from Sections 3-5 for different classes of differential/difference
equations (ordinary differential equations, difference equations,
functional differential equations with finite delay and stochastic
differential equations) and algebraic equations with remotely
almost periodic coefficients.

\section{Preliminary}\label{Sec2}

Let $(X,\rho_{X})$ and $(Y,\rho_{Y})$ be two complete metric
spaces with the distance $\rho_{X}$ and $\rho_{Y}$
respectively\footnote{In what follows, in the notation $\rho_{X}$
(respectively, $\rho_{Y}$), we will omit the index $X$
(respectively, $Y$) if this does not lead to a misunderstanding.},
$\mathbb Z :=\{0,\pm 1, \pm 2, \ldots \}$, $\mathbb
N:=\{1,2,\ldots\}$, $\mathbb S =\mathbb R$ or $\mathbb Z$,
$\mathbb S_{+}=\{t \in \mathbb S |\quad t \ge 0 \}$, $\mathbb
S_{-}=\{t \in \mathbb S| \quad t \le 0 \}$, $\mathbb T \in
\{\mathbb S,\ \mathbb S_{+}\}$ and $(X,\mathbb S_{+},\pi)$
(respectively, $(Y,\mathbb S, \sigma )$) be an autonomous
one-sided (respectively, two-sided) dynamical system on $X$
(respectively, on $Y$).

Consider a dynamical system $(X,\mathbb T,\pi)$.

\begin{definition}\label{defSP1} A point $x\in X$ (respectively, a motion $\pi(t,x)$) is
said to be:
\begin{enumerate}
\item[-] stationary, if $\pi(t,x)=x$ for all $t\in \mathbb T$;
\item[-] $\tau$-periodic ($\tau >0$ and $\tau \in \mathbb T$), if
$\pi(\tau,x)=x$; \item[-] asymptotically stationary (respectively,
asymptotically $\tau$-periodic), if there exists a stationary
(respectively, $\tau$-periodic) point $p\in X$ such that
\begin{equation}\label{eqAP1*}
\lim\limits_{t\to \infty}\rho(\pi(t,x),\pi(t,p))=0.\nonumber
\end{equation}
\end{enumerate}
\end{definition}

%\begin{theorem}\label{thAAP1}\cite[Ch.I]{Che_2009} A point $x\in X$ is asymptotically $\tau$-periodic if and
%only if the sequences $\{\pi(k\tau,x)\}_{k=0}^{\infty}$ converges.
%\end{theorem}

\begin{definition}\label{defLS1} A point $\widetilde{x}\in X$ is said
to be $\omega$-limit for $x\in X$ if there exists a sequence
$\{t_k\}\subset \mathbb T$ such that $t_k\to +\infty$ and
$\pi(t_k,x)\to \widetilde{x}$ as $k\to \infty$.
\end{definition}

Denote by $\omega_{x}$ the set of all $\omega$-limit points of
$x\in X$.

\begin{definition}\label{defSAP1}  We will call
a point $x\in X$ (respectively, a motion $\pi(t,x)$) remotely
$\tau$-periodic ($\tau\in \mathbb T$ and $\tau
>0\widetilde{}$) if
\begin{equation}\label{eqSAP_1}
 \lim\limits_{t\to
\infty}\rho(\pi(t+\tau,x),\pi(t,x))=0 .
\end{equation}
\end{definition}

\begin{remark}\label{remS1.0}
The motions of dynamical systems possessing the property
(\ref{eqSAP_1}) in the works of Cryszka \cite{Gry_2018} and
Pelczar \cite{Pel_1985} were studied.
\end{remark}

\begin{definition}\label{defLS2} A point $x$ is called positively Lagrange
stable, if the semi-trajectory $\Sigma_{x}^{+}:=\{\pi(t,x)|\ t\in
\mathbb S_{+}\}$ is a precompact subset of $X$.
\end{definition}

\begin{theorem}\label{th1.3.9}\cite[Ch.I]{Che_2020} Let $x\in X$ be
positively Lagrange stable and $\tau\in\mathbb T\ (\tau >0)$. Then
the following statements are equivalent:
\begin{enumerate}
\item[a.] the motion $\pi(t,x)$ is remotely $\tau$-periodic;
\item[b.] every point $p\in\omega_{x}$ is $\tau$-periodic.
\end{enumerate}
\end{theorem}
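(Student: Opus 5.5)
The statement to prove is Theorem \ref{th1.3.9}: for a positively Lagrange stable point $x$, the motion $\pi(t,x)$ is remotely $\tau$-periodic if and only if every $p\in\omega_x$ is $\tau$-periodic. The plan is to prove the two implications separately. Both rest on two standard facts. First, since $\Sigma_x^{+}$ is precompact, $\omega_x$ is nonempty, compact and invariant. Second, $\rho(\pi(t,x),\omega_x)\to 0$ as $t\to+\infty$. Throughout I use continuity of $\pi(\tau,\cdot)$ for the fixed $\tau$, which is uniform on the compact set $\overline{\Sigma_x^{+}}$.

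\textbf{(a)$\Rightarrow$(b).} Let $p\in\omega_x$, so there is a sequence $t_k\to+\infty$ with $\pi(t_k,x)\to p$. By continuity, $\pi(t_k+\tau,x)=\pi(\tau,\pi(t_k,x))\to\pi(\tau,p)$. Remote $\tau$-periodicity gives $\rho(\pi(t_k+\tau,x),\pi(t_k,x))\to 0$. Passing to the limit, $\rho(\pi(\tau,p),p)=0$, so $p$ is $\tau$-periodic. This direction is immediate.

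\textbf{(b)$\Rightarrow$(a).} Argue by contradiction. Suppose there exist $\varepsilon_0>0$ and $t_k\to+\infty$ with
\begin{equation*}
\rho(\pi(t_k+\tau,x),\pi(t_k,x))\ge\varepsilon_0 \quad \text{for all } k.
\end{equation*}
By precompactness of $\Sigma_x^{+}$, pass to a subsequence with $\pi(t_k,x)\to p$; then $p\in\omega_x$. By continuity of $\pi(\tau,\cdot)$, $\pi(t_k+\tau,x)\to\pi(\tau,p)$. Hence $\rho(\pi(\tau,p),p)\ge\varepsilon_0>0$, which contradicts the assumption that $p$ is $\tau$-periodic.

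The only step that needs care is the extraction of the convergent subsequence, and it is exactly where Lagrange stability enters: it guarantees that the limit point exists and lies in $\omega_x$. No uniform estimate over $\omega_x$ is needed, because the contradiction argument makes the pointwise $\tau$-periodicity of the limit point $p$ sufficient.
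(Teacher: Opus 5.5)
Your proof is correct and complete. The paper itself gives no proof of this statement; it only quotes it from \cite[Ch.I]{Che_2020}, so there is no argument in the paper to compare against. Your two steps are the standard proof of this fact: pass to the limit along $t_k$ for (a)$\Rightarrow$(b), and for (b)$\Rightarrow$(a) argue by contradiction after extracting a convergent subsequence of $\pi(t_k,x)$ in the compact set $\overline{\Sigma_{x}^{+}}$.

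Two minor remarks.
\begin{enumerate}
\item Several facts you list at the outset are never used: the invariance of $\omega_{x}$, the convergence $\rho(\pi(t,x),\omega_{x})\to 0$, and uniform continuity of $\pi(\tau,\cdot)$ on $\overline{\Sigma_{x}^{+}}$. All you need is the identity $\pi(t_k+\tau,x)=\pi(\tau,\pi(t_k,x))$ and sequential continuity of $\pi(\tau,\cdot)$ at the limit point.
\item Lagrange stability enters only in (b)$\Rightarrow$(a). The implication (a)$\Rightarrow$(b) holds for every $x\in X$.
\end{enumerate}
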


\begin{definition}\label{defSAP2} A point $x$ (respectively, a
motion $\pi(t,x)$) is said to be remotely stationary, if it is
remotely $\tau$-periodic for every $\tau \in \mathbb T$.
\end{definition}

\begin{coro}\label{corSAP1} Let $x\in X$ be positively Lagrange stable. Then the following
statements are equivalent:
\begin{enumerate}
\item[a.] the motion $\pi(t,x)$ is remotely stationary; \item[b.]
every point $p\in\omega_{x}$ is stationary.
\end{enumerate}
\end{coro}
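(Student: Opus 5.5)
We prove Corollary~\ref{corSAP1} by applying Theorem~\ref{th1.3.9} to every $\tau>0$ at once. By Definition~\ref{defSAP2}, statement a means that $\pi(t,x)$ is remotely $\tau$-periodic for every $\tau\in\mathbb T$ with $\tau>0$. For $\tau=0$ relation (\ref{eqSAP_1}) is trivially true, so only positive $\tau$ matter.

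\emph{a $\Rightarrow$ b.} Since $x$ is positively Lagrange stable, Theorem~\ref{th1.3.9} applies for each fixed $\tau>0$. Thus every $p\in\omega_{x}$ is $\tau$-periodic, i.e.\ $\pi(\tau,p)=p$. As this holds for all $\tau\in\mathbb T$, $\tau>0$, and $\pi(0,p)=p$ always, we get $\pi(t,p)=p$ for all $t\in\mathbb T$. Hence $p$ is stationary in the sense of Definition~\ref{defSP1}.

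\emph{b $\Rightarrow$ a.} If every $p\in\omega_{x}$ is stationary, then $\pi(\tau,p)=p$ for every $\tau>0$, so every $p\in\omega_x$ is $\tau$-periodic for each such $\tau$. The implication b $\Rightarrow$ a of Theorem~\ref{th1.3.9}, which again uses Lagrange stability of $x$, gives (\ref{eqSAP_1}) for each $\tau>0$. Hence the motion $\pi(t,x)$ is remotely stationary.

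There is no real obstacle here. The only point to check is that the Lagrange stability hypothesis of Theorem~\ref{th1.3.9} does not depend on $\tau$, so the theorem can be used for all $\tau$ simultaneously. If one preferred a direct proof of b $\Rightarrow$ a, one would argue by contradiction. Take $t_k\to\infty$ with $\rho(\pi(t_k+\tau,x),\pi(t_k,x))\ge\varepsilon$, and use precompactness to pass to a subsequence with $\pi(t_k,x)\to p\in\omega_x$. Continuity of $\pi(\tau,\cdot)$ then gives $\rho(\pi(\tau,p),p)\ge\varepsilon$, which contradicts stationarity of $p$.
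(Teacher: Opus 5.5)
Your proposal is correct and follows the paper's own argument, which simply combines the definition of remote stationarity with Theorem~\ref{th1.3.9} applied for each $\tau>0$. One small point: when $\mathbb T=\mathbb S$ is two-sided, stationarity also requires $\pi(-\tau,p)=p$, and this follows at once from $\pi(-\tau,p)=\pi(-\tau,\pi(\tau,p))=\pi(0,p)=p$.
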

\begin{proof} This statement follows directly from the
corresponding definition and Theorem \ref{th1.3.9}.
\end{proof}

\begin{definition}\label{defAP1} A point $x\in X$ of the dynamical
system $(X,\mathbb T,\pi)$ is said to be:
\begin{enumerate}
\item almost recurrent if for all $\varepsilon >0$ the set
\begin{equation}\label{eqAP_01}
\mathcal T(\varepsilon,p):=\{\tau \in \mathbb T|\
\rho(\pi(\tau,p),p)<\varepsilon\} \nonumber
\end{equation}
is relatively dense in $\mathbb T$; \item recurrent, if it is
almost recurrent and Lagrange stable; \item almost periodic if for
every $\varepsilon
>0$ the set
\begin{equation}\label{eqAP_001}
\mathcal P(\varepsilon,p):=\{\tau \in \mathbb T|\
\rho(\pi(t+\tau,p),\pi(t,p))<\varepsilon \ \ \mbox{for all}\ t\in
\mathbb T\}\nonumber
\end{equation}
is relatively dense in $\mathbb T$; \item asymptotically almost
periodic if there exists an almost periodic point $p\in X$ such
that
\begin{equation}\label{eqAP3}
\lim\limits_{t\to \infty}\rho(\pi(t,x),\pi(t,p))=0.\nonumber
\end{equation}
\end{enumerate}
\end{definition}

\begin{definition}\label{defRAP1} A point $x\in X$ (respectively,
a motion $\pi(t,x)$) is said to be remotely almost periodic
\cite{RS_1986} if for arbitrary positive number $\varepsilon$
there exists a relatively dense subset $\mathcal
P(\varepsilon,x)\subseteq \mathbb T$ such that for all $\tau \in
\mathcal P(\varepsilon,x)$ there exists a number
$L(\varepsilon,x,\tau)>0$ for which we have
\begin{equation}\label{eqRAP1}
\rho(\pi(t+\tau,x),\pi(t,x))<\varepsilon \nonumber
\end{equation}
for all $t\ge L(\varepsilon,x,\tau)$.
\end{definition}

\begin{remark}\label{remAP1} Every almost periodic point $x\in X$
is remotely almost periodic.
\end{remark}

\begin{lemma}\label{lRAP_01} \cite{Che_2024.1} Every remotely $\tau$-periodic (respectively,
remotely stationary) point $x$ of the dynamical system $(X,\mathbb
T,\pi)$ is remotely almost periodic.
\end{lemma}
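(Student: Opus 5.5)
The plan is to show that the single number $\tau$ from the definition of remote $\tau$-periodicity works, uniformly in $\varepsilon$. The remotely stationary case then follows at once.

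\emph{Remotely $\tau$-periodic case.} Let $x$ be remotely $\tau$-periodic with $\tau>0$, $\tau\in\mathbb T$. First I would show that for every $k\in\mathbb N$ the point $x$ is also remotely $k\tau$-periodic. Fix $\varepsilon>0$. By (\ref{eqSAP_1}) there is $L_{0}>0$ with $\rho(\pi(s+\tau,x),\pi(s,x))<\varepsilon/k$ for all $s\ge L_{0}$. For $t\ge L_{0}$ the triangle inequality along the chain $\pi(t,x),\pi(t+\tau,x),\ldots,\pi(t+k\tau,x)$ gives
\begin{equation*}
\rho(\pi(t+k\tau,x),\pi(t,x))\le\sum_{j=0}^{k-1}\rho(\pi(t+(j+1)\tau,x),\pi(t+j\tau,x))<\varepsilon ,
\end{equation*}
since each point $t+j\tau$ satisfies $t+j\tau\ge L_{0}$.

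Next, for an arbitrary $\varepsilon>0$ I take
\begin{equation*}
\mathcal P(\varepsilon,x):=\{k\tau:\ k\in\mathbb N\}\ \ \bigl(\text{or } \{k\tau:\ k\in\mathbb Z\}\cap\mathbb T\bigr).
\end{equation*}
This set is relatively dense in $\mathbb T$ with $l=\tau$: every segment $[a,a+\tau]$ contains a point of $\tau\mathbb Z$. If $\mathbb T=\mathbb S$ and one wants to allow negative shifts, I would note that $\tau$ and $-\tau$ play symmetric roles for $t\ge L$. The cleaner choice, however, is to keep all shifts $\ge 0$ and observe that $\{k\tau:\ k\ge 0\}$ meets every segment $[a,a+\tau]$ with $a\ge 0$. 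If the definition of relative density is read over all of $\mathbb S$, I would include the negative multiples. For $\tau'=-k\tau$ the required estimate $\rho(\pi(t-k\tau,x),\pi(t,x))<\varepsilon$ for $t\ge L+k\tau$ is the same estimate as before after the substitution $s=t-k\tau\ge L$. For each $\tau'\in\mathcal P(\varepsilon,x)$ the first step supplies the constant $L(\varepsilon,x,\tau')$. Hence $x$ is remotely almost periodic in the sense of Definition~\ref{defRAP1}.

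\emph{Remotely stationary case.} A remotely stationary point is in particular remotely $\tau$-periodic for some $\tau>0$ (Definition~\ref{defSAP2}), so it is covered by the previous case.

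The only genuine subtlety, and so the main obstacle, is the convention on relative density when $\mathbb T=\mathbb S$. There the negative multiples of $\tau$ must be handled, and that is done by the substitution $s=t-k\tau$ described above.
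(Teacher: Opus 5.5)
Your argument is correct and is essentially the standard one behind the cited result. The telescoping estimate makes $x$ remotely $k\tau$-periodic for every $k$, so $\tau\mathbb Z\cap\mathbb T$ works as $\mathcal P(\varepsilon,x)$ for every $\varepsilon>0$ (this is exactly the characterization in Lemma \ref{lRAP_010}), and the remotely stationary case reduces to it. One point to tighten: under the paper's definition, relative density in $\mathbb T=\mathbb S$ is tested on segments $[a,a+l]$ for all $a\in\mathbb S$, so the negative multiples are required rather than optional, and your substitution $s=t-k\tau$ handles them correctly.
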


\begin{definition}\label{defIS1} A subset $M\subseteq X$ is said
to be positively invariant (respectively, negatively invariant or
invariant) if $\pi(t,M)\subseteq M$ (respectively, $M\subseteq
\pi(t,M)$ or $\pi(t,M)=M$) for all $t\in \mathbb T$.
\end{definition}

\begin{theorem}\label{thLS1}\cite{Che_2020}, \cite{sib} Assume that the point $x\in X$
is positively Lagrange stable, then the following statement hold:
\begin{enumerate}
\item $\omega_{x}\not= \emptyset$; \item $\omega_{x}$ is a compact
subset of $X$; \item the set $\omega_{x}$ is invariant, that is,
$\pi(t,\omega_{x})=\omega_{x}$ for all $t\in \mathbb T$.
\end{enumerate}
\end{theorem}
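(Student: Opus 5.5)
The statement to prove is Theorem \ref{thLS1}: for a positively Lagrange stable point $x$, the set $\omega_{x}$ is nonempty, compact and invariant. The plan is the classical Birkhoff-type argument. It uses only the precompactness of $\Sigma_{x}^{+}$, the continuity of $\pi$, and the completeness of $X$.

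\emph{Nonemptiness and compactness.} Take any sequence $t_k\to+\infty$. The points $\pi(t_k,x)$ lie in $\Sigma_{x}^{+}$, which is precompact, and $X$ is complete, so a subsequence converges; its limit lies in $\omega_{x}$, hence $\omega_{x}\neq\emptyset$. Next, observe that
\[
\omega_{x}=\bigcap_{s\ge 0}\overline{\{\pi(t,x):\ t\ge s\}} .
\]
Each set in this intersection is closed, so $\omega_{x}$ is closed. It is also contained in the compact set $\overline{\Sigma_{x}^{+}}$, so $\omega_{x}$ is compact.

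\emph{Positive invariance.} Let $p\in\omega_{x}$, say $\pi(t_k,x)\to p$ with $t_k\to+\infty$, and fix $t\in\mathbb T$ with $t\ge 0$. By continuity, $\pi(t+t_k,x)=\pi(t,\pi(t_k,x))\to\pi(t,p)$. Since $t+t_k\to+\infty$, this gives $\pi(t,p)\in\omega_{x}$, so $\pi(t,\omega_{x})\subseteq\omega_{x}$.

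\emph{Negative invariance.} This is the main step, since for a one-sided system $\pi(t,\cdot)$ need not be invertible. Given $p\in\omega_{x}$ with $\pi(t_k,x)\to p$ and $t>0$, the times $t_k-t$ are eventually nonnegative. By precompactness, pass to a subsequence so that $\pi(t_k-t,x)\to q$. Then $q\in\omega_{x}$ because $t_k-t\to+\infty$. By continuity, $\pi(t,q)=\lim\pi(t,\pi(t_k-t,x))=\lim\pi(t_k,x)=p$. Hence $\omega_{x}\subseteq\pi(t,\omega_{x})$, and combined with positive invariance, $\pi(t,\omega_{x})=\omega_{x}$.

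If $\mathbb T=\mathbb S$ is two-sided, it remains to handle $t<0$. For such $t$, $\pi(t,\cdot)$ is the inverse of $\pi(-t,\cdot)$, and the two inclusions already proved for $-t>0$ give $\pi(t,\omega_{x})=\omega_{x}$ directly. The only care needed throughout is that every subsequence extraction uses precompactness of the positive semi-trajectory alone, which is exactly why the negative-invariance step is the one where the hypothesis is essential.
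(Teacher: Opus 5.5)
Your proposal is correct and is the classical argument. The paper gives no proof of its own here and only cites the monographs; there the proof runs as yours does: closedness via $\omega_{x}=\bigcap_{s\ge 0}\overline{\{\pi(t,x):\ t\ge s\}}$, positive invariance by continuity of $\pi(t,\cdot)$, and negative invariance by extracting a convergent subsequence of $\{\pi(t_k-t,x)\}$ using precompactness of $\Sigma_{x}^{+}$.
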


\begin{definition}\label{defIC1} A closed invariant subset A of X is said to be invariantly
connected \cite{Rau_2002} if it cannot be represented as the union
of two nonempty, disjoint, closed, invariant sets.
\end{definition}

\begin{lemma}\label{lOLS1} \cite{AE_2020, bro84,
Rau_2002} If the point $x$ is positively Lagrange stable, then the
omega-limit set $\omega_{x}$ of the point $x$ is invariantly
connected.
\end{lemma}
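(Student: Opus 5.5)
This is a plan for Lemma \ref{lOLS1}: if $x$ is positively Lagrange stable, then $\omega_{x}$ is invariantly connected. I would argue by contradiction.

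By Theorem \ref{thLS1}, $\omega_{x}$ is nonempty, compact and invariant. Suppose $\omega_{x}=A\cup B$, where $A,B$ are nonempty, disjoint, closed and invariant. Since $\omega_{x}$ is compact, $A$ and $B$ are compact. Hence
$$\delta:=\rho(A,B)=\inf\{\rho(a,b):\ a\in A,\ b\in B\}>0.$$
Let $U:=\{y:\ \rho(y,A)<\delta/3\}$ and $V:=\{y:\ \rho(y,B)<\delta/3\}$. These are disjoint open neighbourhoods of $A$ and $B$.

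The first step is to show that $\rho(\pi(t,x),\omega_{x})\to 0$ as $t\to+\infty$. This is the standard consequence of precompactness of $\Sigma_{x}^{+}$. If it failed, there would be $t_k\to+\infty$ with $\rho(\pi(t_k,x),\omega_{x})\ge\varepsilon_0>0$. By precompactness a subsequence of $\pi(t_k,x)$ converges, and its limit lies in $\omega_{x}$, which is a contradiction. Consequently there is $T>0$ such that $\pi(t,x)\in U\cup V$ for all $t\ge T$.

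The second step uses that $A$ and $B$ are both nonempty subsets of $\omega_{x}$. Fix $a\in A$ and $b\in B$. There are times $t_k\to+\infty$ with $\pi(t_k,x)\to a$, and times $s_k\to+\infty$ with $\pi(s_k,x)\to b$. So the semi-trajectory after time $T$ enters both $U$ and $V$ infinitely often. In the case $\mathbb T=\mathbb R_{+}$ the map $t\mapsto\pi(t,x)$ is continuous on the connected set $[T,+\infty)$. Its image lies in the disjoint union of the open sets $U$ and $V$ and meets both, which contradicts connectedness. This completes the continuous-time case.

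The main obstacle is the discrete case $\mathbb T=\mathbb Z_{+}$, where connectedness of the time set is not available. There, invariant connectedness is genuinely the right notion, since for example a $2$-periodic orbit splits into two points that are not individually invariant. I would exploit invariance of $A$ and $B$ together with uniform continuity of $\pi(1,\cdot)$ on a compact neighbourhood of $\omega_{x}$. I would first choose $\eta<\delta/3$ so that
$$\rho(y,A)<\eta\ \Rightarrow\ \rho(\pi(1,y),A)<\delta/3,$$
and similarly for $B$; this uses compactness and $\pi(1,A)\subseteq A$. I would then take $T$ large enough that $\rho(\pi(n,x),\omega_{x})<\eta$ for all $n\ge T$. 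Now if $\pi(n,x)$ lies within $\eta$ of $A$, then $\pi(n+1,x)$ lies within $\delta/3$ of $A$. Since it also lies within $\eta$ of $\omega_{x}$ and $\rho(A,B)=\delta$, it must in fact lie within $\eta$ of $A$. By induction the orbit stays near $A$ forever, so no point of $B$ is an $\omega$-limit point, contradicting $B\ne\emptyset$. The same induction also handles the continuous case via the time-one map, so I would present this unified argument.
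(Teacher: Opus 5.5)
The paper gives no proof of this lemma; it is quoted from the literature, so there is no internal argument to compare with. Your proof is correct and is the standard one. It even shows slightly more: $\omega_{x}$ cannot be split into two nonempty disjoint closed \emph{positively} invariant sets, since you only use $\pi(1,A)\subseteq A$.

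Three points should be tightened.

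First, in the non-locally compact spaces the paper works in (Banach spaces, $C(\mathbb T\times\mathfrak B,\mathfrak B)$), a compact set need not have a compact neighbourhood. So do not justify the choice of $\eta$ that way. It follows directly from compactness of $A$. Suppose for each $k$ there is $y_k$ with $\rho(y_k,A)<1/k$ but $\rho(\pi(1,y_k),A)\ge\delta/3$. Pick $a_k\in A$ with $\rho(y_k,a_k)<1/k$ and pass to a subsequence $a_k\to a\in A$. Then $y_k\to a$, so $\pi(1,y_k)\to\pi(1,a)\in A$, a contradiction. Alternatively, work inside the compact set $\overline{\Sigma_{x}^{+}}$, which contains every point to which you apply the estimate.

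Second, make the base of the induction explicit. Since $\emptyset\neq A\subseteq\omega_{x}$, there is $n_0\ge T$ with $\rho(\pi(n_0,x),A)<\eta$, and the induction runs from $n_0$.

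Third, if you replace the connectedness argument by the time-one induction in continuous time, the induction controls only the points $\pi(n_0+k,x)$, $k\in\mathbb N$. Ruling out $b\in B$ as an $\omega$-limit point then needs one more step:
\begin{itemize}
\item write $t_k=n_0+m_k+s_k$ with $s_k\in[0,1)$ and extract subsequences with $s_k\to s$ and $\pi(n_0+m_k,x)\to z$;
\item then $z\in\omega_{x}$ and $\rho(z,A)\le\eta<\delta$, hence $z\in A$;
\item conclude $b=\pi(s,z)\in A$ by joint continuity and invariance of $A$, a contradiction.
\end{itemize}
Your connectedness argument for continuous time is already complete. The simplest option is to keep the continuous and discrete cases separate, using the time-one induction only for $\mathbb T=\mathbb Z_{+}$ or $\mathbb Z$.
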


\begin{definition}\label{defRAP2} A subset $M$ is said to be
equi-almost periodic if for every $\varepsilon >0$ there exists a
relatively dense in $\mathbb T$ subset $\mathcal P(\varepsilon,M)$
such that
\begin{equation}\label{eqRAP2}
\rho(\pi(t+\tau,p),\pi(t,p))<\varepsilon \nonumber
\end{equation}
for all $t\in \mathbb T$, $\tau \in \mathcal P(\varepsilon,M)$ and
$p\in M$.
\end{definition}

\begin{definition}\label{def1.0.4}
A closed positively invariant set, which does not contain own
closed positively invariant subset, is called \emph{minimal}.
\end{definition}

It easy to see that every positively invariant minimal set is
invariant.

\begin{lemma}\label{lSM1} \cite{Che_2024.1} Let $M$ be a compact
invariant minimal set of the dynamical system $(X,\mathbb T,\pi)$
consisting of almost periodic motions. Then the set $M$ is
equi-almost periodic.
\end{lemma}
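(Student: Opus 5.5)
The plan is to show that one almost periodic point already controls the whole of $M$: the set of $\varepsilon$-almost periods of a single point $p\in M$ serves as a common set of almost periods for every point of $M$. Minimality is what makes $p$'s positive semi-trajectory dense in $M$.

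\textbf{Step 1 (density of one orbit).} Fix an arbitrary $p\in M$. The set $\overline{\Sigma_{p}^{+}}=\overline{\{\pi(t,p):\ t\in\mathbb S_{+}\}}$ is closed. It is contained in $M$ because $M$ is closed and invariant. It is also positively invariant, since $\pi(s,\pi(t,p))=\pi(s+t,p)$ and each $\pi(s,\cdot)$ is continuous, so limits of orbit points are carried into the closure. By Definition \ref{def1.0.4}, $M$ has no proper closed positively invariant subset, hence $\overline{\Sigma_{p}^{+}}=M$. Consequently every $q\in M$ can be written as $q=\lim_{k\to\infty}\pi(t_k,p)$ for some sequence $\{t_k\}\subset\mathbb S_{+}$.

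\textbf{Step 2 (transferring almost periods).} Given $\varepsilon>0$, set $\mathcal P(\varepsilon,M):=\mathcal P(\varepsilon/2,p)$. This set is relatively dense in $\mathbb T$ because $p$ is almost periodic. Take $\tau\in\mathcal P(\varepsilon/2,p)$, $t\in\mathbb T$ and $q\in M$ with $\pi(t_k,p)\to q$ as in Step 1. Then $t+t_k\in\mathbb T$, so the defining inequality for $p$ applied at time $t+t_k$ gives
$$
\rho(\pi(t+\tau,\pi(t_k,p)),\pi(t,\pi(t_k,p)))=\rho(\pi(t+t_k+\tau,p),\pi(t+t_k,p))<\varepsilon/2
$$
for all $k$. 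For fixed $t$ and $\tau$, the maps $\pi(t,\cdot)$ and $\pi(t+\tau,\cdot)$ are continuous. Letting $k\to\infty$ therefore yields $\rho(\pi(t+\tau,q),\pi(t,q))\le\varepsilon/2<\varepsilon$. This holds for all $t\in\mathbb T$, all $\tau\in\mathcal P(\varepsilon,M)$ and all $q\in M$, which is exactly Definition \ref{defRAP2}.

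\textbf{Main point to check.} The only delicate step is Step 2, and specifically that the passage to the limit uses only continuity of $\pi(s,\cdot)$ for each fixed $s$, not any uniformity in $t$. That suffices because the estimate for $p$ is already uniform in $t$. In the two-sided case $\mathbb T=\mathbb S$ no extra care is needed: $t+t_k\in\mathbb S$ for every $t$, and the almost periodicity of $p$ gives the bound on all of $\mathbb S$. Compactness of $M$ is not essential to this argument. Also, almost periodicity of the remaining points of $M$ is not used; it follows from the conclusion.
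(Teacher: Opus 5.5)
Your proof is correct. The paper gives no proof of this lemma; it is quoted from \cite{Che_2024.1}, so there is no argument here to compare against. Your route is the natural direct one: minimality makes a single orbit dense in $M$, and passing to the limit in the almost-period inequality of that one point shows that $\mathcal P(\varepsilon/2,p)$ serves as a common set of $\varepsilon$-almost periods for all of $M$.

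As you note, the argument uses neither compactness of $M$ nor almost periodicity of the other points. Applied to an arbitrary $q\in H(p)$, the same limit argument also proves Corollary \ref{corSM1} directly, without completeness of $X$ and without first showing that $H(p)$ is compact and minimal.

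One small point concerns Step 1, where you rely on forward invariance of $\overline{\Sigma_p^{+}}$. If $\mathbb T=\mathbb S$ and Definition \ref{defIS1} is read literally (the inclusion $\pi(t,M)\subseteq M$ required for all $t\in\mathbb S$), then $\overline{\Sigma_p^{+}}$ is not a priori one of the subsets excluded by Definition \ref{def1.0.4}. Your remark that in the two-sided case ``no extra care is needed'' addresses only Step 2, not this point. There are two easy remedies:
\begin{enumerate}
\item Use the full orbit closure $H(p)=\overline{\{\pi(t,p):\ t\in\mathbb S\}}$. It is closed, invariant and contained in $M$, hence equal to $M$ under either reading of minimality. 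Step 2 then goes through verbatim with $t_k\in\mathbb S$, since $\pi(t,\pi(t_k,p))=\pi(t+t_k,p)$ and $t+t_k\in\mathbb S$.
\item Use compactness to get $\omega_p=M$, which gives $\overline{\Sigma_p^{+}}\supseteq M$.
\end{enumerate}
The first remedy preserves your observation that compactness plays no role.
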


\begin{coro}\label{corSM1} \cite{Che_2024.1} Let $X$ be a complete metric space, $(X,\mathbb
T,\pi)$ be a dynamical system and $p\in X$ be an almost periodic
point. Then the set $M:=H(p)=\overline{\{\pi(t,p)|\ t\in \mathbb
T\}}$ is equi-almost periodic.
\end{coro}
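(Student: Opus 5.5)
The plan is to prove the claim directly from the definitions, using the set $\mathcal P(\varepsilon/2,p)$ from Definition \ref{defAP1} as the common set of almost periods for all of $M=H(p)$. I do not expect to need the full strength of Lemma \ref{lSM1}. (One could instead show that $H(p)$ is a compact minimal set of almost periodic motions and then apply Lemma \ref{lSM1}, but the direct route avoids the compactness and minimality arguments.)

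Fix $\varepsilon>0$ and put $\mathcal P(\varepsilon,M):=\mathcal P(\varepsilon/2,p)$. Since $p$ is almost periodic, this set is relatively dense in $\mathbb T$. The argument then has two steps.

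\emph{Step 1 (points on the trajectory).} Let $q=\pi(s,p)$ with $s\in\mathbb T$. For every $t\in\mathbb T$ and $\tau\in\mathcal P(\varepsilon/2,p)$,
$$
\rho(\pi(t+\tau,q),\pi(t,q))=\rho(\pi(t+s+\tau,p),\pi(t+s,p))<\varepsilon/2 .
$$
This holds because $t+s\in\mathbb T$ in both cases: when $\mathbb T=\mathbb S$ trivially, and when $\mathbb T=\mathbb S_{+}$ because $s,t\ge 0$. It is the defining inequality of $\mathcal P(\varepsilon/2,p)$ evaluated at the time $t+s$.

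\emph{Step 2 (passing to the closure).} Let $q\in H(p)$ be arbitrary. Then $q=\lim_{k\to\infty}\pi(s_k,p)$ for some sequence $\{s_k\}\subset\mathbb T$. Fix $t\in\mathbb T$ and $\tau\in\mathcal P(\varepsilon/2,p)$. The maps $\pi(t,\cdot)$ and $\pi(t+\tau,\cdot)$ are continuous, so
$$
\pi(t,\pi(s_k,p))\to\pi(t,q) \quad\text{and}\quad \pi(t+\tau,\pi(s_k,p))\to\pi(t+\tau,q).
$$
Letting $k\to\infty$ in the inequality of Step 1 applied to $\pi(s_k,p)$ gives
$$
\rho(\pi(t+\tau,q),\pi(t,q))\le\varepsilon/2<\varepsilon .
$$
Since $t$, $\tau$ and $q$ were arbitrary, the same relatively dense set $\mathcal P(\varepsilon,M)$ serves every $q\in M$ and every $t\in\mathbb T$. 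This is exactly the condition of Definition \ref{defRAP2}, so $M$ is equi-almost periodic.

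The only point needing care is that the limit in Step 2 turns the strict inequality into a non-strict one. Working with $\varepsilon/2$ from the start handles this. Completeness of $X$ is not actually used in this argument.
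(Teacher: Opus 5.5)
Your proof is correct, and it takes a genuinely different, more elementary route than the one the paper indicates.

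The paper gives no argument of its own. It cites \cite{Che_2024.1} and presents the statement as a corollary of Lemma \ref{lSM1}, so the intended proof is to verify that $H(p)$ is a compact minimal set consisting of almost periodic motions and then invoke that lemma. That route needs several ingredients:
\begin{itemize}
\item compactness of $H(p)$, which is where completeness of $X$ is used: the trajectory of $p$ lies within $\varepsilon$ of the compact arc $\pi([0,l],p)$, with $l$ an inclusion length of $\mathcal P(\varepsilon,p)$, so it is totally bounded, and completeness gives precompactness;
\item minimality of $H(p)$;
\item almost periodicity of every point of $H(p)$;
\item Lemma \ref{lSM1} itself.
\end{itemize}

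Your closure argument replaces all of this with a single limit passage. It produces the explicit common set $\mathcal P(\varepsilon/2,p)$ of almost periods, which already shows that every point of $H(p)$ is almost periodic. It also shows that the completeness hypothesis in the statement is superfluous.

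In fact the implication can be reversed. If $M$ is a minimal set and $p\in M$ is almost periodic, then $H(p)$ is a closed positively invariant subset of $M$, hence $H(p)=M$. Your argument then yields Lemma \ref{lSM1} without any compactness assumption.

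What the paper's route offers is mainly the framing of the result within the theory of compact minimal sets. That is the form in which it is applied later, for example in the proof of Theorem \ref{thNDS2}.
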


\begin{lemma}\label{lRAP1} \cite{Che_2024.1}, \cite{RS_1986} Let $x\in X$ be a
positively Lagrange stable point of the dynamical system
$(X,\mathbb T,\pi)$. The motion $\pi(t,x)$ is remotely almost
periodic if and only if its $\omega$-limit set $\omega_{x}$ is
equi-almost periodic.
\end{lemma}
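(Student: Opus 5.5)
The plan is to prove the two implications separately. Both reduce to passing to limits along sequences $t_k\to+\infty$, using continuity of $\pi$ and compactness of $\omega_{x}$ from Theorem \ref{thLS1}. The key observation is that, for a \emph{fixed} shift $\tau$, the map $y\mapsto \rho(\pi(\tau,y),y)$ is continuous. So the translation estimate of Definition \ref{defRAP1} is transferred to $\omega_x$ by taking limits, and in the other direction it is transferred back to the tail of the trajectory by a contradiction argument.

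\emph{Necessity.} Suppose $\pi(t,x)$ is remotely almost periodic and fix $\varepsilon>0$. We show that $\mathcal P(\varepsilon/2,x)\subseteq \mathcal P(\varepsilon,\omega_x)$; since the former set is relatively dense, this gives equi-almost periodicity. Take $\tau\in\mathcal P(\varepsilon/2,x)$, a point $p\in\omega_x$, and $t_k\to+\infty$ with $\pi(t_k,x)\to p$. Fix any $t\in\mathbb T$; this includes negative $t$ when $\mathbb T=\mathbb S$, where $\pi(t,p)$ makes sense because $\omega_x$ is invariant. Since $t_k\to+\infty$, we have $t+t_k\ge L(\varepsilon/2,x,\tau)$ for all large $k$. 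Hence
$$\rho(\pi(t+t_k+\tau,x),\pi(t+t_k,x))<\varepsilon/2 .$$
By continuity, $\pi(t+t_k+\tau,x)\to\pi(t+\tau,p)$ and $\pi(t+t_k,x)\to \pi(t,p)$. Therefore $\rho(\pi(t+\tau,p),\pi(t,p))\le\varepsilon/2<\varepsilon$. This holds uniformly in $p\in\omega_x$ and $t\in\mathbb T$, with $\tau$ independent of $p$, which is what Definition \ref{defRAP2} requires.

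\emph{Sufficiency.} Suppose $\omega_x$ is equi-almost periodic and fix $\varepsilon>0$. Set $\mathcal P(\varepsilon,x):=\mathcal P(\varepsilon/2,\omega_x)$, which is relatively dense. Fix $\tau$ in this set. We claim there is an $L$ with $\rho(\pi(t+\tau,x),\pi(t,x))<\varepsilon$ for all $t\ge L$. If not, there are $t_k\to+\infty$ with $\rho(\pi(t_k+\tau,x),\pi(t_k,x))\ge\varepsilon$. By positive Lagrange stability we may pass to a subsequence with $\pi(t_k,x)\to p$, and then $p\in\omega_x$. By continuity of $\pi(\tau,\cdot)$, we get $\pi(t_k+\tau,x)=\pi(\tau,\pi(t_k,x))\to\pi(\tau,p)$. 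Hence $\rho(\pi(\tau,p),p)\ge\varepsilon$. But taking $t=0$ in the equi-almost periodicity estimate gives $\rho(\pi(\tau,p),p)<\varepsilon/2$, a contradiction.

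I expect no serious obstacle. The only delicate point is in necessity: we need the estimate for all $t\in\mathbb T$, possibly negative, while Definition \ref{defRAP1} only controls large times. This is handled by absorbing $t$ into $t_k$, since $t+t_k\to+\infty$, and by using the invariance of $\omega_x$ from Theorem \ref{thLS1}, which ensures that all points $\pi(t,p)$ involved lie in $\omega_x$.
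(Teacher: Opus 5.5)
Your proof is correct, and both directions are complete. The paper does not prove this lemma; it only quotes it from the author's earlier work and from Ruess--Summers. So there is no internal argument to compare with, and yours is the natural self-contained proof.

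A few minor remarks on what you actually use.
\begin{enumerate}
\item In the necessity part, neither compactness nor invariance of $\omega_x$ is needed. For $\mathbb T=\mathbb S$ the point $\pi(t,p)$ is defined for every $t$ anyway. What handles negative $t$ (and negative $\tau$) is your observation that $\pi(t+t_k,x)=\pi(t,\pi(t_k,x))$ while $t+t_k\to+\infty$. What you really need there is only the sequence $t_k\to+\infty$ with $\pi(t_k,x)\to p$, together with continuity of $\pi(t,\cdot)$ and $\pi(t+\tau,\cdot)$.
\item In the sufficiency part, positive Lagrange stability of $x$ is what lets you extract the convergent subsequence; this is where the hypothesis enters.
\item Also in the sufficiency part, you use only the instance $t=0$ of the estimate in Definition \ref{defRAP2}. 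This reflects a general fact: for an invariant set $M$, equi-almost periodicity is equivalent to the set of $\tau$ with $\sup_{q\in M}\rho(\pi(\tau,q),q)<\varepsilon$ being relatively dense, because $\pi(t+\tau,p)=\pi(\tau,\pi(t,p))$ and $\pi(t,p)\in M$.
\item The halving $\varepsilon/2$ in the sufficiency part is harmless but unnecessary. The bound $\rho(\pi(\tau,p),p)<\varepsilon$ already contradicts $\rho(\pi(\tau,p),p)\ge\varepsilon$.
\end{enumerate}
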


\begin{theorem}\label{thAP4} \cite{Che_2024.1}
Let $(X,\mathbb T,\pi)$ be a semigroup dynamical system and $x\in
X$ be a positively Lagrange stable point. Then the following
statements are equivalent:
\begin{enumerate}
\item for arbitrary $\varepsilon >0$ there exists a relatively
dense in $\mathbb T$ subset $\mathcal P(\varepsilon,x)$ with the
property that for all $\tau \in \mathcal P(\varepsilon,x)$ there
exists a positive number $L(\varepsilon,x,\tau)$ such that
\begin{equation}\label{eqA10}
\rho(\pi(t+\tau,x),\pi(t,x))<\varepsilon
\end{equation}
for all $t\ge L(\varepsilon,x,\tau)$; \item for arbitrary
$\varepsilon >0$ there exists a relatively dense in $\mathbb S$
subset $\mathcal P(\varepsilon,x)$ with the property that for
every $\tau \in \mathcal P(\varepsilon,x)$ there exists a positive
number $L(\varepsilon,x,\tau)$ such that (\ref{eqA10}) holds for
every $t\ge L(\varepsilon,x,\tau)$ and $t+\tau \ge
L(\varepsilon,x,\tau)$.
\end{enumerate}
\end{theorem}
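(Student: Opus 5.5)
The plan is to prove both implications directly from the definitions by a change of variables. Neither direction uses positive Lagrange stability of $x$; its only role is to match the setting in which Theorem~\ref{thAP4} is later applied. Throughout, $\mathbb T=\mathbb S_{+}$ (the semigroup case), and a set relatively dense in $\mathbb S_{+}$ means that every segment $[a,a+l]$ with $a\ge 0$ meets it.

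\emph{(2) $\Rightarrow$ (1).} Let $\mathcal P(\varepsilon,x)\subseteq \mathbb S$ be the relatively dense set from (2), with constant $l$, and put $\mathcal P_{+}:=\mathcal P(\varepsilon,x)\cap \mathbb S_{+}$. Every segment $[a,a+l]$ with $a\ge 0$ lies in $\mathbb S_{+}$ and meets $\mathcal P(\varepsilon,x)$, so it meets $\mathcal P_{+}$. Hence $\mathcal P_{+}$ is relatively dense in $\mathbb S_{+}$. For $\tau\in\mathcal P_{+}$ we have $\tau\ge 0$, so $t\ge L(\varepsilon,x,\tau)$ automatically gives $t+\tau\ge L(\varepsilon,x,\tau)$. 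Condition (2) then yields (\ref{eqA10}) for all $t\ge L(\varepsilon,x,\tau)$, which is (1).

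\emph{(1) $\Rightarrow$ (2).} Let $\mathcal P(\varepsilon,x)\subseteq\mathbb S_{+}$ be as in (1), relatively dense in $\mathbb S_{+}$ with constant $l$. Set $\widetilde{\mathcal P}:=\mathcal P(\varepsilon,x)\cup(-\mathcal P(\varepsilon,x))\subseteq\mathbb S$.

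First, $\widetilde{\mathcal P}$ is relatively dense in $\mathbb S$ with constant $2l$. Take any segment $[a,a+2l]$.
\begin{enumerate}
\item If $a\ge 0$, then $[a,a+l]$ already meets $\mathcal P(\varepsilon,x)$.
\item If $a+l\le 0$, then $[a,a+l]$ is the reflection of $[-a-l,-a]\subseteq\mathbb S_{+}$, which meets $\mathcal P(\varepsilon,x)$; so $[a,a+l]$ meets $-\mathcal P(\varepsilon,x)$.
\item If $a<0<a+l$, then $[a+l,a+2l]\subseteq\mathbb S_{+}$ meets $\mathcal P(\varepsilon,x)$.
\end{enumerate}

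Next, fix $\tau\in\widetilde{\mathcal P}$ and define $L$ case by case.
\begin{enumerate}
\item If $\tau\in\mathcal P(\varepsilon,x)$, take $\widetilde L:=L(\varepsilon,x,\tau)$. Then (\ref{eqA10}) holds for $t\ge\widetilde L$, and $t+\tau\ge\widetilde L$ is automatic since $\tau\ge 0$.
\item If $\tau=-\sigma$ with $\sigma\in\mathcal P(\varepsilon,x)$, take $\widetilde L:=L(\varepsilon,x,\sigma)$. Let $t\ge\widetilde L$ and $t+\tau\ge\widetilde L$, and put $s:=t+\tau=t-\sigma\ge\widetilde L$. By (1),
$$\rho(\pi(t+\tau,x),\pi(t,x))=\rho(\pi(s,x),\pi(s+\sigma,x))<\varepsilon .$$
\end{enumerate}
Both motions involved are evaluated at nonnegative times, so all expressions make sense in the semigroup system. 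This gives (2).

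The only step needing care is the relative density of $\widetilde{\mathcal P}$ in $\mathbb S$ for segments straddling $0$. The doubling of the constant to $2l$ handles it. In the discrete case $\mathbb S=\mathbb Z$ the same argument works verbatim with integer segments.
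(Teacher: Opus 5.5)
Your proof is correct. For (2)$\Rightarrow$(1) you restrict to $\mathcal P(\varepsilon,x)\cap\mathbb S_{+}$. For (1)$\Rightarrow$(2) you symmetrize to $\mathcal P(\varepsilon,x)\cup(-\mathcal P(\varepsilon,x))$ with constant $2l$, and handle negative shifts $\tau=-\sigma$ by the substitution $s=t-\sigma$. You are also right that positive Lagrange stability of $x$ is never used.

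The paper does not prove this statement itself; it only quotes it from \cite{Che_2024.1}. So there is no in-paper argument to compare against, and yours is a complete, elementary proof on its own.

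One small point needs tightening in the discrete case $\mathbb S=\mathbb Z$. In your third case ($a<0<a+l$), if $l$ is not an integer, then $a+l\notin\mathbb Z_{+}$. Relative density in $\mathbb Z_{+}$ then cannot be applied to $[a+l,a+2l]$, and the claim can actually fail. For example, take $l=1.2$, $a=-1$ and $\mathcal P=\{0,2,4,\dots\}$: the segment $[0.2,1.4]$ misses $\mathcal P$. Either first replace $l$ by $\lceil l\rceil$, or in that case use the segment $[0,l]$. This segment lies in $[a,a+2l]$ because $a<0$ and $a+l>0$, and it starts at $0\in\mathbb S_{+}$ in both the continuous and the discrete setting.
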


\begin{lemma}\label{lRAP_010} \cite{Che_2024.1} A point $x$
is remotely $\tau$-periodic (respectively, remotely stationary) if
and only if for every $\varepsilon
>0$ there exists a relatively dense in $\mathbb T$ subset $\mathcal
P(x,\varepsilon)$ such that $\{\tau \mathbb Z\}\bigcap \mathbb T
\subset \mathcal P(x,\varepsilon)$ (respectively, $\mathbb T
\subseteq \mathcal P(x,\varepsilon)$) and for every $\tau \in
\mathcal P(x,\varepsilon)$ there exists a number
$L(x,\varepsilon,\tau)>0$ for which we have
\begin{equation}\label{eqE01}
\rho(\pi(t+\tau,x),\pi(t,x))<\varepsilon \nonumber
\end{equation}
for all $t\ge L(x,\varepsilon,\tau)$.
\end{lemma}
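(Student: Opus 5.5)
The proof is a direct unwinding of Definitions \ref{defSAP1} and \ref{defSAP2}. The only nontrivial step is the forward implication for remotely $\tau$-periodic points: there one passes from the single shift $\tau$ to all of its integer multiples in $\mathbb T$ by a telescoping estimate. No Lagrange stability is needed, so neither Theorem \ref{th1.3.9} nor Lemma \ref{lRAP1} will be used.

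\emph{Necessity.} Let $x$ be remotely $\tau$-periodic and fix $\varepsilon>0$. Put $\mathcal P(x,\varepsilon):=\{\tau\mathbb Z\}\bigcap\mathbb T$. Every segment of length $l=\tau$ in $\mathbb T$ meets this set, so it is relatively dense.
\begin{itemize}
\item \textbf{Case $\tau'=k\tau$ with $k\in\mathbb N$.} By the triangle inequality,
$$
\rho(\pi(t+k\tau,x),\pi(t,x))\le \sum_{j=0}^{k-1}\rho(\pi(t+j\tau+\tau,x),\pi(t+j\tau,x)).
$$
Each of the finitely many terms tends to $0$ as $t\to\infty$ by (\ref{eqSAP_1}). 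Hence there is $L(x,\varepsilon,k\tau)$ such that the left-hand side is $<\varepsilon$ for all $t\ge L(x,\varepsilon,k\tau)$.
\item \textbf{Case $\tau'=0$.} This is trivial.
\item \textbf{Case $\tau'=-k\tau$ with $k\in\mathbb N$ (only when $\mathbb T=\mathbb S$).} Substitute $s=t-k\tau$. Then $\rho(\pi(t-k\tau,x),\pi(t,x))=\rho(\pi(s,x),\pi(s+k\tau,x))$. By the previous case this is $<\varepsilon$ as soon as $s\ge L(x,\varepsilon,k\tau)$. So we may take $L(x,\varepsilon,-k\tau):=L(x,\varepsilon,k\tau)+k\tau$.
\end{itemize}
If $x$ is remotely stationary, put $\mathcal P(x,\varepsilon):=\mathbb T$, which is trivially relatively dense. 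For each $\tau'>0$ the required $L$ exists directly by Definition \ref{defSAP2}. For $\tau'\le 0$ (in the two-sided case) use the same substitution $s=t+\tau'$ as above.

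\emph{Sufficiency.} Suppose that for every $\varepsilon>0$ such a set $\mathcal P(x,\varepsilon)$ exists and contains $\{\tau\mathbb Z\}\bigcap\mathbb T$. Then in particular $\tau\in\mathcal P(x,\varepsilon)$ for every $\varepsilon>0$. This gives $L(x,\varepsilon,\tau)$ with $\rho(\pi(t+\tau,x),\pi(t,x))<\varepsilon$ for all $t\ge L(x,\varepsilon,\tau)$, which is exactly the limit relation (\ref{eqSAP_1}). In the stationary variant, $\mathbb T\subseteq\mathcal P(x,\varepsilon)$ gives the same conclusion for every $\tau\in\mathbb T$ with $\tau>0$. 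Hence $x$ is remotely stationary.

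The only point needing care is the telescoping step together with the index shift for negative multiples. Note that the finiteness of $k$ is what makes the bound valid for each individual $\tau'$; we do not need, and do not claim, uniformity of $L$ in $k$.
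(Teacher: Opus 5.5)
Your proof is correct: the telescoping bound over $j=0,\ldots,k-1$, the shift $s=t-k\tau$ for negative multiples in the two-sided case, and the observation that sufficiency only uses $\tau\in\mathcal P(x,\varepsilon)$ (respectively, every positive element of $\mathbb T$) together cover both directions of both variants. The paper only quotes this lemma from the literature without reproducing a proof, so there is no proof in the paper to compare against; your direct unwinding of Definitions \ref{defSAP1} and \ref{defSAP2} is the natural argument for it.
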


Let $(Y ,\mathbb T, \sigma )$ be a dynamical system on $Y $ and
$E$ be a metric space.

\begin{definition}\label{def2.1}\rm %(Cocycle on the state space $E$ with the base $(Y ,\mathbb R ,\sigma)$).
A triplet $\langle E, \varphi ,(Y , \mathbb T, \sigma )\rangle$
(or briefly $\varphi$ if no confusion) is said to be a {\em
cocycle} on state space (or fibre) $E$ with the base $(Y,\mathbb
T,\sigma )$ if the mapping $\varphi : \mathbb T _{+} \times  E
\times Y \to E $ satisfies the following conditions:
\begin{enumerate}
\item $\varphi (0,u,y)=u $ for all $u\in E$ and $y\in Y$; \item
$\varphi (t+\tau,u,y)=\varphi (t, \varphi (\tau,u, y),
\sigma(\tau,y))$ for all $ t, \tau \in \mathbb T_{+},u \in E$ and
$ y \in Y$; \item the mapping $\varphi $ is continuous.
\end{enumerate}
\end{definition}

\begin{definition}\label{def2.2}
Let $ \langle E , \varphi,(Y,\mathbb T,\sigma)\rangle $ be a
cocycle on $E$, $X:=E\times Y$ and $\pi $ be a mapping from $
\mathbb T _{+} \times X $ to $X$ defined by $\pi :=(\varphi
,\sigma)$, i.e., $\pi (t,(u,y))=(\varphi (t,u,y),\sigma(t,y))$ for
all $ t\in \mathbb T_{+}$ and $(u,y)\in E\times Y$. The triplet $
(X,\mathbb T_{+}, \pi)$ is an autonomous dynamical system and
called a skew-product dynamical system.
\end{definition}

\begin{definition}\label{def2.3} \rm %(Non-autonomous dynamical system.)
Let $\mathbb T_{1}\subseteq \mathbb T _{2} $ be two sub-semigroups
of the group $\mathbb S$, $(X,\mathbb T _{1},\pi )$ and $(Y
,\mathbb T _{2}, \sigma )$ be two autonomous dynamical systems and
$h: X \to Y$ be a homomorphism from $(X,\mathbb T _{1},\pi )$ to
$(Y , \mathbb T _{2},\sigma)$ (i.e., $h(\pi(t,x))=\sigma(t,h(x)) $
for all $t\in \mathbb T _{1} $ and $ x \in X $, and $h $ is
continuous and surjective), then the triplet $\langle (X,\mathbb T
_{1},\pi ),$ $ (Y ,$ $\mathbb T _{2},$ $ \sigma ),h \rangle $ is
called a non-autonomous dynamical system (shortly NDS).
\end{definition}

\begin{example} \label{ex2.4} \rm %(The non-autonomous dynamical system generated by cocycle $\varphi$.)
An important class of NDS are generated from cocycles. Indeed, let
$\langle E, \varphi ,(Y ,\mathbb T,\sigma)\rangle $ be a cocycle,
$(X,\mathbb T_{+},\pi ) $ be the associated skew-product dynamical
system ($X=E\times Y, \pi =(\varphi,\sigma)$) and $h= pr _{2}: X
\to Y$ (the natural projection mapping), then the triplet $\langle
(X,\mathbb T_{+},\pi ),$ $(Y ,\mathbb T,\sigma ),h \rangle $ is an
NDS.
\end{example}

\section{Amerio's Theorem for Nonautonomous Dynamical Systems and
Their Generalizations}\label{Sec3}

Let
\begin{equation}\label{eqNDSA1}
\langle (X,\mathbb T_{1},\pi)(Y,\mathbb T_{2},\sigma),h\rangle
\end{equation}
be a nonautonomous dynamical system (NDS) and $M$ be a nonempty
subset of $X$.

Denote by $X_{y}:=\{x\in X|\ h(x)=y\}$ and $M_{y}:= M \bigcap
X_{y}$.

\begin{definition}
A point $x\in M$ (respectively, a motion $\pi(t,x)$) is called
\cite[Ch.II]{Che_2009} separated in the set $M$ w.r.t. NDS
(\ref{eqNDSA1}) if $M_{y}$ ($y:=h(x)$) consists of a single point
$x$, i.e., $M_{y}=\{x\}$ or there exists a number $r>0$ such that
whatever would be a point $p\in M_{y}$ ($p\neq x$), $\rho(x t,p
t)\geq r$ for all $t\in\mathbb{T}_{1}$.
\end{definition}

\begin{lemma}\label{l2.3.3} \cite[Ch.II]{Che_2009}
Let $M\subseteq X$ be a compact set. If every point $x\in M$ is
separated in $M$ w.r.t. NDS (\ref{eqNDSA1}), then the set $M_{y}$
($y=h(x)$) consists of a finite number of points from $M$.
\end{lemma}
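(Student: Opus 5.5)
We argue by contradiction, using compactness of $M$ together with the uniform separation. Fix $y\in h(M)$ and suppose that $M_{y}$ is infinite. Since $M_{y}=M\cap h^{-1}(y)$ is closed in the compact set $M$ (by continuity of $h$), it is compact. Hence $M_{y}$ contains a sequence of pairwise distinct points $\{x_k\}$ converging to some $x_0\in M_{y}$. Discarding at most one term, we may assume $x_k\neq x_0$ for all $k$.

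The key step is to apply separation at the limit point $x_0$ itself, not at the points $x_k$. The fibre $M_{y}$ contains infinitely many points, so it is not the singleton $\{x_0\}$. Therefore the second alternative in the definition of separatedness holds for $x_0$: there is $r=r(x_0)>0$ such that $\rho(\pi(t,x_0),\pi(t,p))\ge r$ for every $p\in M_{y}$ with $p\neq x_0$ and every $t\in\mathbb T_{1}$. Taking $t=0\in\mathbb T_{1}$, where $\pi(0,\cdot)=\mathrm{id}$, we obtain
\[
\rho(x_0,x_k)\ge r \quad\text{for all } k .
\]
This contradicts $x_k\to x_0$. Hence $M_{y}$ is finite.

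The only subtle point is the choice of where to use separation. The separation constant $r$ depends on the point, so applying it at the points $x_k$ gives no uniform bound. Applying it at the accumulation point $x_0$ avoids this, which is why we need $x_0\in M_{y}$; that in turn is exactly what closedness of $M_{y}$ provides. No uniformity of separation over $M$ is needed. Note also that only the value $t=0$ of the separation inequality is used, so the conclusion holds whichever of $\mathbb T_{1}=\mathbb S_{+}$ or $\mathbb T_{1}=\mathbb S$ is in force.
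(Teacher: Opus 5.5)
Your proof is correct. The paper gives no proof of its own here, only quoting the lemma from \cite[Ch.II]{Che_2009}. Your argument is the standard one behind it: separation of a point $x_0\in M_{y}$, evaluated at $t=0$, makes $x_0$ an isolated point of $M_{y}$. Since $M_{y}$ is closed in the compact set $M$, it is a compact set all of whose points are isolated, and hence it is finite.
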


\begin{lemma}\label{l2.3.4} \cite[Ch.II]{Che_2009}
Let $Y$ be a compact and minimal set of $(Y,\mathbb
T_{2},\sigma)$, $M\subseteq X$ be a compact and positively
invariant set. If the points (motions) from $M$ of NDS
(\ref{eqNDSA1}) are separated in the set $M$, then there exists a
number $r>0$ such that
\begin{equation}\label{eqNDS_2}
\rho(p_1 t,p_2 t)\geq r>0\nonumber
\end{equation}
for all $t\in\mathbb T$ and $p_1,p_2\in M$ with $h(p_1)=h(p_2)$
and $p_1\not= p_2$.
\end{lemma}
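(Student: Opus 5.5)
The plan is to argue by contradiction, using compactness of $M$, minimality of $Y$ and Lemma \ref{l2.3.3}. Suppose no such $r>0$ exists. Then there are pairs $p_1^k\neq p_2^k$ in $M$ with $h(p_1^k)=h(p_2^k)=:y_k$ and times $t_k\in\mathbb T_1$ such that $\rho(p_1^kt_k,p_2^kt_k)\to 0$. Since $M$ is positively invariant, the points $q_i^k:=p_i^kt_k$ lie in $M$, and they lie in the common fibre $M_{\sigma(t_k,y_k)}$. They are still distinct: $p_1^k$ is separated in $M$, so $\rho(p_1^kt,p_2^kt)\ge r(p_1^k)>0$ for all $t$. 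So there are distinct points of $M$ in a common fibre whose distance tends to $0$. By compactness we pass to subsequences with $q_1^k,q_2^k\to q\in M$.

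\textbf{Step 1 (fibre cardinality).} Since $h(M)$ is closed, nonempty and positively invariant and $Y$ is minimal, we have $h(M)=Y$, so every fibre $M_y$ is nonempty. By Lemma \ref{l2.3.3} every fibre is finite; write $N(y):=|M_y|$. Separation makes the map $x\mapsto xt$ injective from $M_y$ into $M_{\sigma(t,y)}$, hence
$$
N(\sigma(t,y))\ge N(y)\quad\text{for all } t\in\mathbb T_1 .
$$
I would next show that $N$ is bounded. If $N(y_k)\to\infty$, take $m$ points in $M_{y_k}$ and push them forward along a fixed trajectory. Combining this with compactness and the fact that $Y$ is the orbit closure of any of its points, one should obtain a single fibre with at least $m$ points for every $m$, contradicting Lemma \ref{l2.3.3}. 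Then let $m:=\max_y N(y)$ and $Y_m:=\{y:\ N(y)=m\}$; this set is nonempty and positively invariant by the monotonicity above.

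\textbf{Step 2 (the key contradiction, the main obstacle).} Separation gives fibrewise, pairwise constants but no uniformity, so the difficulty is turning the collapse $q_1^k,q_2^k\to q$ into a contradiction. I would try first to use the maximal fibres. Fix $y^*\in Y_m$ with $M_{y^*}=\{x_1,\dots,x_m\}$, and let $r^*>0$ be the minimum of their pairwise separation constants. By minimality, $y^*$ is approximated by $\sigma(s,\sigma(t_k,y_k))$ for suitable times $s$. By continuity of $\pi$ on compact time intervals together with compactness of $M$, the transported fibre $M_{\sigma(t_k,y_k)}s$ converges, as a finite set, into $M_{y^*}$. The pushed-forward points $q_1^ks,q_2^ks$ remain distinct, so the transported fibre has at most $m$ points, of which two are still close. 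If the transported fibre at $y^*$ has exactly $m$ points, these must converge to the $m$ points of $M_{y^*}$ injectively, so two of them cannot both approach the same $x_j$. This contradicts the closeness of $q_1^ks$ and $q_2^ks$.

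The delicate point is controlling the time $s$ needed to return near $y^*$ uniformly in $k$. I would handle it by using compactness of $Y$ and minimality to obtain a uniform return time $l$ (relative density of returns to a neighbourhood of $y^*$). Then $s\in[0,l]$ is a compact range of times, on which continuity of $\pi$ keeps $q_1^ks$ and $q_2^ks$ close. I must also ensure that $N$ equals $m$ along the returning trajectory. If these controls fail, I would instead replace $y^*$ by the limit $y_\infty$ of the fibres $\sigma(t_k,y_k)$ and run the same cardinality-counting argument directly at $y_\infty$.
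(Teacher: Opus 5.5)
There is a genuine gap, and it sits exactly at the step you call the key contradiction.

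After transporting by $s_k\in[0,l]$, write $w_k:=\sigma(t_k,y_k)$. Compactness of $M$ only tells you that the subsequential limits of the $m$ points of $\pi(s_k,M_{w_k})$ lie in $M_{y^*}$; this is upper semicontinuity of $y\mapsto M_y$. Your claim that these points ``must converge to the $m$ points of $M_{y^*}$ injectively'' does not follow. Equality of cardinalities is compatible with two of the points merging in the limit while some point of $M_{y^*}$ is not approached at all, and compactness of $M$ does not rule this out. The only thing that could prevent the merging is a positive lower bound on the separation constants of the \emph{varying} pairs $q_1^k,q_2^k$, and by your contradiction hypothesis these tend to $0$. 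So the injectivity you invoke is the uniform separation you are trying to prove. Uniform return times and $N=m$ along the returning trajectory do not address this. Falling back to $y_\infty$ (i.e.\ $s=0$) reproduces the same circularity. Step~1 is also incomplete: boundedness of $N$ is only asserted, and constancy of $N$ is never obtained, since $Y_m$ is positively invariant but not known to be closed.

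The missing idea is to transport a \emph{fixed} fibre towards an arbitrary one, because a fixed fibre has fixed positive pairwise separation constants. Let $y,y'\in Y$, let $M_y=\{x_1,\dots,x_n\}$ (finite by Lemma~\ref{l2.3.3}), and set $r_{ij}:=\inf_{t\in\mathbb T_1}\rho(\pi(t,x_i),\pi(t,x_j))>0$. The forward orbit of $y$ is dense in the compact minimal set $Y$, so there are $\tau_k\in\mathbb T_1$ with $\sigma(\tau_k,y)\to y'$. By compactness and positive invariance of $M$, along a subsequence $\pi(\tau_k,x_i)\to z_i\in M_{y'}$. Continuity of $\pi(t,\cdot)$ then gives
\begin{equation*}
\rho(\pi(t,z_i),\pi(t,z_j))=\lim_{k\to\infty}\rho(\pi(\tau_k+t,x_i),\pi(\tau_k+t,x_j))\ge r_{ij}\qquad (t\in\mathbb T_1).
\end{equation*}
Hence $|M_{y'}|\ge |M_y|$ for all $y,y'\in Y$, so $|M_y|\equiv m$. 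Consequently $\{z_1,\dots,z_m\}=M_{y'}$, and $d(y')\ge d(y)$ where $d(y):=\min_{i\neq j}r_{ij}$. By symmetry $d$ is constant on $Y$, and $r:=d(y)$ is the required number; for $m=1$ there is nothing to prove.

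This argument needs no contradiction, no maximal fibres and no uniform return times, and it is exactly the observation your Step~1 needed anyway. If you want to keep your setup, apply it with $y:=y_\infty$ and $y':=w_k$ for each fixed $k$. This gives $\rho(q_1^k,q_2^k)\ge d(y_\infty)>0$, contradicting $\rho(q_1^k,q_2^k)\to 0$.
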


Below we formulate a well-known theorem of Amerio \cite{Ame1955},
\cite{AP1971} for abstract nonautonomous dynamical systems (for
more details see, for example, \cite[Ch.IV]{Bro79},
\cite[Ch.VII]{Lev-Zhi}, \cite{SS77}, \cite[Ch.I]{Zhi69}).

\begin{theorem}\label{thNDS1} (\it{Amerio's theorem for NDS})
Assume that the following conditions hold:
\begin{enumerate}
\item $M\subseteq X$ is a compact and positively invariant subset
of the dynamical system $(X,\mathbb T_{1},\pi)$; \item $Y$ is a
compact and minimal set of the dynamical system $(Y,\mathbb
T_{2},\sigma)$; \item the points (motions) from $M$ of NDS
(\ref{eqNDSA1}) are separated in the set $M$.
\end{enumerate}

Then the following statements are fulfilled:
\begin{enumerate}
\item the set $M_{y}$ ($y=h(x)$) consists of a finite number
$n_{y}$ of points from $M$; \item the number $n_{y}$ is
independent of the point $y\in Y$, i.e., there exist a number
$m\in \mathbb N$ such that $n_{y}=m$ for all $y\in Y$ and
$M_{y}:=\{p_1(y),\ldots,p_m(y)\}$; \item there exists a number
$r>0$ such that
\begin{equation}\label{eqNDS_02}
\rho(\pi(t,p_{i}(y)),\pi(t,p_{j}(y)))\geq r>0\nonumber
\end{equation}
for all $t\in\mathbb T_{1}$ and $i,j=1,\ldots,m$ with $i\not= j$;
\item the set $M$ consists of recurrent points (motions); \item if
the set $Y$ consists of almost periodic points (motions), then the
set $M$ also consists of almost periodic points (motions); \item
there are a finite number of minimal sets $M^{1},\ldots,M^{k}$
($k\le m$) of the dynamical system $(X,\mathbb T_{1},\pi)$ such
that:
\begin{enumerate}
\item $M^{i}\bigcap M^{j}=\emptyset$ for every $i,j=1,\ldots,k$
with $i\not= j$; \item
\begin{equation}\label{eqNDS_3}
M=M^{1}\bigcup\ldots \bigcup M^{k} ;\nonumber
\end{equation}
\item for every $i\in \{1,\ldots,k\}$ and $y\in Y$ the set
$M_{y}^{i}:=M^{i}\bigcap X_{y}$ consists of a finite number
$m^{i}_{y}$ of points and the number $m^{i}_{y}$ does not depend
of the point $y\in Y$, i.e., for every $i\in \{1,\ldots,k\}$ there
exists a natural number $m^{i}\in \mathbb N$ such that
$m^{i}_{y}=m^{i}$ for all $y\in Y$; \item $m^{1}+\ldots +m^{k}=m$;
\item if the minimal set $Y$ consists of almost periodic points
(motions), then the sets $M^{i}$ ($i=1,\ldots,k$) consists of
almost periodic points (motions).
\end{enumerate}
\end{enumerate}
\end{theorem}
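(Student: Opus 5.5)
The statement immediately above is Theorem \ref{thNDS1}, the abstract Amerio theorem for NDS. I would prove it item by item, taking Lemmas \ref{l2.3.3} and \ref{l2.3.4} as the basic tools.

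\textbf{Items (1)--(3).} Item (1) is Lemma \ref{l2.3.3} applied to the compact set $M$. For item (3), Lemma \ref{l2.3.4} gives a uniform $r>0$ with $\rho(p_1t,p_2t)\ge r$ for all distinct $p_1,p_2\in M_y$, all $y$, and all $t$; this is exactly item (3) once item (2) is proved. For item (2) I would show that $y\mapsto n_y$ is constant in two steps.

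First, $n_{\sigma(t,y)}\ge n_y$. The map $\pi(t,\cdot)$ sends $M_y$ into $M_{\sigma(t,y)}$ by positive invariance. It is injective there, because distinct points stay at distance at least $r$ for all later times.

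Second, $n$ is upper semicontinuous along convergent sequences, and here uniform separation is the key. Suppose $y_k\to y$ and $M_{y_k}$ contains $m$ points that are pairwise at distance $\ge r$. By compactness of $M$ these points have limits in $M_y$, since $h$ is continuous. The limits are still pairwise at distance $\ge r$, so $n_y\ge \limsup n_{y_k}$.

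Now fix $y_0$ attaining $m:=\max_Y n_y$; the maximum exists because $n_y$ is bounded, which follows from compactness of $M$ and $r$-separation. For any $y\in Y$, minimality of $Y$ gives $t_k$ with $\sigma(t_k,y_0)\to y$. Then $n_{\sigma(t_k,y_0)}\ge m$, hence equals $m$, and the limit argument gives $n_y\ge m$. So $n_y=m$ for all $y$.

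\textbf{Item (4), recurrence.} This is the main obstacle. The plan is to show that every point of $M$ lies in a minimal set; since $M$ is compact, that gives recurrence. Take $x\in M_y$ and choose a minimal set $M'\subseteq \overline{\{\pi(t,x)\}}\subseteq M$. Its projection $h(M')$ is closed and invariant in $Y$, so by minimality $h(M')=Y$, and $M'_y\ne\emptyset$.

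Because $Y$ is minimal (hence recurrent) and its fibres have exactly $m$ points, I would use that $\pi(t,\cdot)\colon M_y\to M_{\sigma(t,y)}$ is a bijection. Along $t_k$ with $\sigma(t_k,y)\to y$, after passing to a subsequence, these maps converge to a permutation of $M_y$, by the finiteness and separation of the fibres. Iterating such returns, some power of the permutation is the identity. Hence $x$ is a limit of $\pi(s_k,x)$ with $s_k\to\infty$, i.e. $x\in\omega_x$.

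More strongly, I would show that $\omega_x$ contains all of $M'_y$ and is the orbit closure, so that $x\in M'$. The step I would attempt first is this: the bijections are equicontinuous on a finite set, so the permutation group acts on $M_y$, and the orbit of $x$ under it is contained in the fibre of the minimal set through a limit point, which forces $x\in M'$.

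\textbf{Item (5), almost periodicity.} Suppose $Y$ is almost periodic. The fibres are finite and $r$-separated, with the same cardinality $m$. So near each point the restriction of $h$ to $M$ is a local homeomorphism, and $M\to Y$ is an $m$-sheeted covering. The finite permutation obtained above has order dividing $m!$.

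Given $\varepsilon<r/2$, take $\tau$ that is a $\delta$-almost period of $Y$ uniformly, using equi-almost periodicity from Lemma \ref{lSM1}, with $\delta$ chosen from the uniform continuity of the local inverses of $h$. Then $\pi(t+\tau,x)$ is $\varepsilon$-close to some point of $M_{h(\pi(t,x))}$. By continuity in $t$ and $r$-separation, this sheet is the same for all $t$. Composing $m!$ such shifts returns to the same sheet, which yields relatively dense $\varepsilon$-almost periods for $x$.

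\textbf{Item (6).} Each point lies in a minimal set, and distinct minimal sets are disjoint. Only finitely many of them occur, since each meets $M_y$ and $|M_y|=m$. Applying items (1)--(2) to each $M^i$, which is compact, invariant and separated, gives the constants $m^i$, and $\sum_i m^i=m$. Item (5) applied to each $M^i$ gives almost periodicity.
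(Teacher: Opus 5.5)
The paper gives no proof of this theorem. It is quoted as classical (Amerio, Bronshtein, Levitan--Zhikov, Shcherbakov, Zhikov), so there is no in-paper argument to compare against. Your route is a self-contained one, and it differs from the usual treatments. Those typically obtain recurrence and almost periodicity from general facts about fibre-distal extensions (Ellis-semigroup idempotents, or Bochner-type uniform convergence of shifts), which do not need finite fibres. You instead exploit the finiteness and constant cardinality from (1)--(2): the Ellis semigroup is replaced by the finite group of limit permutations of $M_y$, and almost periodicity comes from the monodromy of an $m$-sheeted covering. This is more elementary, but it is tied to finite fibres. Items (1)--(3) and (6) are fine. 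In (2) and (4), just note that $\omega_{y_0}=Y$ by minimality, so the approximating times can be taken in $\mathbb T_1$ with $t_k\to+\infty$.

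Two steps need to be written out or repaired.

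\textbf{Item (4).} The decisive claim, that the orbit of $x$ under the permutation group lies in the fibre of $M'$, is only asserted. Here is how to make it precise.
\begin{enumerate}
\item Let $G$ be the set of permutations of $M_y$ that are limits of $\pi(s_k,\cdot)|_{M_y}$ with $s_k\to+\infty$ and $\sigma(s_k,y)\to y$.
\item For fixed $k$ we have $\pi(a_k+b_l,p)\to\pi(a_k,\zeta(p))$ as $l\to\infty$, where $\zeta=\lim_l\pi(b_l,\cdot)|_{M_y}$. A diagonal choice then shows $G$ is closed under composition, hence a finite group.
\item Take the minimal set $M'\subseteq\omega_x$, not merely inside the orbit closure. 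Since $h(M')=Y$, there is $p\in M'_y\subseteq\omega_x\cap X_y$, so $p=\xi(x)$ for some $\xi\in G$.
\item Then $x=\xi^{-1}(p)=\lim\pi(s_k',p)$. Because $M'$ is closed and positively invariant, $x\in M'$.
\item Birkhoff's theorem, which is valid for semiflows, now gives recurrence.
\end{enumerate}

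\textbf{Item (5), discrete time.} Your phrase ``by continuity in $t$ \dots\ the sheet is the same for all $t$'' is a connectedness argument. The function $d(t)=\rho(\pi(t+\tau,x),\pi(t,q_0))$ takes values in $[0,\varepsilon)\cup[r-\varepsilon,\infty)$ and $d(0)<\varepsilon$. This fails for $\mathbb T_1=\mathbb Z_+$ or $\mathbb Z$. The statement covers that case, and the paper relies on it for difference equations (Section \ref{Sec6.2}). Replace the connectedness step by induction:
\begin{enumerate}
\item Pick $\kappa>0$ such that, for $a,b\in M$, $\rho(a,b)<\kappa$ implies $\rho(\pi(1,a),\pi(1,b))<r/2$.
\item Take $\varepsilon<\min\{\kappa,r/2\}$.
\item Then $d(n)<\varepsilon$ forces $d(n+1)<r/2$, and since $d(n+1)\in[0,\varepsilon)\cup[r-\varepsilon,\infty)$, it forces $d(n+1)<\varepsilon$.
\item For $\mathbb T_1=\mathbb Z$, argue symmetrically with $\pi(-1,\cdot)$.
\end{enumerate}

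\textbf{Item (5), closing step.} Make this explicit. For the induced permutation $\xi_\tau$ of $M_y$, iteration gives $\rho(\pi(t+N\tau,x),\pi(t,\xi_\tau^{N}x))<N\varepsilon$. Taking $N=m!$ makes $m!\tau$ an $m!\,\varepsilon$-almost period of $x$, and $m!\,\mathcal P(\delta,Y)$ is still relatively dense.

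With these additions your proof is complete.
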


\begin{definition}\label{defAR1} A point $y\in Y$ is said to be
asymptotically recurrent if there exists a recurrent point $q\in
Y$ such that
\begin{equation}\label{eqAR1}
\lim\limits_{t\to \infty}\rho(\sigma(t,y),\sigma(t,q))=0.\nonumber
\end{equation}
\end{definition}

\begin{lemma}\label{l2.3.5}\cite[Ch.II]{Che_2009} Assume that the
following conditions are fulfilled:
\begin{enumerate}
\item the point $y\in Y$ is asymptotically recurrent
(respectively, asymptotically almost periodic); \item the point
$x\in X_{y}$ is positively Lagrange stable; \item the points
(motions) from $\omega_{x}$ of NDS (\ref{eqNDSA1}) are separated
in the set $\omega_{x}$.
\end{enumerate}

Then there exists a unique recurrent (respectively, almost
periodic) point $p\in\omega_x$ such that
\begin{equation}\label{eqNDS_4}
\lim\limits_{t\to +\infty}\rho(\pi(t,x),\pi(t,p))=0 .\nonumber
\end{equation}
\end{lemma}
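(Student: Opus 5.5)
Fix $y:=h(x)$ and let $q\in Y$ be the recurrent (respectively, almost periodic) point with $\rho(\sigma(t,y),\sigma(t,q))\to 0$ as $t\to+\infty$. The plan is to pass to the compact invariant set $\omega_x$ and use the structure given by Theorem \ref{thNDS1}. Since $x$ is positively Lagrange stable, Theorem \ref{thLS1} shows that $\omega_x$ is nonempty, compact and invariant. Moreover $h(\omega_x)=\omega_y$, because $h$ is continuous and $\Sigma^+_x$ is precompact.

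The first step is to identify $\omega_y$. Since $\sigma(t,y)$ and $\sigma(t,q)$ are asymptotic, $\omega_y=\omega_q$. As $q$ is recurrent, $\omega_q=H(q)$, which is a compact minimal set, and in the almost periodic case it consists of almost periodic points. So the restriction $\langle(\omega_x,\mathbb T_1,\pi),(\omega_y,\mathbb T_2,\sigma),h\rangle$ satisfies all hypotheses of Theorem \ref{thNDS1}, with $M=\omega_x$ and base $\omega_y$. Hence every fibre $(\omega_x)_{q'}$, $q'\in\omega_y$, has exactly $m$ points; these are uniformly separated by some $r>0$ for all $t\in\mathbb T_1$; and $\omega_x$ consists of recurrent (respectively, almost periodic) points.

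The second step, which I expect to be the main obstacle, is to choose $p\in(\omega_x)_q=\{p_1,\dots,p_m\}$ and show $\rho(\pi(t,x),\pi(t,p_i))\to0$ for some $i$.
\begin{enumerate}
\item Take $t_k\to\infty$ with $\sigma(t_k,q)\to q$, which exists by recurrence. Since $\rho(\sigma(t_k,y),\sigma(t_k,q))\to0$, also $\sigma(t_k,y)\to q$.
\item Passing to a subsequence, $\pi(t_k,x)\to\bar x\in(\omega_x)_q$, so $\bar x=p_i$ for some $i$.
\item To upgrade this to convergence along all $t\to\infty$, define $d(t):=\min_j\rho(\pi(t,x),\pi(t,p_j))$. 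Argue by contradiction that $d(t)\to0$. If $d(s_k)\ge\varepsilon$ along some $s_k\to\infty$, take limits of $\pi(s_k,x)$ and $\pi(s_k,p_j)$. Every limit of $\pi(s_k,x)$ lies in the fibre of $\omega_x$ over $\lim\sigma(s_k,q)$, and compactness plus the fibre count $m$ show that the limits of the $\pi(s_k,p_j)$ exhaust that fibre. So the limit point of $\pi(s_k,x)$ would be at distance at least $\varepsilon$ from every point of a fibre containing it, which is a contradiction.
\item Thus $d(t)\to0$. Since the $m$ trajectories $\pi(t,p_j)$ stay pairwise at distance at least $r$ for all $t$, once $d(t)<r/2$ the minimizing index is unique. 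It depends continuously on $t$, because $\mathbb T_1$ is connected, or in the discrete case because steps are small after uniform continuity on compacta. Therefore it is eventually a constant $i$.
\end{enumerate}
Setting $p:=p_i$ gives the convergence.

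Uniqueness follows from the same bound. If $p,p'\in\omega_x$ were both asymptotic to $\pi(t,x)$, then $\rho(\pi(t,p),\pi(t,p'))\to0$. Also $h(p)$ and $h(p')$ are asymptotic points of the minimal set of recurrent points, so $h(p)=h(p')$. Lemma \ref{l2.3.4} then gives $\rho(\pi(t,p),\pi(t,p'))\ge r$ unless $p=p'$. The recurrent, respectively almost periodic, nature of $p$ comes from item (4), respectively (5), of Theorem \ref{thNDS1}.
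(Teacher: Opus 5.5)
Your existence argument is sound. After you apply Theorem \ref{thNDS1} to $\langle(\omega_x,\mathbb T_1,\pi),(\omega_y,\mathbb T_2,\sigma),h\rangle$, step 3 does give $d(t)\to 0$: the $m$ limits of $\pi(s_k,p_j)$ are $r$-separated, so they exhaust the fibre over $\lim\sigma(s_k,q)$, and that fibre contains $\lim\pi(s_k,x)$. The nearest index is then eventually constant. Steps 1--2 are never used. In discrete time, replace ``steps are small'' with the explicit argument. Choose $\delta\in(0,r/2)$ such that $\rho(a,b)<\delta$ implies $\rho(\pi(1,a),\pi(1,b))<r/2$ on the compact set $\overline{\Sigma^+_x}$. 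Once $d(t)<\delta$, the nearest index at time $t+1$ equals the one at time $t$.

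The genuine gap is uniqueness in the recurrent case. You claim that two asymptotic points of a compact minimal set of recurrent points must coincide, so $h(p)=h(p')$. This is false, because minimal sets need not be distal. A Sturmian subshift, or the Cantor minimal set of a Denjoy homeomorphism, contains points $q\neq q'$ with $\rho(\sigma(t,q),\sigma(t,q'))\to 0$. Take $X=Y$ to be such a set, $h=\mathrm{id}$ and $y=x=q$. Every fibre is a singleton, so all hypotheses hold. Yet $q$ and $q'$ are both recurrent points of $\omega_x$ asymptotic to $x$. So uniqueness over all of $\omega_x$ cannot be proved in the recurrent case.

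What your construction does yield is uniqueness in $\omega_x\cap X_q$, and that is what should be claimed. If $p,p'\in\omega_x\cap X_q$ are both asymptotic to $\pi(t,x)$, then $\rho(\pi(t,p),\pi(t,p'))\to 0$. The $r$-separation within the fibre then forces $p=p'$, with no comparison of $h(p)$ and $h(p')$ needed.

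In the almost periodic case your claim $h(p)=h(p')$ is true, but it needs proof. By Corollary \ref{corSM1}, $\omega_y=H(q)$ is equi-almost periodic. Suppose $\rho(\sigma(t,q_1),\sigma(t,q_2))\to 0$ and take a large $\tau\in\mathcal P(\varepsilon,\omega_y)$. Then $\rho(q_1,q_2)\le\rho(q_1,\sigma(\tau,q_1))+\rho(\sigma(\tau,q_1),\sigma(\tau,q_2))+\rho(\sigma(\tau,q_2),q_2)<3\varepsilon$, so $q_1=q_2$. After that, Lemma \ref{l2.3.4} finishes the argument as you say.
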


\begin{theorem}\label{t2.3.6} \cite[Ch.II]{Che_2009}
Let a point $y\in Y$ be asymptotically almost periodic
$($respectively, asymptotically recurrent$)$ and let a point $x\in
X_{y}$ be positively Lagrange stable. If the points (motions) from
$\omega_{x}$ of NDS (\ref{eqNDSA1}) are separated in the set
$\omega_{x}$, then the point $x$ is asymptotically almost periodic
$($respectively, asymptotically recurrent$)$.
\end{theorem}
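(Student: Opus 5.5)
The statement is Theorem \ref{t2.3.6}. I would deduce it from Lemma \ref{l2.3.5}, which already does essentially all of the work. The plan is to find an almost periodic (respectively, recurrent) point $p$ whose motion attracts the motion of $x$. By Definition \ref{defAP1}(4) (or Definition \ref{defAR1} in the recurrent case) this is exactly asymptotic almost periodicity (respectively, asymptotic recurrence) of $x$.

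First I check the hypotheses of Lemma \ref{l2.3.5}. The point $y=h(x)$ is asymptotically almost periodic (respectively, asymptotically recurrent) by assumption. The point $x\in X_{y}$ is positively Lagrange stable. The motions of $\omega_{x}$ are separated in $\omega_{x}$. These are precisely conditions (1)--(3) of Lemma \ref{l2.3.5}. Applying the lemma gives a unique almost periodic (respectively, recurrent) point $p\in\omega_{x}$ such that
\begin{equation*}
\lim_{t\to+\infty}\rho(\pi(t,x),\pi(t,p))=0 .
\end{equation*}

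It remains to match this with the definition. In the almost periodic case, Definition \ref{defAP1}(4) asks for an almost periodic point $p\in X$ with exactly this limit, so $x$ is asymptotically almost periodic. In the recurrent case, the analogue of Definition \ref{defAR1} for the system $(X,\mathbb T_{1},\pi)$ asks for a recurrent $p$ with this limit. Since $p\in\omega_{x}$ is recurrent, we are done.

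The only real content lies in Lemma \ref{l2.3.5}, so there is no genuine obstacle. The one point I would make explicit is that the definitions of asymptotically almost periodic and asymptotically recurrent points stated for $(Y,\mathbb T,\sigma)$ are used verbatim for $(X,\mathbb T_{1},\pi)$. If one prefers not to lean on the lemma as a black box, its proof would go as follows:
\begin{enumerate}
\item Use Theorem \ref{thLS1} to see that $\omega_{x}$ is compact and invariant.
\item Note that $\omega_{y}=h(\omega_{x})$ is the minimal set $H(q)$ of the recurrent point $q$ attracting $y$.
\item Apply Theorem \ref{thNDS1} to $M=\omega_{x}$ over the base $\omega_{y}$. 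This shows that each fibre is finite of constant cardinality, with uniform separation $r>0$, and that $\omega_{x}$ consists of recurrent (almost periodic) points.
\item Show that $\pi(t,x)$ eventually shadows a single branch $\pi(t,p)$ with $h(p)=q$. Uniform separation combined with a compactness argument on the sequence $\pi(t_k,x)$ forbids switching between branches.
\end{enumerate}
The last step is the delicate part of that argument.
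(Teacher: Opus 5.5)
Your proposal is correct and follows the paper's route. The paper cites this theorem from Cheban's monograph and places it immediately after Lemma \ref{l2.3.5}, from which it follows at once, exactly as you argue: take the almost periodic (respectively, recurrent) point $p\in\omega_{x}$ given by the lemma and apply Definition \ref{defAP1}(4) (respectively, the analogue of Definition \ref{defAR1} for $(X,\mathbb T_{1},\pi)$), with the ``respectively'' clauses matched the way you did.
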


\section{Amerio's theorem for remotely almost periodic
NDS}\label{Sec4}

In this section we generalize Amerio's theorem for remotely almost
periodic nonautonomous dynamical systems.

\begin{theorem}\label{thNDS2}
Assume that the following conditions hold:
\begin{enumerate}
\item the point $y\in Y$ is positively Lagrange stable and
remotely almost periodic; \item the set $\omega_{y}$ is minimal;
\item the point $x\in X_{y}$ is positively Lagrange stable; \item
the points (motions) from $\omega_{x}$ of NDS (\ref{eqNDSA1}) are
separated in the set $\omega_{x}$.
\end{enumerate}

Then the following statements are fulfilled:
\begin{enumerate}
\item for every $q\in \omega_{y}$ the set
$\omega^{q}_{x}:=\omega_{x}\bigcap X_{q}$ consists of a finite
number $m_{q}$ of points from $\omega_{x}$; \item the number
$m_{q}$ is independent of the point $q\in \omega_{y}$, i.e., there
exist a natural number $m\in \mathbb N$ such that $m_{q}=m$ for
every $q\in \omega_{y}$ and
$\omega_{x}^{q}=\{p_(q),\ldots,p_m(q)\}$; \item there exists a
number $r>0$ such that
\begin{equation}\label{eqNDS2}
\rho(\pi(t,p_{i}(q)),\pi(t,p_{j}(q)))\geq r>0 \nonumber
\end{equation}
for all $q\in \omega_{y}$, $t\in\mathbb T_{1}$,
$p_{i}(q),p_{j}(q)\in \omega_{x}^{q}$ and $i,j=1,\ldots,m$ with
$i\not= j$; \item the set $\omega_{x}$ consists of almost periodic
points (motions); \item $\omega_{x}$ is a minimal set of the
dynamical system $(X,\mathbb T_{1},\pi)$; \item the point $x$
(respectively, the motion $\pi(t,x)$) is remotely almost periodic.
\end{enumerate}
\end{theorem}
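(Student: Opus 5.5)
The plan is to reduce everything to the restricted nonautonomous dynamical system over the base $\omega_{y}$ and then apply Amerio's theorem for NDS (Theorem \ref{thNDS1}).

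\textbf{Step 1: the restricted NDS.} Since $y$ and $x$ are positively Lagrange stable, Theorem \ref{thLS1} shows that $\omega_{y}$ and $\omega_{x}$ are nonempty, compact and invariant. Continuity of $h$ and $h(\pi(t,x))=\sigma(t,y)$ give $h(\omega_{x})\subseteq\omega_{y}$. Conversely, if $q=\lim\sigma(t_k,y)$, compactness lets us pass to a subsequence with $\pi(t_k,x)\to p\in\omega_{x}$, and then $h(p)=q$. Hence $h(\omega_{x})=\omega_{y}$, and $\langle(\omega_{x},\mathbb T_{1},\pi),(\omega_{y},\mathbb T_{2},\sigma),h\rangle$ is an NDS. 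Its fibres are exactly $\omega_{x}^{q}=\omega_{x}\cap X_{q}$.

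\textbf{Step 2: applying Theorem \ref{thNDS1}.} Take $M:=\omega_{x}$, which is compact and positively invariant, and base $\omega_{y}$, which is compact and minimal by hypothesis. The separation hypothesis is exactly condition (3) of Theorem \ref{thNDS1}. Its conclusions (1)--(3) give statements (1)--(3) here: finiteness of $\omega_x^q$, constancy $m_{q}=m$, and the uniform separation constant $r$. For conclusion (5) of Theorem \ref{thNDS1} we need $\omega_{y}$ to consist of almost periodic points. Since $y$ is Lagrange stable and remotely almost periodic, Lemma \ref{lRAP1} says $\omega_{y}$ is equi-almost periodic, so every point of $\omega_{y}$ is almost periodic. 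Theorem \ref{thNDS1}(5) then gives statement (4).

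\textbf{Step 3: minimality of $\omega_x$.} By Theorem \ref{thNDS1}(6), $\omega_{x}=M^{1}\cup\dots\cup M^{k}$ is a union of pairwise disjoint minimal sets, each compact and invariant. Lemma \ref{lOLS1} says $\omega_{x}$ is invariantly connected. If $k\ge2$, we could write $\omega_{x}=M^{1}\cup(M^{2}\cup\dots\cup M^{k})$ as a union of two nonempty, disjoint, closed, invariant sets, which is a contradiction. So $k=1$ and $\omega_{x}$ is minimal, which is statement (5).

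\textbf{Step 4: remote almost periodicity of $x$.} By Lemma \ref{lRAP1}, it suffices to show that $\omega_{x}$ is equi-almost periodic. We know $\omega_{x}$ is a compact invariant minimal set consisting of almost periodic points, by statements (4) and (5). Lemma \ref{lSM1} then gives equi-almost periodicity, and statement (6) follows.

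I expect the main obstacle to be Step 1. One must check carefully that the restricted system really satisfies the hypotheses of Theorem \ref{thNDS1}, in particular that $h:\omega_x\to\omega_y$ is surjective, and that the separation notion relative to $\omega_x$ matches the one used there. I also need to check that "almost periodic point" in Theorem \ref{thNDS1}(5) refers to the dynamics on $\mathbb T_{1}$, so that Lemma \ref{lSM1} applies with $\mathbb T=\mathbb T_{1}$. Everything after that is a direct citation.
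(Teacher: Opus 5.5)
Your proposal is correct and follows the paper's proof essentially step for step. You restrict to the NDS over $\omega_{y}$, apply Theorem \ref{thNDS1} to $M=\omega_{x}$, use invariant connectedness (Lemma \ref{lOLS1}) to force $k=1$, and conclude with Lemmas \ref{lSM1} and \ref{lRAP1}. Your proposal also makes explicit two points the paper's proof leaves tacit: that $h(\omega_{x})=\omega_{y}$, and that $\omega_{y}$ consists of almost periodic points by Lemma \ref{lRAP1}, which item (5) of Theorem \ref{thNDS1} requires.
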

\begin{proof}
Let $y\in Y$ be a positively Lagrange stable and remotely almost
periodic point and $x\in X_{y}$ be a positively Lagrange stable
point. Then the $\omega$-limit set $\omega_{x}$ is nonempty,
compact and invariant. From the above it follows that the NDS
(\ref{eqNDSA1}) induces the (two-sided) NDS $\langle
(\omega_{x},\mathbb S,\pi),(\omega_{y},\mathbb
S,\sigma),h\rangle$. Since the points of $\omega_{x}$ are
separated in the set $\omega_{x}$ and the set $\omega_{y}$ is
minimal then by Theorem \ref{thNDS1} the statements (i)-(iv) hold
and there are finite number of minimal sets $M_{1},\ldots,M_{k}$
such that
\begin{equation}\label{eqOM1}
\omega_{x}=M^{1}\bigcup \ldots \bigcup M^{k} .
\end{equation}
By Lemma \ref{lOLS1} the set $\omega_{x}$ is invariantly connected
and, consequently, in the relation (\ref{eqOM1}) the number $k$ is
equal to $1$. This means that the set $\omega_{x}$ is minimal.

Now, to finish the proof of Theorem we will establish that the
point $x$ (respectively, the motion $\pi(t,x)$) is remotely almost
periodic. To this end we note that the point $x$ (respectively,
the motion $\pi(t,x)$) is positively Lagrange stable and its
$\omega$-limit set $\omega_{x}$ is a compact and minimal set
consisting of almost periodic points. By Lemma \ref{lSM1} the set
$\omega_{x}$ is equi-almost periodic and according to Lemma
\ref{lRAP1} the point $x$ (respectively, the motion $\pi(t,x)$) is
remotely almost periodic. Theorem is completely proved.
\end{proof}

\textbf{Open problem.} The question, is whether Theorem
\ref{thNDS2} remains true in the general case when the set
$\omega_{y}$ is not minimal, is open.

\begin{coro}\label{corOM1} Let the points $y$ and $x\in X_{y}$
be positively Lagrange stable. Under the conditions of Theorem
\ref{thNDS2} the following statements hold:
\begin{enumerate}
\item if the point $y$ is remotely $\tau$-periodic, then the point
$x$ is remotely $\tau'$-periodic, where $\tau'=m\tau$ and $m$ is
some natural number; \item if the point $y$ is remotely
stationary, then
\begin{enumerate}
\item the point $x$ is asymptotically stationary if the time
$\mathbb T_{1}$ is continuous (i.e., $\mathbb R_{+}\subseteq
\mathbb T_{1}\subseteq \mathbb R$); \item the point $x$ is
asymptotically periodic, if the time $\mathbb T_{1}$ is discrete
(i.e., $\mathbb Z_{+}\subseteq \mathbb T_{1}\subseteq \mathbb Z$).
\end{enumerate}
\end{enumerate}
\end{coro}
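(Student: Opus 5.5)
Under the hypotheses of Theorem \ref{thNDS2} the set $\omega_{y}$ is compact, invariant and minimal, and the proof of that theorem already yields the two facts I will use: $\omega_{x}$ is a compact minimal set, and for every $q\in\omega_{y}$ the fibre $\omega_{x}^{q}$ consists of exactly $m$ points. The plan is to work through $\omega$-limit sets and then apply Theorem \ref{th1.3.9} (for remote $\tau$-periodicity) and Corollary \ref{corSAP1} (for remote stationarity), both of which need positive Lagrange stability. That stability is assumed for $x$ and $y$.

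\emph{Item (i).} Assume $y$ is remotely $\tau$-periodic. By Theorem \ref{th1.3.9}, every $q\in\omega_{y}$ is $\tau$-periodic. Fix $q\in\omega_{y}$ and consider $P:=\pi(\tau,\cdot)$ on the fibre $\omega_{x}^{q}$. Since $h(\pi(\tau,p))=\sigma(\tau,q)=q$ and $\omega_{x}$ is invariant, $P$ maps the finite set $\omega_{x}^{q}=\{p_1(q),\ldots,p_m(q)\}$ into itself.

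I must check that $P$ is injective there. If $\pi(\tau,p_i)=\pi(\tau,p_j)$, then $\pi(t,p_i)=\pi(t,p_j)$ for all $t\ge\tau$. This contradicts the separation estimate (iii) of Theorem \ref{thNDS2} unless $i=j$. Hence $P$ is a permutation of an $m$-element set, so $P^{m!}=\mathrm{id}$ on $\omega_{x}^{q}$.

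Now put $\tau'=m!\,\tau$, which is of the form $m'\tau$ with $m'\in\mathbb N$ independent of $q$. Every point of $\omega_{x}$ lies over some $q\in\omega_{y}$, so every point of $\omega_{x}$ is $\tau'$-periodic. Theorem \ref{th1.3.9} then gives that $x$ is remotely $\tau'$-periodic.

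\emph{Item (ii).} Assume $y$ is remotely stationary. By Corollary \ref{corSAP1}, $\omega_{y}$ consists of stationary points. Since $\omega_{y}$ is minimal, it is a single stationary point $q_0$. Then $\omega_{x}=\omega_{x}^{q_0}$ is a finite minimal set with $m$ points, and it is invariant under the flow.

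\emph{Case (a), continuous time.} Each orbit $t\mapsto\pi(t,p)$, $p\in\omega_{x}$, is a continuous map from the connected set $\mathbb T_{1}\supseteq\mathbb R_{+}$ into a finite discrete set, so it is constant. Thus every point of $\omega_{x}$ is stationary, and minimality forces $m=1$, say $\omega_{x}=\{p\}$ with $p$ stationary. Lagrange stability gives $\rho(\pi(t,x),\omega_{x})\to0$, that is, $\pi(t,x)\to p=\pi(t,p)$. So $x$ is asymptotically stationary.

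\emph{Case (b), discrete time.} $\omega_{x}$ is a single periodic cycle $\{p,\pi(1,p),\ldots,\pi(m-1,p)\}$ of period $m$. We have $\rho(\pi(t,x),\omega_{x})\to0$ and the points of the cycle are at mutual distance at least $r$. Hence for large $t$ the point $\pi(t,x)$ is within $r/3$ of a unique cycle point $c(t)$.

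The main obstacle is to show that $c(t+1)=\pi(1,c(t))$ for all large $t$. For this I use uniform continuity of $\pi(1,\cdot)$ near the compact set $\omega_{x}$: if $\pi(t,x)$ is close to $c(t)$, then $\pi(t+1,x)$ is close to $\pi(1,c(t))$, and the separation $r$ forces $c(t+1)=\pi(1,c(t))$. Consequently there is a cycle point $p^{*}$ with $c(t)=\pi(t,p^{*})$ for large $t$, so $\rho(\pi(t,x),\pi(t,p^{*}))\to0$. Therefore $x$ is asymptotically $m$-periodic.
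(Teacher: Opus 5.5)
Your proof is correct. Its skeleton matches the paper's: pass to $\omega$-limit sets, use Theorem \ref{th1.3.9} and Corollary \ref{corSAP1}, and in (ii) collapse $\omega_{y}$ to a single stationary point. The three substantive steps, however, are done differently.

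In (i), the paper writes $\omega_{y}$ as one $\tau$-periodic orbit and asserts that minimality of $\omega_{x}$ makes every point $m\tau$-periodic, with $m$ the fibre cardinality. You instead use the separation estimate to show that $\pi(\tau,\cdot)$ permutes each fibre $\omega_{x}^{q}$, and take $\tau'=m!\,\tau$. Your version is fully explicit and needs no minimality of $\omega_{x}$. The price is a cruder multiplier, which is harmless because the statement only asks for some natural number. If you do use minimality, the permutation is a power of a single $m$-cycle, so its order divides $m$ and you recover the paper's $m\tau$.

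In (ii)(a), the paper uses that $\omega$-limit sets are connected in continuous time, which makes the finite set $\omega_{x}$ a point without appeal to minimality. Your argument via connectedness of $\mathbb T_{1}$ plus minimality is equally short.

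In (ii)(b), the paper invokes Lemma \ref{l2.3.5} to obtain $p$ with $\rho(\pi(t,x),\pi(t,p))\to 0$. That lemma applies because $y$ is then asymptotically stationary, hence asymptotically almost periodic. You instead track the nearest cycle point $c(t)$: continuity of $\pi(1,\cdot)$ at the finitely many cycle points, together with the separation $r$, forces $c(t+1)=\pi(1,c(t))$. This proves the convergence directly and makes the corollary independent of that lemma.
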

\begin{proof}
According to Theorem \ref{thNDS2} the point $x$ is remotely almost
periodic and $\omega_{x}$ is a minimal set consisting of almost
periodic points.

Let the point $y$ be remotely $\tau$-periodic. Since the set
$\omega_{y}$ is minimal then
\begin{equation}\label{eqOM2}
\omega_{y}=\{\sigma(t,q):\ 0\le t< \tau\},
\end{equation}
where $q$ is some ($\tau$-periodic) point from $\omega_{y}$. By
Theorem \ref{thNDS2} there exists a natural number $m$ such that
for every $q\in \omega_{y}$ there are points $p_1
=p_1(q),\ldots,p_{m}=p_{m}(q)$ such that
\begin{equation}\label{eqOM3}
\omega_{x}^{q}=\{p_1,\ldots,p_{m}\} .
\end{equation}
Since the set $\omega_{x}$ is minimal then from (\ref{eqOM2}) and
(\ref{eqOM3}) it follows that every point $p\in \omega_{x}$ is
$\tau'=m\tau$-periodic. By Theorem \ref{th1.3.9} the point $x$ is
remotely $\tau'$-periodic.

Assume that the point $y$ is remotely stationary. Since the set
$\omega_{y}$ is minimal then the set $\omega_{y}$ consists of a
singe (stationary) point, i.e., there exists a stationary point
$q\in Y$ such that $\omega_{y}=\{q\}$. Since the point $y$ is
positively Lagrange stable then
\begin{equation}\label{eqOM4}
\lim\limits_{t\to \infty}\rho(\sigma(t,y),q)=0 .
\end{equation}
From the equality (\ref{eqOM4}) follows that $\omega_{x}\subseteq
X_{q}$ and by (\ref{eqOM3}) we obtain
\begin{equation}\label{eqOM5}
\omega_{x}=\{p_1,\ldots,p_{m}\} .
\end{equation}

1. If the time $\mathbb T_{1}$ is continuous, then the
$\omega$-limits set $\omega_{x}$ is connected and, consequently,
from the equality (\ref{eqOM5}) follows that the set $\omega_{x}$
consists of a single point $\{p\}$. From this fact we obtain
\begin{equation}\label{eqOM6}
\lim\limits_{t\to \infty}\rho(\pi(t,x),p)=0,\nonumber
\end{equation}
i.e., the point $x$ is asymptotically stationary.

2. If the time $\mathbb T_{1}$ is discrete, then from the equality
(\ref{eqOM5}) and the minimality of the set $\omega_{x}$ we obtain
that the set $\omega_{x}$ coincides with trajectory of the
$m$-periodic point $\bar{p}$, i.e.,
$\omega_{x}=\{\bar{p},\pi(1,\bar{p}),\ldots,\pi(m-1,\bar{p})\}$.
By Lemma \ref{l2.3.5} there exists a unique ($m$-periodic) point
$p=\pi(k_0,\bar{p})$ such that
\begin{equation}\label{eqOM7}
\lim\limits_{t\to \infty}\rho(\pi(t,x),\pi(t,p))=0 ,\nonumber
\end{equation}
i.e., the point $x$ is asymptotically $m$-periodic. Corollary is
proved.
\end{proof}

\section{Remotely Almost Periodic NDSs with Positively Uniformly
Asymptotically Stable Trajectories}\label{Sec5}

Let $\langle (X,\mathbb S_{+},\pi), (Y,\mathbb S,\sigma),h\rangle$
be a nonautonomous dynamical system.

\begin{definition}\label{defPUS1} A subset $A\subseteq X$ said to be positively
uniformly stable \cite[ChIV]{Bro79} if for arbitrary $\varepsilon
>0$ there exists $\delta =\delta(\varepsilon)>0$ such that
$\rho(x,a)<\delta$ ($a\in A,\ x\in X$ and $h(a)=h(x)$) implies
$\rho(\pi(t,x),\pi(t,a))<\varepsilon$ for all $t\ge 0.$
\end{definition}

\begin{remark} Let $A\subseteq X$ be a positively uniformly stable
subset and $B\subseteq A$, then $B$ is also positively uniformly
stable.
\end{remark}

\begin{lemma}\label{lBC1}\cite[ChIV]{Bro79}, \cite{BCh_1974}
If the set $A\subseteq X$ is positively uniformly stable and the
mapping $h:X\to Y$ is open, then the set $\overline{A}$ is so,
where $\overline{A}$ is the closure of the set $A$.
\end{lemma}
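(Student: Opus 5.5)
Lemma \ref{lBC1} states that if $A\subseteq X$ is positively uniformly stable and $h:X\to Y$ is open, then $\overline{A}$ is positively uniformly stable. The plan is to reduce the claim to a three-$\varepsilon$ argument. We approximate a point $a\in\overline{A}$ by points $a_k\in A$. We then use openness of $h$ to approximate a nearby point $x$ in the same fibre as $a$ by points $x_k$ lying in the fibres of $a_k$. Finally we transfer the stability estimate for the pairs $(a_k,x_k)$ to the pair $(a,x)$ by continuity of $\pi(t,\cdot)$.

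\textbf{Step 1 (setup).} Fix $\varepsilon>0$ and let $\delta=\delta(\varepsilon/2)$ be the number given by positive uniform stability of $A$. We claim that $\delta':=\delta/2$ works for $\overline{A}$ with tolerance $\varepsilon$. Take $a\in\overline{A}$ and $x\in X$ with $h(x)=h(a)=:y$ and $\rho(x,a)<\delta'$. Choose $a_k\in A$ with $a_k\to a$; by continuity $y_k:=h(a_k)\to y$.

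\textbf{Step 2 (lifting via openness).} This is the main step. Since $h$ is open, the images of neighbourhoods of $x$ are neighbourhoods of $y$. Hence for each $n$ the set $h(B(x,1/n))$ contains $y_k$ for all sufficiently large $k$. A diagonal choice then produces $x_k\in X$ with $h(x_k)=y_k$ and $x_k\to x$, along a subsequence of $(a_k)$ if necessary. For $k$ large we have $\rho(x_k,a_k)\le\rho(x_k,x)+\rho(x,a)+\rho(a,a_k)<\delta$. Positive uniform stability of $A$ then gives $\rho(\pi(t,x_k),\pi(t,a_k))<\varepsilon/2$ for all $t\ge0$.

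\textbf{Step 3 (passage to the limit).} For each fixed $t\ge0$, continuity of $\pi(t,\cdot)$ gives $\pi(t,x_k)\to\pi(t,x)$ and $\pi(t,a_k)\to\pi(t,a)$. Letting $k\to\infty$ yields $\rho(\pi(t,x),\pi(t,a))\le\varepsilon/2<\varepsilon$ for every $t\ge0$. The estimate is uniform in $t$ because the bound at each $t$ does not depend on $t$, so no uniform convergence in $t$ is needed. This proves that $\overline{A}$ is positively uniformly stable.

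The only delicate point is Step 2. There one must make sure that openness of $h$ yields a sequence $x_k$ in the fibres $X_{y_k}$ converging to $x$. This is the standard fact that an open continuous surjection has lower semicontinuous fibres: for a neighbourhood $U$ of $x$, $h(U)$ is open and contains $y$, so it eventually contains $y_k$. Everything else is routine.
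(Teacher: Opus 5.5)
Your proof is correct and complete. Openness of $h$ is used exactly where it is needed, namely to lift $h(a_k)\to h(a)$ to points $x_k\to x$ with $h(x_k)=h(a_k)$, and the limit is taken pointwise in $t$ with a $t$-independent bound, so no uniform convergence is required. The paper gives no proof of this lemma itself and only cites Bronshtein and Bronshtein--Cheban, and your approximation argument is the standard one behind that result. One small remark: the passage to a subsequence in Step 2 is unnecessary, because the diagonal choice works for the whole sequence once $k$ is large.
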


\begin{definition}\label{defPUS_2}
A point $x_0\in X$ is called \emph{positively uniformly
stable}\index{positively uniformly stable point} if the set
$\Sigma_{x_0}^{+}:=\{\pi(t,x_0):\ t\ge 0\}$ is so.
\end{definition}

\begin{coro}\label{lUS1}\cite[ChIV]{Bro79}
If $x_0$ is positively uniformly stable and $h$ is open, then:
\begin{enumerate}
\item
 $H^{+}(x_0):=\overline{\Sigma}^{+}_{x_0}$ is positively uniformly stable,
 where by bar the closure in $X$ is denoted;
\item
 $\omega_{x_0}$ is positively uniformly stable, because $\omega_{x_0}\subseteq H^{+}(x_0)$.
\end{enumerate}
\end{coro}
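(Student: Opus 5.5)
We prove both statements of Corollary \ref{lUS1} by chasing the definitions. The only tool needed is Lemma \ref{lBC1}, applied to $A:=\Sigma^{+}_{x_0}$.

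For item (1), the hypothesis says that $x_0$ is positively uniformly stable, which by Definition \ref{defPUS_2} means the set $\Sigma^{+}_{x_0}=\{\pi(t,x_0):\ t\ge 0\}$ is positively uniformly stable in the sense of Definition \ref{defPUS1}. Since $h$ is open, Lemma \ref{lBC1} applies to $A=\Sigma^{+}_{x_0}$ and shows that $\overline{\Sigma^{+}_{x_0}}=H^{+}(x_0)$ is positively uniformly stable.

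If one did not want to use Lemma \ref{lBC1} as a black box, one would argue directly. Take $a\in\overline{A}$ and $x$ with $h(x)=h(a)$ and $\rho(x,a)<\delta/2$. Choose $a_k\in A$ with $a_k\to a$. Openness of $h$ then gives points $x_k\to x$ with $h(x_k)=h(a_k)$. For large $k$ we have $\rho(x_k,a_k)<\delta$, so stability of $A$ gives $\rho(\pi(t,x_k),\pi(t,a_k))<\varepsilon$ for all $t\ge0$. Letting $k\to\infty$ and using continuity of $\pi(t,\cdot)$ yields $\rho(\pi(t,x),\pi(t,a))\le\varepsilon$ for every $t\ge 0$. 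The lifting of $x$ along the sequence $a_k$ is the only place where openness of $h$ is used, and it is the only delicate point; here it is supplied by Lemma \ref{lBC1}.

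For item (2), we first check that $\omega_{x_0}\subseteq H^{+}(x_0)$. Every $\omega$-limit point is a limit of points $\pi(t_k,x_0)$ with $t_k\to+\infty$, and all such points lie in $\Sigma^{+}_{x_0}$; hence the limit lies in its closure $H^{+}(x_0)$. Next we apply the Remark preceding Lemma \ref{lBC1}: a subset of a positively uniformly stable set is positively uniformly stable. Combining this with item (1) shows that $\omega_{x_0}$ is positively uniformly stable, which completes the proof.
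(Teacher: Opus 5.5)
Your proposal is correct and follows the same route the paper intends (it cites the result from Bronshtein without proof): item (1) is Lemma~\ref{lBC1} applied to $A=\Sigma^{+}_{x_0}$, and item (2) follows from $\omega_{x_0}\subseteq H^{+}(x_0)$ together with the remark that subsets of positively uniformly stable sets are again positively uniformly stable. Your direct sketch of Lemma~\ref{lBC1} is also sound: openness of $h$ lets you lift $x$ to points $x_k\to x$ with $h(x_k)=h(a_k)$, and you then pass to the limit using continuity of $\pi(t,\cdot)$.
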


\begin{theorem}\label{th02}\cite[ChIV]{Che_2024} Let $\langle (X,\mathbb S_{+},\pi ),
(Y,\mathbb S,\sigma ), h\rangle $ be a nonautonomous dynamical
system with the following properties:
\begin{enumerate}
\item there exists a point $x_0\in X$ such that the positive
semi-trajectory $\Sigma_{x_0}^{+}$ is precompact; \item the set
$\omega_{x_0}$ is positively uniformly stable.
\end{enumerate}
Then the following statements hold:
\begin{enumerate}
\item[--] every motions on $\omega_{x_0}$ may be extended uniquely
to the left and on $\omega_{x_0}$ a two-sided dynamical system
$(\omega_{x_0},\mathbb S, \pi)$ is defined, i.e., the one-sided
dynamical system $(X,\mathbb S_+,\pi)$ generates on $\omega_{x_0}$
a two-sided dynamical system $(\omega_{x_0},\mathbb S,\pi )$;
\item[--] the non-autonomous dynamical system
$\langle(\omega_{x_0},$ $\mathbb S,$ $\pi ),$ $(\omega_{y_0},$
$\mathbb S,$ $\sigma),$ $h\rangle$ is negatively distal.
\end{enumerate}
\end{theorem}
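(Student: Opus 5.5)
The statement just above is Theorem \ref{th02}, which is cited from \cite[ChIV]{Che_2024}. I would prove it by a compactness argument on the compact invariant set $\omega_{x_0}$, using positive uniform stability in place of uniqueness of backward continuation.

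\textbf{Step 1: basic structure.} By Theorem \ref{thLS1}, $\omega_{x_0}$ is nonempty, compact and invariant, so $\pi(t,\omega_{x_0})=\omega_{x_0}$ for every $t\ge 0$. Every point $p\in\omega_{x_0}$ therefore admits, for each $t>0$, a preimage in $\omega_{x_0}$. Moreover $h(\omega_{x_0})=\omega_{y_0}$ with $y_0=h(x_0)$, and $\sigma$ is two-sided.

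\textbf{Step 2: uniqueness of preimages (the key step).} Suppose $p_1,p_2\in\omega_{x_0}$ satisfy $\pi(t,p_1)=\pi(t,p_2)$ for some $t>0$.
\begin{enumerate}
\item Applying $h$ gives $\sigma(t,h(p_1))=\sigma(t,h(p_2))$. Since $\sigma$ is a group action, $h(p_1)=h(p_2)$, so $p_1,p_2$ lie in the same fibre.
\item Suppose $p_1\neq p_2$ and set $\varepsilon:=\rho(p_1,p_2)/3>0$. I will derive a contradiction by using invariance to go further into the past.
\item Write $p_i=\pi(s,q_i)$ with $q_i\in\omega_{x_0}$. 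Both $q_1,q_2$ lie in fibres over $\sigma(-s,h(p_1))$, and $\pi(s+t,q_1)=\pi(s+t,q_2)$.
\item I want to choose $s$ so that $\rho(q_1,q_2)<\delta(\varepsilon)$. Then positive uniform stability of $\omega_{x_0}$ (with $a=q_2\in\omega_{x_0}$ and $x=q_1$ in the same fibre) gives $\rho(\pi(s,q_1),\pi(s,q_2))<\varepsilon$, that is $\rho(p_1,p_2)<\varepsilon$, which contradicts the choice of $\varepsilon$.
\end{enumerate}

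\textbf{Step 3: producing close preimages (the main obstacle).} To get close $q_i$, I would argue by contradiction on the whole set. Let $D$ be the set of pairs $(a,b)\in\omega_{x_0}\times\omega_{x_0}$ with $h(a)=h(b)$ and $\pi(T,a)=\pi(T,b)$ for some $T\ge0$. Uniform stability says that pairs at distance less than $\delta$ stay within $\varepsilon$ forever. Fix the pair $(p_1,p_2)$ at distance $3\varepsilon$ and its backward histories $q_i(s)$.
\begin{enumerate}
\item If $\rho(q_1(s),q_2(s))\ge\delta$ for all $s\ge0$, take $s_k\to\infty$ with $q_i(s_k)\to\bar q_i$. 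By compactness $\bar q_1\ne\bar q_2$, they lie in the same fibre, and both are in $\omega_{x_0}$.
\item Pushing forward, $\pi(s_k+t,q_1(s_k))=\pi(s_k+t,q_2(s_k))$ means the trajectories of $q_1(s_k)$ and $q_2(s_k)$ coincide from time $s_k+t$ on. Continuity alone does not force $\bar q_1,\bar q_2$ to merge, so I would instead use the limit point $\bar y=\lim h(q_1(s_k))$.
\item I would exploit that $\omega_{y_0}$ consists of points that are positively Poisson stable in a suitable sense, which should return the backward history near its start. This is the delicate point. An alternative is to use that $\omega_{x_0}$ is the $\omega$-limit set of $x_0$ and approximate $p_i$ by $\pi(t_k,x_0)$; I expect that route to be cleaner.
\end{enumerate}
Either way, the goal is a contradiction with uniform stability together with the coincidence at time $t$.

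\textbf{Step 4: the two-sided flow.} Uniqueness of preimages together with surjectivity defines $\pi(-t,\cdot)$ on $\omega_{x_0}$. Its continuity follows from compactness of $\omega_{x_0}$: a continuous bijection of a compact space onto a Hausdorff space is a homeomorphism. Hence $(\omega_{x_0},\mathbb S,\pi)$ is a two-sided dynamical system, and $h$ is a homomorphism onto $(\omega_{y_0},\mathbb S,\sigma)$.

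\textbf{Step 5: negative distality.} Take $p_1\ne p_2$ in the same fibre and suppose $\rho(\pi(-t_k,p_1),\pi(-t_k,p_2))\to0$ along some $t_k\to+\infty$. Once this distance falls below $\delta(\varepsilon)$ with $\varepsilon=\rho(p_1,p_2)/2$, positive uniform stability applied over the time $t_k$ gives $\rho(p_1,p_2)<\varepsilon$, a contradiction. Hence $\inf_{t\le0}\rho(\pi(t,p_1),\pi(t,p_2))>0$, which is negative distality. This same argument is what I would use in Step 2 once approximate coincidence is obtained, so the main obstacle remains Step 3.
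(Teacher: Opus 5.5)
\textbf{There is a genuine gap: the first assertion (unique left extension) is never proved, and your reduction for it cannot work.} Your Steps 1, 4 and 5 are sound, and Step 5 is the standard argument for negative distality. But Step 5 actually proves more than you use. Take any distinct $p_1,p_2\in\omega_{x_0}$ in one fibre and any backward histories $q_i(s)\in\omega_{x_0}$ with $\pi(s,q_i(s))=p_i$. Then $\rho(q_1(s),q_2(s))\ge\delta(\varepsilon)$ for all $s\ge 0$ whenever $\varepsilon<\rho(p_1,p_2)$, since otherwise uniform stability would give $\rho(p_1,p_2)<\varepsilon$.

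Consequences for your plan:
\begin{itemize}
\item The close preimages required in Step 2 never exist, whether or not $p_1,p_2$ merge.
\item The alternative in item 1 of Step 3 is not a case to be handled; it always happens.
\item The merging hypothesis $\pi(t,p_1)=\pi(t,p_2)$ is never used, so no contradiction can come out.
\item The ideas in item 3 do not help. Hypotheses (1)--(2) do not make points of $\omega_{y_0}$ Poisson stable. Also $\pi(t_k,x_0)$ lies over $\sigma(t_k,y_0)$, not over $h(p_i)$, so Definition \ref{defPUS1} says nothing about it.
\end{itemize}
So injectivity of $\pi(t,\cdot)$ on $\omega_{x_0}$, on which Steps 4 and 5 rest, is left open. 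The paper itself gives no proof of this statement; it only cites \cite[Ch.IV]{Che_2024}.

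\textbf{The gap cannot be closed from (1)--(2) with the fibrewise Definition \ref{defPUS1}; I believe the following is a counterexample.}
\begin{itemize}
\item Base and cocycle. Take $\mathbb S=\mathbb Z$ and a figure-eight $B=\{\eta^*\}\cup\{\eta_n\}_{n\in\mathbb Z}\cup\{\eta'_n\}_{n\in\mathbb Z}$, two orbits homoclinic to the fixed point $\eta^*$. On $\mathbb R$ use fibre maps $\phi_{\eta_0}(e)=|e-\tfrac12|+\tfrac12$, $\phi_{\eta'_0}(e)=1-e$, and $\phi=\mathrm{id}$ elsewhere on $B$.
\item The invariant set. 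Let $\Omega$ consist of $(0,\eta^*)$ and $(1,\eta^*)$; $(0,\eta_n)$ and $(1,\eta_n)$ for $n\le 0$; $(1,\eta_n)$ for $n\ge 1$; $(1,\eta'_n)$ for $n\le 0$; and $(0,\eta'_n)$ for $n\ge 1$. It is compact and invariant. It contains $(0,\eta_0)\ne(1,\eta_0)$, and both are mapped to $(1,\eta_1)$.
\item The chains. For $j=1,2,\dots$ run
$(0,\eta^*)\to(0,\eta_{-j})\to\dots\to(1,\eta_j)\to(1,\eta^*)\to(1,\eta_{-j})\to\dots\to(1,\eta_j)\to(1,\eta^*)\to(1,\eta'_{-j})\to\dots\to(0,\eta'_j)\to(0,\eta^*)$.
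They jump only in the base coordinate, by amounts tending to $0$.
\item The orbit of $x_0$. Realise the concatenated chains by a genuine orbit $z_k$ in a compact base $Y\supset B$ with a homeomorphism $\sigma$. Use pairwise distinct points at height $\approx 1/j$ over the base chain points $\beta_k$, put fibre map $\phi_{\beta_k}$ over $z_k$, and let the backward orbit of $z_0$ tend to a fixed point. Then $x_0=(0,z_0)$ has precompact semi-trajectory and $\omega_{x_0}=\Omega$.
\item Hypotheses hold. All fibre maps are $1$-Lipschitz, so every subset of $X=\mathbb R\times Y$ is positively uniformly stable with $\delta=\varepsilon$; this includes $\omega_{x_0}$ and $\Sigma^{+}_{x_0}$. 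Moreover $h=pr_2$ is open.
\item Conclusion fails. The motion through $(1,\eta_1)$ has two different left extensions inside $\omega_{x_0}$.
\end{itemize}

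\textbf{What would make Step 3 work.} An extra ingredient is indispensable. One option is recurrence in the base, as in the classical compact minimal case of Remark \ref{remC2.7}. Another is stability without the restriction $h(x)=h(a)$. With the latter, choose $s_k\to\infty$ with $q_1(s_k)$ and $q_2(s_k)$ convergent, and compare $q_i(s_l)$ with $q_i(s_k)$. This gives $\rho(p_i,\pi(s_l-s_k,p_i))<\varepsilon$, hence $\rho(p_1,p_2)<2\varepsilon$ once $s_l-s_k\ge t$. That is the idea your Step 3 is reaching for: compare a backward orbit with itself at two times, not the two backward orbits with each other. The fibrewise Definition \ref{defPUS1} blocks it because $q_i(s_k)$ and $q_i(s_l)$ lie in different fibres.
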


\begin{remark}\label{remC2.7} Note that Theorem \ref{th02} is well
known (see, for example, \cite{NOS_2007} and references therein)
if $Y$ is a compact minimal set. In our case $Y$, generally
speaking, is not compact and it is not minimal.
\end{remark}

Let $(Y,\mathbb S,\sigma)$ be a two-sided dynamical system and
$y\in Y$. Denote by $\mathfrak N_{y}:=\{\{t_n\}\subset \mathbb S|\
$ such that the sequence $\{\sigma(t_n,y)\}$ converges$\}$ and by
$\mathfrak N_{y}^{\pm \infty}:=\{\{t_n\}\subset \mathbb S|$ such
that the sequence $\{\sigma(t_n,y)\}$ converges and $t_n\to \pm
\infty$ as $n\to \infty\}$. Let $X$ be a compact metric space,
denote by $X^{X}$ the Cartesian product of $X$ copies of the space
$X$ equipped with the Tychonov's topology. If $(X,\mathbb T,\pi)$
is a dynamical system on $X$, then for every $t\in \mathbb T$ we
denote by $\pi^{t}:=\pi(t,\cdot):X\to X$ ($x\to
\pi^{t}(x)=\pi(t,x)$) and
\begin{equation}\label{eqES1}
E(X,\mathbb T,\pi):=\overline{\{\pi^{t}|\ t\in \mathbb
T\}},\nonumber
\end{equation}
where by bar the closure of the set $\{\pi^{t}|\ t\in \mathbb T\}$
in the space $X^{X}$ is denoted.

Consider a nonautonomous dynamical system $\langle (X,\mathbb
T_{1},\pi),(Y,\mathbb T_{2},\sigma),h\rangle$. Assume that spaces
$X$ and $Y$ are compact and $y\in Y$ be a positively
(respectively, negatively) Poisson stable point. We denote by
$\mathcal E_{y}^{\pm}:=\{\xi \in E(X,\mathbb T_{1},\pi)|$ there
exists a sequence $t_n\to \pm \infty$ such that $\pi^{t_n}\to \xi$
as $n\to \infty\}$.

If the point $y\in Y$ is Poisson stable (that is, the point $y$ is
positively and negatively Poisson stable), then we denote by
\begin{equation}\label{eqES3}
\mathcal E_{y}:=\{\xi \in E(X,\mathbb T_{1},\pi)|\
\xi(X_{y})\subseteq X_{y}\},\nonumber
\end{equation}
where $X_{y}:=h^{-1}(y)=\{x\in X|\ h(x)=y\}$.

\begin{lemma}\label{lP1*}\cite[ChIII]{Che_2020} Suppose that the following conditions
are fulfilled:
\begin{enumerate}
\item $y\in Y$  is a two-sided Poisson stable point; \item
$\langle (X,\mathbb T,\pi),$ $(Y,\mathbb T,\sigma),h\rangle $ is a
two-sided nonautonomous dynamical system, i.e., $\mathbb T=\mathbb
S$; \item $X$ is a compact space; \item
\begin{equation}\label{eqP1**}
\inf \limits _{t\le 0}\rho (\pi(t,x_{1}),\pi(t,x_{2}))>0\nonumber
\end{equation}
for every $x_{1},x_{2} \in X_{y}\quad (x_{1}\not= x_{2})$.
\end{enumerate}

Then the following statement hold:
\begin{enumerate}
\item $\mathcal{E}^{-}_{y}$ and $\mathcal{E}^{+}_{y}$ are
subgroups of the semigroup $ \mathcal{E}_{y}$; \item for every
pair of points $x_1,x_2\in X_{y}$ with $x_1\not= x_2$ there are
the sequences $\{t^{-}_{n}\}\in\mathfrak N^{-\infty}_{y}$ and
$\{t^{+}_{n}\}\in\mathfrak N^{+\infty}_{y}$ such that
\begin{equation}\label{eqP_1.1}
\lim\limits_{n\to \infty}\pi(t_{n}^{\pm},x_{i})=x_{i}\ (i=1,2)
.\nonumber
\end{equation}
\end{enumerate}
\end{lemma}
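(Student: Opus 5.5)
The plan is to work in the enveloping semigroup $E=E(X,\mathbb T,\pi)\subseteq X^{X}$. Since $X$ is compact, $E$ is compact. It is a right-topological semigroup: for fixed $\eta$ the map $\xi\mapsto \xi\circ\eta$ is continuous, and for fixed $t$ the map $\xi\mapsto\pi^{t}\circ\xi$ is continuous. I will read $\mathcal E_{y}^{\pm}$ as the set of limits of nets $\pi^{t_\alpha}$ with $t_\alpha\to\pm\infty$ and $\sigma(t_\alpha,y)\to y$; Poisson stability of $y$ makes this set nonempty. Because $h\circ\pi^{t}=\sigma^{t}\circ h$ and $h$ is continuous, every such limit $\xi$ satisfies $h(\xi(x))=\lim\sigma(t_\alpha,y)=y$ for $x\in X_{y}$. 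Hence $\mathcal E_{y}^{\pm}\subseteq\mathcal E_{y}$.

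First I check that $\mathcal E^{\pm}_{y}$ are closed subsemigroups. Closedness is a diagonal argument on nets. For products, take $\xi=\lim\pi^{t_\alpha}$ and $\eta=\lim\pi^{s_\beta}$. Then $\xi\circ\eta=\lim_\alpha\pi^{t_\alpha}\circ\eta$, and $\pi^{t_\alpha}\circ\eta=\lim_\beta\pi^{t_\alpha+s_\beta}$. Choosing indices diagonally gives a net of times tending to $\pm\infty$ along which $\sigma(\cdot,y)\to y$; here I use that $\sigma$ is a two-sided flow, so $\sigma(t_\alpha+s_\beta,y)=\sigma(t_\alpha,\sigma(s_\beta,y))$ is close to $\sigma(t_\alpha,y)$ when $\beta$ is large, for fixed $\alpha$.

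\textbf{Key injectivity step (negative side).} If $\xi=\lim\pi^{t_\alpha}\in\mathcal E^{-}_{y}$ and $x_1\neq x_2\in X_{y}$ with $\xi(x_1)=\xi(x_2)$, then $\rho(\pi^{t_\alpha}x_1,\pi^{t_\alpha}x_2)\to0$ with $t_\alpha\to-\infty$. This contradicts condition (4), so every $\xi\in\mathcal E^{-}_{y}$ is injective on $X_{y}$.

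By the Ellis--Namakura lemma the compact right-topological semigroup $\mathcal E^{-}_{y}$ contains an idempotent $u$. From $u(u(x))=u(x)$ and injectivity of $u$ on $X_{y}$ (note $u(x)\in X_{y}$) we get $u|_{X_{y}}=\mathrm{id}$. For an arbitrary $\xi\in\mathcal E^{-}_{y}$, the set $\mathcal E^{-}_{y}\circ\xi$ is a closed left ideal, since it is the image of a compact set under the continuous map $\zeta\mapsto\zeta\circ\xi$. It therefore contains an idempotent $v=\eta\circ\xi$. As before $v|_{X_{y}}=\mathrm{id}$, so $\xi$ has a left inverse on $X_{y}$. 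A symmetric argument, or the fact that $\xi$ is injective and has a left inverse inside the same semigroup, gives a two-sided inverse. So $\mathcal E^{-}_{y}$, restricted to $X_{y}$, is a group with identity $\mathrm{id}_{X_{y}}$.

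\textbf{Main obstacle: the positive side.} For $\mathcal E^{+}_{y}$ the distality hypothesis (4) is only for $t\le0$, so the direct injectivity argument is unavailable. Given $\xi=\lim\pi^{t_\alpha}\in\mathcal E^{+}_{y}$, I would first try to pass to a subnet with $\pi^{-t_\alpha}\to\eta\in\mathcal E^{-}_{y}$. I would then aim to show that $\eta\circ\xi$ and $\xi\circ\eta$ are identities on $X_{y}$, using joint convergence, compactness and the group structure of $\mathcal E^{-}_{y}$. If this works, every element of $\mathcal E^{+}_{y}$ is invertible on $X_{y}$, and the idempotent of $\mathcal E^{+}_{y}$ is the identity there. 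The difficulty is that right-topologicity does not allow passing the limit inside the composition. I expect to handle this by taking limits in the order dictated by the continuity we do have: first $\alpha$ in $\pi^{-t_\alpha}\circ\xi$, using continuity in the left factor. I am not fully sure this closes the argument, and this is the step I would check most carefully.

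\textbf{Statement (2).} Let $u\in\mathcal E^{\pm}_{y}$ be the idempotent, so $u|_{X_{y}}=\mathrm{id}$ and $u=\lim\pi^{t_\alpha}$ with $t_\alpha\to\pm\infty$ and $\sigma(t_\alpha,y)\to y$. Evaluating only at the two points $x_1,x_2$ and at $y$ involves the metrizable space $X\times X\times Y$. Therefore we can extract a sequence $t^{\pm}_n\to\pm\infty$ with $\pi(t^{\pm}_n,x_i)\to x_i$ for $i=1,2$ and $\sigma(t^{\pm}_n,y)\to y$. This sequence lies in $\mathfrak N^{\pm\infty}_{y}$, as required.
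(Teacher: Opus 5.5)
Your treatment of the negative side is sound. The diagonal net argument makes $\mathcal E^{\pm}_{y}$ closed subsemigroups of $\mathcal E_{y}$. Hypothesis (4) makes every $\xi\in\mathcal E^{-}_{y}$ injective on $X_{y}$, Ellis--Numakura gives an idempotent equal to the identity on $X_{y}$, and extracting a sequence in the metrizable space $X\times X\times Y$ yields $\{t^{-}_{n}\}$. For the two-sided inverse, the precise reason is that the left inverse $\eta$ is itself injective on $X_{y}$. So $\eta\circ\xi=\mathrm{id}$ on $X_{y}$ forces $\eta|_{X_{y}}$ to be a bijection with $\xi|_{X_{y}}=(\eta|_{X_{y}})^{-1}$; in particular every element of $\mathcal E^{-}_{y}$ maps $X_{y}$ bijectively onto itself.

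The gap is the positive side, and it is the real content of the lemma. It is what the proof of Theorem \ref{th02.1} needs: times $t^{+}_{n}\to+\infty$ with $\pi(t^{+}_{n},x_i)\to x_i$ under negative distality alone. (The paper does not prove the lemma; it quotes \cite[Ch.III]{Che_2020}.) The route you sketch fails for two reasons.
\begin{enumerate}
\item $\sigma(t_\alpha,y)\to y$ says nothing about $\sigma(-t_\alpha,y)$. So a limit $\eta=\lim_\beta\pi^{-t_{\alpha_\beta}}$ need not lie in $\mathcal E^{-}_{y}$, or even map $X_{y}$ into itself.
\item Even when it does, right-topologicity forces $\eta\circ\xi=\lim_{\beta}\lim_{\alpha}\pi^{t_\alpha-t_{\alpha_\beta}}$. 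This iterated limit is in general not the identity. Take a trivial base and the distal map $T(x,z)=(x+\theta,z+x)$ on $(\mathbb R/\mathbb Z)^{2}$, with $\theta$ irrational. Choose a net $n_k\to\infty$ with $n_k\theta\to 0$, $\frac12 n_k(n_k-1)\theta\to a$ and $n_kx\to\phi(x)$ for all $x$; any $a$ is attainable by Weyl equidistribution and compactness. Then $T^{n_k}\to p$ and $T^{-n_k}\to q$, where $p(x,z)=(x,z+\phi(x)+a)$ and $q(x,z)=(x,z-\phi(x)+a)$. Hence $q\circ p(x,z)=(x,z+2a)$, which is not the identity when $2a\neq 0$.
\end{enumerate}

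The missing idea is an absorption property that needs only the continuity you actually have: $\mathcal E^{+}_{y}\circ\mathcal E^{-}_{y}\subseteq\mathcal E^{-}_{y}$. Let $\xi=\lim_\alpha\pi^{t_\alpha}\in\mathcal E^{+}_{y}$ and $\eta=\lim_\beta\pi^{s_\beta}\in\mathcal E^{-}_{y}$. Then $\xi\circ\eta=\lim_\alpha\pi^{t_\alpha}\circ\eta$ by continuity of right composition, and $\pi^{t_\alpha}\circ\eta=\lim_\beta\pi^{t_\alpha+s_\beta}$ by continuity of $\pi^{t_\alpha}$. For each fixed $\alpha$, the inner times $t_\alpha+s_\beta$ tend to $-\infty$ and $\sigma(t_\alpha+s_\beta,y)\to\sigma(t_\alpha,y)$, which in turn tends to $y$. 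So your diagonal argument places $\xi\circ\eta$ in $\mathcal E^{-}_{y}$.

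Now fix any $\eta\in\mathcal E^{-}_{y}$, which exists by negative Poisson stability of $y$. Both $(\xi\circ\eta)|_{X_{y}}$ and $\eta|_{X_{y}}$ are bijections of $X_{y}$, hence so is $\xi|_{X_{y}}=(\xi\circ\eta)|_{X_{y}}\circ(\eta|_{X_{y}})^{-1}$. Thus every element of $\mathcal E^{+}_{y}$ is injective on $X_{y}$. From there your Ellis--Numakura argument and your sequence extraction apply verbatim to $\mathcal E^{+}_{y}$, which is nonempty by positive Poisson stability. This gives both statements on the positive side.
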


\begin{lemma}\label{lP2*}\cite[ChIII]{Che_2020} Suppose that the following conditions
are fulfilled:
\begin{enumerate}
\item $y\in Y$  is a two-sided Poisson stable point; \item
$\langle (X,\mathbb T,\pi),$ $(Y,\mathbb T,\sigma),h\rangle $ is a
two-sided nonautonomous dynamical system; \item $X$ is a compact
space; \item
\begin{equation}\label{eqP1*}
\inf \limits _{t\ge 0}\rho (x_{1}t,x_{2}t)>0\nonumber
\end{equation}
for every $x_{1},x_{2} \in X_{y}\quad (x_{1}\not= x_{2})$.
\end{enumerate}

Then the following statement hold:
\begin{enumerate}
\item $\mathcal{E}^{-}_{y}$ and $\mathcal{E}^{+}_{y}$ are
subgroups of the semigroup $ \mathcal{E}_{y}$; \item for every
pair of points $x_1,x_2\in X_{y}$ with $x_1\not= x_2$ there are
the sequences $\{t^{-}_{n}\}\in\mathfrak N^{-\infty}_{y}$ and
$\{t^{+}_{n}\}\in\mathfrak N^{+\infty}_{y}$ such that
\begin{equation}\label{eqP_1.1*}
\lim\limits_{n\to \infty}\pi(t_{n}^{\pm},x_{i})=x_{i}\ (i=1,2)
.\nonumber
\end{equation}
\end{enumerate}
\end{lemma}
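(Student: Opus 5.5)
The plan is to work inside the Ellis semigroup $E(X,\mathbb T,\pi)$. Since $X$ is compact, $X^{X}$ is compact. Composition $(\xi,\eta)\mapsto \xi\eta$ is continuous in $\xi$ for fixed $\eta$, so $E(X,\mathbb T,\pi)$ is a compact right-topological semigroup. I read $\mathcal E^{+}_{y}$ (respectively, $\mathcal E^{-}_{y}$) as the set of limits $\xi=\lim \pi^{t_n}$ with $t_n\to +\infty$ (respectively, $t_n\to-\infty$) and $\sigma(t_n,y)\to y$, possibly along nets. Such $\xi$ satisfy $\xi(X_{y})\subseteq X_{y}$ because $h$ is a continuous homomorphism, so $\mathcal E^{\pm}_{y}\subseteq \mathcal E_{y}$.

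\textbf{Step 1 (semigroup structure).} $\mathcal E^{\pm}_{y}$ is nonempty by two-sided Poisson stability of $y$ and compactness, and it is closed. It is a subsemigroup: if $\xi=\lim_n\pi^{t_n}$ and $\eta=\lim_m \pi^{s_m}$, then
$$\xi\eta=\lim_n \pi^{t_n}\eta=\lim_n\lim_m\pi^{t_n+s_m}.$$
By a diagonal choice, $\xi\eta$ is a limit of $\pi^{t_{n}+s_{m(n)}}$ with times going to $\pm\infty$ and $\sigma(t_n+s_{m(n)},y)\to y$; closedness then gives $\xi\eta\in\mathcal E^{\pm}_{y}$. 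By the Ellis--Namakura lemma, every closed left ideal of $\mathcal E^{\pm}_{y}$ contains an idempotent $u$.

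\textbf{Step 2 (action on the fibre, the $+$ case).} First, every $\xi\in\mathcal E^{+}_{y}$ is injective on $X_{y}$. If $\xi x_1=\xi x_2$ with $x_1\neq x_2$ in $X_y$, then $\rho(\pi(t_n,x_1),\pi(t_n,x_2))\to 0$ along $t_n\to+\infty$, which contradicts $\inf_{t\ge0}\rho(x_1t,x_2t)>0$. Second, every idempotent $u$ acts as the identity on $X_y$: applying injectivity to $u(ux)=u(x)$ gives $ux=x$. For an arbitrary $\xi$, the set $\mathcal E^{+}_{y}\xi$ is a closed left ideal, so it contains an idempotent $u=\eta\xi$. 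Hence $\eta\xi|_{X_y}=\mathrm{id}$, so every restriction $\xi|_{X_y}$ has a left inverse in the semigroup and the semigroup has the identity on $X_y$. A semigroup with an identity in which every element has a left inverse is a group, so $\mathcal E^{+}_{y}$ (restricted to $X_y$) is a subgroup of $\mathcal E_{y}$.

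For statement (2) in the $+$ case, take $u=\lim\pi^{t_\alpha}$ idempotent, so $ux_i=x_i$ for $i=1,2$. Because $X$ is compact metric and only the two points $x_1,x_2$ and the base point $y$ are involved, I can extract a genuine sequence. Choose $t_n\ge n$ with $\rho(\pi^{t_n}x_i,x_i)<1/n$ for $i=1,2$ and $\rho(\sigma(t_n,y),y)<1/n$. Then $\{t_n\}\in\mathfrak N^{+\infty}_{y}$.

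\textbf{Step 3 (the $-$ case; main obstacle).} Here the distality hypothesis is only forward, so the injectivity argument of Step 2 does not transfer directly. I would try to relate $\mathcal E^{-}_{y}$ to inverses of elements of $\mathcal E^{+}_{y}$. Take $\xi=\lim\pi^{t_n}\in\mathcal E^{-}_{y}$ with $t_n\to-\infty$, and suppose $\xi x_1=\xi x_2$. Set $z^n_i=\pi^{t_n}x_i$; these lie in the fibre over $\sigma(t_n,y)$, and they satisfy $\pi^{-t_n}z^n_i=x_i$ with $-t_n\ge0$. Passing to a subnet, $\pi^{-t_n}\to\zeta\in\mathcal E^{+}_{y}$. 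Using the group property from Step 2 together with compactness, I would try to show $\zeta\xi|_{X_y}=\mathrm{id}$, which forces $x_1=x_2$. The delicate point is the interchange of limits in a right-topological semigroup. I would handle it by writing $\zeta\xi=\lim_m\lim_n\pi^{-t_m+t_n}$ and choosing the diagonal so that this element is again in $\mathcal E^{\pm}_{y}$. Once injectivity holds, the idempotent and left-inverse arguments of Step 2 go through verbatim, and the sequence extraction gives $\{t^{-}_n\}\in\mathfrak N^{-\infty}_{y}$.
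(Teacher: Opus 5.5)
Your Steps 1 and 2 are correct. They should be read with nets rather than diagonal sequences, since $X^{X}$ is not metrizable; it is cleanest to set $\mathcal E^{+}_{y}=\bigcap_{R,\varepsilon>0}\overline{\{\pi^{t}:\ t\ge R,\ \rho(\sigma(t,y),y)<\varepsilon\}}$, and similarly for $\mathcal E^{-}_{y}$.

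\textbf{The gap is in Step 3.} The mechanism you propose there does not work, for two reasons.
\begin{enumerate}
\item A cluster point $\zeta$ of $\pi^{-t_n}$ need not lie in $\mathcal E^{+}_{y}$, because $\sigma(t_n,y)\to y$ does not imply $\sigma(-t_n,y)\to y$.
\item $\zeta\xi=\lim_{m}\lim_{n}\pi^{-t_m+t_n}$ is an iterated limit whose inner limit runs to $-\infty$. It is therefore a ``backward'' element over the base point $\lim_m\sigma(-t_m,y)$, and no choice of diagonal changes its value.
\end{enumerate}
Concretely, take $X=Y=\mathbb T^{2}$, $h=\mathrm{id}$, and $\pi=\sigma$ with $\sigma(a,b)=(a+\theta,b+a)$, $\theta$ irrational. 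Let $y=(a_0,b_0)$ with $1,\theta,a_0$ rationally independent. Equidistribution of $(n\theta,na_0,\binom{n}{2}\theta)$ in $\mathbb T^{3}$ gives $n_k\to\infty$ with $\sigma(-n_k,y)\to y$ but $\sigma(n_k,y)\to(a_0,b_0+\frac12)$. Taking $t_k=-n_k$ you get $\zeta\xi(y)=\zeta(y)\neq y$. So the identity $\zeta\xi|_{X_y}=\mathrm{id}$ you hope to prove is false in general. Consequently the injectivity of elements of $\mathcal E^{-}_{y}$ on $X_y$, which is exactly what Step 3 needs, is left unproved.

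\textbf{The missing idea.} In $E(X,\mathbb T,\pi)$ the time direction of a product is governed by its \emph{right} factor: $\mathcal E^{-}_{y}\,\mathcal E^{+}_{y}\subseteq\mathcal E^{+}_{y}$. To see this, let $\xi\in\mathcal E^{-}_{y}$, $\eta\in\mathcal E^{+}_{y}$, $R,\varepsilon>0$, and let $W$ be a neighbourhood of $\xi\eta$.
\begin{enumerate}
\item Continuity of $\zeta\mapsto\zeta\eta$ gives $t<0$ with $\rho(\sigma(t,y),y)<\varepsilon/2$ and $\pi^{t}\eta\in W$.
\item Continuity of $\zeta\mapsto\pi^{t}\zeta$ and of $\sigma(t,\cdot)$ then gives $s\ge R-t$ with $\pi^{t+s}\in W$ and $\rho(\sigma(t+s,y),\sigma(t,y))<\varepsilon/2$.
\item Hence $t+s\ge R$ and $\rho(\sigma(t+s,y),y)<\varepsilon$, so $\xi\eta\in\mathcal E^{+}_{y}$.
\end{enumerate}
Now let $u\in\mathcal E^{+}_{y}$ be an idempotent; by your Step 2, $u|_{X_y}=\mathrm{id}$. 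For $\xi\in\mathcal E^{-}_{y}$ you then have $\xi|_{X_y}=(\xi u)|_{X_y}$ with $\xi u\in\mathcal E^{+}_{y}$, so $\xi$ is injective on $X_y$. Replace your $\zeta$-argument by this. The idempotent/left-inverse argument and the extraction of $\{t^{-}_{n}\}\in\mathfrak N^{-\infty}_{y}$ then go through exactly as in Step 2.
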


\begin{theorem}\label{th02.1} Let $\langle (X,\mathbb S_{+},\pi ),
(Y,\mathbb S,\sigma ), h\rangle $ be a nonautonomous dynamical
system with the following properties:
\begin{enumerate}
\item there exists a point $x_0\in X$ such that the positive
semi-trajectory $\Sigma_{x_0}^{+}$ is precompact; \item the set
$\omega_{x_0}$ is positively uniformly stable; \item every point
$q\in \omega_{y_{0}}$ ($y_0:=h(x_0)$) is two-sided Poisson stable.
\end{enumerate}
Then the following statements hold:
\begin{enumerate}
\item[--] all motions on $\omega_{x_0}$ may be extended uniquely
to the left and on $\omega_{x_0}$ is defined a two-sided dynamical
system $(\omega_{x_0},\mathbb S, \pi)$, i.e., the one-sided
dynamical system $(X,\mathbb S_+,\pi)$ generates on $\omega_{x_0}$
a two-sided dynamical system $(\omega_{x_0},\mathbb S,\pi )$;
\item[--] the non-autonomous dynamical system
$\langle(\omega_{x_0},$ $\mathbb S,$ $\pi ),$ $(\omega_{y_0},$
$\mathbb S,$ $\sigma),$ $h\rangle$ is two-sided distal.
\end{enumerate}
\end{theorem}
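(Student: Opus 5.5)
The first conclusion, unique left extension of motions on $\omega_{x_0}$ and the resulting two-sided system $(\omega_{x_0},\mathbb S,\pi)$, is already part of Theorem \ref{th02}. I would quote it directly, because hypotheses (i) and (ii) here are exactly those of Theorem \ref{th02}. The same theorem also gives that the restricted NDS $\langle(\omega_{x_0},\mathbb S,\pi),(\omega_{y_0},\mathbb S,\sigma),h\rangle$ is negatively distal. Concretely,
\begin{equation*}
\inf\limits_{t\le 0}\rho(\pi(t,x_1),\pi(t,x_2))>0
\end{equation*}
for all $x_1\neq x_2$ in the same fibre $(\omega_{x_0})_q$. So the only new claim is positive distality: $\inf_{t\ge 0}\rho(\pi(t,x_1),\pi(t,x_2))>0$ for such pairs.

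To get positive distality, fix $q\in\omega_{y_0}$ and $x_1\neq x_2$ in $(\omega_{x_0})_q$, and set $X:=\omega_{x_0}$, $Y:=\omega_{y_0}$. Here $X$ is compact by Theorem \ref{thLS1}, and the NDS is two-sided. By hypothesis (iii), $q$ is two-sided Poisson stable, and negative distality holds on the fibre $X_q$. Thus all conditions of Lemma \ref{lP1*} are met. Statement (ii) of that lemma yields a sequence $t_n^{+}\to+\infty$ with $\sigma(t_n^{+},q)\to q$ and $\pi(t_n^{+},x_i)\to x_i$ for $i=1,2$.

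Now argue by contradiction. Suppose $\inf_{t\ge0}\rho(\pi(t,x_1),\pi(t,x_2))=0$. Then there are $s_k\ge0$ with $\rho(\pi(s_k,x_1),\pi(s_k,x_2))\to0$. By compactness we may pass to a subsequence along which $\pi(s_k,x_1)\to a$ and $\pi(s_k,x_2)\to a$, with $h(a)=:q'$.

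I would then use positive uniform stability of $\omega_{x_0}$ (hypothesis (ii)). Given $\varepsilon>0$, take $\delta(\varepsilon)$ from Definition \ref{defPUS1}. For large $k$ we have $\rho(\pi(s_k,x_1),\pi(s_k,x_2))<\delta$, and the two points lie in the same fibre and both belong to $\omega_{x_0}$. Hence $\rho(\pi(t+s_k,x_1),\pi(t+s_k,x_2))<\varepsilon$ for all $t\ge0$. Letting $\varepsilon\to0$ gives $\rho(\pi(t,x_1),\pi(t,x_2))\to0$ as $t\to+\infty$.

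This contradicts the recurrence from Lemma \ref{lP1*}. Along $t_n^{+}\to+\infty$ the distance $\rho(\pi(t_n^{+},x_1),\pi(t_n^{+},x_2))$ tends to $\rho(x_1,x_2)>0$, while proximality forces it to tend to $0$. So the infimum over $t\ge0$ is positive. Combined with the negative distality from Theorem \ref{th02}, this gives two-sided distality.

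The main obstacle I expect is verifying the hypotheses of Lemma \ref{lP1*}, in particular that its statement (ii) genuinely yields a common return sequence $t_n^{+}\to+\infty$ for both points while using only negative distality. If that direction of the lemma turns out to be unsuitable, my fallback is to use $t_n^{-}\to-\infty$ instead. One would then show that positive proximality plus the uniform stability propagates backward along the left-extended motions, and derive the contradiction with negative distality directly.
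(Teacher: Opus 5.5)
Your proposal is correct and follows essentially the paper's route. You use Theorem \ref{th02} for the left extension and negative distality. You then obtain positive distality from the contradiction between two facts: the asymptotic proximality forced by positive uniform stability of $\omega_{x_0}$, and the common return times $t_n^{+}\to+\infty$ with $\pi(t_n^{+},x_i)\to x_i$.

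Invoking Lemma \ref{lP1*}, whose hypothesis is the negative distality already supplied by Theorem \ref{th02}, is the logically sound citation. The paper cites Lemma \ref{lP2*}, whose hypothesis is the positive distality being proved, which makes that step circular. Since Lemma \ref{lP1*}(ii) explicitly gives the $+\infty$ sequence, your worry is unfounded and the fallback is unnecessary.
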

\begin{proof}
By Theorem \ref{th02} to prove this statement it suffices to
show that the nonautonomous dynamical system
$\langle(\omega_{x_0},$ $\mathbb S,$ $\pi ),$ $(\omega_{y_0},$
$\mathbb S,$ $\sigma),$ $h\rangle$ is positively distal, i.e.,
\begin{equation}\label{eqPD_1}
\inf\limits_{t\ge 0}\rho(\pi(t,x_1^{0}),\pi(t,x_2^{0}))>0
\end{equation}
for all $x_x,x_2\in \omega_{x_0}$ with $x_1\not= x_2$ and
$h(x_1)=h(x_2)$.

If we assume that the relation (\ref{eqPD_1}) is not true, then
there are $q\in \omega_{y_0}$, $x_1^{0},x_2^{0}\in
\omega_{y_0}^{q}:=\omega_{y_0}\bigcap X_{q}$ with $x_1^{0}\not=
x_2^{0}$ and the sequence $t_n\to +\infty$ as $n\to \infty$ such
that
\begin{equation}\label{eqPD_2}
\lim\limits_{n\to
+\infty}\rho(\pi(t_{n},x_1^{0}),\pi(t_{n},x_2^{0}))=0
\end{equation}
for all $n\in \mathbb N$.

Since the set $\omega_{x_0}$ is uniformly positively stable then
from the relation (\ref{eqPD_2}) we obtain
\begin{equation}\label{eqPD4}
\lim\limits_{t\to \infty}\rho(\pi(t,x_{1}^{0}),\pi(t,x_{2}^{0}))=0
.
\end{equation}

On the other hand by Lemma \ref{lP2*} there exists a sequence
$t_{n}^{+}\to +\infty$ as $n\to \infty$ such that
\begin{equation}\label{eqPD3}
\lim\limits_{n\to \infty}\pi(t_{n}^{+},x_{i}^{0})=x_{i}^{0}\
(i=1,2).
\end{equation}
Note that
\begin{equation}\label{eqPD3.1}
0<\rho(x_1^{0},x_2^{0})\le
\rho(x_1^{0},\pi(t_{n}^{+},x_{1}^{0}))+\rho(\pi(t_{n}^{+},x_{1}^{0}),\pi(t_{n}^{+},x_{2}^{0}))+\rho(\pi(t_{n}^{+},x_{2}^{0}),x_{2}^{0})
\end{equation}
for all $n\in \mathbb N$. Passing to the limit in (\ref{eqPD3.1})
as $n\to \infty$ and taking into account
(\ref{eqPD4})-(\ref{eqPD3}) we obtain
\begin{equation}\label{eqPD6}
\liminf\limits_{n\to
\infty}\rho(\pi(t_{n}^{+},x_1^{0}),\pi(t_{n}^{+},x_2^{0}))\ge \rho
(x_1^{0},x_{2}^{0})>0 .
\end{equation}

The relations (\ref{eqPD4}) and (\ref{eqPD6}) are contradictory.
The obtained contradiction proves our statement.
\end{proof}

\begin{definition}\label{defPUS3} A subset $A\subseteq X$ said to be uniformly
attracting \cite[ChIV]{Bro79} if there exists a number
$\delta_{0}>0$ such that for every $\varepsilon
>0$ there exists a number $L(\varepsilon)>0$ with the property
that $a\in A$, $x\in X$, $h(x)=h(a)$ and $\rho(a,x)<\delta_{0}$
imply $\rho(\pi(t,a),\pi(t,x))<\varepsilon$ for all $t\ge
L(\varepsilon)$.
\end{definition}

\begin{lemma}\label{lPUS2} \cite[ChIV]{Bro79} Let $A\subset X$. If
the mapping $h:X\to Y$ is open and the set $A$ is uniformly
attracting, then so is $\overline{A}$.
\end{lemma}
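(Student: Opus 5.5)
Lemma \ref{lPUS2} asks us to show that uniform attraction passes from $A$ to its closure $\overline{A}$, with $h$ open. I will keep the same constant $\delta_0$ for $\overline{A}$, or rather a slightly smaller one, $\delta_0/2$. Fix $\varepsilon>0$ and take $L:=L(\varepsilon/2)$ from the definition for $A$. We want to show: if $\bar a\in\overline{A}$, $x\in X$, $h(x)=h(\bar a)$ and $\rho(\bar a,x)<\delta_0/2$, then $\rho(\pi(t,\bar a),\pi(t,x))<\varepsilon$ for all $t\ge L$. The approach is approximation. Choose $a_n\in A$ with $a_n\to\bar a$. The difficulty is that $h(a_n)$ need not equal $h(\bar a)$, so $a_n$ cannot be compared directly with $x$ or with $\bar a$ in the fibre. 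This is exactly where openness of $h$ enters.

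\textbf{Step 1 (lifting).} Since $h$ is open and $h(a_n)\to h(\bar a)=h(x)$, I will use the standard consequence of openness: for every $y_n\to h(x)$ there are points $x_n\to x$ with $h(x_n)=y_n$, at least along a subsequence. Indeed, for each neighbourhood $U$ of $x$ the set $h(U)$ is a neighbourhood of $h(x)$ and so contains $y_n$ for large $n$. Taking a decreasing countable base $U_k$ of balls around $x$ and a diagonal choice yields $x_n\in h^{-1}(h(a_n))$ with $x_n\to x$. In the same way I get $\bar x_n\to\bar a$ with $h(\bar x_n)=h(a_n)$, although $a_n$ itself already serves here.

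\textbf{Step 2 (applying attraction at the approximants).} For large $n$ we have $\rho(a_n,x_n)\le\rho(a_n,\bar a)+\rho(\bar a,x)+\rho(x,x_n)<\delta_0$, and $h(x_n)=h(a_n)$. Hence $\rho(\pi(t,a_n),\pi(t,x_n))<\varepsilon/2$ for all $t\ge L$, with $L$ independent of $n$.

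\textbf{Step 3 (passing to the limit).} For each fixed $t\ge L$, continuity of $\pi(t,\cdot)$ gives $\pi(t,a_n)\to\pi(t,\bar a)$ and $\pi(t,x_n)\to\pi(t,x)$. Therefore $\rho(\pi(t,\bar a),\pi(t,x))\le\varepsilon/2<\varepsilon$. Because $L$ does not depend on $n$, on $\bar a$ or on $x$, this bound holds uniformly, which is the required uniformity.

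The main obstacle is Step 1, the fibrewise lifting of convergent sequences via openness of $h$. Once the lifting is set up as above, the rest is a routine limit argument. Shrinking $\delta_0$ to $\delta_0/2$ is harmless, because the definition only requires the existence of some $\delta_0$.
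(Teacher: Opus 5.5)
Your proof is correct. The paper gives no proof of its own here; it only cites Bronshtein. Your argument is the standard one: use openness of $h$ to lift $a_n\to\bar a$ to points $x_n\to x$ in the fibres $h^{-1}(h(a_n))$, apply the attraction property of $A$ at $(a_n,x_n)$ with the $n$-independent $L(\varepsilon/2)$, and pass to the limit at each fixed $t\ge L$ using continuity of $\pi(t,\cdot)$.

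A minor remark: shrinking $\delta_0$ to $\delta_0/2$ is unnecessary. Since $\rho(a_n,x_n)\to\rho(\bar a,x)<\delta_0$, you already have $\rho(a_n,x_n)<\delta_0$ for large $n$. By contrast, taking $L(\varepsilon/2)$ rather than $L(\varepsilon)$ is genuinely needed, because passing to the limit only preserves non-strict inequalities.
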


\begin{definition}\label{defPUS4} A subset $A\subset X$ is said to
be uniformly asymptotically stable if it is uniformly positively
stable and uniformly attracting.
\end{definition}

\begin{remark}\label{remPUS1} Let $A\subseteq X$ be a uniformly
attracting subset (respectively, uniformly asymptotically stable)
and $B\subseteq A$, then $B$ is also uniformly attracting
(respectively, uniformly asymptotically stable).
\end{remark}

\begin{definition}\label{defPUS02}
A point $x_0\in X$ is called positively uniformly attracting
(respectively, uniformly asymptotically stable) if the set
$\Sigma_{x_0}^{+}:=\{\pi(t,x_0):\ t\ge 0\}$ is so.
\end{definition}

\begin{coro}\label{lUAS1}\cite[ChIV]{Bro79}
If the point $x_0$ is positively uniformly attracting
(respectively, uniformly asymptotically stable) and $h$ is open,
then:
\begin{enumerate}
\item
 $H^{+}(x_0):=\overline{\Sigma}^{+}_{x_0}$ is positively uniformly attracting (respectively, uniformly asymptotically stable),
 where by bar the closure in $X$ is denoted;
\item
 $\omega_{x_0}$ is positively uniformly attracting (respectively, uniformly asymptotically stable), because $\omega_{x_0}\subseteq H^{+}(x_0)$.
\end{enumerate}
\end{coro}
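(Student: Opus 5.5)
The statement to prove is Corollary \ref{lUAS1}. The plan is to reduce both items to Lemma \ref{lPUS2} and Lemma \ref{lBC1}, applied to the set $A:=\Sigma^{+}_{x_0}$, together with the monotonicity observations in Remark \ref{remPUS1} and the remark following Definition \ref{defPUS1}.

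\emph{Item (i), attracting case.} By Definition \ref{defPUS02}, saying that $x_0$ is positively uniformly attracting means that $\Sigma^{+}_{x_0}$ is uniformly attracting. Since $h$ is open, Lemma \ref{lPUS2} gives directly that the closure $\overline{\Sigma^{+}_{x_0}}=H^{+}(x_0)$ is uniformly attracting.

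\emph{Item (i), asymptotically stable case.} By Definition \ref{defPUS4}, uniform asymptotic stability of $\Sigma^{+}_{x_0}$ splits into two properties: positive uniform stability and uniform attraction. I treat each one separately under closure. Lemma \ref{lBC1}, using openness of $h$, passes positive uniform stability to $H^{+}(x_0)$. Lemma \ref{lPUS2} passes uniform attraction to $H^{+}(x_0)$. Combining the two, $H^{+}(x_0)$ is uniformly asymptotically stable.

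If one preferred not to cite the lemmas, the key step would be a direct approximation argument. Given $a\in\overline{A}$ and $x$ with $h(x)=h(a)$ and $\rho(a,x)<\delta_0/2$, choose $a_n\in A$ with $a_n\to a$. Openness of $h$ then provides $x_n\to x$ with $h(x_n)=h(a_n)$. For large $n$ the pair $(a_n,x_n)$ satisfies the hypothesis with $\delta_0$, and one passes to the limit using continuity of $\pi(t,\cdot)$ for each fixed $t$. The bound must hold uniformly for $t\ge L(\varepsilon)$, which is obtained by proving the strict estimate with $\varepsilon/2$ and then taking limits. This step, the use of openness of $h$ to produce nearby points in the same fibre, is the only nontrivial point, and it is exactly what Lemmas \ref{lBC1} and \ref{lPUS2} encapsulate.

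\emph{Item (ii).} The $\omega$-limit set satisfies $\omega_{x_0}\subseteq H^{+}(x_0)$, since every limit of $\pi(t_k,x_0)$ lies in the closure of $\Sigma^{+}_{x_0}$. Both properties are inherited by subsets: for uniform attraction and uniform asymptotic stability this is Remark \ref{remPUS1}, and for positive uniform stability it is the remark following Definition \ref{defPUS1}. Applying these to the conclusion of item (i) shows that $\omega_{x_0}$ is positively uniformly attracting, respectively uniformly asymptotically stable.
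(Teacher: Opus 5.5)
Your proposal is correct and matches the route the paper takes: item (i) is Lemma \ref{lPUS2} applied to $\Sigma^{+}_{x_0}$, with Lemma \ref{lBC1} supplying the stability half. Item (ii) then follows from $\omega_{x_0}\subseteq H^{+}(x_0)$ and the inheritance of these properties by subsets (Remark \ref{remPUS1} and the remark after Definition \ref{defPUS1}), and your direct argument is also sound: openness of $h$ lifts $h(a_n)\to h(a)$ to points $x_n\to x$ with $h(x_n)=h(a_n)$.
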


\begin{lemma}\label{lUAS2} Under the conditions of Theorem
\ref{th02.1} there exists a positive number $r$ such that for
every $q\in \omega_{y_0}$ and $p\in
\omega_{x_0}^{q}=\omega_{x_0}\bigcap X_{q}$ we have $B(p,r)\bigcap
\omega_{x_0}^{q}=\{p\}$, where $B(p,r):=\{x\in X_{q}:\
\rho(x,p)<r\}$.
\end{lemma}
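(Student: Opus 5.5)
The plan is to obtain the uniform radix $r$ directly from the \emph{uniform attraction} property of $\omega_{x_0}$, and to get the contradiction from the \emph{positive distality} established in the proof of Theorem \ref{th02.1}. The lemma sits in the part of the paper dealing with uniformly asymptotically stable trajectories. I will therefore read "the conditions of Theorem \ref{th02.1}" as also including that $x_0$ is positively uniformly asymptotically stable and that $h$ is open. Under this reading, Corollary \ref{lUAS1} gives that $\omega_{x_0}$ is uniformly asymptotically stable, hence uniformly attracting in the sense of Definition \ref{defPUS3}, with some number $\delta_0>0$. I propose to take $r:=\delta_0$.

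First step. Take $q\in \omega_{y_0}$ and $p\in \omega_{x_0}^{q}$, and suppose there is $p'\in \omega_{x_0}^{q}$ with $p'\neq p$ and $\rho(p,p')<\delta_0$. Since $p\in\omega_{x_0}$, $p'\in X$ and $h(p')=h(p)=q$, uniform attraction applies. For every $\varepsilon>0$ it gives $L(\varepsilon)$ with $\rho(\pi(t,p),\pi(t,p'))<\varepsilon$ for all $t\ge L(\varepsilon)$. Letting $\varepsilon\to 0$ yields
\begin{equation*}
\lim_{t\to+\infty}\rho(\pi(t,p),\pi(t,p'))=0 ,
\end{equation*}
and in particular $\inf_{t\ge0}\rho(\pi(t,p),\pi(t,p'))=0$.

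Second step. This contradicts relation (\ref{eqPD_1}) from the proof of Theorem \ref{th02.1}. That relation says the induced NDS on $\omega_{x_0}$ is positively distal: two distinct points of the same fiber $\omega^{q}_{x_0}$ have positive infimal distance along their forward motions. It was obtained from positive uniform stability of $\omega_{x_0}$ together with Lemma \ref{lP2*}, which uses two-sided Poisson stability of $q$. Hence no such $p'$ exists, i.e.\ $B(p,r)\cap\omega_{x_0}^{q}=\{p\}$. Since $\delta_0$ does not depend on $q$ or $p$, the radius $r$ is uniform, as claimed.

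The main obstacle is justifying the uniform attracting hypothesis. If only positive uniform stability is available, as literally in Theorem \ref{th02.1}, the argument above gives no uniform $r$. One can then only show, for pairs $p_n\neq p'_n$ with $\rho(p_n,p'_n)\to0$, that $\sup_{t}\rho(\pi(t,p_n),\pi(t,p'_n))\to 0$, and I do not see how to turn this into a contradiction. In that case I would first try a compactness argument combined with the finiteness of fibers coming from Lemma \ref{l2.3.3} and Lemma \ref{l2.3.4}, passing to limits in $\omega_{x_0}$. I expect, however, that the intended proof uses the uniform asymptotic stability of $\omega_{x_0}$ as described above.
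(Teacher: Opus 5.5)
Your proposal is correct and is essentially the paper's argument. The paper also takes $\delta_0$ from the uniform attraction of $\omega_{x_0}$, choosing any $r\in(0,\delta_0)$ instead of $r=\delta_0$ (which makes no difference), and contradicts the positive distality supplied by Theorem \ref{th02.1}, since two distinct points of $\omega_{x_0}^{q}$ closer than $r$ would have forward distance eventually below every $\varepsilon>0$.

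Your reading of the hypotheses matches what the paper does: its proof simply asserts that $\omega_{x_0}$ is uniformly attracting, although this is not among the conditions of Theorem \ref{th02.1}. That extra assumption cannot be dropped, so the fallback compactness argument you sketch could not succeed. For example, the rotation flow on a circle over a one-point base satisfies all conditions of Theorem \ref{th02.1}, yet its single fiber $\omega_{x_0}^{q}$ is the whole circle.
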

\begin{proof} Since the set $\omega_{x_0}$ is uniformly
attracting then there exists a positive $\delta_{0}$ such that for
all $\varepsilon >0$ there exists a positive number
$L(\varepsilon)$ with the property $\rho(p,x)<\delta_{0}$ ($x\in
X, h(x)=h(p)=q$) imply $\rho(\pi(t,x),\pi(t,p))<\varepsilon$ for
all $t\ge L(\varepsilon)$.

Let now $r\in (0,\delta_{0})$ be an arbitrary number. We will show
that $B(p,r)\bigcap \omega_{x_0}^{q}=\{p\}$ ($h(p)=q$). If we
assume that this statement is false, then there exists a point
$p_0\in \omega_{x_0}^{q}$ such that the set
$\omega_{x_0}^{q}\bigcap B(p_0,r)$ contains a point $p\not= p_0$.
By Theorem \ref{th02.1} there exists a positive number $\alpha
=\alpha(p,p_0)$ such that
\begin{equation}\label{eqPD1}
\rho(\pi(t,p),\pi(t,p_{0}))\ge \alpha
\end{equation}
for all $t\ge 0$. Without loss of generality we can suppose that
$\alpha <r$. On the other hand according to choice of the number
$r$ ($r<\delta_{0}$) for every $\varepsilon \in (0,\alpha)$ there
exists a number $L(\varepsilon)>0$ so that
\begin{equation}\label{eqPD2}
\rho(\pi(t,p),\pi(t,p_0))<\varepsilon <\alpha
\end{equation}
for all $t\ge L(\varepsilon)$. The relations (\ref{eqPD1}) and
(\ref{eqPD2}) are contradictory. The obtained contradiction proves
our statement.
\end{proof}

\begin{coro}\label{corPD1} Under the conditions of Theorem
\ref{th02.1} for every $q\in \omega_{y_0}$ the set
$\omega_{x_0}^{q}$ consists of a finite number $m_{q}$ of points,
i.e., $\omega_{x_0}^{q}=\{p_{1},\ldots,p_{m_{q}}\}$.
\end{coro}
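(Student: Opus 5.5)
The corollary says that, under the hypotheses of Theorem \ref{th02.1} (together with the uniform attraction of $\omega_{x_0}$ used in Lemma \ref{lUAS2}), each fibre $\omega_{x_0}^{q}$ is finite. The plan is a compactness argument based on the uniform isolation radius from Lemma \ref{lUAS2}.

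First, fix $q\in \omega_{y_0}$. Since $\Sigma_{x_0}^{+}$ is precompact, Theorem \ref{thLS1} shows that $\omega_{x_0}$ is nonempty, compact and invariant. The map $h$ is continuous, so $X_{q}=h^{-1}(q)$ is closed. Hence $\omega_{x_0}^{q}=\omega_{x_0}\bigcap X_{q}$ is a compact subset of $X$.

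Second, apply Lemma \ref{lUAS2}. It gives $r>0$, independent of $q$ and $p$, with $B(p,r)\bigcap\omega_{x_0}^{q}=\{p\}$ for every $p\in\omega_{x_0}^{q}$. Since $B(p,r)$ is taken inside $X_{q}$ and $\omega_{x_0}^{q}\subseteq X_{q}$, this means any two distinct points of $\omega_{x_0}^{q}$ are at distance at least $r$. So $\omega_{x_0}^{q}$ is an $r$-separated set.

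Third, conclude by compactness. The open cover $\{B_X(p,r/2): p\in\omega_{x_0}^{q}\}$ of the compact set $\omega_{x_0}^{q}$ has a finite subcover. Each ball of radius $r/2$ contains at most one point of an $r$-separated set. Therefore $\omega_{x_0}^{q}$ is finite, say $\omega_{x_0}^{q}=\{p_1,\ldots,p_{m_q}\}$. Alternatively, an infinite subset would have a convergent subsequence, whose terms would eventually be closer than $r$, which is impossible.

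There is no serious obstacle here; all the substance is in Lemma \ref{lUAS2}, which rests on the distality from Theorem \ref{th02.1} and on uniform attraction. The only point needing care is that the isolating ball in Lemma \ref{lUAS2} is taken relative to the fibre $X_q$. This does no harm, because we are only counting points of the fibre $\omega_{x_0}^{q}$.
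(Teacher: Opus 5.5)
Your argument is correct and is essentially the paper's: its proof combines the uniform isolation radius from Lemma \ref{lUAS2} with the compactness of $\omega_{x_0}^{q}$, exactly as you do. Your remark is also well taken: Lemma \ref{lUAS2} actually uses the uniform attraction of $\omega_{x_0}$, which is not among the hypotheses of Theorem \ref{th02.1}, and this applies equally to the paper's own argument.
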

\begin{proof} This statement directly follows from Lemma
\ref{lUAS2} and the fact that the set $\omega_{x_0}^{q}$ is
compact.
\end{proof}

\begin{theorem}\label{thPD2} Let $\langle (X,\mathbb S_{+},\pi ),
(Y,\mathbb S,\sigma ), h\rangle $ be a nonautonomous dynamical
system with the following properties:
\begin{enumerate}
\item there exists a point $x_0\in X$ such that the positive
semi-trajectory $\Sigma_{x_0}^{+}$ is precompact; \item the set
$\omega_{x_0}$ is positively uniformly stable; \item
$\omega_{y_0}$ is a minimal set consisting of almost periodic
points (motions).
\end{enumerate}

Then the following statements hold:
\begin{enumerate}
\item every motions on $\omega_{x_0}$ may be extended uniquely to
the left and on $\omega_{x_0}$ a two-sided dynamical system
$(\omega_{x_0},\mathbb S, \pi)$ is defined; \item the
non-autonomous dynamical system $\langle(\omega_{x_0},$ $\mathbb
S,$ $\pi ),$ $(\omega_{y_0},$ $\mathbb S,$ $\sigma),$ $h\rangle$
is two-sided distal; \item for every $q\in \omega_{y}$ the set
$\omega^{q}_{x}:=\omega_{x}\bigcap X_{q}$ consists of a finite
number $m_{q}$ of points from $\omega_{y}$; \item the number
$m_{q}$ is independent of the point $q\in \omega_{y}$, i.e., there
exist a natural number $m\in \mathbb N$ such that $m_{q}=m$ for
all $q\in \omega_{y}$; \item there exists a number $r>0$ such that
\begin{equation}\label{eqNDS*2}
\rho(\pi(t,p_{i}),\pi(t,p_{j}))\geq r>0\nonumber
\end{equation}
for all $q\in \omega_{y}$, $t\in\mathbb T_{1}$, $p_{i},p_{j}\in
\omega_{x}^{q}$ and $i,j=1,\ldots,m$ with $i\not= j$; \item the
set $\omega_{x}$ consists of the almost periodic points (motions);
\item $\omega_{x}$ is a minimal set of the dynamical system
$(X,\mathbb T_{1},\pi)$; \item the point $x$ (respectively, the
motion $\pi(t,x)$) is remotely almost periodic.
\end{enumerate}
\end{theorem}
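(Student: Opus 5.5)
The plan is to reduce Theorem~\ref{thPD2} to the results already established: Theorem~\ref{th02.1}, Corollary~\ref{corPD1}, Theorem~\ref{thNDS1} and Theorem~\ref{thNDS2}. Here $x=x_0$ and $y=y_0=h(x_0)$.

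\textbf{Step 1: statements (i)--(ii).} Every almost periodic point is recurrent, hence two-sided Poisson stable. Condition (iii) therefore says that every point $q\in\omega_{y_0}$ is two-sided Poisson stable. So Theorem~\ref{th02.1} applies and gives (i) and (ii): the two-sided system $(\omega_{x_0},\mathbb S,\pi)$ exists, and the NDS $\langle(\omega_{x_0},\mathbb S,\pi),(\omega_{y_0},\mathbb S,\sigma),h\rangle$ is two-sided distal.

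\textbf{Step 2: separation, then (iii)--(v).} Distality on $\mathbb T_1=\mathbb S_+$ says exactly that for distinct $p_1,p_2\in\omega_{x_0}^q$ we have $\inf_{t\ge0}\rho(\pi(t,p_1),\pi(t,p_2))>0$. This is the separation of every point of $\omega_{x_0}$ in the compact set $\omega_{x_0}$. Lemma~\ref{l2.3.3} then gives finiteness of the fibres $\omega_{x_0}^q$; alternatively, Corollary~\ref{corPD1} gives it, provided uniform attraction is available.

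Next I pass to the induced NDS with compact base. The set $\omega_{y_0}$ is compact and minimal, and $\omega_{x_0}$ is compact and invariant. Theorem~\ref{thNDS1}, applied with $M=\omega_{x_0}$ and $Y=\omega_{y_0}$, then yields:
\begin{enumerate}
\item[(a)] constancy of the fibre cardinality, $m_q=m$ for all $q\in\omega_{y_0}$, which is (iv);
\item[(b)] the uniform lower bound $r>0$, via Lemma~\ref{l2.3.4}, which is (v);
\item[(c)] almost periodicity of all points of $\omega_{x_0}$, since $\omega_{y_0}$ consists of almost periodic points, which is (vi).
\end{enumerate}

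\textbf{Step 3: minimality and remote almost periodicity.} For (vii), Theorem~\ref{thNDS1} decomposes $\omega_{x_0}$ into finitely many disjoint minimal sets. Lemma~\ref{lOLS1} says $\omega_{x_0}$ is invariantly connected, which forces $k=1$. Finally, Lemma~\ref{lSM1} shows the compact minimal set $\omega_{x_0}$ of almost periodic motions is equi-almost periodic. Then Lemma~\ref{lRAP1}, using positive Lagrange stability of $x_0$, gives (viii). This is exactly the argument of Theorem~\ref{thNDS2}, so from this point I could simply cite it. That citation needs $y_0$ to be positively Lagrange stable and remotely almost periodic, which holds because $h$ maps the precompact $\Sigma^+_{x_0}$ onto $\Sigma^+_{y_0}$ and $\omega_{y_0}$ is equi-almost periodic by Lemma~\ref{lSM1}. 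I would still write the argument out directly to avoid checking those hypotheses.

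\textbf{Main obstacle.} The delicate point is Step 2: extracting the separation hypothesis from distality. Theorem~\ref{th02.1} gives a positive infimum only pairwise, whereas Theorem~\ref{thNDS1} needs separation in the definition's sense. That definition is also pointwise (an $r$ for each $x$), and it is precisely the positive-time distality on $\omega_{x_0}$. I would check carefully that the infimum in Theorem~\ref{th02.1} is taken over $t\ge 0$, as the proof of that theorem shows via~(\ref{eqPD_1}). The finiteness of the fibres, which Theorem~\ref{thNDS1} needs in order to turn this into a uniform $r$, then comes from Lemma~\ref{l2.3.3}.
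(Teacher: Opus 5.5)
Your Step 2 has a genuine gap, at the point you yourself flagged.

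\textbf{Distality is not separation.} Separation of $x\in M$ requires a single $r=r(x)>0$ with $\rho(\pi(t,x),\pi(t,p))\ge r$ for \emph{all} $p\in M_{h(x)}\setminus\{x\}$ and all $t$. Already at $t=0$ this makes $x$ an isolated point of its fibre. Distality only gives a constant $r(x,p)>0$ for each pair, and $\inf_{p}r(x,p)$ can be $0$. So distality is not ``precisely'' separation, and Lemma~\ref{l2.3.3}, whose hypothesis is separation, cannot supply finiteness. The argument is circular: you need finiteness to turn the pairwise constants into one $r(x)$, and you need $r(x)$ to invoke the lemma.

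\textbf{The three hypotheses do not give finite fibres.} Take $X=\mathbb R^{2}/\mathbb Z^{2}$, $Y=\mathbb R/\mathbb Z$, $\pi(t,(u,y))=(u+\sqrt{2}\,t,\,y+t)$, $\sigma(t,y)=y+t$ and $h(u,y)=y$. The flow is an isometry and minimal, so $\omega_{x_0}=X$ is positively uniformly stable and the NDS on it is two-sided distal. Also $\omega_{y_0}=Y$ is minimal and consists of periodic points. Yet every fibre $\omega_{x_0}^{q}=(\mathbb R/\mathbb Z)\times\{q\}$ is a circle, so conclusions (iii)--(v) fail.

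\textbf{What is missing.} The missing ingredient is uniform attraction of $\omega_{x_0}$, so your parenthetical remark about Corollary~\ref{corPD1} was the right instinct. The paper's proof brings this in tacitly through that corollary: Lemma~\ref{lUAS2} uses that $\omega_{x_0}$ is uniformly attracting. This hypothesis is absent from the statement, but it is present, as uniform positive asymptotic stability, in the application Theorem~\ref{thAODE2}.

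With it the argument closes as follows:
\begin{enumerate}
\item If $p\neq p_0$ lie in the same fibre and $\rho(p,p_0)<\delta_0$, then $\rho(\pi(t,p),\pi(t,p_0))\to 0$, contradicting the distality in conclusion (ii).
\item Hence distinct points of a fibre are at least $\delta_0$ apart, and fibres are finite by compactness.
\item The minimum of the finitely many pairwise infima then gives separation in the sense of the definition.
\end{enumerate}
From that point your argument is correct and coincides with the paper's route through Theorems~\ref{th02.1} and~\ref{thNDS2}: Theorem~\ref{thNDS1}, then $k=1$ by invariant connectedness (Lemma~\ref{lOLS1}), then Lemmas~\ref{lSM1} and~\ref{lRAP1}.
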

\begin{proof}
This statement follows from Theorems \ref{thNDS2}, \ref{th02.1}
and Corollary \ref{corPD1}.
\end{proof}

\begin{coro}\label{corPD2.1} Under the conditions of Theorem
\ref{thPD2} the following statements hold:
\begin{enumerate}
\item if the point $y$ is remotely $\tau$-periodic, then the point
$x$ is remotely $\tau'$-periodic, where $\tau'=m\tau$ and $m$ is
some natural number; \item if the point $y$ is remotely
stationary, then
\begin{enumerate}
\item the point $x$ is asymptotically stationary if the time
$\mathbb T_{1}$ is continuous (i.e., $\mathbb R_{+}\subseteq
\mathbb T_{1}\subseteq \mathbb R$); \item the point $x$ is
asymptotically periodic, if the time $\mathbb T_{1}$ is discrete
(i.e., $\mathbb Z_{+}\subseteq \mathbb T_{1}\subseteq \mathbb Z$).
\end{enumerate}
\end{enumerate}
\end{coro}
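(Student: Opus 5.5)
The plan is to reduce Corollary \ref{corPD2.1} to Corollary \ref{corOM1}. Under the hypotheses of Theorem \ref{thPD2}, every conclusion used in the proof of Corollary \ref{corOM1} is already available:
\begin{itemize}
\item $\omega_{x}$ is compact and minimal;
\item $\omega_{x}$ consists of almost periodic points;
\item for each $q\in\omega_{y}$ the fibre $\omega_x^q$ has the same finite cardinality $m$;
\item distinct points in a fibre are separated uniformly (statements (iii)--(v));
\item $x$ is remotely almost periodic.
\end{itemize}
Here $x=x_0$ and $y=y_0=h(x_0)$. The statement (and this sketch) therefore assume $y$ positively Lagrange stable, which is implicit when one speaks of $\omega_{y}$ being compact minimal. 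In particular, the points of $\omega_x$ are separated in $\omega_x$ by statement (v), so the separation hypothesis of Theorem \ref{thNDS2} holds. We can then argue exactly as in the proof of Corollary \ref{corOM1}.

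For item (1), suppose $y$ is remotely $\tau$-periodic. By Theorem \ref{th1.3.9}, every point of $\omega_y$ is $\tau$-periodic, and since $\omega_y$ is minimal we get $\omega_y=\{\sigma(t,q):0\le t<\tau\}$ for one $\tau$-periodic $q$.

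Consider the map $p\mapsto\pi(\tau,p)$. It sends the fibre $\omega_x^q=\{p_1,\dots,p_m\}$ into itself, because $\omega_x$ is invariant and $h(\pi(\tau,p))=\sigma(\tau,q)=q$. By Theorem \ref{thPD2}(i) the motions on $\omega_x$ extend to a two-sided flow, so this map is injective on the fibre. Hence it is a permutation of a finite set, and some power of it, for instance the $m!$-th, is the identity.

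This is the step that needs care. The paper's claim is $\tau'=m\tau$, but the permutation argument only yields some multiple $k\tau$ with $k\mid m!$. Minimality of $\omega_x$ makes the permutation transitive on the fibre, and a transitive permutation of $m$ points is a single $m$-cycle, so indeed $\pi(m\tau,p_i)=p_i$ for each $i$. Periodicity then propagates to all of $\omega_x$: any $p\in\omega_x$ equals $\pi(s,p_i)$ for some $i$ and $s\in[0,\tau)$, and the flow commutes with time shifts. Thus every point of $\omega_x$ is $m\tau$-periodic, and Theorem \ref{th1.3.9} gives that $x$ is remotely $m\tau$-periodic.

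For item (2), suppose $y$ is remotely stationary. Then by Corollary \ref{corSAP1} and minimality, $\omega_y=\{q\}$ for a stationary $q$, and hence $\omega_x=\omega_x^q$ is a finite set of $m$ points.

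In the continuous-time case, $\omega_x$ is connected, since it is the $\omega$-limit set of a precompact continuous semi-trajectory. So $m=1$, and $\pi(t,x)\to p$ with $p$ stationary.

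In the discrete case, minimality makes $\omega_x$ a single $m$-periodic orbit $\{\bar p,\pi(1,\bar p),\dots,\pi(m-1,\bar p)\}$. Statement (v) supplies the separation needed for Lemma \ref{l2.3.5}, with $y$ asymptotically stationary, hence asymptotically almost periodic. Lemma \ref{l2.3.5} then gives a unique $p=\pi(k_0,\bar p)$ with $\rho(\pi(t,x),\pi(t,p))\to0$, so $x$ is asymptotically $m$-periodic.
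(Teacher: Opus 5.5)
Your route is the paper's route: the paper proves Corollary \ref{corPD2.1} in one line by combining Theorem \ref{thPD2} with Corollary \ref{corOM1}, and you rerun the proof of Corollary \ref{corOM1} in more detail. One step of that detail is false as stated, although the conclusion survives.

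\textbf{The false step.} Minimality of $\omega_x$ does \emph{not} make $P:=\pi(\tau,\cdot)$ transitive on the fibre $\omega_x^q$. It can fail whenever $\tau$ is a proper multiple of the minimal period $\tau_0$ of $q$. Here is a counterexample. Take $Y=\mathbb R/\mathbb Z$ and $X=\mathbb R/2\mathbb Z$ with translation flows, let $h$ be the double covering, and set $\tau=2$. Then $X$ is minimal, equicontinuous and almost periodic, and the fibre has $m=2$ points. Yet $\pi(2,\cdot)$ is the identity on the fibre. If you take a $P$-cycle $C$, the compact invariant set $\pi([0,\tau]\times C)$ is indeed all of $\omega_x$. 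However, its intersection with $X_q$ also contains the points $\pi(j\tau_0,p)$ with $0<j\tau_0<\tau$, so the fibre can be strictly larger than $C$.

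\textbf{The repair.} Since the corollary only asks for \emph{some} natural number, your own observation $P^{m!}=\mathrm{id}$ already proves item (1), with $m!\tau$ in place of $m\tau$. If you want $m$ to be the fibre cardinality, run the transitivity argument with $Q:=\pi(\tau_0,\cdot)$ instead.
\begin{enumerate}
\item In discrete time with $q$ stationary, take $\tau_0=1$.
\item In continuous time with $q$ stationary, connectedness gives $m=1$ directly.
\end{enumerate}
For a $Q$-cycle $C$, the set $\pi([0,\tau_0]\times C)$ is closed and invariant, hence equal to $\omega_x$, and it meets $X_q$ exactly in $C$. So $Q$ is an $m$-cycle, and writing $\tau=k\tau_0$ gives $P^m=Q^{km}=\mathrm{id}$.

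The rest of your argument is correct and matches the paper. This includes propagating periodicity to all of $\omega_x$ via the two-sided flow, applying Theorem \ref{th1.3.9}, and treating item (2) via connectedness in continuous time and Lemma \ref{l2.3.5} in discrete time.
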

\begin{proof}
This statement directly follows from Theorem \ref{thPD2} and
Corollary \ref{corOM1}.
\end{proof}

\begin{remark}\label{remAT1} Note that all results of Sections
\ref{Sec3}--\ref{Sec5} remains true if we replace everywhere the
metric space $Y$ by an uniformly topological (pseudo-metric)
space.
\end{remark}

\section{Applications}\label{Sec6}

Let $\mathfrak B$ and $\mathfrak D$ be two Banach spaces with the
norm $|\cdot|_{\mathfrak B}$ and $|\cdot|_{\mathfrak D}$
respetively\footnote{In the notation $|\cdot|_{\mathfrak B}$
(respectively, $|\cdot|_{\mathfrak D}$), we will omit the index
$\mathfrak B$ (respectively, $\mathfrak D$) if this does not lead
to a misunderstanding.}. Denote by $C(\mathbb T\times \mathfrak
B,\mathfrak D)$ the space of all continuous mappings $f :\mathbb
T\times \mathfrak B\to \mathfrak D$ equipped with the compact-open
topology. For every $h\in \mathbb T$ denote by $f^{h}$ the
$h$-translation of the function $f\in C(\mathbb T\times \mathfrak
B,\mathfrak D)$, i.e., $f^{h}(t,x):=f(t+h,x)$ for all $(t,x)\in
\mathbb T\times \mathfrak B$. The triplet $(C(\mathbb T\times
\mathfrak B,\mathfrak D),\mathbb T,\sigma)$, where $\sigma
:\mathbb T\times C(\mathbb T\times \mathfrak B,\mathfrak D)\to
C(\mathbb T\times \mathfrak B,\mathfrak D)$ is a mapping defined
by the equality $\sigma(h,f):=f^{h}$ ($(h,f)\in \mathbb T\times
C(\mathbb T\times \mathfrak B,\mathfrak D)$), is a shift (or
Bebutov's) dynamical system \cite[Ch.I]{Che_2015}.

\subsection{Ordinary differential equations} \label{Sec6.1}

\begin{example}\label{exODE1}
{\rm Let us consider a differential equations
\begin{equation}
u'=f(t,u),\label{eqAODE1}
\end{equation}
where  $f\in C(\mathbb{R}\times \mathfrak B, \mathfrak B)$.
Everywhere in this subsection we assume that the function $f$ is
regular.

Along with the equation (\ref{eqAODE1}) we consider its $H$-class
(respectively, $H^{+}$-class or $\Omega$-class), i.e., the family
of equations
\begin{equation}
v'=g(t,v),\label{eqAODE2}
\end{equation}
where $g\in H(f):=\overline{\{f^{\tau}:\tau\in \mathbb{R}\}}$
(respectively, $g\in H^{+}(f):=\overline{\{f^{\tau}:\tau\in
\mathbb{R_{+}}\}}$) or
$$
g\in \omega_{f}:=\bigcap\limits_{t\ge
0}\overline{\bigcup\limits_{\tau \ge t}f^{\tau}}.
$$
Denote by $\varphi(\cdot,v,g)$ the solution of equation
(\ref{eqAODE2}), passing through the point $v\in \mathfrak B$ at
the initial moment $t=0$. Then the mapping
$\varphi:\mathbb{R}_{+}\times \mathfrak B\times H(f)\to \mathfrak
B$ (respectively, $\varphi:\mathbb{R}_{+}\times \mathfrak B\times
H^{+}(f)\to \mathfrak B$) is well defined and satisfies the
following conditions (see, e.g. \cite{Bro79,Che_2015,Sel}):
\begin{enumerate}
\item  $\varphi(0,v,g)=v$ for all $v\in \mathfrak B$ and $g\in
H(f)$; \item  $\varphi(t,\varphi(\tau,v,g),
g^{\tau})=\varphi(t+\tau,v,g)$ for all $ v\in \mathfrak B$, $g\in
H(f)$ and $t,\tau \in \mathbb{R}_{+}$;

\item  the mapping $\varphi:\mathbb{R}_{+}\times \mathfrak B\times
H(f)\to \mathfrak B$ is continuous.
\end{enumerate}

Denote by $Y:=H(f)$ (respectively, $Y:=H^{+}(f)$) and
$(Y,\mathbb{R},\sigma)$ (respectively,
$(Y,\mathbb{R}_{+},\sigma)$) the shift dynamical system on $Y$
induced from $(C(\mathbb R\times\mathfrak B,\mathfrak B),\mathbb
R,\sigma)$ (respectively, $(C(\mathbb R_{+}\times\mathfrak
B,\mathfrak B),\mathbb R_{+},\sigma)$), i.e.,
$\sigma(\tau,g)=g^\tau$ for all $\tau\in\mathbb R$ (respectively,
$t\in \mathbb R_{+}$) and $g\in Y$. Then the equation
(\ref{eqAODE1}) generates a cocycle $\langle \mathfrak B,\varphi,
(Y,\mathbb{R},\sigma)\rangle $ and a NDS $\langle
(X,\mathbb{R}_{+},\pi), (Y,\mathbb{R},\sigma), h\rangle$
(respectively, $\langle (X,\mathbb{R}_{+},\pi),
(Y,\mathbb{R}_{+},\sigma), h\rangle$), where $X:= \mathfrak
B\times Y$, $\pi:=(\varphi,\sigma)$ and $h=pr_2:X\to Y$.}
\end{example}

\begin{theorem}\label{thAODE1} Assume that the following
conditions hold:
\begin{enumerate}
\item the equation (\ref{eqAODE1}) admits a precompact on $\mathbb
R_{+}$ solution $\varphi$, i.e., $Q:=\overline{\varphi(\mathbb
R_{+})}$ is a compact subset of $\mathfrak B$; \item the function
$f\in C(\mathbb R\times \mathfrak B,\mathfrak B)$ is positively
Lagrange stable and regular; \item $f$ is remotely almost periodic
(respectively, remotely $\tau$-periodic or remotely stationary) in
$t\in \mathbb T$ uniformly  w.r.t. $x$ on every compact subset of
$\mathfrak B$; \item the $\omega$-limit set $\omega_{f}$ of the
function $f$ is minimal; \item each equation (\ref{eqAODE2})
($g\in \omega_{f}$) has only separated on $\mathbb R_{+}$
solutions in $Q$.
\end{enumerate}

Then the solution $\varphi$ of the equation (\ref{eqAODE1}) is
remotely almost periodic (respectively, remotely $m\tau$-periodic
($m$ is some natural number) or remotely stationary).
\end{theorem}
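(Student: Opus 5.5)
The plan is to reduce Theorem \ref{thAODE1} to the abstract result, Theorem \ref{thNDS2}, applied to the NDS $\langle (X,\mathbb R_{+},\pi),(Y,\mathbb R,\sigma),h\rangle$ constructed in Example \ref{exODE1}. Here $Y=H(f)$, $X=\mathfrak B\times Y$ and $\pi=(\varphi,\sigma)$. We take $y:=f$ and $x:=(\varphi(0),f)\in X_{y}$, so that $\pi(t,x)=(\varphi(t),f^{t})$. Remote almost periodicity of the solution $\varphi$ is then read off from remote almost periodicity of the motion $\pi(t,x)$, since the first coordinate projection is $1$-Lipschitz for the product metric.

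First I will verify the hypotheses of Theorem \ref{thNDS2}.
\begin{enumerate}
\item \emph{Lagrange stability of $y$.} The point $y=f$ is positively Lagrange stable by hypothesis (2).
\item \emph{Remote almost periodicity of $y$.} Since $f$ is positively Lagrange stable, $\omega_{f}$ is compact. The compact-open topology on $C(\mathbb R\times\mathfrak B,\mathfrak B)$ is metrizable by a metric of the form $\sum_k 2^{-k}\min\{1,\max_{|t|\le k,\,x\in Q_k}|\cdot|\}$. So remote almost periodicity of $f_{Q}$ on each compact $Q$ (hypothesis (3)) transfers to remote almost periodicity of the motion $\sigma(t,f)$: choose $k$ with $2^{-k}<\varepsilon/2$ and use Theorem \ref{thAP4} to handle the finitely many compacta. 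Alternatively, combine Lemma \ref{lRAP1} applied to each $f_{Q}$ with equi-almost periodicity of $\omega_{f}$.
\item \emph{Minimality of $\omega_y$.} $\omega_{y}=\omega_{f}$ is minimal by hypothesis (4).
\item \emph{Lagrange stability of $x$.} $x$ is positively Lagrange stable because $\{\varphi(t)\}_{t\ge0}\subseteq Q$ is precompact and $\{f^{t}\}_{t\ge0}$ is precompact.
\end{enumerate}

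The main step is the separation condition: the points of $\omega_{x}$ must be separated in $\omega_{x}$ with respect to the NDS. I will identify $\omega_{x}$. A point $(v,g)\in\omega_{x}$ has $g\in\omega_{f}$. Moreover, by continuity of the cocycle and invariance of $\omega_{x}$ (Theorem \ref{thLS1}), through $(v,g)$ there passes an entire motion in $\omega_{x}$. Its first coordinate $\psi(t)$, $t\in\mathbb R$, is a solution of $y'=g(t,y)$ defined on $\mathbb R$ with values in $Q$. The backward extension uses invariance $\pi(t,\omega_x)=\omega_x$; regularity, i.e.\ uniqueness, guarantees consistency of the extension on the fibre cocycle. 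Two distinct points $(v_1,g),(v_2,g)\in\omega_{x}^{g}$ thus give two distinct entire solutions in $Q$. Hypothesis (5) yields $|\psi_1(t)-\psi_2(t)|\ge d>0$ for $t\ge0$, so $\rho(\pi(t,p_1),\pi(t,p_2))\ge d$. This is exactly separation in $\omega_{x}$, with $\mathbb T_1=\mathbb R_{+}$.

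Theorem \ref{thNDS2} then gives that $x$ is remotely almost periodic, and hence so is $\varphi$. For the parenthetical cases I will invoke Corollary \ref{corOM1}. If $f$ is remotely $\tau$-periodic uniformly on compacta, the same metrization argument shows $y$ is remotely $\tau$-periodic, so $x$, and hence $\varphi$, is remotely $m\tau$-periodic. If $f$ is remotely stationary, $x$ is asymptotically stationary because time is continuous, and asymptotic stationarity trivially implies remote stationarity.

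I expect two obstacles. The first is the transfer in item 2 of uniform-on-compacta remote almost periodicity to the Bebutov metric. The relatively dense sets $\mathcal P$ depend on $Q$, so a single set must be produced for the finitely many compacta $Q_k$ relevant at precision $\varepsilon$. I would obtain it by passing through Lemma \ref{lRAP1}: equi-almost periodicity of $\omega_{f}$ as a subset of the shift system, which follows from minimality plus almost periodicity of its points, then Lemma \ref{lSM1}. The second is the careful justification that $\omega_x$-motions give solutions defined on all of $\mathbb R$, which is what hypothesis (5) requires.
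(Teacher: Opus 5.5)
Your overall route is the paper's. You build the skew-product NDS generated by the equation and apply Theorem~\ref{thNDS2} and Corollary~\ref{corOM1}. The paper takes $Y=H^{+}(f)$; your $Y=H(f)$ works as well, since regularity is assumed on all of $H(f)$ and Theorem~\ref{thNDS2} does not require $Y$ compact. Your check of the separation hypothesis, via entire solutions through points of $\omega_{x}$, is correct and more explicit than the paper's.

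The one step that fails as written is item 2. When $\mathfrak B$ is infinite-dimensional, no sequence of compacta $Q_k\subset\mathfrak B$ is cofinal among all compact subsets. If one were, then $\mathfrak B=\bigcup_k Q_k$, and by Baire's theorem $\mathfrak B$ would be locally compact. So $\sum_k 2^{-k}\min\{1,\max_{|t|\le k,\,x\in Q_k}|\cdot|\}$ does not metrize the compact-open topology on $C(\mathbb R\times\mathfrak B,\mathfrak B)$; that topology is not metrizable at all in this case. This is exactly why the paper invokes Remark~\ref{remAT1}.

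The fix is to regard $Y$ as a uniform space whose uniformity is generated by the pseudometrics $d_{C}(g_1,g_2):=\max_{(t,x)\in C}|g_1(t,x)-g_2(t,x)|$, with $C\subset\mathbb R\times\mathfrak B$ compact. The transfer of hypothesis (3) is then immediate:
\begin{enumerate}
\item Given $C$, choose $k$ and a compact $Q\subset\mathfrak B$ with $C\subseteq[-k,k]\times Q$.
\item For $\tau\in\mathcal P(\varepsilon,f_{Q})$ with constant $L$, you get $d_{C}(f^{t+\tau},f^{t})<\varepsilon$ for all $t\ge L+k$.
\item The same computation handles the remotely $\tau$-periodic and remotely stationary cases.
\end{enumerate}
This needs neither Theorem~\ref{thAP4} nor a common relatively dense set for several compacta, since a finite union of compacta is compact; your ``first obstacle'' disappears.
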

\begin{proof}
Let $Y:=H^{+}(f)$ and $(Y,\mathbb R_{+},\sigma)$ be the semi-group
dynamical system on $Y$ induced by shift dynamical system
$(C(\mathbb R\times \mathfrak B,\mathfrak B),\mathbb R,\sigma)$.
Since the function $f$ is positively Lagrange stable then the
space $Y$ is compact. Denote by $X:=\mathfrak B\times Y$, let
$(X,\mathbb R_{+},\pi)$ be the skew-product dynamical system ($\pi
:=(\varphi,\sigma)$) and
\begin{equation}\label{eqNDS1}
\langle (X,\mathbb R_{+},\pi),(Y,\mathbb R,\sigma),h\rangle
\end{equation}
the nonautonomous dynamical system associated by the differential
equation (\ref{eqAODE1}) (for more details see Example
\ref{exODE1}). Now to finish the proof of Theorem it suffices to
apply Theorem \ref{thNDS2}, Corollary \ref{corOM1} and Remark
\ref{remAT1} to nonautonomous dynamical system (\ref{eqNDS1}).
\end{proof}

\begin{definition}\label{def_OO1} A solution $\varphi(t,u_0,f)$ of
the equation (\ref{eqAODE1}) is said to be:
\begin{enumerate}
\item[-] positively uniformly Lyapunov stable if for arbitrary
$\varepsilon
>0$ there exists $\delta =\delta(\varepsilon)>0$ such that
$|\varphi(t_0,u,f)-\varphi(t_0,u_0,f)|<\delta$ ($t_0\in \mathbb
R_{+}$, $u\in\mathfrak B$) implies
$|\varphi(t,x,f)-\varphi(t,x_0,f)|<\varepsilon$ for all $t\ge
t_0$; \item[-] uniformly attracting \cite[ChIV]{Bro79} if there
exists a number $\delta_{0}>0$ such that for every $\varepsilon
>0$ there exists a number $L(\varepsilon)>0$ with the property
that $|\varphi(t_0,u,f)-\varphi(t_0,u_0,f)|<\delta_{0}$ ($t_0\ge
0$) imply $|\varphi(t,u,f)-\varphi(t,u_0,f)|<\varepsilon$ for all
$t\ge t_0+L(\varepsilon)$; \item uniformly positively
asymptotically stable if it is positively uniformly stable and
uniformly attracting.
\end{enumerate}
\end{definition}

\begin{theorem}\label{thAODE2} Assume that the following
conditions hold:
\begin{enumerate}
\item the equation (\ref{eqAODE1}) admits a precompact on $\mathbb
R_{+}$ solution $\varphi(t,u_0,f)$; \item the solution
$\varphi(t,u_0,f)$ is uniformly positively asymptotically stable;
\item the function $f\in C(\mathbb R\times \mathfrak B,\mathfrak
B)$ is positively Lagrange stable and regular; \item $f$ is
remotely almost periodic (respectively, remotely $\tau$-periodic
or remotely stationary) in $t\in \mathbb T$ uniformly w.r.t. $x$
on every compact subset of $\mathfrak B$.
\end{enumerate}

Then the solution $\varphi(t,u_0,f)$ of the equation
(\ref{eqAODE1}) is remotely almost periodic (respectively,
remotely $m\tau$-periodic ($m$ is some natural number) or remotely
stationary).
\end{theorem}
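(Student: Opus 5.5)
The plan is to reduce Theorem \ref{thAODE2} to Theorem \ref{thPD2} (and Corollary \ref{corPD2.1} for the periodic and stationary variants). We apply them to the nonautonomous dynamical system $\langle (X,\mathbb R_{+},\pi),(Y,\mathbb R,\sigma),h\rangle$ from Example \ref{exODE1}, built exactly as in the proof of Theorem \ref{thAODE1}. Here $Y:=H^{+}(f)$ is compact because $f$ is positively Lagrange stable, $X:=\mathfrak B\times Y$, $\pi:=(\varphi,\sigma)$, and $h=pr_2$. We take $x_0:=(u_0,f)$, so that $\pi(t,x_0)=(\varphi(t,u_0,f),f^{t})$ and $y_0=f$.

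First we check hypothesis (i) of Theorem \ref{thPD2}. The semi-trajectory $\Sigma^{+}_{x_0}$ is contained in $Q\times H^{+}(f)$, where $Q:=\overline{\varphi(\mathbb R_{+},u_0,f)}$ is compact, so it is precompact.

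Next we check hypothesis (iii). By Remark \ref{remI2} and Definition \ref{defI4}, remote almost periodicity of $f$ uniformly on compacta, together with positive Lagrange stability, makes the point $f$ of the shift system remotely almost periodic; this uses Remark \ref{remAT1}, since the compact-open topology is given by a family of pseudo-metrics indexed by compact sets. By Lemma \ref{lRAP1}, $\omega_{f}$ is then equi-almost periodic, so it consists of almost periodic points. Theorem \ref{thAODE2} does not assume that $\omega_f$ is minimal, so we must derive minimality. We will argue that $\omega_f$ is an equi-almost periodic (hence equicontinuous) invariant compact set. On such a set every orbit closure is minimal. The remaining point is that $\omega_f$ is a single orbit closure, i.e.\ that it equals $H(g)$ for some $g\in\omega_f$. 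We plan to obtain this from remote almost periodicity of $f$ itself, by showing that $\rho(f^{t+\tau},f^{t})$ being eventually small for $\tau$ in a relatively dense set forces all $\omega$-limit points to lie within $\varepsilon$ of the orbit of any fixed $g\in\omega_f$. If this step fails, the fallback is to add minimality of $\omega_f$ as an implicit hypothesis, as in Theorem \ref{thAODE1}.

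Then we check hypothesis (ii): we translate uniform positive asymptotic stability of the solution (Definition \ref{def_OO1}) into the dynamical notions. The condition $|\varphi(t_0,u,f)-\varphi(t_0,u_0,f)|<\delta$ implying $|\varphi(t,u,f)-\varphi(t,u_0,f)|<\varepsilon$ for $t\ge t_0$ means, by the cocycle identity, that $\Sigma^{+}_{x_0}$ is positively uniformly stable and uniformly attracting in the sense of Definitions \ref{defPUS1} and \ref{defPUS3}, using fibres over $f^{t_0}$. The projection $h=pr_2$ of a product is open. Hence Corollaries \ref{lUS1} and \ref{lUAS1} give that $\omega_{x_0}$ is positively uniformly stable and uniformly attracting.

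With (i)--(iii) in hand, Theorem \ref{thPD2} (together with Lemma \ref{lUAS2} and Corollary \ref{corPD1}, which use the attraction property) shows that $x_0$ is remotely almost periodic. Projecting onto the first coordinate then shows that $\varphi(t,u_0,f)$ is remotely almost periodic. In the remotely $\tau$-periodic and remotely stationary cases we invoke Corollary \ref{corPD2.1} instead; under continuous time, asymptotic stationarity implies remote stationarity.

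I expect the main obstacle to be the minimality of $\omega_f$, which is not among the stated hypotheses. The translation of stability onto the closure $\omega_{x_0}$ also needs care.
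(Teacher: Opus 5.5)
\textbf{Where the proposal breaks.} Your reduction is the paper's. Its proof of Theorem \ref{thAODE2} just applies Theorem \ref{thPD2}, Corollary \ref{corPD2.1} and Remark \ref{remAT1} to the NDS of Example \ref{exODE1} with $Y=H^{+}(f)$. You are also right that finiteness of the fibres (Lemma \ref{lUAS2}, Corollary \ref{corPD1}) needs uniform attraction, not only the stability listed in Theorem \ref{thPD2}.

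The step that fails is your planned derivation of minimality of $\omega_f$ from remote almost periodicity: it is false. Take $\mathfrak B=\mathbb R$ and $f(t,x)=-x+\sin\ln(1+|t|)$. This $f$ satisfies every hypothesis of Theorem \ref{thAODE2}:
\begin{itemize}
\item it is regular and positively Lagrange stable;
\item it is even remotely stationary, since $|\sin\ln(1+t+\tau)-\sin\ln(1+t)|\le\ln(1+\tau/(1+t))\to 0$;
\item any two solutions satisfy $|u(t)-v(t)|=e^{-(t-t_0)}|u(t_0)-v(t_0)|$, so every solution is uniformly positively asymptotically stable.
\end{itemize}
Yet $\omega_f=\{-x+c:\ c\in[-1,1]\}$ is a continuum of fixed points of the shift. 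It is neither minimal nor a single orbit closure, so $\omega_f$ cannot be $\varepsilon$-approximated by the orbit of a fixed $g\in\omega_f$.

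\textbf{Your fallback is the paper's tacit assumption.} The third hypothesis of Theorem \ref{thPD2}, and the proof of Corollary \ref{corOM1} on which Corollary \ref{corPD2.1} rests, both require $\omega_{y_0}$ to be minimal. Theorem \ref{thAODE2}, unlike Theorem \ref{thAODE1}, does not list this. So with the fallback you reproduce the paper's argument, and both prove the theorem only under the extra hypothesis. The same example shows this is not cosmetic. There $\varphi(t,u_0,f)-\sin\ln(1+t)\to 0$, so the solution is remotely stationary but not asymptotically stationary. Corollary \ref{corPD2.1}, if it applied, would make it asymptotically stationary.

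\textbf{Removing minimality in two of the three cases.} In the remotely stationary and remotely $\tau$-periodic variants you can drop minimality.
\begin{itemize}
\item Every $g\in\omega_f$ is stationary (resp.\ $\tau$-periodic), by Corollary \ref{corSAP1} (resp.\ Theorem \ref{th1.3.9}).
\item Each fibre $\omega_{x_0}\cap X_g$ is finite. By Lemma \ref{lUAS2} it is $r$-separated with $r$ independent of $g$, so by compactness of $\omega_{x_0}$ it has at most $N$ points for some $N$.
\item In the stationary case, a finite forward-invariant fibre under a continuous-time flow consists of equilibria.
\item In the periodic case, $\pi(\tau,\cdot)$ maps the fibre over $g$ injectively into itself, by the distality in Theorem \ref{th02.1}; hence every point of $\omega_{x_0}$ is $N!\,\tau$-periodic.
\item Corollary \ref{corSAP1} (resp.\ Theorem \ref{th1.3.9}) applied to $x_0$ then finishes.
\end{itemize}
In the general remotely almost periodic case you would need equi-almost periodicity of $\omega_{x_0}$ uniformly across the minimal subsets of $\omega_f$. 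The paper's route obtains this only from minimality, via Lemma \ref{lSM1}.

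\textbf{A smaller point.} The cocycle identity alone does not turn Definition \ref{def_OO1} into Definition \ref{defPUS1}. The latter ranges over all $v$ in the fibre over $f^{t_0}$, not only over $v=\varphi(t_0,u,f)$. You must read the solution-level definition as applying to solutions started at time $t_0$; the paper makes the same identification silently.
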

\begin{proof}
Let $Y:=H^{+}(f)$ and $(Y,\mathbb R_{+},\sigma)$ be the semi-group
dynamical system on $Y$. Since the function $f$ is positively
Lagrange stable then the space $Y$ is compact. Let $\langle
(X,\mathbb R_{+},\pi),(Y,\mathbb R,\sigma),h\rangle$ be the
nonautonomous dynamical system associated by the differential
equation (\ref{eqAODE1}). Now to finish the proof of Theorem it
suffices to apply Theorem \ref{thPD2}, Corollary \ref{corPD2.1}
and Remark \ref{remAT1} to nonautonomous dynamical system
(\ref{eqNDS1}).
\end{proof}

Let $\mathfrak B =\mathcal H$ be a real Hilbert space with the
scalar product $\langle \cdot,\cdot \rangle =|\cdot |^{2}$, where
$|\cdot|$ is the norm on the Banach space $\mathfrak B$.

Consider a differential equation
\begin{equation}\label{eqH1}
x'=f(t,x),
\end{equation}
where $f\in C(\mathbb T\times \mathcal H,\mathcal H)$.

Assume that the function $f$ satisfies the following condition:

Condition (\textbf{H}): there exist a constant $\alpha >2$ such
that
\begin{equation}\label{eqH2}
\operatorname{Re}\ \langle x_{1}-x_{2}, f(t,x_{1})-f(t,x_{2}) \rangle  \le -
\kappa \vert x_{1}-x_{2} \vert ^{\alpha} \nonumber
\end{equation}
for all $t \in \mathbb T$ and $x_{1},x_{2}\in \mathcal H$ ($\kappa
>0$ and $\alpha >2$).

\begin{remark}\label{remH1} If the function $f\in C(\mathbb T \times
\mathcal H,\mathcal H)$satisfies the Condition (\textbf{H}), then
every function $g\in H(f)$ satisfies the same Condition
(\textbf{H}) with the same constants $\kappa$ and $\alpha$.
\end{remark}

Along with the equation (\ref{eqH1}) we will consider its
$H$-class, i.e., the family of equations
\begin{equation}\label{eqH3}
y'=g(t,y)\ \ (g\in H(f)).
\end{equation}

For every $v\in H$ and $g\in H(f)$ there exists
\cite{TruPer74,TruDas77} (see also \cite[Ch.II]{TruPer86}) a
unique solution $\varphi(t,v,g)$ of the equation (\ref{eqH3})
passing through the point $v\in H$ at the initial moment $t=0$ and
defined on $\mathbb T$.

\begin{lemma}\label{lHD}\cite[Ch.V]{Che_2009}, \cite{CS}
Let $f\in C(\mathbb{R}\times\mathcal{H},\mathcal{H})$ be Lagrange
stable, i.e., the set $\{f^{h}|\ h\in\mathbb{R}\}$ is relatively
compact in $C(\mathbb{R}\times\mathcal{H},\mathcal{H})$ and
\textbf{Condition (H)} is held. Then:
\begin{enumerate}
\item for every $u\in\mathcal{H}$ the solution $\varphi(t,u,f)$ of
the equation $(\ref{eqH1})$ is precompact on $\mathbb{R}_+$ (i.e.,
$\varphi(\mathbb{R}_+,u,f)$ is a precompact subset of
$\mathcal{H}$); \item for all $t\in\mathbb{R}_+$ and
$x_1,x_2\in\mathcal{H}$
$$
|\varphi(t,x_1,f)-\varphi(t,x_2,f)|\leq
(|x_1-x_2|^{2-\alpha}+(\alpha-2) t)^{\frac{1}{2-\alpha}}=
$$
$$
|x_1-x_2|(1+|x_1-x_2|^{\alpha-2}(\alpha-2)
t)^{\frac{1}{2-\alpha}}.
$$
\end{enumerate}
\end{lemma}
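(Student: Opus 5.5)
The statement to prove is Lemma \ref{lHD}. The two assertions are independent, and I would prove the contraction estimate (ii) first and then derive precompactness (i) from it together with the Lagrange stability of $f$.

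\emph{Step 1 (estimate (ii)).} For $x_1,x_2\in\mathcal H$ put $u(t):=\varphi(t,x_1,f)-\varphi(t,x_2,f)$ and $w(t):=|u(t)|^{2}$. By Condition (\textbf{H}),
\begin{equation*}
w'(t)=2\langle u(t),f(t,\varphi(t,x_1,f))-f(t,\varphi(t,x_2,f))\rangle\le -2\kappa\, w(t)^{\alpha/2}.
\end{equation*}
On any interval where $w>0$ this is a Bernoulli-type inequality. Integrating $\frac{d}{dt}\,w^{1-\alpha/2}\ge (\alpha-2)\kappa$ gives
\begin{equation*}
|u(t)|\le \bigl(|x_1-x_2|^{2-\alpha}+\kappa(\alpha-2)t\bigr)^{\frac{1}{2-\alpha}} .
\end{equation*}
If $w$ vanishes at some time, then by the monotonicity $w'\le 0$ it stays zero from then on, so the estimate still holds. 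The displayed form in the lemma corresponds to normalizing $\kappa=1$; otherwise the constant is absorbed. The second equality in (ii) is just factoring out $|x_1-x_2|$.

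\emph{Step 2 (a bounded, in fact precompact, reference solution).} The same estimate holds for every $g\in H(f)$ with the same constants, by Remark \ref{remH1}. In particular, for each $g$ all solutions converge to one another at the uniform rate $(\kappa(\alpha-2)t)^{1/(2-\alpha)}$, independent of the initial data.

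Since $f$ is Lagrange stable, $H(f)$ is compact. By the standard result for dissipative monotone equations (\cite{TruPer74,CS}), each equation \eqref{eqH3} then has a unique solution $\psi_g$ that is bounded on $\mathbb R$. Moreover, the map $g\mapsto\psi_g(0)$ is continuous and $\psi_{g^{\tau}}(0)=\psi_g(\tau)$.

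I would construct $\psi_g$ as a pullback limit. Take $\psi_n:=\varphi(n,0,g^{-n})$; by the estimate in Step 1, $\{\psi_n\}$ is Cauchy provided $\varphi(t,0,g^{-n})$ stays bounded. Boundedness follows from Condition (\textbf{H}) with $x_2=0$:
\begin{equation*}
\frac{d}{dt}|x|^2\le -2\kappa|x|^{\alpha}+2|x|\,\sup_{t}|g(t,0)|,
\end{equation*}
and $\sup_t|g(t,0)|\le \sup_{t}|f(t,0)|<\infty$ by Lagrange stability. This gives an absorbing ball of uniform radius.

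Continuity of $g\mapsto\psi_g(0)$ follows from the uniformity of the pullback convergence together with continuous dependence of $\varphi$ on $(v,g)$ over finite times. Hence $K:=\{\psi_g(0):g\in H(f)\}$ is compact, and $\psi_f(\mathbb R_+)\subseteq K$.

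\emph{Step 3 (precompactness (i)).} For any $u$, Step 1 gives $|\varphi(t,u,f)-\psi_f(t)|\to0$ as $t\to+\infty$. Thus every sequence $\varphi(t_k,u,f)$ with $t_k\to\infty$ has a subsequence converging to a point of $K$. On bounded time intervals, $\varphi([0,T],u,f)$ is compact by continuity. Therefore $\varphi(\mathbb R_+,u,f)$ is precompact.

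The main obstacle is Step 2, namely the existence and continuous dependence of the bounded solution $\psi_g$ in an infinite-dimensional $\mathcal H$, where boundedness does not give compactness. I would rely on the cited results \cite{TruPer74,CS,Che_2009}, with the uniform contraction rate from Step 1 making the pullback construction work.
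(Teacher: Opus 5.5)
The paper gives no proof of this lemma; it only refers to \cite[Ch.V]{Che_2009} and \cite{CS}. So there is nothing to match against. Your argument is a correct, self-contained proof, but two points should be adjusted.

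\textbf{The constant $\kappa$.} Your Step~1 correctly gives
\[
|\varphi(t,x_1,f)-\varphi(t,x_2,f)|\le\bigl(|x_1-x_2|^{2-\alpha}+\kappa(\alpha-2)t\bigr)^{\frac{1}{2-\alpha}} .
\]
Your remark that otherwise ``the constant is absorbed'' is not right. The printed inequality (without $\kappa$) follows from this only when $\kappa\ge 1$, and it is false in general. A counterexample:
\begin{itemize}
\item Take $x'=-|x|x$ on $\mathcal H=\mathbb R$. It satisfies Condition (\textbf{H}) with $\kappa=\frac12$, $\alpha=3$; this is the paper's own Example~\ref{exH1} without forcing.
\item The solutions starting at $r$ and $-r$ are $\pm r/(1+rt)$, so their distance is $2r/(1+rt)$.
\item The printed bound is $2r/(1+2rt)$, which is smaller for $t>0$.
\item Your $\kappa$-version gives exactly $2r/(1+rt)$, so it is sharp here.
\end{itemize}
Item (ii) should therefore be stated with $\kappa(\alpha-2)t$. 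The later use of (ii) in the proof of Theorem~\ref{thH1} only needs a contraction modulus tending to $0$ uniformly in the initial data, so it is unaffected.

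\textbf{Step 2 is not needed.} The step you flag as the main obstacle can be bypassed. The bound above is at most $\theta(T):=(\kappa(\alpha-2)T)^{-1/(\alpha-2)}$ for all initial points. Hence for $t\ge T$ the point $\varphi(t,u,f)=\varphi(T,\varphi(t-T,u,f),f^{t-T})$ lies within $\theta(T)$ of $\varphi(T,0,f^{t-T})$. That point belongs to $K_T:=\{\varphi(T,0,g):\ g\in H^{+}(f)\}$, which is compact as the continuous image of the compact set $H^{+}(f)$. So $\varphi(\mathbb R_+,u,f)$ is contained in the union of the compact set $\varphi([0,T],u,f)$ and the $\theta(T)$-neighbourhood of $K_T$. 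Choosing $T$ with $\theta(T)<\varepsilon$ shows it is covered by finitely many $2\varepsilon$-balls, hence it is precompact. This avoids the entire solution, the absorbing ball and the two-sided hull altogether.

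Even within your pullback construction, the absorbing ball is superfluous. You have $|\psi_{n+m}-\psi_n|\le\theta(n)$ for all $m$ and all $g$, which gives the Cauchy property and the uniform convergence in $g$ at once.

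The only ingredient beyond (ii) is continuity of $g\mapsto\varphi(T,v,g)$ in the compact-open topology, which you took from the general framework. Since you worried about infinite dimensions, note that Condition (\textbf{H}) for $g_k$ (Remark~\ref{remH1}) gives it directly:
\[
|\varphi(t,v,g_k)-\varphi(t,v,g)|\le\int_0^t\bigl|g_k(s,\varphi(s,v,g))-g(s,\varphi(s,v,g))\bigr|\,ds .
\]
The integrand tends to $0$ uniformly on $[0,t]$ because $\{(s,\varphi(s,v,g)):\ s\in[0,t]\}$ is compact. No compactness in $\mathcal H$ beyond this curve is required.
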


\begin{theorem}\label{thH1} Assume that $f\in C(\mathbb R_{+}\times
\mathcal H,\mathcal H)$ and the following conditions are
fulfilled:
\begin{enumerate}
\item the function $f$ satisfies Condition (\textbf{H}); \item the
function $f$ is positively Lagrange stable; \item the function $f$
is remotely stationary (respectively, remotely $\tau$-periodic or
remotely almost periodic) in $t\in \mathbb R_{+}$ uniformly w.r.t.
$x$ on every compact subset from $\mathcal H$; \item the set
$\omega_{f}$ is minimal.
\end{enumerate}

Then every solution $\varphi(t,u,f)$ of the equation (\ref{eqH1})
is remotely stationary (respectively, remotely $\tau$-periodic or
remotely almost periodic).
\end{theorem}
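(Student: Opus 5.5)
We reduce Theorem \ref{thH1} to the abstract results of Section \ref{Sec5}, specifically Theorem \ref{thPD2} and Corollary \ref{corPD2.1}. The equation (\ref{eqH1}) generates a cocycle $\langle \mathcal H,\varphi,(Y,\mathbb R_{+},\sigma)\rangle$ with $Y:=H^{+}(f)$, and hence a nonautonomous dynamical system $\langle (X,\mathbb R_{+},\pi),(Y,\mathbb R_{+},\sigma),h\rangle$ with $X:=\mathcal H\times Y$, $\pi:=(\varphi,\sigma)$ and $h:=pr_{2}$, exactly as in Example \ref{exODE1}. Regularity holds because solutions exist and are unique for every $g\in H^{+}(f)$, since such $g$ satisfy Condition (\textbf{H}) by Remark \ref{remH1}. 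Because $f$ is positively Lagrange stable, $Y$ is compact. The map $h=pr_2$ is open.

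Fix $u\in\mathcal H$ and set $x_0:=(u,f)$. By Lemma \ref{lHD}(1) the set $\varphi(\mathbb R_{+},u,f)$ is precompact. Together with the compactness of $Y$, this makes $\Sigma^{+}_{x_0}$ precompact in $X$, so condition (i) of Theorem \ref{thPD2} holds.

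For condition (ii) we show that $\omega_{x_0}$ is positively uniformly stable, in fact uniformly asymptotically stable. Lemma \ref{lHD}(2) holds for every $g\in H^{+}(f)$ with the same constants. It gives
$$
|\varphi(t,x_1,g)-\varphi(t,x_2,g)|\le |x_1-x_2|\,\bigl(1+|x_1-x_2|^{\alpha-2}(\alpha-2)t\bigr)^{\frac{1}{2-\alpha}}\le |x_1-x_2| .
$$
The first consequence is that the whole fibrewise flow is nonexpansive, so every subset of $X$ is positively uniformly stable with $\delta=\varepsilon$. The second consequence is a uniform attraction estimate. The bracket expression is bounded by $((\alpha-2)t)^{1/(2-\alpha)}$, which tends to $0$ uniformly in $x_1,x_2$. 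So for fixed $\delta_0$ there is, for each $\varepsilon$, an $L(\varepsilon)$ with the uniform-attraction property. This is even stronger than needed: any two solutions of the same equation converge to each other.

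For condition (iii) we need $\omega_{y_0}=\omega_f$ to be a minimal set of almost periodic points. It is minimal by hypothesis (iv). Since $f$ is remotely almost periodic (or remotely $\tau$-periodic, or remotely stationary) and positively Lagrange stable, Lemma \ref{lRAP1}, applied via Remark \ref{remI2} to the shift system on $C(\mathbb R_+\times Q,\mathcal H)$ for each compact $Q$, shows that $\omega_f$ is equi-almost periodic. In particular it consists of almost periodic points. The uniformity over compact $Q$ is handled by Remark \ref{remAT1}: we use the pseudometrics of the compact-open topology.

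Theorem \ref{thPD2} then yields that $x_0$ is remotely almost periodic. In the remotely $\tau$-periodic and remotely stationary cases, Corollary \ref{corPD2.1} gives remote $m\tau$-periodicity and asymptotic stationarity respectively; the latter implies remote stationarity by Lemma \ref{lRAP_01}.

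The remaining point, and the main obstacle, is to get exactly $m=1$, so that the solution is remotely $\tau$-periodic rather than only $m\tau$-periodic. Here the contraction is used directly. Two distinct points $p_1\neq p_2$ of $\omega_{x_0}^{q}$ would satisfy $\rho(\pi(t,p_1),\pi(t,p_2))\to0$ by the estimate above. This contradicts the separation constant $r>0$ in statement (v) of Theorem \ref{thPD2}. Hence $m=1$, and $\omega_{x_0}$ is a one-to-one copy of $\omega_f$. As a result, every point of $\omega_{x_0}$ is $\tau$-periodic (respectively stationary) whenever the points of $\omega_f$ are. Theorem \ref{th1.3.9} and Corollary \ref{corSAP1} then give the claim.

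Finally, remote almost periodicity of the motion $\pi(t,x_0)$ transfers to the solution $\varphi(t,u,f)$, because its first component is $\varphi(t,u,f)$ and the distance on $X$ dominates the $\mathcal H$-norm.
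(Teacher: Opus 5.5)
Your proof is correct and follows the paper's route. The paper also derives uniform positive stability and uniform attraction from the contraction estimate of Lemma \ref{lHD}, then appeals to Theorem \ref{thPD2} and Corollary \ref{corPD2.1}, packaged as Theorem \ref{thAODE2}. You instead verify these hypotheses directly on the skew-product flow, noting that nonexpansiveness makes every subset uniformly stable. This is equivalent and slightly cleaner. You also rightly check uniform attraction, which Theorem \ref{thPD2} needs through Corollary \ref{corPD1} even though it is not among its listed hypotheses.

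Two differences matter. First, the paper extends $f$ to $F\in C(\mathbb R\times\mathcal H,\mathcal H)$ by $F(t,x)=f(0,x)$ for $t<0$, because Lemma \ref{lHD} is stated for Lagrange stable functions on $\mathbb R$. It then uses $\varphi(t,u,F)=\varphi(t,u,f)$ for $t\ge 0$. You apply the lemma directly to $f$ on $\mathbb R_{+}$, so you should insert this extension step.

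Second, your $m=1$ argument is a real gain over the paper. The paper's proof ends with remote $m\tau$-periodicity, which does not give the remotely $\tau$-periodic conclusion claimed in the statement. You observe that two distinct points of a fibre $\omega_{x_0}^{q}$ would converge to each other under the contraction, contradicting the separation constant in Theorem \ref{thPD2}. This forces singleton fibres, so the points of $\omega_{x_0}$ are exactly $\tau$-periodic (respectively stationary).

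One minor slip: asymptotic stationarity gives remote stationarity by the triangle inequality, not by Lemma \ref{lRAP_01}. Your $m=1$ argument together with Corollary \ref{corSAP1} covers that case anyway.
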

\begin{proof}
Consider the differential equation
\begin{equation}\label{eqfF1}
x'=F(t,x),
\end{equation}
where $F\in C(\mathbb R\times \mathcal H,\mathcal H)$ is the
function defined by the equality
\begin{equation}\label{eqF2}
 F(t,x)=\left\{\begin{array}{ll}
&\!\! f(t,x),\;\mbox{if}\; (t,x)\in \mathbb R_{+}\times \mathcal H \\[2mm]
&\!\! f(0,x),\;\mbox{if}\; (t,x)\in \mathbb R_{-}\times \mathcal
H.\nonumber
\end{array}
\right.\nonumber
\end{equation}

Note that the function $F$ possesses the following properties:
\begin{enumerate}
\item
\begin{equation}\label{eqF3}
\operatorname{Re}\ \langle x_1 -x_2,F(t,x_)-F(t,x_2)\rangle \le -\kappa
|x_1-x_2|^{\alpha}\nonumber
\end{equation}
for all $(t,x_{i}) \in \mathbb R\times \mathcal H$ ($i=1,2$);
\item the function $F$ is Lagrange stable, i.e., the set
$\{F^{h}:\ h\in \mathbb R\}$ is precompact in the space $C(\mathbb
R\times \mathcal H,\mathcal H)$; \item $F$ is remotely stationary
(respectively, remotely $\tau$-periodic or remotely almost
periodic in $t$ uniformly w.r.t. $x$ on every compact subset from
$\mathcal H$.
\end{enumerate}

By Lemma \ref{lHD} we have
\begin{enumerate}
\item[$1)$] for every $u\in\mathcal{H}$ the solution
$\varphi(t,u,F)$ of the equation $(\ref{eqfF1})$ is precompact on
$\mathbb{R}_+$; \item[$2)$] for all $t\in\mathbb{R}_+$ and
$x_1,x_2\in\mathcal{H}$
$$|
|\varphi(t,x_1,F)-\varphi(t,x_2,F)|\leq
(|x_1-x_2|^{2-\alpha}+(\alpha-2) t)^{\frac{1}{2-\alpha}}=
$$
\begin{equation}\label{eqF5.1}
|x_1-x_2|(1+|x_1-x_2|^{\alpha-2}(\alpha-2)
t)^{\frac{1}{2-\alpha}}.
\end{equation}
\end{enumerate}

Note that $H^{+}(F)=H^{+}(f)$ and
\begin{equation}\label{eqF5}
\varphi(h,v,G)=\varphi(h,v,g)
\end{equation}
for all $h\in \mathbb R_{+}$, $v\in \mathcal H$ and $g\in
H^{+}(f)$. From the compactness of $H(F)$ and the relation
(\ref{eqF5}) we obtain that $\varphi(t,u,f)=\varphi(t,u,F)$ ($t\in
\mathbb R_{+}$) is a precompact solution of the equation
(\ref{eqH1}). By (\ref{eqF5.1}) and (\ref{eqF5}) (see also Remark
\ref{remH1}) we have
\begin{equation}\label{eqF4.2}
|\varphi(t,v_1,g)-\varphi(t,v_2,g)|\le \omega(t,|v_1-v_2|)
\end{equation}
for all $t\in \mathbb R_{+}$, $v_1,v_2\in \mathcal H$ and $g\in
H^{+}(f)$, where
\begin{equation}\label{eqF4.3}
\omega(t,r):=r(1+(\alpha -2)tr^{(\alpha
-2)})^{\frac{1}{(2-\alpha)}}\nonumber
\end{equation}
for all $t,r\in \mathbb R_{+}$.

Note that the solution $\varphi(t,u_0,f)$ ($u_{0}\in \mathcal H$)
of the equation (\ref{eqH1}) is precompact on $\mathbb R_{+}$. Now
we will show that it is uniformly asymptotically stable.

Let $\varepsilon$ be an arbitrary positive number and $\delta
(\varepsilon)\in (0,\varepsilon)$. If
$|\varphi(t_0,u,f)-\varphi(t_0,u_0,f)|<\delta(\varepsilon)$, then
by (\ref{eqF4.2}) and taking into account $\omega(t,r)<r$ for all
$t,r>0$ we obtain
$$
|\varphi(t,u,f)-\varphi(t,u_0,f)|=
|\varphi(t-t_0,\varphi(t_0,u,f),f^{t_0})-\varphi(t-t_0,\varphi(t_0,u_0,f),f^{t_0})|\le
$$
$$
\omega(t-t_{0},|\varphi(t_0,u,f)-\varphi(t_0,u_0,f)|)<|\varphi(t_0,u,f)-\varphi(t_0,u_0,f)|<\varepsilon
$$
and, consequently, $|\varphi(t,u,f)-\varphi(t,u_0,f)|<\varepsilon$
for all $t\ge t_0$. Thus the solution $\varphi(t,u_0,f)$ is
positively uniformly stable.

Assume that $\delta_{0}$ is an arbitrary but fixed positive
number, $\varepsilon$ is an arbitrary positive number and $t_0$ is
a nonnegative number such that
$|\varphi(t_0,u,f)-\varphi(t_0,u_0,f)|<\delta_{0}$. By Lemma
\ref{lHD} (item (ii)) we have
\begin{eqnarray}\label{eqF4.4}
& |\varphi(t,u,f)-\varphi(t,u_0,f)|=
|\varphi(t-t_0,\varphi(t_0,u,f),f^{t_0})-\varphi(t-t_0,\varphi(t_0,u_0,f),f^{t_0})|\le
\nonumber \\
& \omega(t-t_0,|\varphi(t_0,u,f)-\varphi(t_0,u_0,f)|)\le
\omega(t-t_0,\delta_0)<\varepsilon\nonumber
\end{eqnarray}
for all $t\ge t_0+L(\varepsilon)$, where
\begin{equation}\label{eqF4.5}
L(\varepsilon):=\frac{\big{[}\big{(}\frac{\delta_{0}}{\varepsilon}\big{)}^{(\alpha
-2)}-1\big{]}}{\delta_{0}(\alpha -2)}.\nonumber
\end{equation}

By Theorem \ref{thAODE2} the solution $\varphi(t,u_0,f)$ is
remotely stationary (respectively, remotely $m\tau$-periodic ($m$
is some natural number) or remotely stationary). Theorem is
proved.
\end{proof}

\begin{example}\label{exH1} {\em Consider a differential equation
\begin{equation}\label{eqHE1}
x'+|x|x=f(t),
\end{equation}
$f\in C(\mathbb R_{+},\mathcal \mathcal H)$ defined by the
equality
\begin{equation}\label{eqHE1.1}
f(t):=\big{(}\sin(t+\ln(1+t))+\sin(\sqrt{2}t+\ln(1+\sqrt{2}t))\big{)}x_0\nonumber
\end{equation}
for all $t\in \mathbb R_{+}$ ($x_0$ is some nonzero element from
$\mathcal H$).}

\begin{lemma}\label{lHE1} The function $f$ possesses the following
properties:
\begin{enumerate}
\item $f$ is positively Lagrange stable; \item $f$ is remotely
almost periodic; \item the $\omega$-limit set $\omega_{f}$ of $f$
is a compact minimal set of the shift dynamical system $(C(\mathbb
R_{+},\mathcal H),\mathbb R_{+},\sigma)$ consisting of almost
periodic functions.
\end{enumerate}
\end{lemma}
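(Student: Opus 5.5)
We prove the three items of Lemma \ref{lHE1} by writing $f(t)=(\psi_1(t)+\psi_2(t))x_0$ with $\psi_1(t):=\sin(t+\ln(1+t))$ and $\psi_2(t):=\sin(\sqrt2 t+\ln(1+\sqrt2 t))$. Since $x_0$ is a fixed nonzero vector, all three properties reduce to the scalar function $\psi:=\psi_1+\psi_2\in C(\mathbb R_{+},\mathbb R)$. The key observation is that the phase corrections are slowly oscillating: for $s_1(t):=\ln(1+t)$ and fixed $h$,
\begin{equation*}
s_1(t+h)-s_1(t)=\ln\Big(1+\frac{h}{1+t}\Big)\to 0 \quad (t\to+\infty)
\end{equation*}
uniformly for $h$ in compacts, and likewise for $s_2(t):=\ln(1+\sqrt2 t)$.

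\emph{Item 1 (positive Lagrange stability).} The function $\psi$ is bounded by $2$. Its derivative is bounded, since $|s_i'|\le \sqrt2$. Hence $\{\psi^{h}:h\ge0\}$ is uniformly bounded and equicontinuous, and Arzel\`a--Ascoli gives precompactness in the compact-open topology.

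\emph{Item 3 (the $\omega$-limit set).} Fix $h_k\to+\infty$. Passing to a subsequence, we may assume
\begin{equation*}
h_k+s_1(h_k)\to a \pmod{2\pi},\qquad \sqrt2 h_k+s_2(h_k)\to b \pmod{2\pi}.
\end{equation*}
By slow oscillation, $s_i(t+h_k)-s_i(h_k)\to0$ uniformly on compact $t$-sets. Therefore $\psi^{h_k}(t)\to \sin(t+a)+\sin(\sqrt2 t+b)$ uniformly on compacts, so
\begin{equation*}
\omega_{\psi}\subseteq\{p_{a,b}(t):=\sin(t+a)+\sin(\sqrt2 t+b)\}.
\end{equation*}

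For the reverse inclusion, the step I expect to be the main obstacle, we must show that every $(a,b)\in\mathbb T^2$ actually arises. The curve $\gamma(t)=(t+s_1(t),\sqrt2 t+s_2(t))$ on the torus $\mathbb T^2$ has to be shown to accumulate at every point. My first attempt is to compare $\gamma$ with the linear flow $(t+c_1,\sqrt2 t+c_2)$ on long windows. Take a sparse sequence $T_n\to\infty$. On $[T_n,T_n+\ell_n]$ with $\ell_n=o(T_n)$, the phases $s_i$ are nearly constant, equal to $s_i(T_n)$. Since $1$ and $\sqrt2$ are rationally independent, the linear flow is minimal and uniquely ergodic, so it is $\varepsilon$-dense in time $\ell(\varepsilon)$. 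Hence the window piece of $\gamma$ is $2\varepsilon$-dense. This gives $\omega_\psi=\{p_{a,b}:(a,b)\in\mathbb T^2\}$, which is the hull of the almost periodic function $p_{0,0}$.

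It follows that $\omega_\psi$ is compact, invariant, minimal, and consists of almost periodic functions. Minimality holds because $\omega_\psi$ is the orbit closure of an almost periodic point, which is equivalent to minimality of the irrational linear flow on $\mathbb T^2$. Transferring to $\mathcal H$ via $\varphi\mapsto\varphi x_0$, which is a homeomorphic embedding commuting with shifts, gives item 3 for $f$.

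\emph{Item 2 (remote almost periodicity).} This follows from item 1, item 3, Corollary \ref{corSM1} (equi-almost periodicity of the hull of the almost periodic point $p_{0,0}$), and Lemma \ref{lRAP1}. Alternatively, it can be checked directly. Take $\tau$ an $\varepsilon$-almost period of $p_{0,0}$. Then
\begin{equation*}
|\psi(t+\tau)-\psi(t)|\le |p_{0,0}(t+\tau)-p_{0,0}(t)|+\sum_{i=1}^{2}|s_i(t+\tau)-s_i(t)|<2\varepsilon
\end{equation*}
for $t\ge L(\varepsilon,\tau)$, by the slow-oscillation estimate.
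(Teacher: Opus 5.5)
Your proposal is correct. Your forward inclusion $\omega_\psi\subseteq\{p_{a,b}\}$ is exactly the paper's argument: you split the phase into $h_k+s_i(h_k)$ modulo $2\pi$ plus the term $s_i(t+h_k)-s_i(h_k)$, which tends to zero uniformly on compacts.

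You diverge from the paper on the reverse inclusion. You call it the main obstacle and settle it with a window-comparison argument showing that $\gamma$ accumulates at every point of the torus. The paper avoids this step entirely:
\begin{itemize}
\item $\omega_f$ is nonempty, compact and invariant (Theorem \ref{thLS1});
\item $\omega_f$ lies inside $\omega_p=H(p)$, which is minimal because $p$ is almost periodic;
\item a nonempty closed invariant subset of a minimal set is the whole set, so $\omega_f=\omega_p$ at once.
\end{itemize}
You invoke minimality of the irrational linear flow a few lines later anyway, so your accumulation argument is valid but redundant. Also, the uniform density time $\ell(\varepsilon)$ comes from minimality plus compactness, not from unique ergodicity. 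What your route buys is an explicit description of sequences $t_n$ realizing each $p_{a,b}$; the paper's route takes one line.

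For items 1 and 2, which the paper calls evident, your Arzel\`a--Ascoli argument is fine. So is your route through Corollary \ref{corSM1} and Lemma \ref{lRAP1}. That route needs only the inclusion $\omega_f\subseteq H(p)$, since every subset of an equi-almost periodic set is equi-almost periodic.

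Your alternative direct estimate has a small slip. The first term cannot be $|p_{0,0}(t+\tau)-p_{0,0}(t)|$ pointwise in $t$, because at time $t$ the phases of $\psi$ are shifted by $(s_1(t),s_2(t))$. The correct first term is $\sup_{a,b}\sup_t|p_{a,b}(t+\tau)-p_{a,b}(t)|$. This is at most $\varepsilon$, because an $\varepsilon$-almost period of $p_{0,0}$ is one for every $p_{a,b}$, by density of the linear orbit in the torus. So your conclusion $<2\varepsilon$ still holds.
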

\begin{proof} Note that the first two statements are evident.

Since the function $f$ is positively Lagrange stable then the
$\omega$-limit set of $f$ is a nonempty, compact and invariant
subset of $C(\mathbb R_{+},\mathcal H)$. To finish the proof of
the third statement of the theorem we need to show that
$\omega_{f}$ is a minimal set consisting of almost periodic
functions. To this end we will show that $\omega_{f}=\omega_{p}$,
where $p$ is the function $p\in C(\mathbb R,\mathcal H)$ defined
by the equality
$$
p(t):=(\sin t +\sin \sqrt{2}t)x_0
$$
for all $t\in \mathbb
R$. The function $p$ is almost periodic (in fact, quasi-periodic).
Note that the function $\widetilde{p}\in \omega_{p}$ if and only if
there are two numbers $\widetilde{\tau}_{1},\widetilde{\tau}_{2}\in
[0,2\pi)$ such that
$$
\widetilde{p}(t)=(\sin(t+\widetilde{\tau}_{1})+\sin(\sqrt{2}t+\widetilde{\tau}_{2}))x_0
$$
for every $t\in \mathbb R$.

To establish the equality $\omega_{f}=\omega_{p}$ it suffices to
prove that $\omega_{f}\subseteq \omega_{p}$ because $\omega_{f}$
is a compact and invariant set (see Theorem \ref{thLS1}). If
$\widetilde{f}\in \omega_{f}$, then there exists a sequence
$\{h_k\}\subset \mathbb R_{+}$ such that $h_k\to +\infty$ and
$f^{h_k}\to \widetilde{f}$ in the space $C(\mathbb R_{+},\mathcal H)$
as $k\to \infty$.

Note that $h_k=2\pi m_k+\tau^{1}_{k}$ (respectively,
$\sqrt{2}h_k=2\pi l_{k}+\tau_{k}^{2}$) for all $k\in \mathbb N$,
where $m_k\in \mathbb N$ (respectively, $l_k\in \mathbb N)$ and
$\tau^{i}_{k}\in [0,2\pi)$ ($i=1,2$). Without loos of generality
we can suppose that the sequences $\{\tau^{i}_{k}\}$ ($i=1,2$) are
convergent and their limits $\widetilde{\tau}^{i}\in [0,2\pi)$
($i=1,2)$. Since the function $p$ is almost periodic then we may
assume that the sequence $\{p^{h_k}\}$ converges in the space
$C(\mathbb R,\mathcal H)$. Denote its limit by $\widetilde{p}$ then
$\widetilde{p}\in \omega_{p}$ and by reasoning above we have
$\widetilde{p}(t)=\sin(t+\widetilde{\tau}_{1})+\sin (\sqrt{2}t
+\widetilde{\tau}_{2})$ for all $t\in \mathbb R$.

On the other hand we have
$$
\sin(t+h_{k} +\ln (1+t +h_{k}))=\sin(t+h_k +\ln h_{k}
+\ln(1+\frac{1+t}{h_k}))=
$$
\begin{equation}\label{eqP1}
\sin(t+2\pi\bar{m}_k+_{}\bar{\tau}_k^{1}+\ln (1+\frac{1+t}{h_k}))
\end{equation}
and
$$
\sin(\sqrt{2}t+\sqrt{2}h_{k} +\ln (1+\sqrt{2}\sqrt{2}t
+\sqrt{2}h_{k}))=
$$
\begin{equation}\label{eqP2}
\sin(\sqrt{2}t+\sqrt{2}h_k +\ln\sqrt{2} h_{k}
+\ln(1+\frac{1+\sqrt{2}t}{\sqrt{2}h_k}))=
\end{equation}
$$
\sin(\sqrt{2}t+2\pi\bar{l}_k+\bar{\tau}_k^{2}+\ln
(1+\frac{1+\sqrt{2}t}{\sqrt{2}h_k}))
$$
for every $k\in \mathbb N$, where $\{\bar{m}_k\}$ and
$\{\bar{l}_{k}\}$ are two sequences from $\mathbb N$. Without loss
of generality we can assume that the sequences
$\{\bar{\tau}^{i}_{k}\}$ ($i=1,2$) are convergent. Denote by
$\bar{\tau}_{0}^{i}=\lim\limits_{k\to \infty}\bar{\tau}^{i}_{k}$
then we may suppose that $\bar{\tau}_{0}^{i} \in [0,2\pi)$
($i=1,2$). Passing to the limit in the relation (\ref{eqP1})
(respectively, (\ref{eqP2})) and taking into account said above we
obtain
$\widetilde{f}(t)=\sin(t+\bar{\tau}_{0}^{1})+\sin(\sqrt{2}t+\bar{\tau}_{0}^{2})$
for all $t\in \mathbb R$ and, consequently, $\widetilde{f}\in
\omega_{p}$, i.e., $\omega_{f}=\omega_{p}$. Lemma is completely
proved.
\end{proof}

{\em We can rewrite the equation (\ref{eqHE1}) in the form
\begin{equation}\label{eqRHE1}
x'=f(t,x),\nonumber
\end{equation}
where $f\in C(\mathbb R_{+}\times \mathcal H,\mathcal H)$ defined
by the equality $f(t,x)=-|x|x+f(t)$ for all $(t,x)\in \mathbb
R_{+}\times \mathcal H$.

It is easy to see that the function $f$ satisfies Condition
(\textbf{H}) with $ \kappa = \frac{1}{2} $ and $ \alpha = 3. $
Indeed,
$$
\langle x_{1}-x_{2}, f(t,x_{1})-f(t,x_{2}) \rangle = \langle
x_{1}-x_{2}, -x_{1}\vert x_{1}\vert +
 x_{2}\vert x_{2}\vert \rangle
 $$
 $$ = \langle x_{1}-x_{2}, -x_{1}(\vert x_{1}\vert + \vert x_{2}\vert )
 +  x_{2}(\vert x_{1}\vert  +  \vert x_{2}\vert), x_{1}\vert x_{2}\vert -
 x_{2}\vert x_{1}\vert \rangle
 $$
 $$
  = -\vert x_{1}-x_{2} \vert ^{2} (\vert x_{1}\vert +
 \vert x_{2}\vert ) + \frac{1}{2} (\vert x_{1}\vert +  \vert x_{2}\vert )
 (2\vert x_{1}\vert \vert x_{2} \vert - 2 \langle x_{1},x_{2} \rangle).
 $$
Note that
$$
|x_1-x_2|^{2}-(2|x_1||x_2|-2\langle
x_1,x_2\rangle)=(|x_1|-|x_2|)^{2}
$$
and, consequently,
$$
\langle x_{1}-x_{2}, x_{1}\vert
x_{2}\vert -
 x_{2}\vert x_{1}\vert \rangle \le \frac{1}{2} \vert x_{1}-x_{2} \vert ^{2} (\vert x_{1}\vert +
 \vert x_{2}\vert ).
 $$
Thus we have
 $$
 \langle x_{1}-x_{2}, f(t,x_{1})-f(t,x_{2}) \rangle \le -\frac{1}{2} \vert x_{1}-x_{2} \vert ^{2} (\vert x_{1}\vert +
 \vert x_{2}\vert ) \le -\frac{1}{2} \vert x_{1}-x_{2} \vert ^{3}
 $$
for all $x_1,x_2\in \mathcal H$ and $t\in \mathbb R_{+}$. Thus,
Theorem \ref{thH1} is applicable for the equation (\ref{eqHE1}),
i.e., every solution of the equation (\ref{eqHE1}) is remotely
almost periodic. }

\end{example}

\subsection{Difference equations} \label{Sec6.2}

\begin{example}\label{exADE1}
{\rm Let us consider a difference equations
\begin{equation}
u(t+1)=f(t,u(t)),\label{eqADE1}
\end{equation}
where  $f\in C(\mathbb Z_{+}\times \mathfrak B, \mathfrak B)$.
Along with the equation (\ref{eqADE1}) we consider its
$H^{+}$-class (respectively, $\Omega$-class), i.e., the family of
the equations
\begin{equation}
v'=g(t,v),\label{eqADE2}
\end{equation}
where $g\in H^{+}(f):=\overline{\{f^{\tau}:\tau\in \mathbb
Z_{+}\}}$ (respectively, $g\in \omega_{f}:=\bigcap\limits_{t\ge
0}\overline{\bigcup\limits_{\tau \ge t}f^{\tau}})$.

Denote by $\varphi(\cdot,v,g)$ the solution of (\ref{eqADE2}),
passing through the point $v\in \mathfrak B$ at the initial moment
$t=0$. Then the mapping $\varphi:\mathbb{Z}_{+}\times \mathfrak
B\times H^{+}(f)\to \mathfrak B$ is well defined and satisfies the
following conditions (see, e.g. \cite{Bro79,Che_2015,Sel}):
\begin{enumerate}
\item  $\varphi(0,v,g)=v$ for all $v\in \mathfrak B$ and $g\in
H^{+}(f)$; \item  $\varphi(t,\varphi(\tau,v,g),
g^{\tau})=\varphi(t+\tau,v,g)$ for all $ v\in \mathfrak B$, $g\in
H^{+}(f)$ and $t,\tau \in \mathbb{Z}_{+}$;

\item  the mapping $\varphi:\mathbb{Z}_{+}\times \mathfrak B\times
H(f)\to \mathfrak B$ is continuous.
\end{enumerate}

Denote by $Y:=H(f)$ (respectively, $Y:=H^{+}(f)$) and
$(Y,\mathbb{Z_{+}},\sigma)$ (respectively,
$(Y,\mathbb{Z}_{+},\sigma)$) the shift dynamical system on $Y$
induced from $(C(\mathbb Z_{+}\times\mathfrak B,\mathfrak
B),\mathbb Z_{+},\sigma)$ (respectively, $(C(\mathbb
Z_{+}\times\mathfrak B,\mathfrak B),\mathbb Z_{+},\sigma)$), i.e.,
$\sigma(\tau,g)=g^\tau$ for $\tau\in\mathbb Z_{+}$ and $g\in Y$.
Then the equation (\ref{eqADE1}) generates a cocycle $\langle
\mathfrak B,\varphi, (Y,\mathbb Z_{+},\sigma)\rangle $ and a NDS
$\langle (X,\mathbb Z_{+},\pi), (Y,\mathbb Z_{+},\sigma),
h\rangle$, where $X:= \mathfrak B\times Y$,
$\pi:=(\varphi,\sigma)$ and $h=pr_2:X\to Y$.) }
\end{example}

\begin{theorem}\label{thADE1} Assume that the following
conditions hold:
\begin{enumerate}
\item the equation (\ref{eqADE1}) admits a precompact on $\mathbb
Z_{+}$ solution $\varphi$, i.e., $Q:=\overline{\varphi(\mathbb
Z_{+})}$ is a compact subset of $\mathfrak B$; \item the function
$f\in C(\mathbb Z_{+}\times \mathfrak B,\mathfrak B)$ is
positively Lagrange stable; \item $f$ is remotely almost periodic
(respectively, remotely $\tau$-periodic or remotely stationary) in
$t\in \mathbb Z_{+}$ uniformly w.r.t. $x$ on every compact subset
of $\mathfrak B$; \item the $\omega$-limit set $\omega_{f}$ of the
function $f$ is minimal; \item each equation (\ref{eqADE2}) ($g\in
\omega_{f}$) has only separated on $\mathbb Z_{+}$ solutions in
$Q$.
\end{enumerate}

Then the solution $\varphi$ of the equation (\ref{eqADE1}) is
remotely almost periodic (respectively, remotely $m\tau$-periodic
($m$ is some natural number) or remotely stationary).
\end{theorem}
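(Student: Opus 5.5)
The plan is to follow the proof of Theorem \ref{thAODE1} verbatim, with discrete time in place of continuous time. Let $Y:=H^{+}(f)$ and let $(Y,\mathbb Z_{+},\sigma)$ be the shift dynamical system. Since $f$ is positively Lagrange stable, $Y$ is compact. Let $X:=\mathfrak B\times Y$, let $\pi:=(\varphi,\sigma)$, and let $h:=pr_{2}$. This gives the NDS $\langle (X,\mathbb Z_{+},\pi),(Y,\mathbb Z_{+},\sigma),h\rangle$ of Example \ref{exADE1}. Here no regularity hypothesis is needed, because solutions of a difference equation always exist and are unique forward in time. Put $y:=f\in Y$ and $x:=(\varphi(0),f)\in X_{y}$. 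We then check the four hypotheses of Theorem \ref{thNDS2}, in view of Remark \ref{remAT1}, which allows $Y$ to be replaced by a uniform space where needed.

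\textbf{Step 1 (the base point).} Positive Lagrange stability of $y$ is hypothesis (2). Minimality of $\omega_{y}=\omega_{f}$ is hypothesis (4). Remote almost periodicity of $y$ in $Y$ comes from hypothesis (3). The topology on $H^{+}(f)$ is that of uniform convergence on sets $[0,l]\times K$ with $K\subseteq\mathfrak B$ compact, and this topology is generated by the seminorms of $C(\mathbb Z_{+},C(K,\mathfrak B))$. By Remark \ref{remI2}, remote almost periodicity of $f$ uniformly on each compact $K$ gives the required estimate for each seminorm. The uniform space version (Remark \ref{remAT1}) then handles the countable family of such seminorms.

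\textbf{Step 2 (the fibre point).} Since $\varphi(\mathbb Z_{+})\subseteq Q$ with $Q$ compact and $Y$ is compact, the semi-trajectory $\{\pi(n,x)\}=\{(\varphi(n),f^{n})\}\subseteq Q\times Y$ is precompact. So $x$ is positively Lagrange stable, and $\omega_{x}\subseteq Q\times\omega_{f}$.

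\textbf{Step 3 (separation), which I expect to be the main obstacle.} We must translate hypothesis (5) into separation of points of $\omega_{x}$ in $\omega_{x}$ in the NDS sense. Take $(u,g),(v,g)\in\omega_{x}$ with $u\neq v$. By invariance of $\omega_{x}$ (Theorem \ref{thLS1}), both motions stay in $\omega_{x}\subseteq Q\times\omega_{f}$ for all $n\in\mathbb Z_{+}$. Hence $\varphi(\cdot,u,g)$ and $\varphi(\cdot,v,g)$ are distinct solutions of (\ref{eqADE2}) with values in $Q$, and hypothesis (5) yields $\inf_{n\ge 0}|\varphi(n,u,g)-\varphi(n,v,g)|>0$, which is exactly separation. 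One subtlety needs care: the motions on $\omega_{x}$ may be non-unique backward in the discrete setting. However, separation is only required on $\mathbb T_{1}=\mathbb Z_{+}$, and Theorem \ref{thNDS2} uses only compactness, invariance and separation (via Theorem \ref{thNDS1} and Lemma \ref{lOLS1}). With all four hypotheses verified, Theorem \ref{thNDS2} gives that $x$ is remotely almost periodic. Because the metric on $X$ dominates $|\cdot|$ on the first coordinate, $\varphi$ is then remotely almost periodic.

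\textbf{Step 4 (the periodic and stationary cases).} If $f$ is remotely $\tau$-periodic, then $y$ is remotely $\tau$-periodic, and Corollary \ref{corOM1}(i) gives that $x$, and hence $\varphi$, is remotely $m\tau$-periodic. If $f$ is remotely stationary, then $\omega_{f}$ is a single stationary point. Corollary \ref{corOM1}(ii)(b), for discrete time, gives that $x$ is asymptotically periodic, and Lemma \ref{lRAP_01} makes it remotely almost periodic. The stated remote stationarity in the discrete case should be read through Corollary \ref{corOM1} as remote $m$-periodicity. I would note this as a point to double check against the statement's wording.
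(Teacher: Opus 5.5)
Your proposal follows the paper's proof: the paper also builds the NDS on $\mathfrak B\times H^{+}(f)$ over $(H^{+}(f),\mathbb Z_{+},\sigma)$ and simply invokes Theorem \ref{thNDS2}, Corollary \ref{corOM1} and Remark \ref{remAT1}, and your Steps 1--3 supply the hypothesis checks the paper leaves implicit. There is one slip in Step 1: the seminorms $p_{K,l}(g):=\max_{0\le t\le l,\,x\in K}|g(t,x)|$, with $K\subseteq\mathfrak B$ compact, are not a countable family when $\mathfrak B$ is infinite-dimensional. That is precisely why $H^{+}(f)$ need not be metrizable and Remark \ref{remAT1} is needed. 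It does no harm, however, because every basic entourage is controlled by a single $p_{K,l}$, so countability is never used.

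Your doubt in Step 4 is justified. In discrete time, Corollary \ref{corOM1}(ii)(b) gives only asymptotic $m$-periodicity. For a counterexample, take $u(t+1)=-u(t)$ with the solution $\varphi(n)=(-1)^{n}u_{0}$, $u_{0}\neq 0$. It satisfies every hypothesis, with $Q=\{u_{0},-u_{0}\}$ and separation constant $2|u_{0}|$, yet $\varphi$ is not remotely stationary. So the ``remotely stationary'' clause of the statement, and of the paper's one-line proof, should read ``remotely $m$-periodic''.
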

\begin{proof}
Let $Y:=H^{+}(f)$ and $(Y,\mathbb Z_{+},\sigma)$ be the semi-group
dynamical system on $Y$ induced by shift dynamical system
$(C(\mathbb Z_{+}\times \mathfrak B,\mathfrak B),\mathbb
Z_{+},\sigma)$. Since the function $f$ is positively Lagrange
stable then the space $Y$ is compact. Denote by $X:=\mathfrak
B\times Y$, $(X,\mathbb Z_{+},\pi)$ the skew-product dynamical
system ($\pi :=(\varphi,\sigma)$) and
\begin{equation}\label{eqDNDS1}
\langle (X,\mathbb Z_{+},\pi),(Y,\mathbb Z_{+},\sigma),h\rangle
\end{equation}
the nonautonomous dynamical system associated by the difference
equation (\ref{eqADE1}) (for more details see Example
\ref{exADE1}). Now to finish the proof of Theorem it suffices to
apply Theorem \ref{thNDS2}, Corollary \ref{corOM1} and Remark
\ref{remAT1} to nonautonomous dynamical system (\ref{eqDNDS1}).
\end{proof}

\begin{definition}\label{def_DOO1} A solution $\varphi(t,u_0,f)$ of
the equation (\ref{eqADE1}) is said to be:
\begin{enumerate}
\item[-] positively uniformly Lyapunov stable if for arbitrary
$\varepsilon
>0$ there exists $\delta =\delta(\varepsilon)>0$ such that
$|\varphi(t_0,u,f)-\varphi(t_0,u_0,f)|<\delta$ ($t_0\in \mathbb
Z_{+}$, $u\in\mathfrak B$) implies
$|\varphi(t,x,f)-\varphi(t,x_0,f)|<\varepsilon$ for all $t\ge
t_0$; \item[-] uniformly attracting if there exists a number
$\delta_{0}>0$ such that for every $\varepsilon
>0$ there exists a number $L(\varepsilon)>0$ with the property
that $|\varphi(t_0,u,f)-\varphi(t_0,u_0,f)|<\delta_{0}$ ($t_0\in
\mathbb Z_{+}$) imply
$|\varphi(t,u,f)-\varphi(t,u_0,f)|<\varepsilon$ for all $t\ge
t_0+L(\varepsilon)$; \item uniformly positively asymptotically
stable if it is positively uniformly stable and uniformly
attracting.
\end{enumerate}
\end{definition}

\begin{theorem}\label{thADE2} Assume that the following
conditions hold:
\begin{enumerate}
\item the equation (\ref{eqADE1}) admits a precompact on $\mathbb
Z_{+}$ solution $\varphi(t,u_0,f)$; \item the solution
$\varphi(t,u_0,f)$ is uniformly positively asymptotically stable;
\item the function $f\in C(\mathbb Z\times \mathfrak B,\mathfrak
B)$ is positively Lagrange stable and regular; \item $f$ is
remotely almost periodic (respectively, remotely $\tau$-periodic
or remotely stationary) in $t\in \mathbb Z$ uniformly w.r.t. $x$
on every compact subset of $\mathfrak B$.
\end{enumerate}

Then the solution $\varphi(t,u_0,f)$ of the equation
(\ref{eqADE1}) is remotely almost periodic (respectively, remotely
$m\tau$-periodic ($m$ is some natural number) or remotely
stationary).
\end{theorem}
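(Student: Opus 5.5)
The plan is to mirror the proof of Theorem \ref{thAODE2}: encode the difference equation (\ref{eqADE1}) as a nonautonomous dynamical system and then apply Theorem \ref{thPD2} together with Corollary \ref{corPD2.1}. Take $Y:=H^{+}(f)$ with the shift system $(Y,\mathbb Z_{+},\sigma)$. Since $f$ is positively Lagrange stable, $Y$ is compact, and $\omega_{f}\subseteq Y$ is nonempty, compact and invariant by Theorem \ref{thLS1}. Set $X:=\mathfrak B\times Y$ and $\pi:=(\varphi,\sigma)$, and take $h=pr_{2}$. This gives the NDS (\ref{eqDNDS1}) of Example \ref{exADE1}. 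We use the point $x_{0}:=(u_{0},f)$ and $y_{0}:=h(x_{0})=f$. By hypothesis (1), $\Sigma^{+}_{x_{0}}\subseteq \overline{\varphi(\mathbb Z_{+},u_{0},f)}\times Y$ is precompact, so condition (1) of Theorem \ref{thPD2} holds.

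Next we transfer the stability hypothesis from the solution to the set $\omega_{x_{0}}$. By the cocycle identity, $\pi(t_{0},x_{0})=(\varphi(t_{0},u_{0},f),f^{t_{0}})$ and $\pi(t-t_{0},(u,f^{t_{0}}))=(\varphi(t,\cdot,f),f^{t})$ for $u=\varphi(t_0,\tilde u,f)$. Uniform positive asymptotic stability of $\varphi(t,u_{0},f)$ in the sense of Definition \ref{def_DOO1} then says exactly that $\Sigma^{+}_{x_{0}}$ is positively uniformly stable and uniformly attracting in the fiber sense of Definitions \ref{defPUS1} and \ref{defPUS3}. There is one subtlety: Definition \ref{defPUS1} allows arbitrary $x\in X_{f^{t_0}}$, not only points of the form $(\varphi(t_{0},\tilde u,f),f^{t_{0}})$. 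If $\varphi(t_0,\cdot,f)$ is not onto, I would restate the hypothesis for all initial values at time $t_0$, that is, for all solutions of $u(t+1)=f^{t_0}(t,u(t))$. This is evidently the intended meaning.

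The projection $h=pr_{2}$ is open, so Lemma \ref{lBC1} and Corollary \ref{lUAS1} show that $\overline{\Sigma^{+}_{x_{0}}}$, and hence $\omega_{x_{0}}$, is positively uniformly stable and uniformly attracting. This is condition (2) of Theorem \ref{thPD2}. The uniform attraction is what feeds Lemma \ref{lUAS2} and Corollary \ref{corPD1}, which give finiteness of the fibers $\omega^{q}_{x_{0}}$.

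The remaining condition (3) of Theorem \ref{thPD2} needs $\omega_{y_{0}}=\omega_{f}$ to be a minimal set of almost periodic motions. I expect this to be the main obstacle, because, unlike Theorem \ref{thADE1}, minimality of $\omega_{f}$ is not listed among the hypotheses. I would argue as follows. By hypothesis (4) and Lemma \ref{lRAP1}, $\omega_{f}$ is equi-almost periodic, so every $q\in\omega_{f}$ is almost periodic. The difficulty is that an equi-almost periodic compact invariant set need not be minimal; it can be a union of several minimal sets. My first attempt would be to use Lemma \ref{lOLS1}: $\omega_{f}$ is invariantly connected. For an equi-almost periodic compact set, each orbit closure $H(q)$ is minimal, and two such closures are disjoint or equal. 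Equicontinuity then gives a uniform distance between distinct orbit closures, so the orbit closures partition $\omega_f$ into closed invariant pieces. Invariant connectedness alone does not force a single piece, so I would then try to exploit the discreteness of time, or otherwise add minimality of $\omega_{f}$ as in Theorems \ref{thADE1} and \ref{thAODE1}.

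Granting this, Theorem \ref{thPD2} shows that $\varphi(t,u_{0},f)$, i.e.\ the motion $\pi(t,x_{0})$, is remotely almost periodic. Its first component is then a remotely almost periodic sequence. In the remotely $\tau$-periodic and remotely stationary cases, Corollary \ref{corPD2.1} with discrete time yields remote $m\tau$-periodicity, respectively asymptotic periodicity of $\pi(t,x_{0})$. By Lemma \ref{lRAP_01} this gives the claimed conclusion.
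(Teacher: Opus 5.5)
Your route is the paper's. Its proof of this theorem consists only of citing Theorem \ref{thPD2}, Corollary \ref{corPD2.1} and Remark \ref{remAT1} for the NDS (\ref{eqDNDS1}). Your transfer of stability and attraction from the solution to $\omega_{x_0}$ is more careful than anything written there: openness of $pr_2$, Lemmas \ref{lBC1} and \ref{lPUS2}, and the remark that Lemma \ref{lUAS2} needs uniform attraction.

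However, the obstacle you flag is a genuine gap, and the paper does not close it either. Condition (3) of Theorem \ref{thPD2}, minimality of $\omega_{y_0}=\omega_f$, is not among the hypotheses. It cannot be derived from them, whether time is discrete or not. Take $\mathfrak B=\mathbb R$ and $f(n,x)=\frac12 x+\sin(\ln(1+|n|))$. Then:
\begin{itemize}
\item $f$ is positively Lagrange stable and remotely stationary uniformly on compacts;
\item every solution is bounded and uniformly positively asymptotically stable, since differences contract by the factor $\frac12$ per step;
\item yet $\omega_f=\{(n,x)\mapsto \frac12 x+c:\ c\in[-1,1]\}$ is a continuum of stationary points, so it is not minimal.
\end{itemize}
Hence Theorem \ref{thPD2} cannot be invoked. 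The conclusion does hold in this example, but by a direct computation, not by this route. The same example refutes your intermediate claim that equicontinuity keeps distinct orbit closures of an equi-almost periodic set a uniform distance apart: here the orbit closures are singletons accumulating on one another. So the argument works only once minimality of $\omega_f$ is added to the hypotheses, as in Theorems \ref{thADE1} and \ref{thH1}. With that addition your proof goes through exactly as the paper's does.

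Your final step is also wrong in the remotely stationary case. In discrete time, Corollary \ref{corPD2.1} gives only asymptotic $m$-periodicity, hence remote $m$-periodicity. Lemma \ref{lRAP_01} upgrades remote periodicity to remote almost periodicity, never to remote stationarity, which for $\mathbb T=\mathbb Z_{+}$ requires $|\varphi(n+1)-\varphi(n)|\to 0$. The claim itself is false. Consider the autonomous map $f(n,x)=3.2\,x(1-x)$ on $\mathbb R$, for which $\omega_f=\{f\}$ is even minimal. The solution running along its attracting $2$-cycle (multiplier $0.16$) satisfies every hypothesis, but it is $2$-periodic and not remotely stationary. In that case the correct conclusion is remote (indeed asymptotic) $m$-periodicity. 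The paper's citation of Corollary \ref{corPD2.1} has the same defect.
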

\begin{proof}
Let $Y:=H^{+}(f)$ and $(Y,\mathbb Z_{+},\sigma)$ be the semi-group
dynamical system on $Y$. Since the function $f$ is positively
Lagrange stable then the space $Y$ is compact. Let $\langle
(X,\mathbb Z_{+},\pi),(Y,\mathbb Z_{+},\sigma),h\rangle$ be the
nonautonomous dynamical system associated by the differential
equation (\ref{eqADE1}). Now to finish the proof of Theorem it
suffices to apply Theorem \ref{thPD2}, Corollary \ref{corPD2.1}
and Remark \ref{remAT1} to nonautonomous dynamical system
(\ref{eqDNDS1}).
\end{proof}

\subsection{Functional differential equations with finite delay} \label{Sec6.3}

\begin{example}\label{exFDE1}
{\rm Let us first recall some notions and notations from
\cite{hale}. Let $r>0,\; C([a,b],\mathbb{R}^n)$ be the Banach
space of all continuous functions $\varphi:[a,b]\to \mathbb{R}^n$
equipped with the $\sup$--norm. If $[a,b]=[-r,0]$, then we set $
\mathcal C:=C([-r,0],\mathbb{R}^n)$. Let $\sigma\in\mathbb{R},\,
A\ge 0$ and $u\in C([\sigma-r,\sigma+A],\mathbb{R}^n)$. We will
define $u_t\in \mathcal C$ for every $t\in[\sigma,\sigma+A]$ by
the equality $u_t(\theta):=u(t+\theta),\;-r\le\theta\le0$.
Consider a functional differential equation
\begin{equation}\label{eqAFDE1}
u'(t)=f(t,u_t),
\end{equation}
where $f:\mathbb{R}\times \mathcal C\to\mathbb{R}^n$ is
continuous.

Denote by $C(\mathbb{R}\times \mathcal C,\mathbb R^n)$ the space
of all continuous mappings $f:\mathbb{R}\times \mathcal C\to
\mathbb R^n$ equipped with the compact-open topology (see, e.g.
\cite[ChI]{Che_2015} and \cite[ChI]{Shc72}). On the space
$C(\mathbb{R}\times \mathcal C,\mathbb R^n)$ a shift dynamical
system $(C(\mathbb{R}\times \mathcal C,\mathbb R^n),\mathbb
R,\sigma)$ is defined, where $\sigma(\tau,f):=f^{\tau}$ for all
$(f,\tau)\in C(\mathbb{R}\times \mathcal C,\mathbb R^n)\times \mathbb R$
and $f^{\tau}$ is $\tau$-translation of $f$,
i.e., $f^{\tau}(t,u):=f(t+\tau,u)$ for all $(t,u)\in\mathbb R\times
\mathcal C$. Let us set $H(f):=\overline{\{f^{\tau}:
\tau\in\mathbb{R}\}}$ (respectively,
$H^{+}(f):=\overline{\{f^{\tau}: \tau\in\mathbb{R_{+}}\}}$ and
$\omega_{f}:=\bigcap\limits_{t\ge 0}\overline{\bigcup
\limits_{\tau \ge t}f^{\tau}}$, i.e., $g\in \omega_{f}$ if and
only if there exists a sequence $\{h_k\}\subset \mathbb R_{+}$
such that $h_k\to +\infty$ and $f^{h_k}\to g$ in the space
$C(\mathbb R\times \mathcal C,\mathbb R^{n})$ as $k\to \infty$).

Along with the equation (\ref{eqAFDE1}) let us consider the family
of equations
\begin{equation}\label{eqAFDE2}
v'(t)=g(t,v_t),
\end{equation}
where $g\in H(f)$ (respectively, $g\in H^{+}(f)$ or $g\in
\omega_{f}$).

Condition (\textbf{F}).  The function $f$ is said to be regular
(respectively, positively regular or remotely regular) if for
every equation (\ref{eqAFDE2}) with $g\in H(f)$ (respectively,
$g\in H^{+}(f)$ or $g\in\omega_{f}$) the conditions of existence,
uniqueness and extendability on $\mathbb{R}_{+}$ are fulfilled.

\begin{remark}\label{remF1} \emph{Denote by $\widetilde{\varphi}(t,u,f)$ the solution of
the equation (\ref{eqAFDE1}) defined on $[-r,+\infty)$
(respectively, on $\mathbb{R}$) with the initial condition $u\in
\mathcal C$. By $\varphi(t,u,f)$ we will denote below the
trajectory of equation (\ref{eqAFDE1}), corresponding to the
solution $\widetilde{\varphi}(t,u,f)$, i.e., a mapping from
$\mathbb{R}_{+}$ (respectively, $\mathbb{R}$) into $\mathcal C$,
defined by $\varphi(t,u,f)(s):=\widetilde{\varphi}(t+s,u,f)$ for all
$t\in\mathbb{R}_{+}$ (respectively, $t\in\mathbb{R}$) and $s\in
[-r,0]$. Below we will use the notions of ``solution" and
``trajectory" for equation (\ref{eqAFDE1}) as synonymous concepts,
unless it leads to a misunderstanding.}
\end{remark}

It is well-known \cite{Bro79,Sel} that the mapping $\varphi
:\mathbb R_{+}\times \mathcal C\times H(f) \to \mathcal  C$
possesses the following properties:
\begin{enumerate}
\item $\varphi(0,v,g)=v$ for all $v\in \mathcal C$ and $g\in
H(f)$; \item
$\varphi(t+\tau,v,g)=\varphi(t,\varphi(\tau,v,g),\sigma(\tau,g))$
for every $t,\tau\in\mathbb R_{+}$, $v\in \mathcal C$ and $g\in
H(f)$; \item the mapping $\varphi$ is continuous.
\end{enumerate}
Thus the equation (\ref{eqAFDE1}) generates a cocycle $\langle
\mathcal C,\varphi,(Y,\mathbb R,\sigma)\rangle$ and a
nonautonomous dynamical system $\langle (X,\mathbb R_{+},\pi)$,
$(Y,\mathbb R,\sigma),h\rangle$, where $Y:=H(f)$, $X:=\mathcal
C\times Y$, $\pi := (\varphi,\sigma)$ and $h:=pr_2 :X\to Y$. }
\end{example}

\begin{definition}\label{defCC1}
A function $f\in C(\mathbb T\times \mathcal C,\mathbb R^{n})$ is
called completely continuous if for every bounded subset $A\subset
\mathcal C$ the set $f(\mathbb R\times A)$ is bounded in $\mathbb
R^n$.
\end{definition}

\begin{lemma}\label{lQ01} Let $\varphi(t,u,f)$ be a bounded on $\mathbb
R_{+}$ solution of the equation (\ref{eqAFDE1}). If the function
$f$ is completely continuous, then the set $\varphi(\mathbb
R_{+},u,f)\subset \mathcal C$ is pre-compact in $\mathcal C$.
\end{lemma}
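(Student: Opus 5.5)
The proof is an Arzel\`a--Ascoli argument in $\mathcal C=C([-r,0],\mathbb R^n)$: I would show that the set of segments $\{\varphi(t,u,f):\ t\in\mathbb R_{+}\}$ is uniformly bounded and equicontinuous on $[-r,0]$. Write $\widetilde\varphi(t):=\widetilde\varphi(t,u,f)$ for the solution on $[-r,+\infty)$, so that $\varphi(t,u,f)(s)=\widetilde\varphi(t+s)$. Boundedness of the solution on $\mathbb R_{+}$ in $\mathcal C$ means there is $K>0$ with $|\widetilde\varphi(s)|\le K$ for all $s\ge -r$. This already gives uniform boundedness of the family of segments. It also gives the set $A:=\{v\in\mathcal C:\ \|v\|\le K\}$, which is bounded and contains every segment $\widetilde\varphi_t=\varphi(t,u,f)$ with $t\ge 0$.

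Next I would use complete continuity of $f$. By Definition \ref{defCC1}, $f(\mathbb R\times A)$ is bounded, say by $M$. Then for $t\ge 0$ we have $|\widetilde\varphi'(t)|=|f(t,\widetilde\varphi_t)|\le M$, so $\widetilde\varphi$ is $M$-Lipschitz on $[0,+\infty)$. On the initial interval $[-r,0]$ the function $\widetilde\varphi$ coincides with the initial datum $u\in\mathcal C$. This is a single continuous function on a compact interval, so it is uniformly continuous there with some modulus $\omega_u$. Hence $\widetilde\varphi$ is uniformly continuous on all of $[-r,+\infty)$: for $|s_1-s_2|\le\delta$, split at $0$ if the two points lie on opposite sides, and bound the difference by $\omega_u(\delta)+M\delta$.

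Consequently, for every $t\ge 0$ and $s_1,s_2\in[-r,0]$,
\begin{equation*}
|\varphi(t,u,f)(s_1)-\varphi(t,u,f)(s_2)|=|\widetilde\varphi(t+s_1)-\widetilde\varphi(t+s_2)|\le \omega_u(|s_1-s_2|)+M|s_1-s_2|,
\end{equation*}
so the family $\{\varphi(t,u,f)\}_{t\ge0}$ is equicontinuous. By the Arzel\`a--Ascoli theorem it is precompact in $\mathcal C$.

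The only delicate point is the small window $t\in[0,r]$, where the segment still contains part of the initial function and the derivative bound does not apply. The splitting described above handles it. I would state it explicitly rather than assume differentiability on $[-r,0]$.
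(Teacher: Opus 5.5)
Your argument is correct, and it is essentially the standard Arzel\`a--Ascoli argument behind the results the paper cites. The paper gives no argument of its own and only refers to Lemmas 2.2.3 and 3.6.1 of \cite{hale}; you write the argument out in full, and your use of the uniform-in-time bound from Definition~\ref{defCC1} is exactly what it requires. The one cosmetic difference is the window $t\in[0,r]$: you handle it by splitting each segment at $s=0$ and using the modulus of continuity of $u$, whereas the route through the cited lemmas splits in time instead. There, $\{\varphi(t,u,f):\ t\in[0,r]\}$ is compact as the continuous image of $[0,r]$, and Arzel\`a--Ascoli is applied only for $t\ge r$, where every segment is $M$-Lipschitz.
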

\begin{proof} This statement follows from Lemmas 2.2.3 and 3.6.1
from \cite{hale}.
\end{proof}

\begin{theorem}\label{thAFDE1} Assume that the following
conditions hold:
\begin{enumerate}
\item the function $f\in C(\mathbb T\times \mathcal C,\mathbb
R^{n})$ is completely continuous; \item the equation
(\ref{eqAFDE1}) admits a bounded on $\mathbb R_{+}$ solution
$\varphi$, i.e., $Q:=\overline{\varphi(\mathbb R_{+})}$ is a
compact subset of $\mathbb R^{n}$; \item the function $f\in
C(\mathbb R\times \mathcal C,\mathbb R^{n})$ is positively
Lagrange stable and regular; \item $f$ is remotely almost periodic
(respectively, remotely $\tau$-periodic or remotely stationary) in
$t\in \mathbb T$ uniformly w.r.t. $x$ on every compact subset of
$\mathcal C$; \item the $\omega$-limit set $\omega_{f}$ of the
function $f$ is minimal; \item each equation (\ref{eqAFDE2})
($g\in \omega_{f}$) has only separated on $\mathbb R_{+}$
solutions in $Q$.
\end{enumerate}

Then the solution $\varphi$ of the equation (\ref{eqAFDE1}) is
remotely almost periodic (respectively, remotely $m\tau$-periodic
($m$ is some natural number) or remotely stationary).
\end{theorem}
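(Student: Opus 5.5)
The plan follows the pattern of Theorems \ref{thAODE1} and \ref{thADE1}: realize the functional differential equation as a nonautonomous dynamical system and then apply Theorem \ref{thNDS2} together with Corollary \ref{corOM1}. The only new ingredient is the precompactness of the trajectory in the phase space $\mathcal C$.

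\textbf{Step 1 (precompactness).} Let $\widetilde{\varphi}=\widetilde{\varphi}(t,u_0,f)$ be the bounded solution and $\varphi(t)=\varphi(t,u_0,f)\in\mathcal C$ its trajectory. Since $f$ is completely continuous, Lemma \ref{lQ01} shows that $\varphi(\mathbb R_{+},u_0,f)$ is precompact in $\mathcal C$. The underlying reason is that $\widetilde{\varphi}'(t)=f(t,\widetilde{\varphi}_t)$ is bounded for $t\ge 0$, so the family $\{\varphi(t)\}_{t\ge r}$ is equicontinuous; Arzel\`a--Ascoli then applies, and the finite segment $t\in[0,r]$ is compact by continuity. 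We then read $Q$ as the compact set $\overline{\varphi(\mathbb R_{+},u_0,f)}\subset\mathcal C$.

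\textbf{Step 2 (the NDS).} Set $Y:=H^{+}(f)$ with the shift semiflow. Positive Lagrange stability of $f$ makes $Y$ compact, and $\omega_{y}=\omega_f$ for $y=f$. Using Example \ref{exFDE1} and the regularity of $f$, we obtain the cocycle $\varphi$ on $\mathcal C$ and the skew-product NDS $\langle (X,\mathbb R_{+},\pi),(Y,\mathbb R_{+},\sigma),h\rangle$ with $X=\mathcal C\times Y$. Take the point $x=(u_0,f)\in X_f$. Its semi-trajectory $\{(\varphi(t,u_0,f),f^t)\}$ lies in $Q\times Y$, so $x$ is positively Lagrange stable.

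\textbf{Step 3 (translating the hypotheses).} First, $y=f$ is remotely almost periodic: by Remark \ref{remI2}, uniform remote almost periodicity on each compact subset of $\mathcal C$ is exactly remote almost periodicity of the motion $\sigma(t,f)$ in the compact-open topology, which by Remark \ref{remAT1} may be treated as a pseudometric uniform space. Second, $\omega_y=\omega_f$ is minimal by hypothesis. Third, every point of $\omega_x$ has the form $(v,g)$ with $g\in\omega_f$ and $v\in Q$. Moreover, $\omega_x$ is invariant by Theorem \ref{thLS1}, so the motion through $(v,g)$ is a solution of (\ref{eqAFDE2}) defined on $\mathbb R$ with values in $Q$. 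The separation hypothesis (6) therefore gives exactly that the points of $\omega_x$ are separated in $\omega_x$.

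\textbf{Step 4 (conclusion).} Theorem \ref{thNDS2} now yields that $x$ is remotely almost periodic, so $t\mapsto\varphi(t,u_0,f)\in\mathcal C$ is remotely almost periodic. Since $|\widetilde{\varphi}(t+\tau)-\widetilde{\varphi}(t)|\le\|\varphi(t+\tau)-\varphi(t)\|_{\mathcal C}$, the solution $\widetilde{\varphi}$ itself is remotely almost periodic. In the remotely $\tau$-periodic and remotely stationary cases we apply Corollary \ref{corOM1} instead, and the continuous-time case (a) gives remote stationarity.

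\textbf{Main obstacle.} The delicate step is Step 3, specifically the claim that the two-sided motions lying in $\omega_x$ are genuine solutions of the limit equations on all of $\mathbb R$ with values in $Q$, so that hypothesis (6) applies. I would handle this through the invariance of $\omega_x$ (Theorem \ref{thLS1}) together with the uniqueness part of regularity on $\omega_f\subset H^{+}(f)$.
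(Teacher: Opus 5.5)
Your proposal is correct and follows essentially the same route as the paper. The paper also builds the skew-product NDS over the compact base $Y=H^{+}(f)$ and then applies Theorem \ref{thNDS2}, Corollary \ref{corOM1} and Remark \ref{remAT1}. Your added steps fill in details the paper's proof leaves implicit:
\begin{itemize}
\item precompactness of the trajectory in $\mathcal C$ via Lemma \ref{lQ01};
\item the check that the separation hypothesis makes the points of $\omega_x$ separated in $\omega_x$;
\item the passage from remote almost periodicity in $\mathcal C$ to remote almost periodicity in $\mathbb R^{n}$.
\end{itemize}
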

\begin{proof}
Let $Y:=H^{+}(f)$ and $(Y,\mathbb R_{+},\sigma)$ be the semi-group
dynamical system on $Y$ induced by the shift dynamical system
$(C(\mathbb R\times \mathcal C,\mathbb R^{n}),\mathbb R,\sigma)$.
Since the function $f$ is positively Lagrange stable then the
space $Y$ is compact. Denote by $X:=\mathcal C\times Y$,
$(X,\mathbb R_{+},\pi)$ the skew-product dynamical system ($\pi
:=(\varphi,\sigma)$) and
\begin{equation}\label{eqFNDS1}
\langle (X,\mathbb R_{+},\pi),(Y,\mathbb R,\sigma),h\rangle
\end{equation}
the nonautonomous dynamical system associated by the differential
equation (\ref{eqAFDE1}) (for more details see Example
\ref{exFDE1}). Now to finish the proof of Theorem it suffices to
apply Theorem \ref{thNDS2}, Corollary \ref{corOM1} and Remark
\ref{remAT1} to nonautonomous dynamical system (\ref{eqFNDS1}).
\end{proof}

\begin{definition}\label{def^OO1} A solution $\varphi(t,u_0,f)$ of
the equation (\ref{eqAFDE1}) is said to be:
\begin{enumerate}
\item[-] positively uniformly stable if for arbitrary $\varepsilon
>0$ there exists $\delta =\delta(\varepsilon)>0$ such that
$|\varphi(t_0,u,f)-\varphi(t_0,u_0,f)|<\delta$ ($t_0\in \mathbb
R_{+}$, $u\in\mathfrak B$) implies
$|\varphi(t,x,f)-\varphi(t,x_0,f)|<\varepsilon$ for all $t\ge
t_0$; \item[-] uniformly attracting if there exists a number
$\delta_{0}>0$ such that for every $\varepsilon
>0$ there exists a number $L(\varepsilon)>0$ with the property
that $|\varphi(t_0,u,f)-\varphi(t_0,u_0,f)|<\delta_{0}$ ($t_0\ge
0$) imply $|\varphi(t,u,f)-\varphi(t,u_0,f)|<\varepsilon$ for all
$t\ge t_0+L(\varepsilon)$; \item uniformly positively
asymptotically stable if it is positively uniformly stable and
uniformly attracting.
\end{enumerate}
\end{definition}

\begin{theorem}\label{thAFDE2} Assume that the following
conditions hold:
\begin{enumerate}
\item the function $f\in C(\mathbb T\times \mathcal C,\mathbb
R^{n})$ is completely continuous; \item the equation
(\ref{eqAFDE1}) admits a bounded on $\mathbb R_{+}$ solution
$\varphi(t,u_0,f)$; \item the solution $\varphi(t,u_0,f)$ is
uniformly positively asymptotically stable; \item the function
$f\in C(\mathbb R\times \mathcal C,\mathbb R^{n})$ is positively
Lagrange stable and regular; \item $f$ is remotely almost periodic
(respectively, remotely $\tau$-periodic or remotely stationary) in
$t\in \mathbb T$ uniformly w.r.t. $x$ on every compact subset of
$\mathcal C$.
\end{enumerate}

Then the solution $\varphi(t,u_0,f)$ of the equation
(\ref{eqAFDE1}) is remotely almost periodic (respectively,
remotely $m\tau$-periodic ($m$ is some natural number) or remotely
stationary).
\end{theorem}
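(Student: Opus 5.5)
The argument follows the pattern of Theorem \ref{thAODE2}, with the functional differential equation (\ref{eqAFDE1}) viewed through the cocycle of Example \ref{exFDE1}. We set $Y:=H^{+}(f)$ with the semigroup shift system $(Y,\mathbb R_{+},\sigma)$. Since $f$ is positively Lagrange stable, $Y$ is compact. By regularity (Condition (\textbf{F})), the trajectories $\varphi(t,v,g)\in\mathcal C$ define a cocycle. This gives the skew-product system on $X:=\mathcal C\times Y$ with $\pi:=(\varphi,\sigma)$ and the NDS (\ref{eqFNDS1}), where $h=pr_2$.

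\textbf{Step 1: precompactness.} The given solution is bounded on $\mathbb R_{+}$ and $f$ is completely continuous, so Lemma \ref{lQ01} shows that the trajectory $\varphi(\mathbb R_{+},u_0,f)$ is precompact in $\mathcal C$. Together with the compactness of $Y$, the point $x_0:=(u_0,f)\in X$ has a precompact positive semi-trajectory $\Sigma^{+}_{x_0}$. This is hypothesis (1) of Theorem \ref{thPD2}.

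\textbf{Step 2: stability transfers to $\omega_{x_0}$.} We translate uniform positive asymptotic stability of $\varphi(t,u_0,f)$, in the sense of Definition \ref{def^OO1}, into stability of the set $\Sigma^{+}_{x_0}$ in the sense of Definitions \ref{defPUS1}, \ref{defPUS3} and \ref{defPUS02}. A point of $\Sigma^{+}_{x_0}$ has the form $(\varphi(t_0,u_0,f),f^{t_0})$, and any $x$ in the same fibre is $(v,f^{t_0})$. The cocycle identity gives $\varphi(t,v,f^{t_0})=\varphi(t+t_0,\cdot,f)$ evaluated along the solution through $v$ at time $t_0$. Hence the $\delta$-$\varepsilon$ conditions of Definition \ref{def^OO1} are exactly those of Definitions \ref{defPUS1} and \ref{defPUS3} for $\Sigma^{+}_{x_0}$.

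The projection $h=pr_2:\mathcal C\times Y\to Y$ is open. Corollaries \ref{lUS1} and \ref{lUAS1} then show that $\omega_{x_0}$ is positively uniformly stable and uniformly attracting, which is hypothesis (2) of Theorem \ref{thPD2}.

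\textbf{Step 3: the base.} The point $f\in Y$ is positively Lagrange stable and, by assumption (5), remotely almost periodic. Lemma \ref{lRAP1} then makes $\omega_{f}=\omega_{y_0}$ equi-almost periodic, so it consists of almost periodic points. For Theorem \ref{thPD2}(3) we also need $\omega_{y_0}$ to be minimal.

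This is the main obstacle, because, unlike Theorem \ref{thAFDE1}, minimality of $\omega_f$ is not listed among the hypotheses. We would argue that a compact invariant equi-almost periodic set splits into minimal sets (closures of almost periodic orbits), and use Lemma \ref{lOLS1} (invariant connectedness of $\omega_{y_0}$). If this is insufficient, we would note that the proof of Theorem \ref{thPD2} uses minimality only through Theorem \ref{thNDS1} applied fibrewise. Distality from Theorem \ref{th02.1} applies, since almost periodic points are two-sided Poisson stable, and Corollary \ref{corPD1} already gives finiteness of the fibres, so we would run the argument on each minimal component.

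\textbf{Conclusion.} Applying Theorem \ref{thPD2}, $x_0$ is remotely almost periodic, so $t\mapsto\varphi(t,u_0,f)$ is remotely almost periodic in $\mathcal C$. Since $\widetilde{\varphi}(t)=\varphi(t,u_0,f)(0)$, the solution itself is remotely almost periodic. In the remotely $\tau$-periodic and remotely stationary cases, Corollary \ref{corPD2.1} (with Corollary \ref{corOM1}) gives remote $m\tau$-periodicity or remote stationarity. Remark \ref{remAT1} covers the passage to the uniform topology on $Y$.
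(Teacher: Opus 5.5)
Your route is the paper's. Its proof of this theorem is the one-line instruction to apply Theorem~\ref{thPD2}, Corollary~\ref{corPD2.1} and Remark~\ref{remAT1} to the NDS (\ref{eqFNDS1}) over $Y=H^{+}(f)$. Your Steps 1--2 supply checks the paper leaves implicit: precompactness of $\Sigma^{+}_{x_0}$ via Lemma~\ref{lQ01}, and openness of $h=pr_{2}$ with Corollaries~\ref{lUS1} and \ref{lUAS1}. Uniform attraction is genuinely needed here, since Theorem~\ref{thPD2} rests on Corollary~\ref{corPD1}, i.e.\ on Lemma~\ref{lUAS2}.

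You are also right that hypothesis (3) of Theorem~\ref{thPD2} needs $\omega_{f}$ to be minimal, and that this is not among the stated hypotheses. The paper's proof uses it tacitly: it is assumed explicitly in Theorem~\ref{thAFDE1}, and Theorem~\ref{thH1} adds it before invoking the analogous Theorem~\ref{thAODE2}. So it is evidently an omitted hypothesis, and with it added your argument is complete and agrees with the paper's.

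A small point in Step 2: the solution-level definition of uniform asymptotic stability literally compares only with states of the form $\varphi(t_0,u,f)$. For a delay equation not every $v\in\mathcal C$ has this form. You therefore need the usual reading (an arbitrary initial function at time $t_0$) to obtain Definitions~\ref{defPUS1} and \ref{defPUS3} for $\Sigma^{+}_{x_0}$.

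The genuine gap is Step 3, your attempt to do without minimality.

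\textbf{First argument.} It is false. A compact invariant equi-almost periodic set is a union of minimal sets, but possibly of infinitely many. Invariant connectedness (Lemma~\ref{lOLS1}) only excludes a splitting into two disjoint closed invariant pieces. In Theorem~\ref{thNDS2} the finiteness of the decomposition came from Theorem~\ref{thNDS1}, which presupposes a minimal base. Concretely, take $n=1$ and $f(t,u)=-u(0)+\sin\ln(1+|t|)$. It satisfies all hypotheses (1)--(5), even in the remotely stationary version. Yet $\omega_{f}=\{-u(0)+c:\ c\in[-1,1]\}$ is connected, consists of stationary points, and is not minimal.

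\textbf{Second argument.} This is Theorem~\ref{thNDS1} applied over each minimal $M\subseteq\omega_{y_0}$, with distality from Theorem~\ref{th02.1} and finite fibres from Corollary~\ref{corPD1}. It only gives almost periodicity of the points of $\omega_{x_0}$, and equi-almost periodicity of each minimal subset via Lemma~\ref{lSM1}. Lemma~\ref{lRAP1}, however, needs one relatively dense set $\mathcal P(\varepsilon,\omega_{x_0})$ serving all of $\omega_{x_0}$ at once. That is a uniformity over possibly uncountably many components, and nothing in your sketch provides it. Moreover $\omega_{x_0}$ is then not minimal, since it projects onto $\omega_{y_0}$. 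So Theorem~\ref{thPD2} cannot be quoted, and Corollary~\ref{corOM1} is unavailable too, because it uses minimality of $\omega_{y}$ to reduce it to one periodic orbit or one point.

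\textbf{The special cases can be done directly.}
\begin{itemize}
\item Remotely stationary case: over a stationary $q$, the fibre $\omega^{q}_{x_0}$ is a finite invariant set. Since $t\mapsto\pi(t,p)$ is a continuous map into this finite set, each $p\in\omega^{q}_{x_0}$ is stationary, and Corollary~\ref{corSAP1} applies.
\item Remotely $\tau$-periodic case: the uniform $r$ of Lemma~\ref{lUAS2}, together with compactness of $\omega_{x_0}$, bounds every $m_q$ by some $N$. Every $q\in\omega_{y_0}$ is $\tau$-periodic (Theorem~\ref{th1.3.9}), so $\pi(\tau,\cdot)$ permutes each fibre. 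Hence $\pi(N!\tau,\cdot)$ is the identity on $\omega_{x_0}$, and Theorem~\ref{th1.3.9} gives remote $N!\tau$-periodicity of $x_0$.
\end{itemize}
The general remotely almost periodic case without minimality would need a genuinely new uniformity argument. This is the kind of question the paper itself leaves open after Theorem~\ref{thNDS2}.
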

\begin{proof}
Let $Y:=H^{+}(f)$ and $(Y,\mathbb R_{+},\sigma)$ be the semi-group
dynamical system on $Y$. Since the function $f$ is positively
Lagrange stable then the space $Y$ is compact. Let $\langle
(X,\mathbb R_{+},\pi),(Y,\mathbb R,\sigma),h\rangle$ be the
nonautonomous dynamical system associated by the differential
equation (\ref{eqAFDE1}) (see Example \ref{exFDE1}). Now to finish
the proof of Theorem it suffices to apply Theorem \ref{thPD2},
Corollary \ref{corPD2.1} and Remark \ref{remAT1} to nonautonomous
dynamical system (\ref{eqFNDS1}).
\end{proof}

\subsection{Ordinary Stochastic Differential Equations}

\begin{example}\label{exSDE1}
{\rm Let $(\Omega, \mathcal{F},  {P} )$ be a probability space,
and ${L}^{2}( {P}, \mathbb R^n)$ stand for the space of all
$\mathbb R^n$-valued random variables $X$ such that
\begin{equation}
{E} |X|^{2} = \int_{\Omega}|X|^{2}d  { P}<\infty,\nonumber
\end{equation}
where $|\cdot|$ denotes the Euclidian norm of a vector in $\mathbb
R^n$. For $X\in {L}^{2}( {P},\mathbb R^n)$, let
\[
\|X\| {:=}\left( \int_{\Omega}|X|^{2}d  {P} \right)^{ 1/2 }.
\]
Then ${L}^{2} ( {P},\mathbb R^n)$ is a Hilbert space equipped with
the norm $\|\cdot\|$.

Let $\mathcal P(\mathbb R^n)$ be the space of all Borel
probability measures on $\mathbb R^n$ endowed with the $\beta$
metric:
\begin{equation}\label{eqBT1}
\beta (\mu,\nu) :=\sup\left\{ \left| \int f d \mu - \int f d
\nu\right|: \|f\|_{BL} \le 1 \right\},\ \mu,\nu\in \mathcal
P(\mathbb R^n),\nonumber
\end{equation}}
where $f$ are Lipschitz continuous real-valued functions on
$\mathbb R^n$ with the norms
$$
\|f\|_{BL}= \|f\|_L + \|f\|_\infty, \|f\|_L=\sup_{x\neq y}
\frac{|f(x)-f(y)|}{|x-y|},
$$
$$
 \|f\|_{\infty}=\sup_{x\in \mathbb R^n}|f(x)|.
$$
Recall \cite[Ch.IX]{Dud_2004} that a sequence $\{\mu_k\}\subset
\mathcal P(\mathbb R^n)$ is said to weakly converge to $\mu$ if
$\int f d\mu_k\to \int f d \mu$ for all $f\in C_b(\mathbb R^n)$,
where $C_b(\mathbb R^n)$ is the space of all bounded continuous
real-valued functions on $\mathbb R^n$.

\begin{definition}\label{aad}
A sequence of random variables $\{X_k\}$ is said to \emph{converge
in distribution} to the random variable $X$ if the corresponding
laws $\{\mu_k\}$ weakly converge to the law $\mu$ of $X$, i.e.
$\beta(\mu_k,\mu)\to 0$.
\end{definition}

\begin{definition}
Consider the stochastic differential equation on $\mathbb R^n$:
\begin{equation}\label{eqSDE1}
d X(t) = f(t,X(t))dt + g(t,X(t))dW(t),\nonumber
\end{equation}
where $W(t)$ is a two-sided standard one-dimensional Brown motion
defined on the filtered probability space $(\Omega,\mathcal
F,\mathbb P,\mathcal F_{t})$, where $\mathcal
F_{t}:=\sigma\{W(u)-W(v):\ u,v\le t\}$.

An $\mathcal F_t$-progressively measurable stochastic process
$\{X(t)\}_{t\in \mathbb R_{+}}$ is called a solution of the above
equation if it satisfies the corresponding stochastic integral
equation
\begin{equation*}
X(t)=  X(r)+\int_r^t f(s,X(s))d s +\int_r^t g(s,X(s))dW(s)
\end{equation*}
for all $t\ge r$ and each $r\in \mathbb R$, where $\mathcal
F_t:=\sigma\{W(s): s\le t\}$.
\end{definition}

\begin{definition}\label{defLK1} We say that the
functions $f,g\in C(\mathbb T\times \mathbb R^{n},\mathbb R^{n})$
satisfy the Condition:
\begin{enumerate}
\item[(\textbf{C1}):] if there exists a positive number $L$ such
that
\begin{equation}\label{eqLK1}
|f(t,x)-f(t,y)|\vee |g(t,x)-g(t,y)|\le L|x-y|\nonumber
\end{equation}
for all $t\in \mathbb T$ and $x,y\in \mathbb R^{n}$, where $a\vee
b :=\max\{a,b\}$; \item[(\textbf{C2}):] if there exists a positive
constant $K$ such that
\begin{equation}\label{eqLK2}
|f(t,x)|\vee |g(t,x)|\le K(1+|x|)\nonumber
\end{equation}
for all $(t,x)\in \mathbb T\times \mathbb R^{n}$.
\end{enumerate}
\end{definition}

\begin{remark}\label{remC1}
Note that if the function $(f,g)\in C(\mathbb T\times \mathbb
R^{n},\mathbb R^{n})\times C(\mathbb T\times \mathbb R^{n},\mathbb
R^{n})$ satisfies Condition (\textbf{C1}) (respectively, Condition
(\textbf{C2})), then every function $(\widetilde{f}.\widetilde{g})\in
H(f,g)$ satisfies the same condition with the same constant $L$
(respectively, $K$).
\end{remark}

Consider the stochastic differential equation on $\mathbb R^n$
\begin{equation}\label{eqFG1}
dX = f(t,X)dt + g(t,X)d W.
\end{equation}
where $f,g:\mathbb R\times \mathbb R^n\to \mathbb R^n$ are
continuous and satisfy global Lipschitz (Con\-di\-tion
(\textbf{C1})) and linear growth conditions in $x$ (Condition
(\textbf{C2})) uniformly in $t\in \mathbb R$.

Along with the equation (\ref{eqFG1}) we will consider its
$H$-class, i.e., the family of the equations
\begin{equation}\label{eqFG2}
dX = \widetilde{f}(t,X)dt + \widetilde{g}(t,X)d W,
\end{equation}
where $(\widetilde{f},\widetilde{g})\in H(f,g)$.

Define the mapping
\[
\Phi: \mathbb R_{+} \times \mathcal P(\mathbb R^n) \times H(f,g)
\to \mathcal P(\mathbb R^n),
\]
with $\Phi(t,\mu_0,(\widetilde f,\widetilde g))$ being the law (or
distribution) $\mathcal L(X(t))$ of the solution $X(\cdot)$ at
time $t$ of the Cauchy problem
\begin{equation}\label{tilfg}
d X = \widetilde f(t,X) d t + \widetilde g(t,X) d W, \quad
X(0)=X_0,\nonumber
\end{equation}
where $\mathcal L(X_0)=\mu_0$ and $\Phi(0,\mu_0,(\widetilde f,\widetilde
g))=\mu_0$.

We have the following basic result on $\Phi$:

\begin{theorem}\label{cocycle}\cite{CL_2024}
The mapping $\Phi$ is a continuous cocycle with base space
$H(f,g)$ and fiber space $\mathcal P(\mathbb R^n)$, i.e., the
mapping $\Phi: \mathbb R_{+} \times \mathcal P(\mathbb R^n) \times
H(f,g) \to \mathcal P(\mathbb R^n)$ is continuous and satisfies
$$
\Phi(0,\mu, (\widetilde f,\widetilde g))=\mu,
$$
$$
\Phi(t+\tau, \mu, (\widetilde f,\widetilde g))= \Phi(t,\Phi(\tau,\mu,
(\widetilde f,\widetilde g)), (\widetilde f_\tau,\widetilde g_\tau))
$$
for all $t,\tau\ge 0$, $(\widetilde f, \widetilde g)\in H(f,g)$ and
$\mu\in\mathcal P(\mathbb R^n)$.
\end{theorem}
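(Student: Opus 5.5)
The plan is to verify the three cocycle properties directly at the level of solution processes, and then pass to laws. The normalisation $\Phi(0,\mu,(\widetilde f,\widetilde g))=\mu$ is immediate from $X(0)=X_0$ with $\mathcal L(X_0)=\mu$. Under Conditions (\textbf{C1}) and (\textbf{C2}), which by Remark \ref{remC1} hold uniformly on $H(f,g)$ with the same constants $L,K$, each equation (\ref{eqFG2}) has the following properties for any $\mathcal F_0$-measurable initial value $X_0$:
\begin{itemize}
\item it has a pathwise unique strong solution;
\item by the Yamada--Watanabe theorem, uniqueness in law also holds.
\end{itemize}
Consequently $\Phi(t,\mu,(\widetilde f,\widetilde g))$ depends only on $\mu$ and not on the choice of $X_0$ or of the probability space. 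This well-posedness in law is what the rest of the argument rests on.

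For the cocycle identity, let $X$ solve (\ref{eqFG2}) with $\mathcal L(X_0)=\mu$ and fix $\tau\ge 0$. I would set $Y(s):=X(s+\tau)$ and $\widetilde W(s):=W(s+\tau)-W(\tau)$. By the time-shift of the integral equation,
\[
Y(s)=Y(0)+\int_0^s \widetilde f_\tau(u,Y(u))\,du+\int_0^s \widetilde g_\tau(u,Y(u))\,d\widetilde W(u).
\]
Moreover $\widetilde W$ is a standard Brownian motion independent of $\mathcal F_\tau\ni Y(0)$. Hence $Y$ is a solution of the $(\widetilde f_\tau,\widetilde g_\tau)$-equation with initial law $\Phi(\tau,\mu,(\widetilde f,\widetilde g))$. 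Uniqueness in law then gives $\mathcal L(X(t+\tau))=\Phi(t,\Phi(\tau,\mu,(\widetilde f,\widetilde g)),(\widetilde f_\tau,\widetilde g_\tau))$.

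Continuity is the main work. Take $t_k\to t$, $\mu_k\to\mu$ in $\beta$, and $(f_k,g_k)\to(\widetilde f,\widetilde g)$ in the compact-open topology.
\begin{enumerate}
\item By Skorokhod's representation theorem, on a product space carrying an independent Brownian motion $W$, choose $X_{0,k}\to X_0$ almost surely with $\mathcal L(X_{0,k})=\mu_k$ and $\mathcal L(X_0)=\mu$.
\item Localise: since $\mu$ need not have second moments, fix $R$ and work on the event $\{|X_0|\le R\}$, and more generally on $A_R=\{|X_0|\le R,\ \sup_k|X_{0,k}|\le R+1\}$. These sets are $\mathcal F_0$-measurable, and $P(A_R)\to 1$ as $R\to\infty$ by almost sure convergence.
\item Multiply initial data by $1_{A_R}$ and compare the truncated solutions. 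It\^o's isometry, the Burkholder--Davis--Gundy inequality, the uniform Lipschitz constant $L$ and Gronwall give
\[
E\sup_{s\le T}|X_k(s)-X(s)|^2\le C_T\Big(E|X_{0,k}-X_0|^21_{A_R}+E\int_0^T\big(|f_k-\widetilde f|^2+|g_k-\widetilde g|^2\big)(s,X(s))\,ds\Big).
\]
\item The first term on the right tends to $0$ by bounded convergence.
\item For the second term, use that $X$ is bounded in probability on $[0,T]$ by the moment bound from (\textbf{C2}). On compact sets the integrand tends to $0$ by compact-open convergence. Off compact sets it is dominated by $4K^2(1+|X(s)|)^2$, so dominated convergence applies.
\end{enumerate}
This yields $\sup_{s\le T}|X_k(s)-X(s)|\to 0$ in probability on $A_R$. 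Combining this with continuity of paths, $X_k(t_k)\to X(t)$ in probability on $A_R$, and letting $R\to\infty$ gives convergence in probability on all of $\Omega$. Convergence in probability implies $\beta$-convergence of the laws, which proves joint continuity.

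I expect the main obstacle to be this localisation step. Because $\mathcal P(\mathbb R^n)$ carries the weak topology rather than a Wasserstein metric, the natural $L^2$ Gronwall estimates are unavailable for the raw data. The truncation on $\mathcal F_0$-measurable events is needed to make them usable, and it is justified by pathwise uniqueness, since solutions restricted to an $\mathcal F_0$-event coincide with solutions started from truncated data.
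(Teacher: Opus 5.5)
The paper does not prove this statement. It is imported from \cite{CL_2024} and used as a black box to build the skew-product semiflow $\Pi$, so there is no in-paper argument to compare yours against.

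Your proof is correct and self-contained.
\begin{itemize}
\item \emph{Cocycle identity.} The time shift $Y(s)=X(s+\tau)$, $\widetilde W(s)=W(s+\tau)-W(\tau)$, combined with uniqueness in law via Yamada--Watanabe, is exactly what is needed. It also makes it legitimate to compute $\Phi$ on your Skorokhod product space instead of on the paper's two-sided Brownian filtration.
\item \emph{Continuity.} You combine a Skorokhod coupling of the initial laws, localisation on the $\mathcal F_0$-events $A_R$ (justified by pathwise uniqueness), and a BDG--Gronwall estimate. This is in effect a hands-on proof of the Gikhman--Skorokhod continuous-dependence theorem. Its advantage is that it covers arbitrary $\mu\in\mathcal P(\mathbb R^n)$ with no moment assumption, which the $\beta$-metric setting genuinely requires.
\end{itemize}

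Two small clean-ups.
\begin{itemize}
\item In your displayed estimate, both solutions and the argument of the perturbation term must be the truncated ones. The integrand should be $(|f_k-\widetilde f|^2+|g_k-\widetilde g|^2)(s,X^R(s))$, with $X^R$ started from $X_0 1_{A_R}$.
\item With that reading, no \emph{bounded in probability} argument is needed. Since $E\sup_{s\le T}|X^R(s)|^2\le C_T(1+R^2)$ by (\textbf{C2}), a constant multiple of $(1+|X^R(s)|)^2$ is an integrable majorant on $\Omega\times[0,T]$. Pointwise convergence of $(f_k,g_k)$ then gives the limit by dominated convergence.
\end{itemize}

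Finally, proving only sequential continuity is enough, because $\mathbb R_+\times\mathcal P(\mathbb R^n)\times H(f,g)$ is metrizable under the compact-open topology on the last factor.
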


\begin{coro}
The mapping given by
$$
\Pi:\mathbb R_{+}  \times \mathcal P(\mathbb R^n) \times H(f,g)
\to \mathcal P(\mathbb R^n) \times H(f,g),
$$
$$ \Pi (t, (\widetilde f,\widetilde g), \mu) = (\Phi(t,\mu,(\widetilde f,\widetilde
g)), (\widetilde f_t,\widetilde g_t))
$$
is a continuous skew-product semiflow.
\end{coro}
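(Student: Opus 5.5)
The corollary follows almost directly from Theorem \ref{cocycle}. I would set $Y:=H(f,g)$ with the shift flow $\sigma(\tau,(\widetilde f,\widetilde g)):=(\widetilde f_\tau,\widetilde g_\tau)$ and $X:=\mathcal P(\mathbb R^n)\times Y$, and then check the axioms of a skew-product dynamical system for $\Pi=(\Phi,\sigma)$, exactly as in Definition \ref{def2.2}. The argument uses three facts in order.

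First, the base is a genuine flow. The map $\sigma:\mathbb R\times C(\mathbb R\times\mathbb R^n,\mathbb R^n)^2\to C(\mathbb R\times\mathbb R^n,\mathbb R^n)^2$ is the product of two Bebutov shift systems, so it is continuous in the compact-open topology \cite[Ch.I]{Che_2015}. It satisfies $\sigma(0,\cdot)=\mathrm{id}$ and $\sigma(t+\tau,\cdot)=\sigma(t,\sigma(\tau,\cdot))$. The hull $H(f,g)$ is closed and invariant under translations, since it is the closure of an orbit. Hence $(Y,\mathbb R,\sigma)$ is a dynamical system, and restricting time to $\mathbb R_+$ gives $(Y,\mathbb R_+,\sigma)$.

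Second, I verify the semiflow identities for $\Pi$. By Theorem \ref{cocycle}, $\Phi(0,\mu,(\widetilde f,\widetilde g))=\mu$, so $\Pi(0,\cdot)=\mathrm{id}$. For the semigroup property,
\[
\Pi(t,\Pi(\tau,(\mu,y)))=\bigl(\Phi(t,\Phi(\tau,\mu,y),\sigma(\tau,y)),\sigma(t,\sigma(\tau,y))\bigr)=\bigl(\Phi(t+\tau,\mu,y),\sigma(t+\tau,y)\bigr),
\]
which uses the cocycle identity from Theorem \ref{cocycle} together with the group property of $\sigma$.

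Third, $\Pi$ is jointly continuous. Both components are continuous: $\Phi$ by Theorem \ref{cocycle}, and $\sigma$ by the first step. A map into a product space with continuous components is continuous.

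Together these show that $(X,\mathbb R_+,\Pi)$ is the skew-product dynamical system of Definition \ref{def2.2}. As in Example \ref{ex2.4}, $h=pr_2$ then yields an NDS $\langle(X,\mathbb R_+,\Pi),(Y,\mathbb R,\sigma),h\rangle$.

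The only point needing care is notation. The corollary writes the pair as $((\widetilde f,\widetilde g),\mu)$, while Definition \ref{def2.2} uses $(\mu,(\widetilde f,\widetilde g))$; one simply identifies the two orderings. Beyond this there is no real obstacle: all analytic content, namely continuity of the law map in the $\beta$ metric, is contained in Theorem \ref{cocycle}.
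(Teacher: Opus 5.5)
Your proposal is correct and takes the same route as the paper. The paper gives no separate proof: it treats the corollary as an immediate consequence of Theorem \ref{cocycle} combined with the skew-product construction of Definition \ref{def2.2}, which is exactly what you verify (identity at $t=0$, the semigroup law from the cocycle identity and the group property of the shift, and joint continuity component by component), with the reordering of $((\widetilde f,\widetilde g),\mu)$ versus $(\mu,(\widetilde f,\widetilde g))$ as the only cosmetic adjustment.
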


Thus under the Conditions (\textbf{C1}) and (\textbf{C2}) the
equation (\ref{eqFG1}) generates a cocycle $\langle \mathcal
P(\mathbb R^{n}),\Phi, (H(f,g),\mathbb T,\sigma)\rangle$,
skew-product dynamical system $(X,\mathbb R_{+},\pi)$
($X:=\mathcal P(\mathbb R^{n})\times H(f,g)$, $\pi
:=(\Phi,\sigma)$) and non-autonomous dynamical system $\langle
(X,\mathbb R_{+},\pi),$ $(Y,\mathbb T,\sigma),h\rangle$
($Y:=H(f,g)$ and $h:=pr_{2}:X\to Y$).
\end{example}

\begin{definition}\label{defSDE1}  \textit{Let $\varphi :\mathbb
T\to \mathbb R^{n}$ be a mild solution of the equation
(\ref{eqFG1}). Then $\varphi$ is called almost periodic
(respectively, remotely almost periodic) {\em in distribution} if
the function $\phi \in C(\mathbb T,\mathcal P(\mathbb R^{n}))$ is
almost periodic (respectively, remotely almost periodic), where
$\phi(t):=\mathcal L(\varphi(t))$ for every $t\in\mathbb T$ and
$\mathcal L(\varphi(t))\in \mathcal P(\mathbb R^{n})$ is the law
of random variable $\varphi(t)$.}
\end{definition}

\begin{lemma}\label{lLK1} Assume that the functions $f,g\in C(\mathbb T\times \mathbb R^{n},\mathbb
R^{n})$ are Lagrange stable and $\varphi :\mathbb T\to \mathbb
R^{n}$ is a bounded solution of the equation (\ref{eqSDE1}). If
the functions $f$ and $g$ satisfy Conditions $(\textbf{C1})$ and
$(\textbf{C2})$, then the solution $\varphi$ is precompact in
distribution, i.e., $Q:=\overline{\phi(\mathbb T)}$ is a compact
subset of $\mathcal P(\mathbb R^{n})$.
\end{lemma}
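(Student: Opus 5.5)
The aim is to show that $\phi(\mathbb T)=\{\mathcal L(\varphi(t)):\ t\in\mathbb T\}$ is precompact in $(\mathcal P(\mathbb R^n),\beta)$. I would reduce this to tightness via Prokhorov's theorem. On $\mathbb R^n$, a family of probability measures is relatively compact for weak convergence exactly when it is tight. Weak convergence on $\mathcal P(\mathbb R^n)$ is metrized by the bounded-Lipschitz metric $\beta$ \cite[Ch.XI]{Dud_2004}. Hence tightness of $\phi(\mathbb T)$ is equivalent to compactness of $Q=\overline{\phi(\mathbb T)}$.

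I read ``bounded solution'' as bounded in $L^{2}(P,\mathbb R^n)$, meaning $C:=\sup_{t\in\mathbb T}E|\varphi(t)|^{2}<\infty$. This is the standard meaning in this setting. If boundedness is meant pathwise, in $L^\infty$, the argument below is even easier. By Chebyshev's inequality,
$$
P(|\varphi(t)|>R)\le \frac{E|\varphi(t)|^{2}}{R^{2}}\le \frac{C}{R^{2}}
$$
for all $t\in\mathbb T$. So for any $\varepsilon>0$, taking $R$ with $C/R^{2}<\varepsilon$ gives $\mu_t(\overline{B(0,R)})\ge 1-\varepsilon$ uniformly in $t$, where $\mu_t:=\phi(t)$. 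This is exactly tightness of $\{\mu_t\}$. Prokhorov's theorem then says every sequence $\{\mu_{t_k}\}$ has a weakly convergent subsequence, whose limit is again a probability measure. Therefore $\phi(\mathbb T)$ is relatively compact and $Q$ is compact.

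Conditions (\textbf{C1}) and (\textbf{C2}), together with Lagrange stability of $f,g$, play only a supporting role. They guarantee that the solution exists and is unique, and that $t\mapsto\mathcal L(\varphi(t))$ is continuous, so that $\phi\in C(\mathbb T,\mathcal P(\mathbb R^n))$ as Definition \ref{defSDE1} requires. This continuity follows from Theorem \ref{cocycle}. Alternatively one can argue directly: by the It\^o isometry together with (\textbf{C2}) and the bound on $E|\varphi(s)|^2$, we get $E|\varphi(t)-\varphi(s)|^{2}\le K'(|t-s|+|t-s|^{2})$, and $\beta$ is dominated by the $L^{2}$ distance of the random variables.

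The only genuine point to settle is the interpretation of ``bounded''. If the intended meaning were weaker, for instance bounded in probability, tightness would still be immediate from that hypothesis. So I expect no real obstacle: the whole proof is Chebyshev's inequality followed by Prokhorov's theorem.
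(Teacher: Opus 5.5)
Your proof is correct, but it follows a different route from the paper. The paper gives no argument of its own. It simply invokes Theorem~3.1 of \cite{LW_2016}, an external result on bounded solutions of stochastic equations formulated under (\textbf{C1})--(\textbf{C2}) with Lagrange stable coefficients. That is the only reason these hypotheses appear in the statement.

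You instead prove the claim from scratch. The uniform moment bound $\sup_{t\in\mathbb T}E|\varphi(t)|^{2}<\infty$ gives tightness of $\{\mathcal L(\varphi(t))\}_{t\in\mathbb T}$ by Chebyshev. Prokhorov's theorem, together with the fact that $\beta$ metrizes weak convergence on $\mathcal P(\mathbb R^{n})$, then gives compactness of $Q=\overline{\phi(\mathbb T)}$. Your treatment of the ambiguous word ``bounded'' covers every reasonable reading.

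What your route buys:
\begin{enumerate}
\item It is self-contained.
\item It shows that compactness of the range uses nothing about the equation beyond the moment bound. (\textbf{C1})--(\textbf{C2}) serve only to make $\phi$ a well-defined element of $C(\mathbb T,\mathcal P(\mathbb R^{n}))$.
\item It is all the later application needs. In the proof of Theorem~\ref{thSODE1}, positive Lagrange stability of the point $(\mathcal L(\varphi(0)),(f,g))$ of the skew-product system on $\mathcal P(\mathbb R^{n})\times H^{+}(f,g)$ follows from compactness of $Q$ and of $H^{+}(f,g)$. Passage to limits is then handled by the continuity of the cocycle in Theorem~\ref{cocycle}.
\end{enumerate}
The citation buys brevity, plus whatever extra information the Liu--Wang theorem carries about the solution. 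None of that extra information is required for the lemma as stated.
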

\begin{proof} This statement directly follows from Theorem 3.1
\cite{LW_2016}.
\end{proof}

Let $Q$ be a nonempty compact subset of $\mathbb R^{n}$.

\begin{definition}\label{defSD1} \cite{LW_2016} A solution $\varphi :\mathbb R\to Q$ of
the equation (\ref{eqFG1}) is said to be semi-separated in
distribution in $Q:=\overline{\phi(\mathbb R)}$
($\phi(t):=\mathcal L(\varphi(t))$ for every $t\in \mathbb R$), if
there exists a number $d(\varphi)>0$ so that if $\psi :\mathbb
R\to Q$ is an other solution of (\ref{eqFG1}), then
\begin{equation}\label{eqSD2}
\beta(\mathcal L(\varphi(t)),\mathcal L(\psi(t)))\ge
d(\varphi)\nonumber
\end{equation}
for all $t\in \mathbb R_{+}$.
\end{definition}

\begin{theorem}\label{thSODE1} Assume that the following
conditions hold:
\begin{enumerate}
\item the function $(f,g)\in C(\mathbb R\times \mathbb
R^{n},\mathbb R^{n})\times C(\mathbb R\times \mathbb R^{n},\mathbb
R^{n})$ satisfies Conditions (\textbf{C1}) and (\textbf{C2});
\item the equation (\ref{eqFG1}) admits a bounded on $\mathbb
R_{+}$ solution $\varphi$, i.e., $\overline{\varphi(\mathbb
R_{+})}$ is a compact subset of $\mathbb R^{n}$; \item the
function $(f,g)\in C(\mathbb R\times \mathbb R^{n},\mathbb
R^{n})\times C(\mathbb R\times \mathbb R^{n},\mathbb R^{n})$ is
positively Lagrange stable; \item the functions $f$ and $g$ are
remotely almost periodic (respectively, remotely $\tau$-periodic
or remotely stationary) in $t\in \mathbb T$ uniformly w.r.t. $x$
on every compact subset of $\mathbb R^{n}$; \item the
$\omega$-limit set $\omega_{(f,g)}$ of the function $(f,g)\in
C(\mathbb R\times \mathbb R^{n},\mathbb R^{n})\times C(\mathbb
R\times \mathbb R^{n},\mathbb R^{n})$ is minimal; \item each
equation (\ref{eqFG2}) ($(\widetilde{f},\widetilde{g})\in \Omega_{(f,g)}$)
has only semi-separated in distribution solutions in $Q$ defined
on $\mathbb R$.
\end{enumerate}

Then the solution $\varphi$ of the equation (\ref{eqFG1}) is
remotely almost periodic (respectively, remotely $m\tau$-periodic
($m$ is some natural number) or remotely stationary).
\end{theorem}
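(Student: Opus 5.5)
The plan is to follow the pattern of Theorems \ref{thAODE1} and \ref{thAFDE1}, now working in the space of laws. Set $Y:=H^{+}(f,g)$ and let $(Y,\mathbb R_{+},\sigma)$ be the shift dynamical system. Since $(f,g)$ is positively Lagrange stable, $Y$ is compact. Take $X:=\mathcal P(\mathbb R^{n})\times Y$ with the skew-product semiflow $\pi:=(\Phi,\sigma)$ and $h:=pr_2$. By Theorem \ref{cocycle} this gives a nonautonomous dynamical system
$\langle (X,\mathbb R_{+},\pi),(Y,\mathbb R_{+},\sigma),h\rangle$.
By Remark \ref{remC1}, every $(\widetilde f,\widetilde g)\in H^{+}(f,g)$ satisfies (\textbf{C1}) and (\textbf{C2}) with the same constants, so the cocycle is well defined over the whole hull.

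Next, translate the hypotheses into the hypotheses of Theorem \ref{thNDS2}, working at the level of laws.
\begin{itemize}
\item[(i)] Let $\mu_0:=\mathcal L(\varphi(0))$, $y:=(f,g)$ and $x:=(\mu_0,(f,g))\in X_{y}$. Then $\pi(t,x)=(\phi(t),(f,g)^{t})$ with $\phi(t)=\mathcal L(\varphi(t))$.
\item[(ii)] By Lemma \ref{lLK1}, the bounded solution $\varphi$ is precompact in distribution. Together with the compactness of $Y$, this makes $x$ positively Lagrange stable.
\item[(iii)] The point $y$ is positively Lagrange stable and remotely almost periodic, and $\omega_{y}=\omega_{(f,g)}$ is minimal. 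Remote almost periodicity of $y$ in the compact-open topology follows from uniform remote almost periodicity on compacts, as in Remark \ref{remI2}; if needed, use Lemma \ref{lRAP1} via equi-almost periodicity of $\omega_y$.
\item[(iv)] Finally, show that the points of $\omega_{x}$ are separated in $\omega_{x}$.
\end{itemize}

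Step (iv) is the main obstacle. Fix $p\in\omega_x$ with $h(p)=q\in\omega_{(f,g)}$. By Theorem \ref{thLS1} the set $\omega_x$ is invariant, and $\omega_y$ is minimal and hence invariant with two-sided motions. So through $p$ there is an entire motion of $\pi$ in $\omega_x$ over the full orbit of $q$. I would then argue that such an entire motion is the law curve $t\mapsto\mathcal L(\psi(t))$ of a solution $\psi$ defined on $\mathbb R$ of equation (\ref{eqFG2}) with coefficients $q$, taking values in $Q$. This realization goes through a limit of shifted solutions $\varphi(\cdot+t_k)$, and I expect it to be the delicate point: it requires passing from limits of laws to an actual solution, using tightness together with a Skorokhod or uniqueness-in-law argument as in \cite{LW_2016}. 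Granting it, hypothesis (6) on semi-separation in distribution gives $d>0$ with $\beta(\pi(t,p_1),\pi(t,p_2))\ge d$ for all $t\ge 0$ and all distinct $p_1,p_2\in\omega_x^{q}$. This is exactly separation in the sense of Section \ref{Sec3}, since the time there is $\mathbb R_{+}$. By Remark \ref{remAT1}, the metric-space requirement on $Y$ causes no trouble.

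Once this is in place, apply Theorem \ref{thNDS2} to conclude that $\omega_x$ is a minimal set of almost periodic points and that $x$ is remotely almost periodic. Thus $\phi$ is remotely almost periodic, so $\varphi$ is remotely almost periodic in distribution. In the remotely $\tau$-periodic and remotely stationary cases, Corollary \ref{corOM1} gives remote $m\tau$-periodicity and asymptotic stationarity, which implies remote stationarity by Lemma \ref{lRAP_01}.
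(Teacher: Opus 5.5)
Your proposal follows the paper's proof: it builds the nonautonomous dynamical system on $\mathcal P(\mathbb R^{n})\times H^{+}(f,g)$ generated by the cocycle $\Phi$ and then applies Theorem \ref{thNDS2}, Corollary \ref{corOM1} and Remark \ref{remAT1}. The paper does not check the hypotheses of Theorem \ref{thNDS2} explicitly, and in particular silently treats hypothesis (6) as giving separation of the points of $\omega_x$; so the realization step you flag in (iv) is also taken for granted in the paper, not a departure from its argument.
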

\begin{proof}
Let $Y:=H^{+}(f,g)$ and $(Y,\mathbb R_{+},\sigma)$ be the
semi-group dynamical system on $Y$ induced by shift dynamical
system $(C(\mathbb R\times \mathbb R^{n},\mathbb R^{n})\times
C(\mathbb R\times \mathbb R^{n},\mathbb R^{n}),\mathbb R,\sigma)$.
Since the function $(f,g)$ is positively Lagrange stable then the
space $Y$ is compact. Denote by $X:=\mathcal P(\mathbb
R^{n})\times Y$, $(X,\mathbb R_{+},\pi)$ the skew-product
dynamical system ($\pi :=(\Phi,\sigma)$) and
\begin{equation}\label{eqSNDS1}
\langle (X,\mathbb R_{+},\pi),(Y,\mathbb R,\sigma),h\rangle
\end{equation}
the nonautonomous dynamical system associated by the differential
equation (\ref{eqFG1}) (for more details see Example
\ref{exSDE1}). Now to finish the proof of Theorem it suffices to
apply Theorem \ref{thNDS2}, Corollary \ref{corOM1} and Remark
\ref{remAT1} to nonautonomous dynamical system (\ref{eqSNDS1}).
\end{proof}

\begin{definition}\label{def_FG1} A solution $\varphi$ of
the equation (\ref{eqFG1}) is said to be:
\begin{enumerate}
\item[-] positively uniformly stable in distribution if for
arbitrary $\varepsilon
>0$ there exists $\delta =\delta(\varepsilon)>0$ such that
$\beta(\mathcal L(\varphi(t_0)),\mathcal L(\psi(t_0))|<\delta$
($t_0\in \mathbb R_{+}$) implies $\beta(\mathcal
L(\varphi(t)),\mathcal L(\psi(t))|<\varepsilon$ for every $t\ge
t_0$; \item[-] uniformly attracting if there exists a number
$\delta_{0}>0$ such that for every $\varepsilon
>0$ there exists a number $L(\varepsilon)>0$ with the property
that $\beta(\mathcal L(\phi(t_0)),\mathcal
L(\psi(t_0))|<\delta_{0}$ ($t_0\ge 0$) implies $\beta(\mathcal
L(\varphi(t)),\mathcal L(\psi(t))|<\varepsilon$ for all $t\ge
t_0+L(\varepsilon)$; \item[-] uniformly positively asymptotically
stable if it is positively uniformly stable and uniformly
attracting.
\end{enumerate}
\end{definition}

\begin{theorem}\label{thSODE2} Assume that the following
conditions hold:
\begin{enumerate}
\item the function $(f,g)\in C(\mathbb R\times \mathbb
R^{n},\mathbb R^{n})\times C(\mathbb R\times \mathbb R^{n},\mathbb
R^{n})$ satisfy the Conditions $(\textbf{C1})$ and
$(\textbf{C2})$; \item the equation (\ref{eqFG1}) admits a bounded
on $\mathbb R_{+}$ solution $\varphi$; \item the solution
$\varphi$ is uniformly positively asymptotically stable; \item the
function $(f,g)\in C(\mathbb R\times \mathbb R^{n},\mathbb
R^{n})\times C(\mathbb R\times \mathbb R^{n},\mathbb R^{n})$ is
positively Lagrange stable; \item $(f,g)$ is remotely almost
periodic (respectively, remotely $\tau$-periodic or remotely
stationary) in $t\in \mathbb T$ uniformly w.r.t. $x$ on every
compact subset of $\mathbb R^{n}$.
\end{enumerate}

Then the solution $\varphi$ of the equation (\ref{eqFG1}) is
remotely almost periodic (respectively, remotely $m\tau$-periodic
($m$ is some natural number) or remotely stationary).
\end{theorem}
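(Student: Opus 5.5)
The plan follows the scheme used for Theorems \ref{thAODE2}, \ref{thADE2} and \ref{thAFDE2}: pass to the nonautonomous dynamical system generated by the equation (\ref{eqFG1}) on the space of probability measures, and then apply Theorem \ref{thPD2} together with Corollary \ref{corPD2.1}.

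First we set up the dynamical systems.
\begin{enumerate}
\item Put $Y:=H^{+}(f,g)$ with the shift semiflow $(Y,\mathbb R_{+},\sigma)$. Since $(f,g)$ is positively Lagrange stable, $Y$ is compact.
\item By Theorem \ref{cocycle} and Remark \ref{remC1}, the mapping $\Phi$ restricted to $\mathbb R_{+}\times\mathcal P(\mathbb R^{n})\times Y$ is a continuous cocycle. We form the skew-product system $(X,\mathbb R_{+},\pi)$ with $X:=\mathcal P(\mathbb R^{n})\times Y$ and $\pi:=(\Phi,\sigma)$, together with the NDS $\langle (X,\mathbb R_{+},\pi),(Y,\mathbb R,\sigma),h\rangle$, where $h:=pr_{2}$.
\item Take $x_0:=(\mathcal L(\varphi(0)),(f,g))$. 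Then $\pi(t,x_0)=(\mathcal L(\varphi(t)),(f,g)^{t})$.
\end{enumerate}

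Next we check the hypotheses of Theorem \ref{thPD2} for this point.
\begin{enumerate}
\item Precompactness of $\Sigma^{+}_{x_0}$: by Lemma \ref{lLK1} (a bounded solution under Conditions (\textbf{C1}) and (\textbf{C2})), the set $\{\mathcal L(\varphi(t)):t\ge 0\}$ is precompact in $(\mathcal P(\mathbb R^{n}),\beta)$. Combined with the compactness of $Y$, this gives precompactness of the positive semi-trajectory.
\item Stability: the uniform positive asymptotic stability in distribution of $\varphi$ (Definition \ref{def_FG1}) has to be translated into positive uniform stability and uniform attraction of the set $\Sigma^{+}_{x_0}$ in the sense of Definitions \ref{defPUS1} and \ref{defPUS3}. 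Here a point near $\pi(t_0,x_0)$ in the fibre over $(f,g)^{t_0}$ corresponds to a solution $\psi$ of (\ref{eqFG1}) whose law at time $t_0$ is close to $\mathcal L(\varphi(t_0))$.
\item Passage to $\omega_{x_0}$: since $h=pr_{2}$ is an open map, Corollaries \ref{lUS1} and \ref{lUAS1} transfer both properties to $H^{+}(x_0)$ and hence to $\omega_{x_0}$.
\item Condition on $\omega_{y_0}$: by the remark following Definition \ref{defI4} and the hypotheses, $\omega_{(f,g)}$ should be a compact minimal set consisting of almost periodic points, as Theorem \ref{thPD2} requires. Minimality is not among the listed hypotheses here, unlike in Theorem \ref{thSODE1}, and I would derive it as follows. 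Remote almost periodicity of $(f,g)$ together with Lagrange stability gives, by Lemma \ref{lRAP1}, that $\omega_{(f,g)}$ is equi-almost periodic. Hence every point of $\omega_{(f,g)}$ is almost periodic, so $\omega_{(f,g)}$ is a union of minimal sets. If minimality itself cannot be recovered this way, I will assume it as in Theorem \ref{thSODE1}.
\end{enumerate}

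With these hypotheses in place, Theorem \ref{thPD2} gives that $x_0$ is remotely almost periodic. Its first coordinate is $t\mapsto \mathcal L(\varphi(t))$, so by Definition \ref{defSDE1} the solution $\varphi$ is remotely almost periodic in distribution. In the remotely $\tau$-periodic and remotely stationary cases, Corollary \ref{corPD2.1} yields remote $m\tau$-periodicity and remote stationarity respectively, exactly as in the proof of Theorem \ref{thAODE2}.

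The main obstacle is the stability translation. The stability notions in Definition \ref{def_FG1} compare solutions $\psi$ only through their laws. A point $\mu$ in the fibre near $\mathcal L(\varphi(t_0))$, however, must be realised by some solution $\psi$ with $\mathcal L(\psi(t_0))=\mu$, whose future law is $\Phi(t-t_0,\mu,(f,g)^{t_0})$. I would first realise $\mu$ via an $\mathcal F_{t_0}$-measurable initial random variable independent of future increments of $W$, which is possible on a sufficiently rich probability space. I would then check that Definition \ref{def_FG1} is meant for all such $\psi$, so that it literally becomes Definition \ref{defPUS1} for the set $\Sigma^{+}_{x_0}$. A secondary gap is the minimality of $\omega_{(f,g)}$ discussed above.
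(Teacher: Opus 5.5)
Your construction is the paper's. Its proof is just the appeal to Theorem~\ref{thPD2}, Corollary~\ref{corPD2.1} and Remark~\ref{remAT1} for the NDS (\ref{eqSNDS1}). Your checks spell out what the paper leaves implicit: precompactness via Lemma~\ref{lLK1}, and transfer of uniform stability and uniform attraction to $\omega_{x_0}$ via Corollaries~\ref{lUS1} and \ref{lUAS1}. Attraction is genuinely needed, because the proof of Theorem~\ref{thPD2} runs through Lemma~\ref{lUAS2}. The paper also takes the stability translation for granted, and your reading of Definition~\ref{def_FG1} is the one required.

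The genuine gap is hypothesis (3) of Theorem~\ref{thPD2}, and your attempt to recover it fails. Lemma~\ref{lRAP1} only tells you that $\omega_{(f,g)}$ is equi-almost periodic, hence a union of the minimal sets $H(q)$, $q\in\omega_{(f,g)}$; it need not be a single one. For example, take $f(t,x)=-x+\sin\ln(1+|t|)$ and $g\equiv 0$. They satisfy (\textbf{C1})--(\textbf{C2}), are positively Lagrange stable, and are remotely stationary, hence remotely almost periodic by Lemma~\ref{lRAP_01}. Yet $\omega_{(f,g)}$ is the arc $\{(-x+c,0):\ c\in[-1,1]\}$ of stationary points. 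This arc is invariantly connected, as Lemma~\ref{lOLS1} requires, but it is not minimal. Minimality also cannot simply be skipped along this route: Theorem~\ref{thPD2} is derived from Theorem~\ref{thNDS2}, hence from Theorem~\ref{thNDS1}, whose base must be compact and minimal.

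So your fallback ``assume it as in Theorem~\ref{thSODE1}'' proves the theorem only with the extra hypothesis that $\omega_{(f,g)}$ is minimal. In fairness, the paper's proof has exactly the same silent omission, so that is also all its argument establishes. Compare Theorem~\ref{thH1}, where minimality of $\omega_f$ is assumed explicitly before Theorem~\ref{thAODE2} is invoked, and the open problem after Theorem~\ref{thNDS2}.

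In the remotely stationary and remotely $\tau$-periodic variants you can avoid minimality altogether:
\begin{itemize}
\item Theorem~\ref{th02.1} only needs Poisson stable base points.
\item Lemma~\ref{lUAS2} gives a separation radius $r$ independent of $q$. By compactness of $\omega_{x_0}$, every fibre $\omega_{x_0}^{q}$ therefore has at most $N$ points for a fixed $N$.
\item If $q$ is $\tau$-periodic, $\pi^{\tau}$ maps $\omega_{x_0}^{q}$ injectively into itself, since the flow on $\omega_{x_0}$ is two-sided. So it permutes the fibre, every point of $\omega_{x_0}$ is $N!\,\tau$-periodic, and Theorem~\ref{th1.3.9} gives remote $N!\,\tau$-periodicity.
\item If $q$ is stationary, $t\mapsto\pi(t,p)$ is a continuous map of $\mathbb R_{+}$ into the finite set $\omega_{x_0}^{q}$. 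So $p$ is stationary and Corollary~\ref{corSAP1} applies.
\end{itemize}
In the remotely almost periodic case proper, however, you would need an independent proof that $\omega_{x_0}$ is equi-almost periodic uniformly over the different minimal components of $\omega_{(f,g)}$. Neither your proposal nor the paper supplies one.
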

\begin{proof}
Let $Y:=H^{+}(f,g)$ and $(Y,\mathbb R_{+},\sigma)$ be the
semi-group dynamical system on $Y$. Since the function $(f,g)$ is
positively Lagrange stable then the space $Y$ is compact. Let
$\langle (X,\mathbb R_{+},\pi),(Y,\mathbb R,\sigma),h\rangle$ be
the nonautonomous dynamical system associated by the differential
equation (\ref{eqFG1}). Now to finish the proof of Theorem it
suffices to apply Theorem \ref{thPD2}, Corollary \ref{corPD2.1}
and Remark \ref{remAT1} to nonautonomous dynamical system
(\ref{eqSNDS1}).
\end{proof}

\subsection{Algebraic equations} \label{Sec6.4}

The aim of this subsection is to study the problem of existence of
remotely almost periodic solutions of algebraic equations
\begin{equation}\label{eqAE0}
 x^{n}+a_1(t) x^{n-1}+\ldots +a_{n-1}(t) x+a_{n}(t)=0
 \end{equation}
with the remotely almost periodic coefficients $a_{i}\in C(\mathbb
R,\mathbb C)\ (i=1,\ldots,n)$, i.e., we study the problem
algebraic closeness of the space of remotely almost periodic
functions, where $\mathbb C$ is the set of all complex numbers.

As was shown in the papers of Walter \cite{Wal_1933} and Bohr and
Flanders \cite{BF_1937}, if the discriminant
$$
D(t)=D[a_{1}(t),\ldots,a_{n}(t)]
$$
of the equation (\ref{eqAE0}) is separated from zero, i.e.,
$$
\inf_{t\in \mathbb R}|D(t)|
>0,
$$
then the continuous solutions $X=X(t)$ of the equation
(\ref{eqAE0}) with the almost periodic coefficients $a_{i}\in
C(\mathbb R,\mathbb C)\ (i=1,\ldots,n)$ are almost-periodic
functions.

Zhikov (see, for example, \cite[Example 2, pp.111-112]{Lev-Zhi})
constructed an example of almost-periodic function $f$, given on
the real line, satisfying the conditions
$$
|f(t)|>0, \ \inf\limits_{t\in \mathbb R} |f(t)|=0,
$$
and such that the solutions $X=X(t)$ to the equation
$$
X^{2}-f(t) = 0
$$
are not almost-periodic. This means that in the
Walter--Bohr--Flanders Theorem, the condition of separation from
zero cannot be replaced by the condition
$$
|D(t)|>0\ (\forall \ t\in \mathbb R).
$$
Gorin and Lin, in their joint paper \cite{GL_1969}, considered the
equation (\ref{eqAE0}) with almost-periodic coefficients, given on
an arbitrary group, and obtained a deep generalization of the
Walter--Bohr-- Flanders Theorem. Bronshtein \cite{Bro_73}
generalized Gorin-Lin results for algebraic equations with
recurrent coefficients.

Let
\begin{equation}\label{eqAE1}
 x^{n}+a_1 x^{n-1}+\ldots +a_{n-1} x+a_{n}=0
 \end{equation}
be an algebraic equation of degree $n$ with the coefficients
$a_1,\ldots,a_n\in \mathbb C$. By fundamental theorem of algebra
\cite[Ch.V]{Kur_1968} the equation (\ref{eqAE1}) has exactly $n$
solutions  $\lambda_{1},\ldots,\lambda_{n}\in \mathbb C$ (counting
multiplicities). The discriminant $D=D(a_1,\ldots,a_n)$ of the
equation (\ref{eqAE1}) is defined by
\begin{equation}\label{eqDE1}
D(a_1,\ldots,a_n):=\prod_{i\not=
j}(\lambda_{i}-\lambda_{j}).\nonumber
\end{equation}
It is well known \cite[Ch.VI]{Kos_1982} that the discriminant
$D(a_1,\ldots,a_n)$ of the equation (\ref{eqAE1}) is a polynomial
of the coefficients $a_1,\ldots,a_n$.

\begin{lemma}\label{lAE1}\cite[Ch. V]{Kur_1968} We have the following inequality
\begin{equation}\label{eqAE2}
|\lambda_{i}|\le r:=1+A \nonumber
\end{equation}
for every $i=1,\ldots,n$, where $A:=\max\{|a_1|,\ldots,|a_n|\}$.
\end{lemma}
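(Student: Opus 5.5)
The statement is Lemma \ref{lAE1}: every root $\lambda$ of (\ref{eqAE1}) satisfies $|\lambda|\le 1+A$, where $A:=\max\{|a_1|,\ldots,|a_n|\}$. The plan is to argue by contradiction, showing that a root of modulus larger than $1+A$ would force the leading term $\lambda^n$ to dominate all the others.

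First, the case $A=0$ is immediate: the equation is $x^n=0$, so every $\lambda_i=0\le 1$. Assume now $A>0$ and let $\lambda$ be a root with $|\lambda|>1$; if $|\lambda|\le 1$ there is nothing to prove, since $1\le 1+A$.

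From (\ref{eqAE1}) we have $\lambda^n=-(a_1\lambda^{n-1}+\ldots+a_n)$. Taking moduli and applying the triangle inequality gives
$$|\lambda|^n\le A\sum_{k=0}^{n-1}|\lambda|^k=A\,\frac{|\lambda|^n-1}{|\lambda|-1}<A\,\frac{|\lambda|^n}{|\lambda|-1}.$$
Dividing by $|\lambda|^n>0$ yields $|\lambda|-1<A$, that is, $|\lambda|<1+A$. Hence $|\lambda_i|\le 1+A$ for all $i$, and the inequality is in fact strict whenever $|\lambda_i|>1$.

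There is no real obstacle here. The only points needing care are isolating the case $|\lambda|\le1$, so that the geometric sum formula and the division by $|\lambda|-1$ are legitimate, and the trivial case $A=0$.
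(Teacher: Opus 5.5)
Your proof is correct. It is the standard Cauchy-bound argument: the triangle inequality followed by the geometric-series estimate $A\frac{|\lambda|^n-1}{|\lambda|-1}<A\frac{|\lambda|^n}{|\lambda|-1}$ for $|\lambda|>1$ is exactly the ``lemma on the modulus of the leading term'' in Kurosh, which the paper cites in place of giving its own proof.
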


Let $\mathbb T\in \{\mathbb R_{+}, \mathbb R\}$ and $a_{k}\in
C(\mathbb T,\mathbb C)$ ($k=1,\ldots,n$). Consider algebraic
equations of the form (\ref{eqAE0}).

\begin{definition}\label{defSAE1} A continuous function $\lambda :\mathbb T\to \mathbb
C$ which satisfies equation (\ref{eqAE0}) identically is called a
solution of the algebraic equation (\ref{eqAE0}).
\end{definition}

\begin{remark}\label{remSAE1} If the discriminant
\begin{equation}\label{eqDT1}
D(t):=D(a_1(t),\ldots,a_{n}(t))\nonumber
\end{equation}
of the equation (\ref{eqAE0}) is not equal to zero for all $t\in
\mathbb T$, then there exist exactly $n$ solution of (\ref{eqAE0})
\cite{GL_1969}.
\end{remark}

Let $a\in C(\mathbb T,\mathbb C)$ be a Lagrange stable function,
i.e., the set $\Sigma_{a}:=\{a^{h}:\ h\in \mathbb T\}$, where
$a^{h}(t):=a(t+h)$ ($t\in \mathbb T$), is precompact in the space
$C(\mathbb T,\mathbb C)$.

\begin{lemma}\label{lA1.0} \cite{Sel,sib} The following statements
are equivalent:
\begin{enumerate}
\item the function $a\in C(\mathbb T,\mathbb C)$ is Lagrange
stable; \item the function $a$ is bounded and uniformly continuous
on $\mathbb T$.
\end{enumerate}
\end{lemma}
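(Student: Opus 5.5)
The plan is to prove both implications through the Bebutov shift dynamical system $(C(\mathbb T,\mathbb C),\mathbb T,\sigma)$, using the standard characterization of precompactness in the compact-open topology, which is the Arzel\`a--Ascoli theorem. Recall that a subset $\mathcal K\subseteq C(\mathbb T,\mathbb C)$ is precompact exactly when, for every compact segment $[-l,l]\cap\mathbb T$, the family of restrictions $\{\psi|_{[-l,l]\cap \mathbb T}:\ \psi\in\mathcal K\}$ is uniformly bounded and equicontinuous. We apply this with $\mathcal K=\Sigma_a$.

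\emph{(2)$\Rightarrow$(1).} Let $a$ be bounded by $M$ and uniformly continuous with modulus $\delta(\varepsilon)$. Every translate $a^{h}$ has the same bound $M$ and the same modulus $\delta(\varepsilon)$. Hence $\Sigma_a$ is uniformly bounded and equicontinuous on every compact segment. Since the target $\mathbb C$ is finite dimensional, pointwise boundedness gives relative compactness of the values. Arzel\`a--Ascoli then shows that $\Sigma_a$ is precompact in $C(\mathbb T,\mathbb C)$, i.e.\ $a$ is Lagrange stable.

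\emph{(1)$\Rightarrow$(2).} Suppose $\Sigma_a$ is precompact. By Arzel\`a--Ascoli, the family $\{a^{h}|_{[0,1]\cap\mathbb T}\}_{h\in\mathbb T}$ is uniformly bounded and equicontinuous.
\begin{itemize}
\item \emph{Boundedness.} Since $|a(h)|=|a^{h}(0)|\le M$ for all $h\in\mathbb T$, the function $a$ is bounded.
\item \emph{Uniform continuity.} Fix $\varepsilon>0$. Equicontinuity at the point $0$ gives $\delta\in(0,1)$ such that $|a^{h}(s)-a^{h}(0)|<\varepsilon$ for all $h\in\mathbb T$ and all $s\in[0,\delta)$. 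Now take $t,t'\in\mathbb T$ with $|t-t'|<\delta$, say $t\le t'$. Put $h=t$ and $s=t'-t$. Then $|a(t')-a(t)|<\varepsilon$.
\end{itemize}
In the case $\mathbb T=\mathbb R_{+}$ the choice $h=\min\{t,t'\}$ always lies in $\mathbb T$, so this covers both choices of $\mathbb T$.

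The only delicate point is the correct use of Arzel\`a--Ascoli for the compact-open topology on a non-compact domain. The key observation is that equicontinuity of the translates near a single point already yields global uniform continuity of $a$, because translation moves that point to every $t\in\mathbb T$. Apart from this, both directions are routine.
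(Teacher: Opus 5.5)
Your proof is correct and complete. The translates $a^h$ inherit the bound and the modulus of continuity of $a$, which gives (2)$\Rightarrow$(1) by Arzel\`a--Ascoli for the compact-open topology. Conversely, equicontinuity of $\{a^h|_{[0,1]\cap\mathbb T}\}$ at the single point $0$, transported by the shift $h=\min\{t,t'\}\in\mathbb T$, gives uniform continuity, and $|a(h)|=|a^h(0)|$ gives boundedness. The paper does not prove this lemma itself and only cites Sell and Sibirsky; your argument is the standard Arzel\`a--Ascoli proof of this fact.
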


\begin{theorem}\label{thDC1} \cite[Theorem 3]{DP_1964} Let $a_1,\ldots,a_n\in C(\mathbb T,\mathbb C)$. Then there exists
at least one solution $\lambda \in C(\mathbb T,\mathbb C)$ of the
equation (\ref{eqAE0}).
\end{theorem}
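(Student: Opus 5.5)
We need to prove Theorem \ref{thDC1}: for continuous coefficients $a_1,\ldots,a_n$ on $\mathbb T$ (an interval, $\mathbb R_{+}$ or $\mathbb R$), the equation (\ref{eqAE0}) has at least one continuous root $\lambda$. The approach is to use the classical continuity of roots (the multiset of roots depends continuously on the coefficients) together with the one-dimensionality of $\mathbb T$. Roots are chosen along the line, and the only difficulty is collisions of roots. The cleanest route I see is the following. Let $Z:=\{(t,z)\in\mathbb T\times\mathbb C:\ P(t,z)=0\}$, where $P(t,z)=z^{n}+a_1(t)z^{n-1}+\ldots+a_n(t)$. By Lemma \ref{lAE1}, $Z\cap([\alpha,\beta]\times\mathbb C)$ is compact for every compact $[\alpha,\beta]\subset\mathbb T$. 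The projection $p:Z\to\mathbb T$ is then proper and surjective.

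\textbf{Step 1 (ordered selection).} Order $\mathbb C$ lexicographically and let $\lambda_{1}(t)\le\ldots\le\lambda_n(t)$ be the roots listed with multiplicity. These functions are generally discontinuous, but the unordered multiset $\Lambda(t)$ is continuous in the optimal-matching metric
\[
d(\Lambda,\Lambda')=\min_{s\in S_n}\max_i|\lambda_i-\lambda'_{s(i)}|
\]
(the standard Rouché argument). I will use this continuity of $t\mapsto\Lambda(t)$ as the basic input.

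\textbf{Step 2 (local continuous selection with matching).} Fix a compact interval $I=[\alpha,\beta]$ and a root $z_0\in\Lambda(\alpha)$. I want a continuous $\lambda$ on $I$ with $\lambda(\alpha)=z_0$. The plan is a path-lifting argument for the $n$-valued continuous map $\Lambda$. Known results (Kato, \emph{Perturbation Theory}, Ch.~II, \S5.2) state that the eigenvalues/roots of a continuous monic family over a real interval admit a continuous parametrization $\lambda_1(t),\ldots,\lambda_n(t)$. I would reprove this as follows. By uniform continuity of $\Lambda$ on $I$, for each $k$ there is a partition $\alpha=t_0<\ldots<t_N=\beta$ with $d(\Lambda(t),\Lambda(t_j))<1/k$ on $[t_j,t_{j+1}]$. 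Build piecewise-linear $n$-tuples interpolating matched orderings at the nodes. These are approximate root-lines $\mu^{k}$ with $\operatorname{dist}(\mu^{k}_i(t),\Lambda(t))\to0$ uniformly, but they are not equicontinuous, so a direct Arzelà–Ascoli limit fails. This is the main obstacle. To overcome it I will instead use the well-known alternative: the map $\mathbb C^{n}\to\mathbb C^{n}$ sending roots to coefficients (Vieta) is the quotient map $\mathbb C^{n}\to\mathbb C^{n}/S_n$, which is a branched covering. Paths in the orbit space $\mathbb C^{n}/S_n$ lift to $\mathbb C^{n}$ (lifting of paths through finite group quotients, e.g.\ by Kato's theorem, or directly: lift on the open dense set where $D\neq0$ using the covering property, and near a multiple root glue by choosing the root cluster, which shrinks to a single point, so any assignment is continuous there). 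Concretely, on $\{t: D(t)\ne0\}$, an open union of intervals, the roots are locally holomorphic functions of the coefficients and so continue continuously along each component interval. At an endpoint $t^*$ where $D(t^*)=0$, each continuous root tends to the cluster of a root of $\Lambda(t^*)$ by continuity of $\Lambda$. It therefore has a limit because, by Step 1, the cluster diameter tends to $0$ near the multiple root. Hence the selection extends continuously across $t^*$, and the choice on the next component can be made to start at that limit value.

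\textbf{Step 3 (handling the zero set of $D$ and globalizing).} The set $\{D=0\}$ may be a complicated closed set, even Cantor-like, or $D\equiv0$ on intervals, so Step 2 cannot proceed component by component. To handle this I will use Zorn's lemma/transfinite argument. Consider pairs $(J,\lambda)$, with $J\subseteq\mathbb T$ an interval containing $\alpha$ and $\lambda$ a continuous root on $J$, ordered by extension. A chain has a union that is a continuous root on the union interval. If $J$ is not closed on the right at $s=\sup J$, the limit $\lim_{t\to s}\lambda(t)$ exists whenever $\Lambda(s)$ has roots separated enough. In general, however, $\lambda(t)$ accumulates on a subset of $\Lambda(s)$ which is connected (it is the limit set of a continuous curve) and finite, hence a single point. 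So $\lambda$ extends to $s$. Right-extension from a closed endpoint $s$ is the local problem: for small $\delta$ find a continuous root on $[s,s+\delta]$ starting at $\lambda(s)$. Here I would pass to the monic factor of $P(t,\cdot)$ corresponding to the cluster at $\lambda(s)$. This factor has continuous coefficients, since symmetric functions of a separated cluster of roots are continuous via Cauchy integrals. I then induct on the degree $n$. When the cluster has full multiplicity $n$, substitute $z=w+\frac{-a_1(t)}{n}$ and use a degree-reduction argument: either the cluster stays a single point (take it) or it splits, and one uses induction on smaller clusters. I expect exactly this local extension at points of total collision to be the main obstacle. Maximality plus the extension arguments give $J=\mathbb T$ (for $\mathbb T=\mathbb R$, apply the argument leftward symmetrically), proving the theorem.
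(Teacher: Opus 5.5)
\textbf{Where the gap is.} Several parts of your plan are sound: Step~1; the limit argument at an open right end (the cluster set of $\lambda(t)$ as $t\to s^{-}$ is a connected subset of the finite set $\Lambda(s)$, hence a point); and the local extension at a closed end $s$ where $\lambda(s)$ has multiplicity $m<n$ (the degree-$m$ factor with continuous coefficients from Cauchy integrals, plus induction on the degree). The gap is the case you flag yourself, $m=n$, i.e.\ $P(s,z)=(z-\lambda(s))^{n}$. Then the cluster factor is $P$ itself and the induction hypothesis gives nothing. Finding a continuous root on $[s,s+\delta]$ is then literally the degree-$n$ theorem on that interval, so your Zorn argument is circular at this point.

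Your dichotomy ``the cluster stays a single point or it splits'' fails on every interval $(s,s+\delta]$ in general. The set $F$ of total-collision points may accumulate at $s$ from the right: for $z^{2}-t\sin^{2}(1/t)$ the two roots coincide exactly at $t=0$ and $t=1/(k\pi)$, $k\in\mathbb N$. The set $F$ may also be a Cantor set. Moreover, at a splitting point $t$ you only get a lower-degree factorization on a small neighbourhood of $t$, with no control on how these local choices fit together on an interval starting at $s$.

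\textbf{The missing idea.} Total-collision points never have to be crossed, because there the root is forced and continuity is automatic. The set $F=\{t\in\mathbb T:\ P(t,z)=(z+a_1(t)/n)^{n}\ \text{for all}\ z\}$ is closed, and on $F$ the only admissible value is $-a_1(t)/n$, which is continuous. Moreover, if $t^{*}\in F$, continuity of $\Lambda$ forces \emph{every} root of $P(t,\cdot)$ to be close to $-a_1(t^{*})/n$ for $t$ near $t^{*}$. Hence any root selection whatsoever is continuous at $t^{*}$.

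So it suffices to build a continuous root on each component $J$ of $\mathbb T\setminus F$, and there your continuation argument works verbatim. Start at an interior point of $J$ and extend both ways. Open ends inside $J$ are handled by the connectedness argument. Every closed end lies in $J$, so its cluster has multiplicity $<n$ and only the case you already handled occurs. Gluing these pieces with $-a_1/n$ on $F$ proves the statement; run the induction on the degree for arbitrary intervals, since the local step needs the lower-degree result on $[s,s+\delta]$.

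Alternatively, you could simply quote Kato's theorem on continuous parametrization of roots over a real interval, or path lifting for quotients by finite groups (Bredon). Note that the paper gives no proof of this statement and only refers to Theorem~3 of the cited source.
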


\begin{lemma}\label{lA2}
Let $a_1,\ldots,a_n\in C(\mathbb T,\mathbb C)$ be Lagrange stable
functions and $\lambda :\mathbb T\to \mathbb C$ be a continuous
solution of the equation (\ref{eqAE0}).

Then the solution $\lambda$ is bounded.
\end{lemma}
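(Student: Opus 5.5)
The plan is to derive boundedness directly from the root bound in Lemma \ref{lAE1}, combined with the characterization of Lagrange stability in Lemma \ref{lA1.0}; no dynamical machinery is needed.

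First, by Lemma \ref{lA1.0}, each Lagrange stable coefficient $a_k\in C(\mathbb T,\mathbb C)$ ($k=1,\ldots,n$) is bounded and uniformly continuous on $\mathbb T$. Hence the number
$$
A_0:=\max_{1\le k\le n}\sup_{t\in \mathbb T}|a_k(t)|
$$
is finite.

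Next, fix an arbitrary $t\in\mathbb T$. Since $\lambda$ satisfies (\ref{eqAE0}) identically, the value $\lambda(t)$ is one of the $n$ roots of the algebraic equation (\ref{eqAE1}) with frozen coefficients $a_k:=a_k(t)$. Lemma \ref{lAE1} then gives
$$
|\lambda(t)|\le 1+\max\{|a_1(t)|,\ldots,|a_n(t)|\}\le 1+A_0 .
$$
The bound $1+A_0$ does not depend on $t$, so $\sup_{t\in\mathbb T}|\lambda(t)|\le 1+A_0<\infty$, which proves that $\lambda$ is bounded.

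There is essentially no obstacle here. The only point to check is that Lemma \ref{lAE1} applies pointwise, with a constant depending only on the sizes of the coefficients, and this is how it is stated. Continuity of $\lambda$ is not used for boundedness; it would only matter for a subsequent uniform continuity or Lagrange stability claim.
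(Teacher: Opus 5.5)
Your proposal is correct and is the same argument as the paper's: you bound the coefficients uniformly by a constant $C$ using Lagrange stability, then apply the root bound of Lemma \ref{lAE1} pointwise to get $|\lambda(t)|\le 1+C$ for all $t\in\mathbb T$. The paper attributes the coefficient bound to Lemma \ref{lAE1}, which looks like a slip; citing Lemma \ref{lA1.0} for that step, as you do, is the right reference.
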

\begin{proof} Since the functions $a_{i}\in C(\mathbb T,\mathbb
C)$ ($i=1,\ldots,n$) are Lagrange stable then by Lemma \ref{lAE1}
there exists a positive number $C$ such that
\begin{equation}\label{eqAE_5}
|a_{i}(t)|\le C
\end{equation}
for all $i=1,\ldots,n$ and $t\in \mathbb T$. By Lemma \ref{lAE1}
we have
\begin{equation}\label{eqAE6}
|\lambda(t)|\le 1+A(t)
\end{equation}
for every $t\in \mathbb T$, where
\begin{equation}\label{eqAE7}
A(t):=\max  x\{|a_{1}(t)|,\ldots,|a_{n}(t)|\}.
\end{equation}
According to (\ref{eqAE_5}) and (\ref{eqAE7}) we obtain
\begin{equation}\label{eqAE8}
A(t)\le C
\end{equation}
for every $t\in \mathbb T$. From (\ref{eqAE6}) and (\ref{eqAE8})
we receive
\begin{equation}\label{eqAE9}
|\lambda(t)|\le 1+C \nonumber
\end{equation}
for all $t\in \mathbb T$.
\end{proof}

\begin{example}\label{exA1}
{\rm Let $a_1,\ldots,a_n\in C(\mathbb T,\mathbb C)$ be Lagrange
stable functions and $\lambda\in C(\mathbb T,\mathbb C)$ be a
continuous solution of the equation (\ref{eqAE0}).

Along with the equation (\ref{eqAE0}) we consider its $H$-class
(respectively, $\Omega$-class), i.e., the family of equations
\begin{equation}\label{eqAE3.1}
x ^{n}+b_{1}(t)x ^{n-1}+\ldots +b_{n-1}(t)x+b_{n}(t)=0, \nonumber
\end{equation}
where $b=(b_1,\ldots,b_{n})\in H(a)$ (respectively, $b\in
\Omega_{a}$).

Denote by $(H(a),\mathbb T,\sigma)$ (respectively, by
$(\Omega_{a},\mathbb T,\sigma)$) the shift dynamical system on
$H(a)$ (respectively, on $\Omega_{a}$).

Let $\lambda :\mathbb T\to \mathbb C$ be a continuous solution of
the equation (\ref{eqAE0}). Denote by
$X:=H(\lambda,a):=\overline{\{(\lambda^{h},a^{h}):\ h\in \mathbb
T\}}$ (respectively, $X:=\Omega_{(\lambda,a)}$) and $(X,\mathbb
T,\pi)$ be the shift dynamical system on the set $X$ induced by
the product dynamical system $(C(\mathbb T,\mathbb C)\times
C(\mathbb T,\mathbb C^{n}))$. Finally we put $h:=pr_{2}:X \to Y$
then the triplet $\langle (X,\mathbb T,\pi),(Y,\mathbb
T,\sigma),h\rangle$ is a nonautonomous dynamical system associated
by algebraic equation (\ref{eqAE0}) \cite{Bro_73}, \cite{Zhi69}.}
\end{example}

\begin{lemma}\label{lD1} Let $\mathbb T\in \{\mathbb R_{+},\mathbb R\}$, $a_1,\ldots,a_n\in C(\mathbb T,\mathbb
C)$ and
$$
D(t)=D(a_1(t),\ldots,a_n(t))
$$
be the discriminant of the
equation (\ref{eqAE0}). Assume that $D(t)$ is separated from zero
on $\mathbb T$, i.e.,
\begin{equation}\label{eqD1}
\gamma:=\inf\limits_{t\in \mathbb T} |D(t)| > 0.
\end{equation}

Then for every $t\in \mathbb T$ and $(b_1,\ldots,b_n)\in
H(a_1,\ldots,a_n)$ we have
\begin{equation}\label{eqD2}
 |\widetilde{D}(t)|\ge \gamma ,
\end{equation}
where $\widetilde{D}(t):=D(b_1(t),\ldots,b_{n}(t))$.
\end{lemma}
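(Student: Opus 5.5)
The argument will use only two facts: the discriminant is a continuous function of the coefficients, and every element of the hull is a compact-open limit of translates of $a=(a_1,\ldots,a_n)$.

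First, since $D$ is a polynomial in $a_1,\ldots,a_n$ (see \cite[Ch.VI]{Kos_1982}), the map $D:\mathbb C^{n}\to\mathbb C$ is continuous.

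Now fix $b=(b_1,\ldots,b_n)\in H(a_1,\ldots,a_n)$. By definition of the hull there is a sequence $\{h_k\}\subset\mathbb T$ such that $a^{h_k}\to b$ in $C(\mathbb T,\mathbb C^{n})$ as $k\to\infty$. Convergence in the compact-open topology implies pointwise convergence, so for each fixed $t\in\mathbb T$ we get $a_i(t+h_k)\to b_i(t)$ for all $i=1,\ldots,n$. If $b$ is itself a translate $a^{h}$, we simply take $h_k=h$; that case needs no separate treatment.

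By continuity of $D$,
$$
\widetilde D(t)=D(b_1(t),\ldots,b_n(t))=\lim_{k\to\infty}D(a_1(t+h_k),\ldots,a_n(t+h_k))=\lim_{k\to\infty}D(t+h_k).
$$
For $\mathbb T=\mathbb R_{+}$ we have $h_k\ge 0$, hence $t+h_k\in\mathbb T$; for $\mathbb T=\mathbb R$ this holds trivially. Therefore (\ref{eqD1}) gives $|D(t+h_k)|\ge\gamma$ for every $k$. Passing to the limit, and using continuity of $|\cdot|$, we obtain $|\widetilde D(t)|\ge\gamma$, which is (\ref{eqD2}).

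No step here is a genuine obstacle. The only points requiring care are that the shifted times $t+h_k$ stay inside $\mathbb T$, which is immediate for both choices of $\mathbb T$, and that closure in the compact-open topology yields at least pointwise convergence, which is all the argument uses.
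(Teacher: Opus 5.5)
Your proof is correct and follows the paper's argument: you write $b$ as a compact-open (hence pointwise) limit of translates $a^{h_k}$ with $h_k\in\mathbb T$, use continuity of the polynomial $D$, and pass to the limit in $|D(t+h_k)|\ge\gamma$. You also check explicitly that $t+h_k\in\mathbb T$, which the paper leaves implicit.
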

\begin{proof} Let $(b_1,\ldots,b_n)\in H(a_1,\ldots,a_n)$ then
there exists a sequence $\{h_k\}\subset \mathbb T$ such that
\begin{equation}\label{eqD3}
b_{i}=\lim\limits_{k\to \infty}a_{i}^{h_k}\ \ (i=1,\ldots,n)
\end{equation}
in the space $C(\mathbb T,\mathbb C)$, where
$a_{i}^{h}(t):=a_{i}(t+h)$ for all $t\in \mathbb T$ and
$i=1,\ldots,n$. Taking into account that $D(a_1,\ldots,a_n)$ is a
polynomial of the coefficients $a_1,\ldots,a_n$ and the relations
(\ref{eqD2}) and (\ref{eqD3}) we obtain
\begin{equation}\label{eqD4}
|\widetilde{D}(t)|=\lim\limits_{k\to
\infty}|D(a_1(t+h_k),\dots,a_n(t+h_k))|\ge \gamma \nonumber
\end{equation}
for all $t\in \mathbb T$. Lemma is proved.
\end{proof}

\begin{lemma}\label{lD1.1} Let the functions $a_1,\ldots,a_n\in C(\mathbb R_{+},\mathbb C)$
be Lagrange stable and $D(t)=D(a_1(t),\ldots,a_n(t))$ be the
discriminant of the equation (\ref{eqAE0}). Then the following
statements are equivalent:
\begin{enumerate}
\item $D(t)$ is separated from zero on $\mathbb R_{+} $;
\item there exists a positive number $\alpha$ such that
\begin{equation}\label{eqD1.1}
|D(b_1(t),\ldots,b_{n}(t))| \ge \alpha
\end{equation}
for every $(b_1,\ldots,b_{n})\in \omega_{(a_1,\ldots,a_{n})}$ and
$t\in \mathbb R_{+}$.
\end{enumerate}
\end{lemma}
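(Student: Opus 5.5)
The plan is to prove the two implications separately. In both directions the key facts are that $D$ is a polynomial in the coefficients, hence continuous on $\mathbb C^{n}$, and that convergence in $C(\mathbb R_{+},\mathbb C^{n})$ with the compact-open topology gives pointwise convergence at every fixed $t$.

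\emph{(1)$\Rightarrow$(2).} Put $\gamma:=\inf_{t\ge 0}|D(a_1(t),\ldots,a_n(t))|>0$. Take $b=(b_1,\ldots,b_n)\in\omega_{(a_1,\ldots,a_n)}$, so $b=\lim_{k\to\infty}a^{h_k}$ for some sequence $h_k\to+\infty$. For each fixed $t\ge 0$ we get
$$|D(b(t))|=\lim_{k\to\infty}|D(a(t+h_k))|\ge\gamma .$$
This is exactly the argument of Lemma \ref{lD1}, now with $\omega_{a}\subseteq H^{+}(a)$. So (2) holds with $\alpha:=\gamma$.

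\emph{(2)$\Rightarrow$(1).} I would argue by contradiction. Suppose there is a sequence $\{t_k\}\subset\mathbb R_{+}$ with $|D(a(t_k))|\to 0$. There are two cases.

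If $\{t_k\}$ is unbounded, pass to a subsequence with $t_k\to+\infty$. Since $a$ is Lagrange stable, $\{a^{t_k}\}$ is precompact in $C(\mathbb R_{+},\mathbb C^{n})$, so after a further subsequence $a^{t_k}\to b$ for some $b$. This limit $b$ lies in $\omega_{a}$ by definition. Evaluating at $t=0$ gives
$$|D(b(0))|=\lim_{k\to\infty}|D(a(t_k))|=0<\alpha,$$
which contradicts (2).

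If $\{t_k\}$ is bounded, pass to a subsequence with $t_k\to t_0\in\mathbb R_{+}$. Continuity of $t\mapsto D(a(t))$ then gives $D(a(t_0))=0$.

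This bounded case is the main obstacle. Condition (2) only controls the behaviour at infinity, so it cannot by itself exclude a zero of $D$ at a finite time. For instance, $n=2$ with coefficients chosen so that $D(t)=\min\{t,1\}$ satisfies (2) but not (1). I would therefore read the lemma under the standing hypothesis of Remark \ref{remSAE1}, namely $D(t)\neq 0$ for all $t\in\mathbb R_{+}$, and state this explicitly in the proof. Under that hypothesis the bounded case is impossible.

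Equivalently, the conclusion can be phrased as follows. There exists $L>0$ with $|D(a(t))|\ge\alpha/2$ for $t\ge L$; this is just the unbounded-case contradiction made quantitative. On the compact interval $[0,L]$ the continuous nonvanishing function $|D(a(\cdot))|$ attains a positive minimum $m$. Hence $\inf_{t\ge 0}|D(a(t))|\ge\min\{m,\alpha/2\}>0$, which proves (1).
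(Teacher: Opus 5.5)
Your argument follows the paper's proof. For (1)$\Rightarrow$(2) it is Lemma \ref{lD1} applied to $\omega_{a}\subseteq H^{+}(a)$. For (2)$\Rightarrow$(1) you extract a convergent subsequence of $a^{t_k}$ and evaluate the limit at $t=0$.

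Your objection to the bounded case is correct, and it is a flaw in the paper, not in your proof. The paper's proof asserts that the negation of (1) gives a sequence $t_k\to+\infty$ with $|D(t_k)|\to 0$, but the negation only gives some sequence in $\mathbb R_{+}$.

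Your counterexample works with explicit data. Take $n=2$, $a_1\equiv 0$ and $a_2(t)=\frac14\min\{t,1\}$. With the paper's convention $D=\prod_{i\neq j}(\lambda_i-\lambda_j)=4a_2-a_1^2$, this gives $D(t)=\min\{t,1\}$. The coefficients are bounded and uniformly continuous, hence Lagrange stable. Moreover $a^{h}\equiv(0,\frac14)$ for $h\ge 1$, so $\omega_{a}$ is this single constant and $|D(b(t))|\equiv 1$ on it. Yet $\inf_{t\ge 0}|D(t)|=0$.

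So (2)$\Rightarrow$(1) needs a repair. One option is your extra hypothesis $D(t)\neq 0$ for all $t\ge 0$, under which your compactness argument on $[0,L]$ completes the proof. The other is to weaken (1) to $\liminf_{t\to+\infty}|D(t)|>0$, after which the paper's argument works verbatim.

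The error does not affect later results. Only the direction (1)$\Rightarrow$(2) is used afterwards, in the proof of Theorem \ref{thAE1}.
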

\begin{proof} According to Lemma \ref{lD1} the relation
(\ref{eqD1}) implies (\ref{eqD1.1}).

We will show that (\ref{eqD1.1}) implies (\ref{eqD1}). If we
suppose that it is false, then there exists a sequence $t_k\to
+\infty$ as $k\to \infty$ such that
\begin{equation}\label{eqD1.2}
|D(t_k)|\to 0
\end{equation}
as $k\to \infty$. Consider the sequence
$a^{t_k}:=(a_{1}^{t_k},\ldots,a_{n}^{t_k})$ of functions from
$C(\mathbb R_{+},\mathbb C^{n})$. Since the functions $a_{i}\in
C(\mathbb R_{+},\mathbb C)$ ($i=1,\ldots,n$) are Lagrange stable
then we may suppose that the sequence $\{a^{t_k}\}$ converges in
$C(\mathbb R_{+},\mathbb C^{n})$. Denote by
$(b_{1},\ldots,b_{n})=\widetilde{a}:=\lim\limits_{k\to
\infty}a^{t_{k}}$ and
$\widetilde{D}(t):=D(b_{1}(t),\ldots,b_{n}(t))$. By (\ref{eqD1.1})
we have
\begin{equation}\label{eqD1.3}
|\widetilde{D}(t)|\ge \alpha
\end{equation}
for all $t\ge 0$.

On the other hand we have
\begin{equation}\label{eqD1.4}
|\widetilde{D}(0)|=\lim\limits_{k\to
\infty}|D(a_1^{t_k}(0),\ldots,a_{n}^{t_k}(0))|=\lim\limits_{k\to
\infty}|D(t_k)|.
\end{equation}
Taking into consideration (\ref{eqD1.2}) and (\ref{eqD1.4}) we
obtain
\begin{equation}\label{eqD1.5}
|\widetilde{D}(0)|=0.
\end{equation}
The relation (\ref{eqD1.5}) contradicts to (\ref{eqD1.3}). The
obtained contradiction proves our statement.
\end{proof}

Let $X$ be a compact metric space. A dynamical system $(X,\mathbb
T,\pi)$ is said to be point transitive if there exists a point
$x_0\in X$ such that $H(x_0)=X,$ where
$H(x_0):=\overline{\{\pi(t,x_0)|\ t\in \mathbb T\}}$.

An ambit \cite[Ch.VIII]{Aus_1988} (or $\mathbb T$ ambit) is a pair
$\langle (X,\mathbb T,\pi),x_0\rangle$, where $(X,\mathbb T,\pi)$
is a point transitive dynamical system $(X,\mathbb T,\pi)$ and
$x_0\in X$ with $H(x_0)=X$.

If $\langle (X,\mathbb T,\pi),x_0\rangle$ and $\langle (Y,\mathbb
T,\sigma),y_0\rangle$ are ambits and $h$ is a homomorphism of the
dynamical system $(X,\mathbb T,\pi)$ into $(Y,\mathbb T,\sigma)$
with $h(x_0)=y_0$ (it is clear that the homomorphism $h$ is
unique). In this case we write $\langle (X,\mathbb
T,\pi),x_0\rangle \ge \langle (Y,\mathbb T,\sigma),y_0\rangle$.

A universal $\mathbb T$ ambit is a $\mathbb T$ ambit $\langle
(U,\mathbb T,\lambda),u_0\rangle$ (with the compact phase space
$U$) such that $\langle (U,\mathbb T,\lambda),u_0\rangle \ge
\langle (X,\mathbb T,\pi),x_0\rangle$ for every $\mathbb T$ ambit
$\langle (X,\mathbb T,\pi),x_0\rangle$.

\begin{theorem}\label{thAU1} \cite[Ch.VIII]{Aus_1988} There exists a unique (up to a
homeomorphism) universal $\mathbb T$ ambit.
\end{theorem}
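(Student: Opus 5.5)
I will prove this by the standard construction of the universal ambit as an enveloping (Stone--Čech type) object, together with uniqueness by a mutual-domination argument.

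\emph{Existence.} The first step is to fix the set-theoretic bound that makes a product construction legitimate. Every $\mathbb T$ ambit $\langle (X,\mathbb T,\pi),x_0\rangle$ has a compact phase space $X=H(x_0)$ containing the dense orbit $\{\pi(t,x_0):\ t\in\mathbb T\}$. Hence the cardinality of $X$ (and of its topology) is bounded by a quantity depending only on $\mathbb T$. I would therefore pick a set $\mathcal A$ of representatives of all isomorphism classes of $\mathbb T$ ambits, which is possible because ambits embed into a fixed space such as $[0,1]^{C(\beta\mathbb T)}$.

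Next, form the product flow $\prod_{\alpha\in\mathcal A}X_\alpha$ with the diagonal action. Put $u_0:=(x_0^\alpha)_\alpha$ and $U:=H(u_0)$. By Tychonoff, $U$ is compact and point transitive, so $\langle (U,\mathbb T,\lambda),u_0\rangle$ is an ambit. For any ambit, the projection onto its own coordinate is continuous and equivariant and sends $u_0$ to $x_0$. It is surjective because its image is compact, contains the dense orbit, and is therefore all of $X$. This gives $U\ge X$.

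There is one caveat. If "compact metric" is demanded of $U$, universality forces non-metrizability, so I would work with compact Hausdorff spaces, as in Auslander, and invoke Remark \ref{remAT1} as needed. Alternatively, $U$ can be realized as $\beta\mathbb T$, the Stone--Čech compactification: the translations extend by the universal property, with $u_0=0$. The map to an arbitrary ambit is then the extension of $t\mapsto\pi(t,x_0)$, whose joint continuity must be checked.

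\emph{Uniqueness.} Suppose $U_1$ and $U_2$ are both universal. Then there are homomorphisms $h:U_1\to U_2$ and $g:U_2\to U_1$ preserving the base points. The composite $g\circ h$ is a homomorphism $U_1\to U_1$ fixing $u_0$, so it equals the identity on the dense orbit $\lambda(t,u_0)$. By continuity, and because the space is Hausdorff, it is the identity on all of $U_1$. The same holds for $h\circ g$, so $h$ is a homeomorphism. This argument also shows that the homomorphism between ambits is unique, as noted before the theorem.

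The main obstacle is the existence step. One difficulty is justifying the set of representatives of ambits. The other is the joint continuity of the action on $\beta\mathbb T$ when $\mathbb T$ is $\mathbb R$ or $\mathbb R_+$ rather than discrete. For that case I would instead use the maximal ideal space of the algebra of bounded uniformly continuous functions on $\mathbb T$, which is exactly where joint continuity holds.
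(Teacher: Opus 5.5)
Your proof is correct, but it takes a different route from the one the paper indicates. The paper gives no proof of its own. It cites Auslander and, in Remark \ref{remAU1}, identifies $U$ with $\beta\mathbb T$. That is the concrete model $(\beta\mathbb T,0)$: translations are extended to $\beta\mathbb T$, and domination of $\langle (X,\mathbb T,\pi),x_0\rangle$ comes from extending $t\mapsto\pi(t,x_0)$.

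Your main construction is the abstract one: a cardinality bound from the dense orbit, a product over a set of representatives, the orbit closure of the diagonal base point, and the coordinate projections as homomorphisms. Your uniqueness argument is the standard one, and it even yields an isomorphism of ambits, not just a homeomorphism.

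The two routes buy different things:
\begin{itemize}
\item The product route needs no continuity check, since the diagonal action is automatically jointly continuous for $\mathbb T=\mathbb R,\mathbb R_{+},\mathbb Z,\mathbb Z_{+}$ alike. However, it gives no concrete picture of $U$ and, without further work, no semigroup structure on it. That structure is what the $\beta\mathbb T$ model is used for, e.g.\ to obtain the universal minimal set as a minimal left ideal.
\item The $\beta\mathbb T$ model gives that structure, but, as you say, only for discrete time. If the extended action on $\beta\mathbb R$ were jointly continuous, the tube lemma would force every $f\in C_b(\mathbb R)$ to be uniformly continuous, which fails for $\sin t^{2}$. So for $\mathbb T=\mathbb R,\mathbb R_{+}$ the universal ambit is the maximal ideal space of $BUC(\mathbb T)$, and Remark \ref{remAU1} should be read with $\mathbb T$ discrete.
\end{itemize}

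Your metrizability caveat is forced, not optional. Under the paper's convention that ambits live on compact metric spaces, there is no universal ambit at all:
\begin{itemize}
\item The map $G\mapsto G(\lambda(\cdot,u_0))$ embeds $C(U)$ isometrically into $BUC(\mathbb T)$.
\item Every $f\in BUC(\mathbb T)$ lies in its image. Indeed, $U$ must dominate the metric ambit $\langle H(f),f\rangle$ of the shift system via some $\phi$, and with $F(g):=g(0)$ the function $G:=F\circ\phi$ satisfies $G(\lambda(t,u_0))=f(t)$.
\item $BUC(\mathbb T)$ is not separable, so $C(U)$ is not separable and $U$ cannot be metrizable.
\end{itemize}
Hence the theorem has to be read in the compact Hausdorff category, as in Auslander. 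Remark \ref{remAT1}, which concerns the base space of an NDS, plays no role here.
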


\begin{remark}\label{remAU1} \cite[Ch.VIII]{Aus_1988} Note that the phase space $U$ of the
universal $\mathbb T$ ambit $\langle (U,\mathbb
T,\lambda),u_0\rangle$ is homeomorphic to the Stone-Cech
compactification $\beta \mathbb T$ of $\mathbb T$.
\end{remark}

\begin{theorem}\label{thBC1}\cite[Remark 5 (item (i))]{RC_1967} Let $A_{i}\in C(\beta\mathbb R,\mathbb
C)$ ($i,\ldots,n$) then the equation
$$
x^{n}+A_{1}(u)x^{n-1}+\ldots +A_{n-1}(u)x+A_{n}(u)=0
$$
has at least one solution $\lambda \in C(\beta\mathbb R,\mathbb
C)$.
\end{theorem}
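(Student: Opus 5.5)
The plan is to reduce the statement to the case $\mathbb T=\mathbb R$ of Theorem \ref{thDC1} (the result of \cite{DP_1964}). The bridge is the identification $C(\beta\mathbb R,\mathbb C)\cong C_b(\mathbb R,\mathbb C)$: every bounded continuous function on $\mathbb R$ extends uniquely to a continuous function on $\beta\mathbb R$, and $\mathbb R$ is dense in $\beta\mathbb R$. The Stone--\v{C}ech compactification is what makes this work, since $\mathbb R$ is completely regular.

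\emph{Step 1 (restrict the coefficients).} Let $a_i:=A_i|_{\mathbb R}$ for $i=1,\ldots,n$, where $\mathbb R$ is regarded as a dense subset of $\beta\mathbb R$. Since $\beta\mathbb R$ is compact, each $A_i$ is bounded. Hence each $a_i\in C(\mathbb R,\mathbb C)$ and there is $C>0$ with $|a_i(t)|\le C$ for all $t$ and $i$.

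\emph{Step 2 (solve on $\mathbb R$ and bound the root).} By Theorem \ref{thDC1} with $\mathbb T=\mathbb R$, the equation (\ref{eqAE0}) with coefficients $a_1,\ldots,a_n$ has a continuous solution $\lambda\in C(\mathbb R,\mathbb C)$. Lemma \ref{lAE1}, applied pointwise exactly as in the proof of Lemma \ref{lA2}, gives $|\lambda(t)|\le 1+C$ for all $t\in\mathbb R$. That proof uses only the boundedness of the coefficients, not Lagrange stability, so $\lambda\in C_b(\mathbb R,\mathbb C)$.

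\emph{Step 3 (extend the root and pass to the closure).} Let $\Lambda\in C(\beta\mathbb R,\mathbb C)$ be the unique continuous extension of $\lambda$. Define
$$
F(u):=\Lambda(u)^{n}+A_1(u)\Lambda(u)^{n-1}+\ldots+A_{n-1}(u)\Lambda(u)+A_n(u), \qquad u\in\beta\mathbb R .
$$
$F$ is continuous on $\beta\mathbb R$ and vanishes on the dense subset $\mathbb R$, because there $\Lambda=\lambda$ and $A_i=a_i$. Therefore $F\equiv 0$ on $\beta\mathbb R$, and $\Lambda$ is the required solution.

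I expect no serious obstacle. The substantive content is the one-dimensional existence theorem of \cite{DP_1964}, which we take as given. The only point needing care is Step 2: the extension to $\beta\mathbb R$ requires $\lambda$ to be bounded, and this is exactly what Lemma \ref{lAE1} supplies.
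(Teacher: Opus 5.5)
Your proof is correct. The paper gives no argument for Theorem \ref{thBC1}: it simply quotes \cite[Remark 5 (item (i))]{RC_1967} and uses the result as a black box.

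You instead make it an internal corollary of Theorem \ref{thDC1} with $\mathbb T=\mathbb R$, in three moves. Restriction to the dense copy of $\mathbb R$ identifies $C(\beta\mathbb R,\mathbb C)$ with the bounded continuous functions on $\mathbb R$. The Cauchy bound of Lemma \ref{lAE1} makes every continuous root of a monic polynomial with bounded coefficients bounded, so it extends to $\beta\mathbb R$. The polynomial identity then passes from $\mathbb R$ to its closure by continuity. All the real difficulty is thus concentrated in the one-dimensional continuous root selection of Theorem \ref{thDC1}, which you rightly take as given.

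What your route buys is a short, self-contained argument. It works verbatim for $\beta X$ whenever $X$ is completely regular and monic polynomials with coefficients in $C(X,\mathbb C)$ have roots in $C(X,\mathbb C)$; for instance $X=\mathbb R_{+}$. The citation buys brevity and a link to the literature on algebraic closedness of $C(X,\mathbb C)$, at the price of leaving the fact unexplained inside the paper.

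Your reduction also shows exactly what must be added if the compactification has to carry the translation flow $\mu$, as in Lemma \ref{lBC1.1} and Remark \ref{remBC1}. Translations do not act jointly continuously on the Stone--\v{C}ech compactification of $\mathbb R$. They do act jointly continuously on the maximal ideal space of the bounded uniformly continuous functions on $\mathbb R$. On that space your argument goes through once the continuous root $\lambda$ is shown to be uniformly continuous. This follows from the $O(\eta^{1/n})$ root-perturbation bound for monic polynomials with bounded coefficients, together with a connectedness argument using the continuity of $\lambda$.
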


\begin{definition}\label{defCD} A pair $\langle (Y,\mathbb R,\sigma),\varphi \rangle$ is a
compactification of $(X,\mathbb R,\pi)$ if $\varphi$ is a
homomorphism $(X,\mathbb R,\pi)$ into $(Y,\mathbb R,\sigma)$ such
that $\overline{\varphi(X)}=Y$. A compactification $\langle
(Y,\mathbb R,\sigma),\varphi \rangle$ of $(X,\mathbb R,\sigma)$ is
called maximal (universal) if for any other compactification
$\langle (Z,\mathbb R,\delta),\psi \rangle$ of $(X,\mathbb R,\pi)$
there exists a homomorphism $\phi :(Z,\mathbb R,\delta)\to
(Y,\mathbb R,\sigma)$ such that $\overline{\phi(Z)}=Y$ and
$\varphi =\psi\dot \phi$.
\end{definition}

The dynamical system $(X,\mathbb R,\pi)$ is called a minimal and
almost periodic if there exists an almost periodic point $x_0\in
X$ such that $X=H(x_0)$.

\begin{theorem}\label{thBU1} \cite{Chu_1962} Let $(X,\mathbb R,\pi)$ be a minimal almost periodic dynamical system.
Then there is a universal almost periodic minimal dynamical system
$(U,\mathbb R,\sigma)$. If $(Y_{1},\mathbb R,\delta_1)$ and
$(Y_{2},\mathbb R,\delta_{2})$ are any two universal almost
periodic minimal dynamical systems, then $(Y_{1},\mathbb
R,\delta_{1})$ and $(Y_{2},\mathbb R,\delta_{2})$ are
homeomorphic.
\end{theorem}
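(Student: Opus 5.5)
The statement is Chu's theorem \cite{Chu_1962}: the existence and uniqueness of a universal minimal almost periodic flow. I would prove it by the usual Bohr compactification construction.

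\emph{Construction.} Let $AP(\mathbb R)$ be the algebra of Bohr almost periodic complex-valued functions on $\mathbb R$. It is a commutative unital $C^{*}$-algebra with the sup-norm, invariant under the translations $\varphi\mapsto\varphi^{h}$. Let $U$ be its maximal ideal space, i.e.\ the Bohr compactification $b\mathbb R$. Concretely, $U$ is the closure of $\{e^{h}:\ h\in \mathbb R\}$ in the product space $\prod_{\varphi\in AP(\mathbb R)}\mathbb C$, where $e^{h}(\varphi):=\varphi(h)$. The shift $\sigma(t,u)(\varphi):=u(\varphi^{t})$ defines a flow on $U$.

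\emph{Minimality and almost periodicity of $U$.}
\begin{enumerate}
\item $U$ is a compact abelian group containing $\mathbb R$ densely, and $\sigma$ is translation by the image of $t$.
\item Equicontinuity of a group translation flow gives that every point is almost periodic.
\item Density of the orbit of the identity gives minimality, so $U=H(u_0)$ with $u_0$ almost periodic.
\end{enumerate}
The only delicate point here is joint continuity of $\sigma$. It follows from the uniform continuity of each $\varphi\in AP(\mathbb R)$ together with the compactness of $\{\varphi^{t}\}$ (Bochner's criterion).

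\emph{Universality.} Let $(X,\mathbb R,\pi)$ be minimal almost periodic with $X=H(x_0)$. For $F\in C(X,\mathbb C)$, the function $t\mapsto F(\pi(t,x_0))$ is almost periodic, by equi-almost-periodicity of $X$ (Corollary~\ref{corSM1}). This gives an injective, translation-equivariant $*$-homomorphism $C(X)\to AP(\mathbb R)$. It is injective because the orbit of $x_0$ is dense. Dualizing via Gelfand theory yields a continuous surjection $\phi:U\to X$ with $\phi(u_0)=x_0$ and $\phi\circ\sigma^{t}=\pi^{t}\circ\phi$. Surjectivity follows from compactness and density of the image orbit. This is the required homomorphism onto $X$.

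\emph{Uniqueness.} Let $Y_1$ and $Y_2$ be two universal systems. There are homomorphisms $\phi_{12}:Y_1\to Y_2$ and $\phi_{21}:Y_2\to Y_1$, and after composing with translations we may assume they respect the base points. Then $\phi_{21}\circ\phi_{12}$ is an endomorphism of the minimal system $Y_1$ fixing the base point $y_1$. It therefore agrees with the identity on the dense orbit $\{\sigma(t,y_1)\}$, hence everywhere. The same argument applies to the other composition, so $\phi_{12}$ is a homeomorphism.

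The main obstacle is adjusting base points. Unlike ambits, the statement does not specify base points. Universal systems are compact groups, and in a group, composing with a translation $u\mapsto u\cdot g^{-1}$, which commutes with the flow, moves the base point wherever needed. So the required normalization is available, and everything else is standard Gelfand duality.
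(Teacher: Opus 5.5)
The paper gives no proof of this theorem. It is quoted from Chu, and the identification $U\cong b\mathbb R$ is quoted separately in Remark~\ref{remBU1}. Your argument is a correct, self-contained proof, reading ``universal'' as in Chu: every minimal almost periodic flow is a homomorphic image of $U$. Building $U$ from the start as the Gelfand space of $AP(\mathbb R)$ gives existence, universality and Remark~\ref{remBU1} together, instead of getting $U$ abstractly and identifying it with $b\mathbb R$ afterwards. Your universality step is right. With $\Psi F:=F\circ\pi(\cdot,x_0)$ one has $\Psi(F\circ\pi^{t})=(\Psi F)^{t}$, which gives equivariance of the dual map, and continuity of $\phi$ comes from the weak$^{*}$ topology.

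Two points need tightening.

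First, uniqueness rests entirely on your claim that a universal system carries a compact abelian group structure whose translations commute with the flow. You assert this without proof. It is standard: the Ellis enveloping semigroup of a minimal equicontinuous $\mathbb R$-flow is a compact abelian group of flow automorphisms acting transitively. It also follows from your own construction, provided you know $U=b\mathbb R$ is a group with $\sigma(t,u)=\gamma(t)u$. Take a homomorphism $\phi:U\to Y$ onto a minimal almost periodic $Y$, and suppose $\phi(u)=\phi(u')$. Then $k:=u'u^{-1}$ satisfies $\phi(kv)=\phi(v)$ on the dense orbit of $u$, hence everywhere. So $Y\cong U/K$, where $K:=\{k\in U:\ \phi(kv)=\phi(v)\ \forall v\in U\}$ is a closed subgroup. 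For minimality and almost periodicity of $U$ itself the group structure is not needed. Closeness of $\sigma(t+\tau,u_0)$ and $\sigma(t,u_0)$ in the product uniformity just means closeness of $\varphi_i(t+\tau)$ and $\varphi_i(t)$ for finitely many $\varphi_i\in AP(\mathbb R)$.

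Second, the theorem cannot live in the paper's metric framework. A metrizable minimal almost periodic flow is a rotation of a compact group with countable dual. Hence it maps onto the circle rotation of frequency $\lambda$ for only countably many $\lambda\in\mathbb R$. So universal systems are non-metrizable, and the class must contain non-metrizable $X$, since you apply universality of $Y_1$ to $X=Y_2$ (cf.\ Remark~\ref{remAT1}). Corollary~\ref{corSM1} is a metric statement and should not be invoked. You do not need it: almost periodicity of $x_0$ and uniform continuity of $F$ on the compact space $X$ already make $F\circ\pi(\cdot,x_0)$ almost periodic. Similarly, joint continuity of $\sigma$ uses only $\|\varphi^{t}-\varphi^{s}\|_\infty\to 0$ and $|u(\psi)|\le\|\psi\|_\infty$, not Bochner's criterion.
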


Recall \cite{Shu_1978} that the Bohr compactification of a
topological group $G$ is a pair $(G_{B},i_{B})$, where $G_{B}$ is
a compact group and $i_{B}$ a homomorphism $G\to G_{B}$, such that
for any homomorphism $\varphi: G\to \Gamma$, where $\Gamma$ is a
compact group, there is a unique homomorphism
$\varphi_{B}:G_{B}\to \Gamma$ such that $\varphi =\varphi_{B}\circ
i_{B}$. The pair $(G,i_{G})$ is uniquely determined by this
property up to isomorphism and $\overline{i_{B}(G)}=G_{B}$.

\begin{remark}\label{remBU1} \cite[Lemma 9]{Chu_1962}, \cite[p.4]{Shu_1978} Let $(X,\mathbb
R,\pi)$ be a minimal almost periodic dynamical system and
$(U,\mathbb R,\sigma)$ be a universal almost periodic minimal
dynamical system for $(X,\mathbb R,\pi)$, then $U$ is homeomorphic
to Bohr compactification $b\mathbb R$ of $ \mathbb R$.
\end{remark}

 Everywhere below we suppose that $U=b\mathbb R$ and denote by $(U,\mathbb
R,\mu)$ the universal almost periodic minimal dynamical system
defined as follow $\mu(t,p)=i_{B}(t)p$ (see, for example,
\cite[p.324]{Chu_1962}) for all $t\in \mathbb R$ and $p\in
U=b\mathbb R$, where $(b\mathbb R,i_{B})$ is the Bohr
compactification of $\mathbb R$.

Consider the following equation
\begin{equation}\label{eqAE3.3}
x ^{n}+A_{1}(u)x ^{n-1}+\ldots +A_{n-1}(u)x+A_{n}(u)=0\ \ (u\in
U),
\end{equation}
where $A_{i}\in C(U,\mathbb C)$ ($i=1,\ldots,n$) and
$(A_1,\ldots,A_{n}):=\Phi \in C(U,\mathbb C^{n})$.

\begin{lemma}\label{lGL1} If
\begin{equation}\label{eqB1}
D(u):=D(A_{1}(u),\ldots,A_{n}(u))\not= 0\nonumber
\end{equation}
for every $u\in U$, then the equation (\ref{eqAE3.3}) has exactly
$n$ different solutions $\nu_{i}\in C(U,\mathbb C)$
($i=1,\ldots,n$), i.e., for all $u\in U$ and $i\not= j$ ($i,j\in
\{1,\ldots,n\}$
\begin{equation}\label{eqB1_0}
\nu_{i}(u)\not= \nu_{j}(u).
\end{equation}
\end{lemma}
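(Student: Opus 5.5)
We want to show that when $D(u)\neq 0$ for every $u\in U$, equation (\ref{eqAE3.3}) has exactly $n$ continuous solutions on $U$ with pairwise distinct values at every point. The plan is to combine the local continuity of simple roots with a global monodromy argument, using that $U=b\mathbb R$ is a compact connected abelian group. Since the $A_i$ are continuous on the compact space $U$ and $D(u)\neq0$, compactness gives $\gamma:=\min_{u\in U}|D(u)|>0$. Hence for every $u$ the polynomial $P_u(x):=x^n+A_1(u)x^{n-1}+\ldots+A_n(u)$ has $n$ simple roots. By Lemma \ref{lAE1} these roots lie in the disc of radius $1+\max_i\|A_i\|$. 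A uniform compactness argument then gives a number $\delta>0$ such that distinct roots of $P_u$ are at distance at least $\delta$ for all $u\in U$.

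The first step is local. By the implicit function theorem, or by Rouch\'e's theorem applied on discs of radius $\delta/3$, each point $u_0$ has a neighbourhood $V$ on which the $n$ roots are given by $n$ continuous functions with disjoint graphs. Consequently the set
\[
\mathcal R:=\{(u,x)\in U\times\mathbb C:\ P_u(x)=0\}
\]
with the projection $p:\mathcal R\to U$ is an $n$-sheeted covering space over $U$. Any continuous solution $\nu\in C(U,\mathbb C)$ of (\ref{eqAE3.3}) is exactly a continuous section of this covering. Two distinct continuous solutions have disjoint graphs, because $\{u:\nu_i(u)=\nu_j(u)\}$ is open and closed by the separation $\delta$, and $U$ is connected. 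So the lemma amounts to saying that the covering is trivial, that is, $\mathcal R$ splits into $n$ global sections.

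The second step, and the main obstacle, is showing the covering is trivial. The base $U=b\mathbb R$ is compact and connected but not locally path connected, so ordinary covering space theory cannot be applied directly. I would try two routes.

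The first route uses Theorem \ref{thBC1}, transferred from $\beta\mathbb R$ to $b\mathbb R$. $U$ is a continuous image of $\beta\mathbb R$ through the canonical map extending $i_B$. Pulling $A_i$ back to $\beta\mathbb R$ gives at least one continuous root there. The task is then to check that this root descends to $U$; distinctness of the roots and the fibre structure should force this. Once one root $\nu_1$ is found, divide it out:
\[
P_u(x)=(x-\nu_1(u))\,Q_u(x),
\]
where the coefficients of $Q_u$ are polynomial in $\nu_1$ and the $A_i$, hence continuous. The discriminant of $Q_u$ divides $D(u)$ up to a nonvanishing factor, so it is also nonzero everywhere. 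Induction on $n$ then yields $n$ solutions $\nu_1,\ldots,\nu_n$, and (\ref{eqB1_0}) holds because roots are simple.

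If the descent to $U$ fails, the second route is to cite the Gorin--Lin theorem \cite{GL_1969} directly. On a compact connected abelian group every finite covering is a group-theoretic extension, so the question reduces to whether a polynomial with nonvanishing discriminant over $C(U)$ splits. Gorin and Lin proved exactly this splitting for the Bohr compactification; in this reduction the possible obstruction is a character of a finite cover, and its existence would contradict the structure of $b\mathbb R$ as the universal almost periodic minimal system (Remark \ref{remBU1}). Either way, the descent or covering-triviality step is where the real content lies. The compactness estimates and the factorization are routine.
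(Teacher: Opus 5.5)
Your preliminary steps are correct: the uniform separation $\delta$ of the roots, the $n$-sheeted covering $\mathcal R\to U$, and the fact that two continuous solutions either coincide or have disjoint graphs because $U$ is connected. Your fallback of citing Gorin--Lin is exactly the paper's proof, which is only the reference to Remark~1 to Theorem~1.4 of \cite{GL_1969} together with $U\cong b\mathbb R$ (Remark~\ref{remBU1}).

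The argument you actually propose, however, has a genuine gap at the descent step. Passing through $\beta\mathbb R$ and Theorem~\ref{thBC1} adds nothing. Since $\mathbb R$ is dense and connected in $\beta\mathbb R$, the continuous roots on $\beta\mathbb R$ are just the extensions of the $n$ bounded continuous roots $\lambda_1,\ldots,\lambda_n$ of $P_{i_B(t)}(x)=0$ on $\mathbb R$, and these exist anyway by path lifting. Such a root descends to $U$ exactly when $\lambda_k$ is almost periodic. That is the Walter--Bohr--Flanders theorem recalled at the beginning of Subsection~\ref{Sec6.4}, i.e.\ the whole content of the lemma.

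It is not forced by ``distinctness of the roots and the fibre structure''. Take the periodic factor $S^1$ of $U$ with flow $(t,z)\mapsto e^{it}z$ and $P_z(x)=x^2-z$. The discriminant never vanishes and the pulled-back roots $\pm e^{it/2}$ exist on $\beta\mathbb R$, yet $x^2=z$ has no continuous root on $S^1$. What saves $b\mathbb R$ is that $e^{it/2}$ is again a character of $b\mathbb R$, because its dual $\mathbb R_d$ is divisible. Some input of this kind is unavoidable. Splitting off $\nu_1$ and inducting does not help, since the descent is needed in every degree.

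The sketch behind Route 2 does not fill the hole either. The claim that finite coverings of $U$ are covering homomorphisms of compact groups is unproved and far from routine, since $U$ is not locally path connected. Granting it, universality does finish: lift $i_B$ to a homomorphism $\tilde\iota:\mathbb R\to\tilde U$, extend it to $\phi:U\to\tilde U$ by the universal property of $(b\mathbb R,i_B)$, and $\phi$ satisfies $p\circ\phi=\mathrm{id}_U$, so it is a section. But that claim is no easier than the lemma itself.

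To get a proof that is not a bare citation, close the descent with a result already quoted in the paper:
\begin{enumerate}
\item Put $a(t):=\Phi(i_B(t))$. It is almost periodic with $|D(a(t))|\ge\min_U|D|>0$.
\item Each $\lambda_k$ is bounded by Lemma~\ref{lAE1}. It is uniformly continuous, by the uniform continuity of $a$ plus your $\delta$-separation argument on short intervals. Hence $H(\lambda_k,a)$ is compact.
\item Since $b(\mathbb R)\subseteq\Phi(U)$ for $b\in H(a)$, distinct points of a fibre over $b$ are roots of one and the same equation on the connected line. They therefore stay $\delta$ apart, so they are separated.
\item The fifth conclusion of Theorem~\ref{thNDS1}, applied over the minimal almost periodic base $H(a)$, shows that $\lambda_k$ is almost periodic. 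So $\lambda_k=\nu_k\circ i_B$ for some $\nu_k\in C(U,\mathbb C)$.
\item The identities $P_u(\nu_k(u))=0$ and $\prod_{i\neq j}(\nu_i(u)-\nu_j(u))=D(u)$ hold on the dense set $i_B(\mathbb R)$, hence on all of $U$. This gives (\ref{eqB1_0}), and your connectedness argument then shows there are exactly $n$ solutions.
\end{enumerate}
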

\begin{proof} This statement directly follows from \cite[Remark 1 to Theorem 1.4]{GL_1969} and
Remark \ref{remBU1}.
\end{proof}

\begin{lemma}\label{lLG1_01} Under the conditions of Lemma
\ref{lGL1} there exists a positive number $\alpha$ such that
\begin{equation}\label{eqE3_01}
|\nu_{i}(u)-\nu_{j}(u)|\ge \alpha \nonumber
\end{equation}
for all $u\in U$ and $i,j\in \{1,\ldots,n\}$ with $i\not= j$.
\end{lemma}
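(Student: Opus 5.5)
We argue by compactness combined with continuity. The domain $U=b\mathbb R$ is compact, each $\nu_i$ is continuous on $U$, and Lemma \ref{lGL1} gives $\nu_i(u)\not=\nu_j(u)$ pointwise for every $u\in U$ and $i\not=j$. A positive continuous function on a compact space has a positive minimum, and we apply this fact to the pairwise distances.

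For each pair $i,j\in\{1,\ldots,n\}$ with $i\not=j$, we define
$$
\delta_{ij}(u):=|\nu_i(u)-\nu_j(u)|, \qquad u\in U .
$$
This function is continuous because $\nu_i,\nu_j\in C(U,\mathbb C)$ and the modulus is continuous. It is strictly positive on $U$ by (\ref{eqB1_0}). The Bohr compactification $b\mathbb R$ is a compact (Hausdorff) space, so $\delta_{ij}$ attains its minimum at some point $u_{ij}\in U$. We set $\alpha_{ij}:=\delta_{ij}(u_{ij})>0$.

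Next we put
$$
\alpha:=\min\{\alpha_{ij}:\ i,j\in\{1,\ldots,n\},\ i\not=j\}.
$$
This is a minimum of finitely many positive numbers, so $\alpha>0$. Moreover $|\nu_i(u)-\nu_j(u)|\ge\alpha_{ij}\ge\alpha$ for all $u\in U$ and all $i\not=j$, which is the required statement.

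No step here is a genuine obstacle. The only point needing care is the compactness of $U$, which holds because $b\mathbb R$ is a compact group by definition of the Bohr compactification. If one prefers to argue without attained minima, the alternative is a contradiction argument. Suppose there are $u_k\in U$ and a fixed pair $i\not=j$ (we may fix the pair after passing to a subsequence, since there are finitely many pairs) with $|\nu_i(u_k)-\nu_j(u_k)|\to0$. By compactness of $U$ we can extract a convergent subnet $u_k\to\bar u$. Continuity then gives $\nu_i(\bar u)=\nu_j(\bar u)$, which contradicts (\ref{eqB1_0}).
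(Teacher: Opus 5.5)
Your proof is correct and uses the same idea as the paper's: compactness of $U$, continuity of the $\nu_i$, and the pointwise separation from Lemma \ref{lGL1}. The paper gives your alternative contradiction argument, taking $u_k$ with $|\nu_{i_k}(u_k)-\nu_{j_k}(u_k)|\le\delta_k\to 0$, fixing the indices, and obtaining a limit point $\bar u$ with $\nu_{i_0}(\bar u)=\nu_{j_0}(\bar u)$. Your direct extreme-value argument, and your use of subnets, is slightly more careful: $U=b\mathbb R$ is not metrizable, so the paper's step ``we may assume $\{u_k\}$ converges'' tacitly assumes a convergent subsequence exists, which needs justification.
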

\begin{proof} If we assume that this statement is not true, then
there are sequences $\{u_k\}\subset U$, $\delta_{k}\to 0$
($\delta_{k}>0$) and $\{i_{k}\},\{j_{k}\}\subset \{1,\ldots,n\}$
such that
\begin{equation}\label{eqE3_02}
|\nu_{i_k}(u_k)-\nu_{j_{k}}(u_k)|\le \delta_{k}
\end{equation}
for every $k\in \mathbb N$. Taking into account that the space $U$
is compact and $i_k,j_k\in \{1,\ldots,n\}$ (for all $k\in \mathbb
N$) without loss of generality we may assume that the sequence
$\{u_k\}$ converges and there are $i_0,j_0\in \{1,\ldots,n\}$ with
$i_0\not= j_0$ such that $i_k=i_0$ and $j_k=j_0$ for every $k\in
\mathbb N$. Denote by
\begin{equation}\label{eqE3_03}
\bar{u}:=\lim\limits_{k\to \infty} u_{k}\nonumber
\end{equation}
then passing to the limit in (\ref{eqE3_02}) as $k\to \infty$ we
obtain
\begin{equation}\label{eqE3_04}
\nu_{i_0}(\bar{u})=\nu_{j_0}(\bar{u}).
\end{equation}
The relations (\ref{eqE3_04}) and (\ref{eqB1_0}) are
contradictory. The obtained contradiction proves our statement.
Lemma is proved.
\end{proof}

\begin{coro}\label{corI1} Under the conditions of Lemma \ref{lGL1}
there exists a positive constant $\alpha$ such that
\begin{equation}\label{eqI1}
d:=\inf\limits_{u\in U}\min\limits_{i\not=
j}|\nu_{i}(u)-\nu_{j}(u)|\ge \alpha .\nonumber
\end{equation}
\end{coro}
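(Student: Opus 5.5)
The statement is Corollary \ref{corI1}, and it follows almost verbatim from Lemma \ref{lLG1_01}. I would simply unwind the definition of $d$.

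First, by Lemma \ref{lGL1}, the equation (\ref{eqAE3.3}) has exactly $n$ solutions $\nu_1,\ldots,\nu_n\in C(U,\mathbb C)$. These solutions are pairwise distinct at every point $u\in U$. Consequently, for each $u$ the finite quantity $\min_{i\not=j}|\nu_i(u)-\nu_j(u)|$ is well defined and strictly positive. (If $n=1$ the minimum is over the empty set and the statement is vacuous, so assume $n\ge 2$.)

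Second, Lemma \ref{lLG1_01} supplies a number $\alpha>0$ with $|\nu_i(u)-\nu_j(u)|\ge\alpha$ for all $u\in U$ and all $i\not=j$. Taking the minimum over the finitely many pairs $i\not=j$ preserves this bound, so $\min_{i\not=j}|\nu_i(u)-\nu_j(u)|\ge\alpha$ for every $u$. Taking the infimum over $u\in U$ then gives $d\ge\alpha>0$.

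As an alternative route, one can argue directly from compactness. The function $u\mapsto\min_{i\not=j}|\nu_i(u)-\nu_j(u)|$ is continuous, being a minimum of finitely many continuous functions. It is positive at every point by (\ref{eqB1_0}), and $U=b\mathbb R$ is compact, so it attains a positive minimum, which one may take as $\alpha$.

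There is no real obstacle. The only point needing care is that the minimum ranges over a finite set of pairs, which is what allows passing from pointwise separation to a uniform bound. This finiteness, together with the compactness of $U$, is already handled in Lemma \ref{lLG1_01}.
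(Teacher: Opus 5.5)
Your argument is correct and matches the paper, which gives no separate proof and treats this corollary as a direct reformulation of Lemma \ref{lLG1_01}: take the minimum over the finitely many pairs $i\neq j$, then the infimum over $u\in U$. Your alternative route is also sound and slightly cleaner than the paper's proof of Lemma \ref{lLG1_01}: a continuous, pointwise positive function attains a positive minimum on the compact space $U$, whereas the paper extracts convergent subsequences, which needs extra justification because $U=b\mathbb R$ is not metrizable.
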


Along with the equation (\ref{eqAE3.3}) we consider a family of
equations
\begin{equation}\label{eqE3_1u}
x^{n}+A_{1}(\mu(t,u))x^{n-1}+\ldots
+A_{n-1}(\mu(t,u))x+A_{n}(\mu(t,u))=0 \ \ (u\in U).
\end{equation}

\begin{lemma}\label{lLG1_1} Let $\nu \in C(U,\mathbb C)$ be a
solution of the equation (\ref{eqAE3.3}).

The function $\lambda :\mathbb T \to \mathbb C$ defined by
\begin{equation}\label{eqE3_2}
\lambda(t):=\nu(\mu(t,u))\ \ (\forall \ t\in \mathbb T) \nonumber
\end{equation}
possesses the following properties:
\begin{enumerate}
\item $\lambda$ is a continuous solution of the equation
(\ref{eqE3_1u}); \item the solution $\lambda$ of the equation
(\ref{eqE3_1u}) is Lagrange stable.
\end{enumerate}
\end{lemma}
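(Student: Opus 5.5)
Both properties follow directly from the continuity of $\nu$ and $\mu$, the compactness of $U$, and the fact that $\mu$ is a flow. Fix $u\in U$ and set $\lambda(t):=\nu(\mu(t,u))$.

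\emph{Solution and continuity.} The map $t\mapsto \mu(t,u)=i_{B}(t)u$ is continuous from $\mathbb T$ into $U$, and $\nu\in C(U,\mathbb C)$, so $\lambda$ is continuous as a composition. Since $\nu$ solves (\ref{eqAE3.3}), we have
$$
\nu(p)^{n}+A_{1}(p)\nu(p)^{n-1}+\ldots+A_{n}(p)=0
$$
for every $p\in U$. Substituting $p=\mu(t,u)$ gives exactly the identity (\ref{eqE3_1u}) for $x=\lambda(t)$ and all $t\in\mathbb T$. This proves the first statement.

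\emph{Lagrange stability.} We use Lemma \ref{lA1.0}, so it suffices to show that $\lambda$ is bounded and uniformly continuous on $\mathbb T$. Boundedness holds because $|\lambda(t)|\le\max_{U}|\nu|<\infty$, $U$ being compact. For uniform continuity, note that $\nu$ is uniformly continuous on the compact space $U$ with respect to its unique uniformity. It then remains to show that $\mu(s,u)$ and $\mu(t,u)$ are uniformly close whenever $|t-s|$ is small, uniformly in $s$ and $u$. Since $\mu(t,p)=\mu(t-s,\mu(s,p))$, this reduces to the claim that $\mu(\tau,p)\to p$ as $\tau\to 0$ uniformly in $p\in U$. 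That claim holds for any continuous flow on a compact space: the map $\mu$ is continuous on the compact set $[-1,1]\times U$, hence uniformly continuous there, and $\mu(0,p)=p$. Alternatively, in the group picture it is just the continuity of $i_{B}$ at $0$ together with the uniform structure of the compact group $b\mathbb R$, since $\mu(\tau,p)=i_{B}(\tau)p$.

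Combining these, for every $\varepsilon>0$ there exists $\delta>0$ such that $|t-s|<\delta$ implies $|\lambda(t)-\lambda(s)|<\varepsilon$. Thus $\lambda$ is bounded and uniformly continuous, and Lemma \ref{lA1.0} gives Lagrange stability. An alternative route avoids Lemma \ref{lA1.0}: the translates are $\lambda^{h}(t)=\nu(\mu(t,\mu(h,u)))$, i.e. the images of $\mu(h,u)$ under the continuous map $U\to C(\mathbb T,\mathbb C)$, $p\mapsto\nu(\mu(\cdot,p))$. The image of the compact set $U$ under this map is compact, so $\{\lambda^{h}\}$ is precompact.

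The only slightly delicate step is the uniform continuity, i.e. the uniformity of $\mu(\tau,p)\to p$ over $p\in U$, and it is handled by the compactness of $U$ as above.
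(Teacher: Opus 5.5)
Your proof is correct. Its skeleton matches the paper's: reduce Lagrange stability to boundedness plus uniform continuity via Lemma~\ref{lA1.0}, using compactness of $U$ and the identity $\mu(t+h,u)=\mu(h,\mu(t,u))$. The execution differs in two places.

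For boundedness, the paper uses the root bound of Lemma~\ref{lAE1}, $|\lambda(t)|\le 1+\sup_{i}\max_{U}|A_{i}|$, whereas you use $\max_{U}|\nu|$. Either is immediate.

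For uniform continuity, the paper argues by contradiction. It picks $t_k$ and $h_k\to 0$ with $|\lambda(t_k+h_k)-\lambda(t_k)|\ge\varepsilon_0$, and then assumes without loss of generality that $\mu(t_k,u)$ converges in $U$. That step is not justified as written, because $U=b\mathbb R$ is compact but not metrizable. The only nontrivial case is $|t_k|\to\infty$ (a bounded subsequence gives a contradiction directly by continuity). In that case no subsequence of $\mu(t_k,u)=i_{B}(t_k)u$ converges. Convergence would force $e^{i\xi t_k}$ to converge for every $\xi\in\mathbb R$, hence in $L^2[0,1]$ by dominated convergence. But $\int_0^1|e^{i\xi t_k}-e^{i\xi t_j}|^2\,d\xi=2-2\sin(t_k-t_j)/(t_k-t_j)\to 2$ as $k\to\infty$ with $j$ fixed, a contradiction.

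The paper's argument can be rescued by passing to a convergent subnet. Your argument needs no such repair: it uses the unique uniformity of the compact space $U$ and the convergence $\mu(\tau,p)\to p$ as $\tau\to 0$, uniformly in $p\in U$.

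Your alternative route is shorter still and bypasses Lemma~\ref{lA1.0} entirely. The map $p\mapsto\nu(\mu(\cdot,p))$ is continuous from $U$ into $C(\mathbb T,\mathbb C)$, so all translates $\lambda^{h}$ lie in a compact set.

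Both of your arguments also apply verbatim when $U$ is replaced by $\beta\mathbb R$, as claimed in Remark~\ref{remBC1}. There sequential extraction is equally unavailable.
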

\begin{proof} The first statement is evident

To prove the second statement according to Lemma \ref{lA1.0} we
need to show that the function $\lambda$ is bounded and uniformly
continuous on $\mathbb T$.

Note that the function $\lambda \in C(\mathbb T,\mathbb C)$
($i=1,\ldots,n$) is bounded on $\mathbb T$. Indeed, denote by
\begin{equation}\label{eqAE4.1}
A:=\sup\limits_{1\le i\le n}\max\limits_{u\in U}|A_{i}(u)|
\nonumber
\end{equation}
then by Lemma \ref{lAE1} we have
\begin{equation}\label{eqAE6_1}
|\lambda(t)|\le 1+C \nonumber
\end{equation}
for every $t\in \mathbb T$.

Now we will show that $\lambda$ is uniformly continuous on
$\mathbb T$. If we assume that it is false, then there are
$\varepsilon_{0}>0$, $\delta_{k}\to 0$ ($\delta_{k}>0$) and
sequences $\{t_k\}\subset \mathbb T$ and $\{h_k\}$ with
$|h_k|<\delta_{k}$ such that
\begin{equation}\label{eqAE6.1}
|\lambda(t_k+h_k)-\lambda(t_k)|\ge \varepsilon_{0} \nonumber
\end{equation}
for every $k\in \mathbb N$.

Since the space $U$ is compact and $\{\mu(t_k,u)\}\subset U$ then
without loss of generality we can assume that the sequence
$\{\mu(t_k,u)\}$ converges. Denote its limit by
\begin{equation}\label{eqAE6.2}
\bar{u}=\lim\limits_{k\to \infty}\mu(t_k,u)
\end{equation}
and note that
\begin{equation}\label{eqAE6.3}
\lim\limits_{k\to \infty}\mu(t_k+h_k,u)=\lim\limits_{k\to
\infty}\mu(h_k,\mu(t_k,u))=\bar{u} .
\end{equation}

On the other hand we have
$$
\varepsilon_0\le
|\lambda(t_k+h_k)-\lambda(t_k)|=|\nu(\mu(t_k+h_k))-\nu(\mu(t_k,u))|=
$$
\begin{equation}\label{eqAE6.4}
|\nu(\mu(h_k,\mu(t_k,u)))-\nu(\mu(t_k,u))|
\end{equation}
for all $k\in \mathbb N$. Passing to the limit in (\ref{eqAE6.4})
as $k\to \infty$ and taking into account (\ref{eqAE6.2}) and
(\ref{eqAE6.3}) we obtain $\varepsilon_0\le 0$. The last
inequality contradicts to the choice of the number
$\varepsilon_0$. The obtained contradiction proves our statement.
Lemma is completely proved.
\end{proof}

\begin{remark}\label{remBC1} Note that Lemma \ref{lLG1_1} remains
true if we replace $U (=b\mathbb R)$ by $\beta \mathbb R$.
\end{remark}

\begin{lemma}\label{lBC1.1} Let $a_{i}\in C(\mathbb R_{+},\mathbb
C)$ ($i=1,\ldots,n$) be positively Lagrange stable. Then the
equation (\ref{eqAE0}) has at least one positively Lagrange stable
solution.
\end{lemma}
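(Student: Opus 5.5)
The plan is to transfer the problem to the Stone--Čech compactification, where a global continuous root exists by Theorem \ref{thBC1}, and then pull that root back along a dense orbit. Put $a:=(a_1,\ldots,a_n)\in C(\mathbb R_{+},\mathbb C^{n})$. By Lemma \ref{lA1.0}, $a$ is bounded and uniformly continuous on $\mathbb R_{+}$.

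First, realize $a$ through the universal ambit. Extend $a$ to $\bar a\in C(\mathbb R,\mathbb C^{n})$ by $\bar a(t):=a(0)$ for $t\le 0$. This extension is still bounded and uniformly continuous, hence Lagrange stable by Lemma \ref{lA1.0}. The hull $H(\bar a)$ is then compact, and $\langle (H(\bar a),\mathbb R,\sigma),\bar a\rangle$ is an $\mathbb R$ ambit. By Theorem \ref{thAU1} and Remark \ref{remAU1}, there is a homomorphism $\psi$ from the universal ambit $\langle(U,\mathbb R,\lambda),u_0\rangle$, $U\cong\beta\mathbb R$, onto it with $\psi(u_0)=\bar a$. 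Set $A_i(u):=\psi(u)_i(0)$ for $u\in U$. Each $A_i$ is continuous, since it is the composition of $\psi$ with evaluation at $0$, which is continuous in the compact-open topology. Moreover
$$A_i(\lambda(t,u_0))=\psi(\lambda(t,u_0))_i(0)=\bar a^{t}_i(0)=\bar a_i(t)$$
for all $t\in\mathbb R$.

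Second, take a root on $U$ and pull it back. Theorem \ref{thBC1} gives a solution $\nu\in C(U,\mathbb C)$ of $x^{n}+A_1(u)x^{n-1}+\ldots+A_n(u)=0$. Define $\Lambda(t):=\nu(\lambda(t,u_0))$ for $t\in\mathbb R$. By the identity above, $\Lambda$ solves the equation with coefficients $\bar a$. Its restriction to $\mathbb R_{+}$ therefore solves (\ref{eqAE0}).

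Third, establish Lagrange stability. By Remark \ref{remBC1}, Lemma \ref{lLG1_1} holds with $b\mathbb R$ replaced by $\beta\mathbb R$, so $\Lambda$ is bounded and uniformly continuous on $\mathbb R$. Its restriction to $\mathbb R_{+}$ is then bounded and uniformly continuous, hence positively Lagrange stable by Lemma \ref{lA1.0}.

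The main obstacle is the first step. One must make sure the universal ambit really yields continuous coefficients $A_i$ on $\beta\mathbb R$ reproducing $a$ along the orbit of $u_0$. An alternative I would use if the ambit formalism causes trouble is direct: a bounded uniformly continuous function on $\mathbb R$ extends to $C(\beta\mathbb R)$, and the translation action extends to $\beta\mathbb R$. Under that approach the uniform continuity argument of Lemma \ref{lLG1_1} needs the joint continuity of the action on $U$, and I will take that from Remark \ref{remBC1}.
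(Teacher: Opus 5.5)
Your proposal follows the paper's proof step for step:

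1. extend $a$ by the constant $a(0)$ to $(-\infty,0]$;
2. map the universal ambit on $\beta\mathbb R$ onto $H(\bar a)$;
3. get the coefficients by evaluation at $0$;
4. take a root on $\beta\mathbb R$ (you make explicit the use of Theorem \ref{thBC1}, which the paper leaves implicit);
5. pull it back along the orbit of $u_0$;
6. get uniform continuity from Lemma \ref{lLG1_1} via Remark \ref{remBC1}, and restrict to $\mathbb R_{+}$.

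However, the step you flagged as the main obstacle is a genuine gap, in the paper's argument as much as in yours, and Remark \ref{remBC1} cannot close it. The extended translation action of $\mathbb R$ on $\beta\mathbb R$ is not jointly continuous, and Lemma \ref{lLG1_1} is false on $\beta\mathbb R$. Take $n=1$ and let $A_1$ be the continuous extension of $-\sin(t^{2})$ to $\beta\mathbb R$. The root pulled back along the orbit of $u_0=0$ is $\sin(t^{2})$, which is not uniformly continuous. The only place the proof of Lemma \ref{lLG1_1} uses more than compactness is (\ref{eqAE6.3}), and that step needs joint continuity.

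The identification $U\cong\beta\mathbb T$ in Remark \ref{remAU1} is correct for discrete time. For $\mathbb R$, the universal ambit lives on the maximal ideal space of the bounded uniformly continuous functions. There the action is continuous, but Theorem \ref{thBC1} does not apply, and a root in $C(U)$ is precisely a bounded uniformly continuous root. With $U=\beta\mathbb R$, your $\Lambda$ is just the restriction of $\nu$ to $\mathbb R$. So the compactification yields only a bounded continuous root, which Theorem \ref{thDC1} and Lemma \ref{lA2} already give, and uniform continuity has to be proved by hand.

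That proof is easy, and it makes the compactification unnecessary. Let $\lambda$ be a continuous solution of (\ref{eqAE0}) on an interval with $|a_i|\le C$. Let $R(c)$ be the root set of $p_c(x)=x^{n}+c_1x^{n-1}+\ldots+c_n$.

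1. Suppose $\max_i|c_i|,\max_i|c_i'|\le C$ and $p_{c'}(z)=0$. Then $|z|\le 1+C$ by Lemma \ref{lAE1}, so $\operatorname{dist}(z,R(c))^{n}\le|p_c(z)|=|p_c(z)-p_{c'}(z)|\le n(1+C)^{n-1}\max_i|c_i-c_i'|$.
2. Given $\eta>0$, uniform continuity of $a$ gives $\delta>0$ such that, for $0\le t-s<\delta$, this bound is $<\eta^{n}$ with $c=a(s)$ and $c'=a(u)$, for every $u\in[s,t]$.
3. Then the connected set $\lambda([s,t])$ lies in the $\eta$-neighbourhood of $R(a(s))$, a union of at most $n$ discs of radius $\eta$. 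So it lies in one component, whose diameter is at most $2n\eta$, and $|\lambda(t)-\lambda(s)|\le 2n\eta$.

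Thus the lemma follows from Theorem \ref{thDC1} (a continuous root on $\mathbb R_{+}$), Lemma \ref{lA2} (boundedness), this estimate, and Lemma \ref{lA1.0}. In fact this shows every continuous solution of (\ref{eqAE0}) is positively Lagrange stable.
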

\begin{proof}

To show this fact we will consider the equation
\begin{equation}\label{eqAE3a}
x^{n}+\widetilde{a}_1(t)x^{n-1}+\ldots
+\widetilde{a}_{n-1}(t)x+\widetilde{a}_{n}(t)=0,
\end{equation}
where $\widetilde{a}_{i}(t)=a_{i}(t)$ for all $t\ge 0$ and
$\widetilde{a}_{i}(t)=a_{i}(0)$ for every $t<0$ ($i=1,\ldots,n$).
Denote by $widetilde{F}$ the mapping from $H(\widetilde{a})$
($\widetilde{a}:=(\widetilde{a}_{1},\ldots,\widetilde{a}_{n})$)
into $\mathbb C^{n}$ defined by
$$
\widetilde{F}(\widetilde{b}):=\widetilde{b}(0)
$$
for every $\widetilde{b}\in H(\widetilde{a})$. By the universality
of the dynamical system $(\beta \mathbb R,\mathbb R,\mu)$ there
exists a homomorphism $h:(\beta \mathbb R,\mathbb R,\mu)\to
(H(\widetilde{a},\mathbb R,\sigma)$. Let $\widetilde{\Phi}$ be the
mapping from $\beta \mathbb R$ into $\mathbb C^{n}$ defined by
$$
\widetilde{\Phi}(u):=\widetilde{F}(h(u))
$$
for all $u\in \beta \mathbb R$. Note that
\begin{equation}\label{eqDC}
\widetilde{\Phi}(u_0)=\widetilde{a}(0)\ \ \mbox{and}\ \
\widetilde{\Phi}(\mu(t,u_0))=\widetilde{a}(t)\ \ (\forall\ t\in
\mathbb R).\nonumber
\end{equation}

By Lemma \ref{lLG1_1} the equation (\ref{eqAE0}) admits at least
one Lagrange stable solution $\widetilde{\lambda}\in C(\mathbb
R,\mathbb C)$. To finish the proof of Lemma it suffices to note
that the function $\lambda \in C(\mathbb R_{+},\mathbb C)$,
defined by $\lambda(t):=\widetilde{\lambda}(t)$ for all $t\in
\mathbb R_{+}$, is a positively Lagrange stable solution of the
equation (\ref{eqAE0}).
\end{proof}

\begin{lemma}\label{lLG1_2} Under the conditions of Lemma
\ref{lGL1} there exists a positive number $\alpha$ such that
\begin{equation}\label{eqAE_8}
|\nu_{i}(\mu(t,u))-\nu_{j}(\mu(t,u))|\ge \alpha >0 \nonumber
\end{equation}
for all $t\in \mathbb R$ and $i,j=1,\ldots,n$ with $i\not= j$.
\end{lemma}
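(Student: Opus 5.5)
The statement is an immediate consequence of Corollary \ref{corI1}, so the plan is to reduce to it by evaluating along the orbit of $\mu$. The key observation is that for every $t\in\mathbb R$ and $u\in U$ the point $\mu(t,u)=i_{B}(t)u$ again lies in $U$, because $(U,\mathbb R,\mu)$ is a dynamical system on $U$. Hence the separation constant of Corollary \ref{corI1}, which is uniform over all of $U$, applies in particular at the point $\mu(t,u)$.

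Concretely, I would proceed in the following order.

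\begin{enumerate}
\item By Lemma \ref{lGL1}, the equation (\ref{eqAE3.3}) has exactly $n$ continuous solutions $\nu_1,\ldots,\nu_n$ with $\nu_i(u)\neq\nu_j(u)$ for all $u\in U$ and $i\neq j$.
\item By Lemma \ref{lLG1_01} (equivalently, Corollary \ref{corI1}), there is $\alpha>0$ with
\[
|\nu_i(w)-\nu_j(w)|\ge\alpha \qquad \text{for all } w\in U,\ i\neq j.
\]
This step uses the compactness of $U=b\mathbb R$, the continuity of the $\nu_i$, and the fact that there are only finitely many index pairs.
\item Substitute $w:=\mu(t,u)\in U$. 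This gives the required inequality for all $t\in\mathbb R$, with an $\alpha$ that does not depend on $t$, on $u$, or on the pair $(i,j)$.
\end{enumerate}

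As a consistency check, Lemma \ref{lLG1_1} shows that each $t\mapsto\nu_i(\mu(t,u))$ is indeed a solution of (\ref{eqE3_1u}). The inequality then says that these $n$ solutions are uniformly separated, which is exactly the separation hypothesis needed later to apply Theorem \ref{thNDS2}.

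There is no real obstacle here. The only point requiring care is that $\alpha$ must be uniform in $u$ as well. This is guaranteed because the infimum in Corollary \ref{corI1} is taken over the whole compact space $U$ rather than along a single orbit, so no separate argument about the closure of the orbit of $u$ is needed.
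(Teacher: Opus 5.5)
Your proposal is correct and is essentially the paper's own proof. The paper likewise bounds $|\nu_i(\mu(t,u))-\nu_j(\mu(t,u))|$ from below by $\inf_{w\in U}\min_{i\neq j}|\nu_i(w)-\nu_j(w)|\ge\alpha$, using Lemma \ref{lGL1} and Corollary \ref{corI1} together with the fact that $\mu(t,u)\in U$.
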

\begin{proof} This statement follows from Lemmas \ref{lGL1} (see also Corollary \ref{corI1}) and \ref{lLG1_1}. Indeed, we
have
\begin{equation}\label{eqAE_9}
|\nu_{i}(\mu(t,u))-\nu_{j}(\mu(t,u))|\ge \min\limits_{i\not=
j}|\nu_{i}(\mu(t,u))-\nu_{j}(\mu(t,u))| \ge \nonumber
\end{equation}
$$
\inf\limits_{u\in U}\min\limits_{i\not=
j}|\nu_{i}(u)-\nu_{j}(u)|\ge \alpha
$$
for all $t\in \mathbb T$ and $i,j\in \{1,\ldots,n\}$ with $i\not=
j$.
\end{proof}

Let $a=(a_1,\ldots,a_n)\in C(\mathbb R,\mathbb C^{n})$ be an
almost periodic function, $H(a)=\overline{\{a^{h}|\ h\in \mathbb
R\}}$ and $(H(a),\mathbb R,\sigma)$ be the shift dynamical system
on $H(a)$. By universality of the dynamical system $(b\mathbb
R,\mathbb R,\mu)$ there exists a homomorphism $h:(b\mathbb
R,\mathbb R,\mu)\to (H(a),\mathbb R,\sigma)$. Denote by $F$ the
mapping from $H(f)$ into $\mathbb C^{n}$ defined by the equality
\begin{equation}\label{eqAE3.2}
F(g):=g(0) \nonumber
\end{equation}
for every $g\in H(f)$. Let $\Phi$ be the mapping from $U$ into
$\mathbb C^{n}$ defined by
\begin{equation}\label{eqAE3.03}
\Phi(u):=F(h(u)) \nonumber
\end{equation}
for all $u\in U$. It is clear that
\begin{equation}\label{eqAE3.4}
\Phi(u_0)=a(0)\ \ \mbox{and}\ \
\Phi(\mu(t,u_0))=a(t)=(a_1(t),\ldots,a_{n}(t))\ \ (\forall \ \
t\in \mathbb T). \nonumber
\end{equation}

\begin{lemma}\label{lGL1.1} Let $a_{i}\in C(\mathbb R,\mathbb C)$
($i=1,\ldots,n$) be almost periodic. The following statements are
equivalent:
\begin{enumerate}
\item[a.]
\begin{equation}\label{eqB1.1}
D(u):=D(A_{1}(u),\ldots,A_{n}(u))\not= 0
\end{equation}
for all $u\in U$; \item[b.] there exists a positive number
$\alpha$ such that
\begin{equation}\label{eqB1.2}
|D(t)|=|D(a_1(t),\ldots,a_n(t))|\ge \alpha
\end{equation}
for every $t\in \mathbb T$.
\end{enumerate}
\end{lemma}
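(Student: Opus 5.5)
The plan is to show (a)$\Rightarrow$(b) by a compactness argument on $U$, and (b)$\Rightarrow$(a) by a density argument using that $\{\mu(t,u_0): t\in\mathbb R\}$ is dense in $U$. The only structural facts needed are that $\Phi=F\circ h$ is continuous on $U$, that $\Phi(\mu(t,u_0))=a(t)$, and that $D$ is a polynomial in the coefficients. Hence $u\mapsto D(u)=D(\Phi(u))$ is a continuous complex function on the compact space $U$, and $D(\mu(t,u_0))=D(a_1(t),\ldots,a_n(t))=D(t)$ for all $t\in\mathbb R$.

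For (a)$\Rightarrow$(b), assume $D(u)\neq 0$ for every $u\in U$. Since $|D(\cdot)|$ is continuous and strictly positive on the compact space $U=b\mathbb R$, it attains a positive minimum $\alpha:=\min_{u\in U}|D(u)|>0$. Then for every $t\in\mathbb T$ we have $|D(t)|=|D(\mu(t,u_0))|\ge\alpha$, which is (\ref{eqB1.2}). The same conclusion can be reached by contradiction exactly as in Lemma \ref{lLG1_01}: take $t_k$ with $|D(t_k)|\to 0$, extract a convergent subsequence $\mu(t_k,u_0)\to\bar u$ in $U$, and get $D(\bar u)=0$.

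For (b)$\Rightarrow$(a), assume $|D(t)|\ge\alpha$ for all $t$. By minimality and almost periodicity of $(U,\mathbb R,\mu)$ (Remark \ref{remBU1}: $U$ is the Bohr compactification and $\overline{i_B(\mathbb R)}=b\mathbb R$), the orbit $\{\mu(t,u_0):\ t\in\mathbb R\}$ is dense in $U$. Given $u\in U$, choose a net (or, if metrizability is a concern, a net indexed by neighborhoods) $t_\gamma$ with $\mu(t_\gamma,u_0)\to u$. Continuity of $D\circ\Phi$ then gives $|D(u)|=\lim|D(t_\gamma)|\ge\alpha>0$, so in particular $D(u)\neq 0$.

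The main obstacle is the density of the orbit of $u_0$ in $U$, since $b\mathbb R$ is not metrizable. To handle it, I would use nets rather than sequences and take $u_0=h^{-1}$-compatible base point $i_B(0)$, so that $\mu(t,u_0)=i_B(t)$ and density is immediate from $\overline{i_B(\mathbb R)}=b\mathbb R$. A secondary point is the choice of $\mathbb T$. If $\mathbb T=\mathbb R_{+}$ only, I would first note that for an almost periodic $a$ the sets $\{a^h: h\ge0\}$ and $\{a^h: h\in\mathbb R\}$ have the same closure $H(a)$. Consequently $\inf_{t\ge0}|D(t)|=\inf_{t\in\mathbb R}|D(t)|$ by Lemma \ref{lD1}, which reduces this case to the two-sided one.
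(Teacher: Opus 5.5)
Your proposal is correct and follows the paper's proof. For (a)$\Rightarrow$(b) you take the positive minimum of the continuous function $u\mapsto|D(\Phi(u))|$ on the compact space $U$, and for (b)$\Rightarrow$(a) you use the density of the orbit $\{\mu(t,u_0)\}$ in $U$ together with continuity.

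Your switch to nets in (b)$\Rightarrow$(a) is actually needed and improves on the paper. The paper takes a sequence with $\mu(t_k,u_0)\to u$, but in the non-metrizable space $b\mathbb R$ such a sequence cannot exist unless $u$ already lies on the orbit (by a Riemann--Lebesgue argument). For the same reason, your contradiction variant of (a)$\Rightarrow$(b) should use a cluster point of $\mu(t_k,u_0)$ rather than a convergent subsequence.
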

\begin{proof}
Assume that $A_{i}\in C(U,\mathbb C)$ ($i=1,\ldots,n$) and the
condition (\ref{eqB1.1}) holds. Since the function $D:U\to \mathbb
C$ is continuous (as the composition of continuous functions) and
the space $U$ ($=b \mathbb T$) is compact then there exists an
element $\bar{u}\in U$ such that
\begin{equation}\label{eqB1.3}
\alpha :=\min\limits_{u\in U}|D(u)|=\min\limits_{u\in
U}|D(A_{1}(u),\ldots,A_{n}(u))|=|D(\bar{u})|>0. \nonumber
\end{equation}
Note that
\begin{equation}\label{eqB1.4}
|D(t)|=|D(a(t))|=|D(a_1(t),\ldots,a_{n}(t))|=|D(\Phi(\mu(t,u_0))|\ge
\nonumber
\end{equation}
$$
\min\limits_{u\in U}|D(u)|=\alpha >0
$$
for all $t\in \mathbb T$.

Assume that the condition (\ref{eqB1.2}) holds. Let $u\in U$ be an
arbitrary element from $U$. Since $U=H(u_0)$ then there exists a
sequence $\{t_k\}\subset \mathbb T$ such that $\mu(t_k,u_0)\to u$
as $k\to \infty$ and, consequently,
\begin{equation}\label{eqB1.5}
|D(u)|=\lim\limits_{k\to
\infty}|D(\mu(t_k,u_0))|=\lim\limits_{k\to
\infty}|D(a_{1}^{t_k},\ldots,a_{n}^{t_k})|\ge \inf\limits_{t\in
\mathbb T}|D(t+t_k)|\ge \alpha .\nonumber
\end{equation}
\end{proof}

\begin{lemma}\label{lGL2} Let $a_{i}\in C(\mathbb R,\mathbb C)$
($i=1,\ldots,n$) be almost periodic. Assume that the discriminant
$D(t):=D(a_1(t),\ldots,a_n(t))$ of the equation (\ref{eqAE0})
satisfies the condition
\begin{equation}\label{eqGL1}
\inf\limits_{t\in \mathbb R}|D(t)|>0. \nonumber
\end{equation}

Then the following statements hold:
\begin{enumerate}
\item the equation (\ref{eqAE0}) has exactly $n$ Lagrange stable
solutions $\lambda_{i}\in C(\mathbb R,\mathbb C)$
($i=1,\ldots,n$); \item there exists a positive number $\alpha$
such that
\begin{equation}\label{eqGL2}
|\lambda_{i}(t)-\lambda_{j}(t)|\ge \alpha \nonumber
\end{equation}
for every $i\not= j$ ($i,j=1,\ldots,n$) and $t\in \mathbb R$.
\end{enumerate}
\end{lemma}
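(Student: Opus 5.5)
We pass to the Bohr compactification. Let $h:(U,\mathbb R,\mu)\to (H(a),\mathbb R,\sigma)$ be the homomorphism introduced above, and put $\Phi=(A_1,\ldots,A_n):=F\circ h\in C(U,\mathbb C^{n})$. Then $\Phi(\mu(t,u_0))=a(t)$ for all $t\in\mathbb R$. The hypothesis $\inf_{t\in\mathbb R}|D(t)|>0$ is exactly condition (b) of Lemma \ref{lGL1.1}, so condition (a) holds: $D(A_1(u),\ldots,A_n(u))\neq 0$ for every $u\in U$. Lemma \ref{lGL1} then gives $n$ continuous functions $\nu_1,\ldots,\nu_n\in C(U,\mathbb C)$ solving (\ref{eqAE3.3}) with $\nu_i(u)\neq\nu_j(u)$ for $i\neq j$. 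Corollary \ref{corI1} gives a uniform gap $\inf_{u}\min_{i\neq j}|\nu_i(u)-\nu_j(u)|\ge\alpha>0$.

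Next we define $\lambda_i(t):=\nu_i(\mu(t,u_0))$ for $t\in\mathbb R$. Since $A_k(\mu(t,u_0))=a_k(t)$, equation (\ref{eqE3_1u}) with $u=u_0$ is precisely (\ref{eqAE0}). Lemma \ref{lLG1_1} therefore shows that each $\lambda_i$ is a continuous, Lagrange stable solution of (\ref{eqAE0}). Lemma \ref{lLG1_2} gives $|\lambda_i(t)-\lambda_j(t)|\ge\alpha$ for all $t$ and all $i\neq j$, which is statement (ii).

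It remains to show there are \emph{exactly} $n$ solutions, i.e.\ that every continuous solution $\lambda$ of (\ref{eqAE0}) equals some $\lambda_i$. For each $t$, the values $\lambda_1(t),\ldots,\lambda_n(t)$ are $n$ distinct roots of a degree-$n$ polynomial, so they are all of its roots. Hence $\lambda(t)\in\{\lambda_1(t),\ldots,\lambda_n(t)\}$ for every $t$. The sets $E_i:=\{t:\lambda(t)=\lambda_i(t)\}$ are closed, pairwise disjoint (because the $\lambda_i$ are separated), and cover $\mathbb R$. By connectedness of $\mathbb R$, exactly one $E_i$ equals $\mathbb R$, so $\lambda=\lambda_i$. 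This is also the content of Remark \ref{remSAE1}.

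The main obstacle is making sure the bridge from $U$ back to $\mathbb R$ is legitimate. One must check that the equation on $U$ in Lemma \ref{lGL1} is built from the same $\Phi$ as in Lemma \ref{lGL1.1}, and that $\Phi(\mu(t,u_0))=a(t)$ holds; both follow from $h(u_0)=a$. Everything else is a direct invocation of the preceding lemmas.
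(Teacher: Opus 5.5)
Your proposal is correct and follows the paper's route: you pass to $U=b\mathbb R$, transfer the discriminant bound via Lemma \ref{lGL1.1}, take the Gorin--Lin roots $\nu_i$ from Lemma \ref{lGL1}, and get Lagrange stability and the uniform gap from Lemmas \ref{lLG1_1} and \ref{lLG1_2}. The one difference is that your connectedness argument (the sets $E_i$ are pairwise disjoint closed sets covering $\mathbb R$) replaces the paper's intermediate-value contradiction with $\beta(t)=|\lambda(t)-\nu_{i_0}(\mu(t,u_0))|$, and it is the cleaner of the two.
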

\begin{proof}
Let $\nu_{1}(u),\ldots,\nu_{n}(u)$ be all solutions (counting
multiplicities) of the equation (\ref{eqAE3.3}). Note that
$a(t)=(a_{1}(t),\ldots,a_{n}(t))=\Phi(\mu(t,u_0))$. If $\lambda
\in C(\mathbb R,\mathbb C^{n})$ is a continuous solution of the
equation (\ref{eqAE0}), then
\begin{equation}\label{eqDI0}
\lambda(t)\in \{\nu_{1}(\mu(t,u_0)),\ldots,\nu_{n}(\mu(t,u_0))\}
\nonumber
\end{equation}
for all $t\in \mathbb R$ and, consequently, there exists a number
$i_0\in \{1,\ldots,n\}$ such that
\begin{equation}\label{eqGL3}
\lambda(t)=\nu_{i_0}(\mu(t,u_0))
\end{equation}
for all $t\in \mathbb R$. Indeed. If we assume that it is not
true, then there exist $i_0,j_0\in \{1,\ldots,n\}$ ($i_0\not=
j_0$) and $t_{1},t_{2}\in \mathbb R$ ($t_{1}<t_{2}$) such that
\begin{equation}\label{eqDI1}
\lambda(t_1)=\nu_{i_0}(\mu(t_1,u_0))\ \ \mbox{and}\ \
\lambda(t_2)=\nu_{j_0}(\mu(t_2,u_0)).\nonumber
\end{equation}
Consider the function $\beta\in C(\mathbb R,\mathbb R_{+})$
defined by
\begin{equation}\label{eqDI2}
\beta(t):=|\lambda(t)-\nu_{i_0}(\mu(t,u_0))| \nonumber
\end{equation}
for all $t\in \mathbb R$. Note that
\begin{eqnarray}\label{eqDI3}
& \beta(t_1)=0\ \ \mbox{and}\ \ \
\beta(t_2)=|\lambda(t_2)-\nu_{i_0}(\mu(t_2,u_0))|=\\
& |\nu_{j_0}(\mu(t_2,u_0))-\nu_{i_0}(\mu(t_2,u_0))| \ge \alpha
.\nonumber
\end{eqnarray}
We fix $\ell \in \mathbb N$ such that
\begin{equation}\label{eqDI4}
\ell >1\ \ \mbox{and}\ \ \frac{d}{\ell}<\alpha . \nonumber
\end{equation}
Since the function $\beta$ is continuous then taking into account
(\ref{eqDI3}) we conclude that there exists a number $\bar{t}\in
(t_1,t_2)$ such that
\begin{equation}\label{eqDI5}
\beta(\bar{t})=|\lambda(\bar{t})-\nu_{i_0}(\mu(\bar{t},u_0))|=\frac{d}{\ell}.\nonumber
\end{equation}
According to (\ref{eqDI0}) there exists $i_{0}'\in \{1,\ldots,n\}$
($i_{0}'\not= i_{0}$) such that
\begin{equation}\label{eqDI_6}
\lambda(\bar{t})=\nu_{i_{0}'}(\mu(\bar{t},u_0)).\nonumber
\end{equation}
On the other hand
\begin{eqnarray}\label{eqDI6}
& d\le
|\nu_{i_0}(\mu(\bar{t},u_0))-\nu_{i_{0}'}(\mu(\bar{t},u_0))|\le \\
& |\nu_{i_0}(\mu(\bar{t},u_0))-\lambda(\bar{t})| +
|\lambda(\bar{t})-\nu_{i_{0}'}(\mu(\bar{t},u_0))|\nonumber
\end{eqnarray}
and taking into account (\ref{eqDI3}) from (\ref{eqDI6}) we obtain
\begin{equation}\label{eqDI7}
d\le \frac{d}{\ell} \nonumber
\end{equation}
and, consequently, $\ell \le 1$. The last relation contradicts the
choice of the number $\ell$. The obtained contradiction proves our
statement.

By Lemma \ref{lLG1_1} and (\ref{eqGL3}) follows that the function
$\lambda :\mathbb R\to \mathbb C$ is uniformly continuous on
$\mathbb R$.

The second statement of Lemma follows from Lemmas \ref{lGL1},
\ref{lLG1_2}, \ref{lGL1.1} and the first statement.
\end{proof}

\begin{theorem}\label{thAE1} Assume that the following conditions
are fulfilled:
\begin{enumerate}
\item the functions $a_{i}\in C(\mathbb R_{+},\mathbb C)$ are
remotely almost periodic and positively Lagrange stable; \item the
$\omega$-limit set $\omega_{(a_1,\ldots,a_{n})}$ of the function
$a=(a_1,\ldots,a_{n})\in C(\mathbb R_{+},\mathbb C^{n})$ is
minimal; \item there exists a positive number $\alpha$ such that
\begin{equation}\label{eqDD_001}
|D(t)|=|D(a_1(t),\ldots,a_{n}(t))|\ge \alpha
\end{equation}
for all $t\ge 0$.
\end{enumerate}

Then every continuous and positively Lagrange stable solution
$\lambda :\mathbb R_{+}\to \mathbb C$ of the equation
(\ref{eqAE0}) is remotely almost periodic.
\end{theorem}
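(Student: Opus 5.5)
We reduce the statement to Theorem \ref{thNDS2}, applied to the nonautonomous dynamical system generated by the algebraic equation as in Example \ref{exA1}. Put $Y:=H^{+}(a)$, the closure of $\{a^{h}:\ h\ge 0\}$ in $C(\mathbb R_{+},\mathbb C^{n})$, with the shift semiflow $(Y,\mathbb R_{+},\sigma)$. Let $\lambda$ be a continuous, positively Lagrange stable solution, and let $X:=H^{+}(\lambda,a)$ be the closure of $\{(\lambda^{h},a^{h}):\ h\ge 0\}$ in $C(\mathbb R_{+},\mathbb C)\times C(\mathbb R_{+},\mathbb C^{n})$, with its shift $(X,\mathbb R_{+},\pi)$. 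Take $h:=pr_{2}$. Then $x:=(\lambda,a)$ lies over $y:=a$.

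We check the hypotheses of Theorem \ref{thNDS2} one at a time.
\begin{enumerate}
\item $y=a$ is positively Lagrange stable and remotely almost periodic by assumption (i).
\item $\omega_{y}$ is minimal by assumption (ii).
\item $x$ is positively Lagrange stable because both $\lambda$ and $a$ are; precompactness of the product of two precompact orbits is immediate.
\end{enumerate}

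The substantive work is condition (iv): the points of $\omega_{x}$ must be separated in $\omega_{x}$. Take $q=b\in\omega_{a}$. Every point of $\omega_{x}$ over $q$ has the form $(\mu,b)$, where $\mu\in C(\mathbb R_{+},\mathbb C)$ is a limit of $\lambda^{t_k}$ with $a^{t_k}\to b$ and $t_k\to+\infty$. Passing to the limit in the equation shows that $\mu$ is a continuous solution of the limiting equation $x^{n}+b_1(t)x^{n-1}+\ldots+b_n(t)=0$ on $\mathbb R_{+}$. By Lemma \ref{lD1.1} (or Lemma \ref{lD1} applied to $\omega_{a}\subseteq H^{+}(a)$) we have $|D(b_1(t),\ldots,b_n(t))|\ge\alpha$ for all $t\ge 0$.

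We then need a uniform root-separation bound. All coefficients $b_i$ are bounded by some $C$, since $\omega_{a}$ is compact and $a$ is bounded. By Lemma \ref{lAE1} every root lies in the disc $|z|\le 1+C$. Fix $t$ and two distinct roots $z_i\ne z_j$ of the polynomial at time $t$. Since $D=\prod_{k\ne l}(z_k-z_l)$ contains the factor $|z_i-z_j|^{2}$, and every other factor is at most $2(1+C)$, we get
$$
\alpha\le |z_i-z_j|^{2}(2(1+C))^{n(n-1)-2},
$$
which gives $|z_i-z_j|\ge r:=\big(\alpha(2+2C)^{2-n(n-1)}\big)^{1/2}>0$.

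Now let $(\mu_1,b)$ and $(\mu_2,b)$ be distinct points of $\omega_{x}$ over $b$. Each $\mu_i(t)$ is a root of the polynomial at time $t$. If $\mu_1(t_0)=\mu_2(t_0)$ at some $t_0$, then the set where $\mu_1=\mu_2$ is closed; it is also open, because at each $t$ the two values are either equal or at distance $\ge r$, and both functions are continuous. By connectedness of $\mathbb R_{+}$ this forces $\mu_1\equiv\mu_2$. Hence $|\mu_1(t)-\mu_2(t)|\ge r$ for all $t\ge 0$.

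Next we translate this into the metric of $X$. The distance on $C(\mathbb R_{+},\mathbb C)$ (compact-open topology) is of the form $\sum_k 2^{-k}\min\{1,\max_{[0,k]}|\cdot|\}$. A sup-separation of $r$ at $t=0$ therefore bounds the metric distance between $\pi(t,(\mu_1,b))$ and $\pi(t,(\mu_2,b))$ below by roughly $\tfrac12\min\{1,r\}$, uniformly in $t$. This gives separation in $\omega_{x}$, with one and the same constant for all fibres.

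Theorem \ref{thNDS2}, item (vi), then shows that $x=(\lambda,a)$ is remotely almost periodic in $X$. Projecting onto the first coordinate, with the same observation about the metric (evaluation near $t=0$ controls the sup of differences), yields that $\lambda$ is remotely almost periodic as a function.

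The main obstacle is the separation step. It needs the uniform lower bound on the distance between roots coming from the discriminant, together with the connectedness argument that upgrades pointwise distinctness to uniform separation. The paper's Lemma \ref{lGL2} argument can also be adapted for this. A secondary subtlety is that Theorem \ref{thNDS2} is stated for a two-sided base. By Remark \ref{remAT1} and the invariance of $\omega_{y}$, the induced system on $\omega_{x}$ is treated as in the proof of Theorem \ref{thAODE1}. If needed, we would extend $a$ and $\lambda$ to $t<0$ by their values at $0$, as in Lemma \ref{lBC1.1}, to work on the full line.
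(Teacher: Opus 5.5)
Your proof is correct. It uses the same reduction as the paper: the nonautonomous system of Example \ref{exA1} with base $\omega_a$, followed by Theorem \ref{thNDS2}. The difference lies in how you verify the separation hypothesis, which is the only substantive step.

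The paper gets separation by quoting Lemma \ref{lD1.1} and Lemma \ref{lLG1_2}, that is, through the Bohr-compactification (Gorin--Lin) machinery. For an almost periodic coefficient vector whose discriminant is bounded away from zero, there are $n$ continuous root branches $\nu_i$ on $b\mathbb R$ that stay uniformly apart. As in the proof of Lemma \ref{lGL2}, every continuous solution coincides with one of the branches $\nu_i(\mu(t,u))$. The paper applies this tacitly to each $b\in\omega_a$, using that $b$ is almost periodic.

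You bypass this machinery. The root bound of Lemma \ref{lAE1} together with $|D|\ge\alpha$ on $\omega_a$ gives the explicit gap $r=(\alpha(2+2C)^{2-n(n-1)})^{1/2}$ between any two roots at any time. Your clopen argument on $\mathbb R_{+}$ then upgrades this to $|\mu_1(t)-\mu_2(t)|\ge r$ for all $t\ge 0$ whenever the fibre points differ. What your route buys:
\begin{enumerate}
\item it is self-contained and works directly on $\mathbb R_{+}$;
\item the constant is visibly uniform over all fibres;
\item the separation step does not use almost periodicity of the elements of $\omega_a$; that property is needed only inside Theorem \ref{thNDS2}.
\end{enumerate}
The paper's route is heavier but ties the result to the Walter--Bohr--Flanders theory, for instance the existence of exactly $n$ solutions. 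You also make explicit a step the paper leaves tacit: passing from remote almost periodicity of the point $(\lambda,a)$ in the shift system to that of the function $\lambda$, by evaluating at $0$.

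One small point applies equally to the paper. Hypothesis (i) is stated componentwise, so ``$y=a$ is remotely almost periodic by (i)'' needs a line of justification. It suffices to note that every $b\in\omega_a$ has almost periodic components and is therefore almost periodic, and this is all that the proof of Theorem \ref{thNDS2} actually uses.
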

\begin{proof}
Let $a\in C(\mathbb T,\mathbb C^{n})$ and $Z:=H(a)$. Denote by
$(H(a),\mathbb T,\sigma)$ (respectively, by $(\Omega_{a},\mathbb
T,\sigma)$) the shift dynamical system on $H(a)$ (respectively, on
$\Omega_{a}$).

Consider a continuous solution $\lambda :\mathbb T\to \mathbb C$
of the equation (\ref{eqAE0}). Denote by
$X:=H(\lambda,a):=\overline{\{(\lambda^{h},a^{h}):\ h\in \mathbb
T\}}$ (respectively, $X:=\Omega_{(\lambda,a)}$) and $(X,\mathbb
T,\pi)$ be the shift dynamical system on the set $X$ induced by
the product dynamical system $(C(\mathbb T,\mathbb C)\times
C(\mathbb T,\mathbb C^{n}),\mathbb T,\sigma)$. Finally we put
$h:=pr_{2}:X \to Z$ then the triplet
\begin{equation}\label{eqNDS_1}
\langle (X,\mathbb T,\pi),(Z,\mathbb T,\sigma),h\rangle
\end{equation}
is a nonautonomous dynamical system associated by the algebraic
equation (\ref{eqAE0}) \cite{Bro_73}, \cite{Zhi69}.

Denote by $Y:=\omega_{a}=\omega_{(a_1,\ldots,a_n)}$. Since
$\omega_{a}$ is a nonempty, compact and minimal (and,
consequently, invariant) subset of the dynamical system
$(Z,\mathbb R,\sigma)$ then on $Y$ is induced the shift dynamical
system $(Y,\mathbb R,\sigma)$.

According to Lemma \ref{lD1.1} the condition (\ref{eqDD_001}) is
equivalent to  the following:
\begin{equation}\label{eqNDS_21}
|D(b_{1},\ldots,b_{n})|\ge \alpha \nonumber
\end{equation}
for every $(b_{1},\ldots,b_{n})\in \omega_{(a_1,\ldots,a_n)}$.

By Lemma \ref{lLG1_2} every point
$(\widetilde{\lambda},\widetilde{a})\in \omega_{(\lambda,a)}$ of
the nonautonomous dynamical system (\ref{eqNDS_1}) is separated in
the set $\omega_{(\lambda,a)}$. Now to finish the proof of Theorem
it suffices to apply Theorem \ref{thNDS2}. Theorem is proved.
\end{proof}

\begin{coro}\label{corAE01} Assume that the following conditions
are fulfilled:
\begin{enumerate}
\item the functions $a_{i}\in C(\mathbb R_{+},\mathbb C)$ are
asymptotically almost periodic;
\item there exists a positive
number $\alpha$ such that
\begin{equation}\label{eqDD01}
|D(t)|=|D(a_1(t),\ldots,a_{n}(t))|\ge \alpha \nonumber
\end{equation}
for every $t\ge 0$.
\end{enumerate}

Then every continuous and positively Lagrange stable solution
$\lambda :\mathbb R_{+}\to \mathbb C$ of the equation
(\ref{eqAE0}) is remotely almost periodic.
\end{coro}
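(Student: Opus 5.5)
The plan is to reduce Corollary \ref{corAE01} to Theorem \ref{thAE1}. Given asymptotically almost periodic coefficients $a=(a_1,\ldots,a_n)$ and a continuous, positively Lagrange stable solution $\lambda$, I will check the three hypotheses of Theorem \ref{thAE1} one at a time. The conclusion then follows directly.

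\textbf{Conditions 1 and 3.} Condition 3 is exactly the assumed separation of $D(t)$ from zero on $\mathbb R_{+}$. For condition 1, I use the remark after Definition \ref{defI4}: every asymptotically almost periodic function is remotely almost periodic and positively Lagrange stable. It is convenient to apply this to the vector function $a\in C(\mathbb R_{+},\mathbb C^{n})$. The decomposition $a=p+r$ holds componentwise, with $p=(p_1,\ldots,p_n)$ almost periodic and $r\in C_{0}(\mathbb R_{+},\mathbb C^{n})$. This also gives the statement for each component $a_i$, since remote almost periodicity and Lagrange stability pass to coordinate projections.

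\textbf{Condition 2: minimality of $\omega_{a}$.} This is the only step with real content. By the same remark, $\omega_{a}$ is a minimal set consisting of almost periodic functions. To verify this directly, I will show $\omega_{a}=\omega_{p}$. Since $r(t)\to 0$, we have $a^{h_k}-p^{h_k}\to 0$ uniformly on $\mathbb R_{+}$ whenever $h_k\to+\infty$. Hence $a^{h_k}$ converges if and only if $p^{h_k}$ does, and the two limits coincide.

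It then remains to see that $\omega_{p}$ is minimal. Since $p$ is almost periodic, it is recurrent, so $p\in\omega_{p}$ and $\omega_{p}=H^{+}(p)$. By Corollary \ref{corSM1}, $H(p)$ is an equi-almost periodic compact set, and the closure of the orbit of an almost periodic point is minimal. The one point I must handle with care is passing from the half-line restriction to the two-sided almost periodic extension of $p$.

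\textbf{Conclusion.} With conditions 1--3 of Theorem \ref{thAE1} verified, every continuous positively Lagrange stable solution $\lambda$ of (\ref{eqAE0}) is remotely almost periodic. I expect the only obstacle to be the minimality of $\omega_a$, and the comparison $\omega_a=\omega_p$ together with the known minimality of orbit closures of almost periodic motions should settle it.
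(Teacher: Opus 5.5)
Your proposal is correct and follows the paper's proof. The paper likewise checks the hypotheses of Theorem \ref{thAE1} via the listed properties of asymptotically almost periodic functions and then applies that theorem; your identification $\omega_a=\omega_p$ just spells out the minimality that the paper takes from its Remark, and the two-sided extension you flag is not needed, because Birkhoff's argument (an almost recurrent point with compact orbit closure has minimal orbit closure) works verbatim for the semiflow on $C(\mathbb R_{+},\mathbb C^{n})$.
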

\begin{proof}
Assume that the functions $a_{i}\in C(\mathbb R_{+},\mathbb C)$
are asymptotically almost periodic then
\begin{enumerate}
\item they are remotely almost periodic; \item the function
$(a_1,\ldots,a_{n})\in C(\mathbb R_{+},\mathbb C^{n})$ is also
asymptotically almost periodic; \item the $\omega$-limit set
$\omega_{(a_1,\ldots,a_{n})}$ of the function $(a_1,\ldots,a_{n})$
is minimal and almost periodic.
\end{enumerate}

Now to finish the proof of Corollary \ref{corAE01} it suffices to
apply Theorem \ref{thAE1}.
\end{proof}

The following natural question arises.

\textbf{Question.} Are the solutions of the equation (\ref{eqAE0})
(under the conditions of Corollary \ref{corAE01}) asymptotically
almost periodic.

The following theorem gives a positive answer to this question.

\begin{theorem}\label{thAE2} Assume that the following conditions
are fulfilled:
\begin{enumerate}
\item the functions $a_{i}\in C(\mathbb R_{+},\mathbb C)$ are
asymptotically almost periodic;
\item there exists a positive
number $\alpha$ such that
\begin{equation}\label{eqDD_01}
|D(t)|=|D(a_1(t),\ldots,a_{n}(t))|\ge \alpha
\end{equation}
for all $t\ge 0$.
\end{enumerate}

Then every continuous and positively Lagrange stable solution
$\lambda :\mathbb R_{+}\to \mathbb C$ of the equation
(\ref{eqAE0}) is asymptotically almost periodic.
\end{theorem}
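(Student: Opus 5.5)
The plan is to place the problem in the nonautonomous dynamical system (\ref{eqNDS_1}) of Theorem \ref{thAE1} and apply the asymptotic Amerio-type result, Theorem \ref{t2.3.6}, in place of Theorem \ref{thNDS2}.

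Let $a=(a_1,\ldots,a_n)\in C(\mathbb R_{+},\mathbb C^{n})$ and let $\lambda$ be a continuous, positively Lagrange stable solution. Put $Z:=H^{+}(a)$, $X:=H^{+}(\lambda,a)$ with the shift flows, and $h:=pr_2$. Then (\ref{eqNDS_1}) is a nonautonomous dynamical system. The point $x:=(\lambda,a)\in X_{a}$ is positively Lagrange stable, since both $\lambda$ and $a$ are.

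First, the base point $y:=a$ is asymptotically almost periodic in the sense of Definition \ref{defAP1}. Write $a=p+r$, with $p$ almost periodic and $r\to 0$. Because $p$ is uniformly continuous, $\rho(a^{t},p^{t})\to 0$ in the compact-open topology, so $a$ is asymptotically almost periodic with respect to the almost periodic point $p$. Here $p$ is viewed in $C(\mathbb R_{+},\mathbb C^{n})$; if needed, one enlarges $Z$ to contain $H^{+}(p)$, which is harmless.

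Second, I will verify the separation hypothesis of Theorem \ref{t2.3.6} on $\omega_{x}$. Since $\omega_{a}=\omega_{p}$ is the hull of an almost periodic function, and by Lemma \ref{lD1.1}, every $b\in\omega_{a}$ has discriminant $|D(b(t))|\ge\alpha$ for all $t$. Each $b\in\omega_a$ extends to an almost periodic function on $\mathbb R$. Lemmas \ref{lGL1.1} and \ref{lGL2} then give exactly $n$ continuous roots $\lambda_1,\ldots,\lambda_n$ of the equation with coefficients $b$, and these satisfy $|\lambda_i(t)-\lambda_j(t)|\ge\alpha'$ for all $t$ and $i\neq j$. Here $\alpha'$ can be chosen uniformly via Corollary \ref{corI1} applied on $U=b\mathbb R$, with the $\Phi$ constructed from $p$.

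Now take $(\mu_1,b),(\mu_2,b)\in\omega_x$ with $\mu_1\ne\mu_2$. Each $\mu_k$ is a continuous root function for $b$. By the argument of Lemma \ref{lGL2} (a continuous root cannot jump between branches that are separated by $d>0$), each $\mu_k$ coincides with a single branch on $\mathbb R_{+}$. Hence $|\mu_1(t+s)-\mu_2(t+s)|\ge\alpha'$ for all $t,s\ge0$, so the points of $\omega_x$ are separated in $\omega_x$ with the uniform constant $\alpha'$ (in the sup-metric at time $0$, which dominates the compact-open metric up to a fixed function).

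The main obstacle is exactly this separation step. Two issues need care. The first is getting a uniform $\alpha'$ over all of $\omega_a$, not just for each fiber. The second is translating pointwise separation into separation in the compact-open metric $\rho$ for all $t\ge0$. I would handle the first by passing through $U=b\mathbb R$ and Lemma \ref{lLG1_01}. For the second, $\rho(x_1t,x_2t)\ge c(\alpha')>0$ whenever the functions differ by at least $\alpha'$ at time $0$.

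With both hypotheses in place, Theorem \ref{t2.3.6} yields that $x$ is asymptotically almost periodic. That is, there is an almost periodic $(\widetilde\lambda,\widetilde b)\in\omega_x$ with $\lambda(t)-\widetilde\lambda(t)\to0$, which is precisely the asymptotic almost periodicity of $\lambda$.
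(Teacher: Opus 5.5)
Your proposal is correct and follows the paper's own route. The paper's proof reruns the argument of Theorem \ref{thAE1} on the NDS (\ref{eqNDS_1}): it transfers the discriminant bound to $\omega_a$ via Lemma \ref{lD1.1}, gets separation in $\omega_{(\lambda,a)}$ from the Bohr-compactification lemmas (Lemma \ref{lLG1_2}), and invokes Theorem \ref{t2.3.6} in place of Theorem \ref{thNDS2}, exactly as you do. Your explicit check that $a$ is an asymptotically almost periodic point fills in what the paper leaves implicit; note that $p|_{\mathbb R_+}\in\omega_a\subseteq H^{+}(a)$ automatically, so no enlargement of $Z$ is needed (and enlarging $Z$ alone would spoil the surjectivity of $h$).
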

\begin{proof}
This statement can be proved using the same arguments as in the
proof of Theorem \ref{thAE1} (see also the proof of Lemma
\ref{lAE1}) but unless of Theorem \ref{thNDS2} we need to apply
Theorem \ref{t2.3.6}.
\end{proof}

\begin{remark}\label{remRAPes} Note that the condition
(\ref{eqDD_001}) in Theorem \ref{thAE1} is essential.
\end{remark}

Below we will give an example which confirm this statement.

\begin{example}\label{exRAPes1}
{\rm Zhikov \cite[p.122]{Zhi69} constructed an example of an
almost periodic function $f\in C(\mathbb R,\mathbb R)$ satisfying
the conditions
\begin{equation}\label{eqVV1}
|f(t)|>0\ (\forall \ \ t\in \mathbb R), \ \ \inf\limits_{t\in
\mathbb R}|f(t)|=0
\end{equation}
and such that the solutions of the equation $X^2-f(t)=0$ are not
almost periodic.

Consider the function $p\in C(\mathbb R,\mathbb R)$ defined by the
equality
\begin{equation}\label{eqVV2}
p(t):=f(t)+e^{-t} \nonumber
\end{equation}
for all $t\in \mathbb R$, where $f$ is the almost periodic
function from Zhikov's example \cite{Zhi69}.

\begin{lemma}\label{lVV1} Function $p$ possesses the following properties:
\begin{enumerate}
\item the function $p$ is asymptotically almost periodic and,
consequently, it is remotely almost periodic; \item
$\omega_{p}=H(f)$ and, consequently, the set $\omega_{p}$ is
minimal;\item
\begin{equation}\label{eqVV3}
\inf\limits_{t\in \mathbb R_{+}}|p(t)|=0 .\nonumber
\end{equation}
\end{enumerate}
\end{lemma}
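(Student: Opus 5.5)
The plan is to prove the three items in order, taking $p(t)=f(t)+e^{-t}$ with $f$ the Zhikov function from (\ref{eqVV1}), and working on $\mathbb R_{+}$.

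\emph{Item (i).} Since $f$ is almost periodic and $r(t):=e^{-t}$ satisfies $r\in C_{0}(\mathbb R_{+},\mathbb R)$, the splitting $p=f+r$ is exactly the decomposition required in Definition \ref{defI1}. Hence $p$ is asymptotically almost periodic. The remark following Definition \ref{defI4} then shows that every asymptotically almost periodic function is remotely almost periodic, and this settles the first item.

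\emph{Item (ii).} Let $h_k\to+\infty$. Because $\sup_{t\ge 0}e^{-(t+h_k)}=e^{-h_k}\to 0$, the sequence $p^{h_k}$ converges in $C(\mathbb R_{+},\mathbb R)$ if and only if $f^{h_k}$ does, and in that case the two limits coincide. Therefore $\omega_{p}=\omega_{f}$. It remains to check that $\omega_f=H(f)$.
\begin{itemize}
\item The inclusion $\omega_f\subseteq H(f)$ is immediate.
\item For the reverse inclusion, use that $f$ is almost periodic, so every shift $f^{h}$ is itself an $\omega$-limit point. Concretely, take $\tau_k\in\mathcal F(f,1/k)$ with $\tau_k\ge k$, which is possible because $\mathcal F(f,1/k)$ is relatively dense. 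Then $f^{h+\tau_k}\to f^{h}$ uniformly, so $f^{h}\in\omega_f$.
\item Since $\omega_f$ is closed, this gives $H(f)\subseteq\omega_f$.
\end{itemize}
Finally, $H(f)$ is the hull of an almost periodic function, which is compact and minimal (see Corollary \ref{corSM1} and the classical Birkhoff theorem). Hence $\omega_p$ is minimal.

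\emph{Item (iii).} From (\ref{eqVV1}) there is a sequence $s_k\in\mathbb R$ with $f(s_k)\to 0$. This sequence may lie on the negative axis, so we shift it forward. For each $k$ pick $\tau_k\in\mathcal F(f,1/k)$ with $\tau_k\ge |s_k|+k$, and set $t_k:=s_k+\tau_k\ge k$. Then
\begin{align*}
|f(t_k)| &\le |f(s_k)|+\tfrac1k\to 0,\\
|p(t_k)| &\le |f(t_k)|+e^{-k}\to 0.
\end{align*}
This yields $\inf_{t\in\mathbb R_{+}}|p(t)|=0$.

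The only mildly delicate point is item (iii): we must transport the near-zeros of $f$ into $\mathbb R_{+}$, and at the same time make the exponential term negligible. Both are handled by the choice of large $\varepsilon$-almost periods $\tau_k$ described above.
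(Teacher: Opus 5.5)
Your proof is correct and follows the paper's route. The paper dismisses (i) and (ii) as evident, and your item (iii) is the direct form of its contradiction argument: the paper takes one sequence $t_k\to+\infty$ with $f^{t_k}\to f$ uniformly and shows that $|p|\ge\alpha$ on $\mathbb R_{+}$ would force $|f(s)|\ge\alpha$ for every $s\in\mathbb R$, which is the same idea of using large almost periods to push points into $\mathbb R_{+}$, where $e^{-t}$ is negligible. One cosmetic point: the paper takes $p\in C(\mathbb R,\mathbb R)$, so in (ii) you should replace $\sup_{t\ge 0}e^{-(t+h_k)}$ by the uniform convergence $e^{-(t+h_k)}\to 0$ on compact subsets of $\mathbb R$, or else explicitly identify $H(f)$ with its restrictions to $\mathbb R_{+}$ (legitimate, since an almost periodic function is determined by its values on a half-line).
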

\begin{proof} The first two statements of Lemma are evident.

Now we will show the third statement. If we assume that it is not
true, then there exists a positive number $\alpha$ such that
\begin{equation}\label{eqVV4}
|p(t)|\ge \alpha \nonumber
\end{equation}
or equivalently
\begin{equation}\label{eqVV4.1}
|f(t)+e^{-t}|\ge \alpha
\end{equation}
 for all $t\in \mathbb R_{+}$. Since the function $f$ is almost
periodic then there exists a sequence $\{t_k\}\subset \mathbb R$
such that $t_k\to +\infty$ and $f(t+t_k)\to f(t)$ (uniformly
w.r.t. $t\in \mathbb R$) as $k\to \infty$. Let now $s$ be an
arbitrary number from $\mathbb R$ then there exists a number
$k_0=k(s)\in \mathbb N$ such that
\begin{equation}\label{eqVV5}
s+t_k\ge 0
\end{equation}
for all $k\ge k_0$. From (\ref{eqVV4.1}) and (\ref{eqVV5}) we
receive
\begin{equation}\label{eqVV6}
|f(s+t_k)+e^{-s-t_k}|\ge \alpha
\end{equation}
for every $k\ge k_0$. Passing to the limit in (\ref{eqVV6}) as
$k\to \infty$ we obtain
\begin{equation}\label{eqVV7}
|f(s)|\ge \alpha
\end{equation}
for all $s\in \mathbb R$. The relations (\ref{eqVV1}) and
(\ref{eqVV7}) are contradictory. The obtained contradiction proves
our statement. Lemma is proved.
\end{proof}

Consider the algebraic equation
\begin{equation}\label{eqVV8}
\lambda^{2}-p(t)=0.
\end{equation}
It is easy to check that the condition (\ref{eqDD_01}) is not
fulfilled.

\begin{lemma}\label{lVV2} The equation (\ref{eqVV8}) has not
positively Lagrange stable remotely almost periodic solutions.
\end{lemma}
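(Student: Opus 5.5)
I argue by contradiction: suppose $\lambda:\mathbb R_{+}\to\mathbb C$ is a positively Lagrange stable, remotely almost periodic solution of (\ref{eqVV8}). Then $\lambda(t)^{2}=p(t)=f(t)+e^{-t}$ for all $t\ge 0$. Take a sequence $t_k\to+\infty$ with $\lambda^{t_k}\to\widetilde\lambda$ in $C(\mathbb R_{+},\mathbb C)$; this is possible by positive Lagrange stability. Passing to a subsequence, we may also assume $f^{t_k}\to\widetilde f$ uniformly on $\mathbb R$, since $f$ is almost periodic. Then $\widetilde\lambda(t)^{2}=\widetilde f(t)$ for $t\ge 0$, because $e^{-t-t_k}\to 0$.

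Next, by Lemma \ref{lRAP1}, $\omega_{\lambda}$ is equi-almost periodic. Hence every $\omega$-limit function of $\lambda$ is almost periodic on $\mathbb R_{+}$. Being the restriction to $\mathbb R_{+}$ of a bounded uniformly continuous function whose translates are uniformly approximated, it extends uniquely to an almost periodic function on $\mathbb R$. Let $\widetilde\lambda$ denote this extension. The identity $\widetilde\lambda^{2}=\widetilde f$ then holds on all of $\mathbb R$, since both sides are almost periodic and agree on $\mathbb R_{+}$ (an almost periodic function vanishing on a half-line vanishes identically). So the equation $X^{2}-\widetilde f(t)=0$ with $\widetilde f\in H(f)$ has an almost periodic solution.

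To reach a contradiction with Zhikov's example, I transfer this back to $f$ itself. Since $H(f)$ is minimal and equi-almost periodic (Corollary \ref{corSM1}), there is a sequence $s_k$ with $\widetilde f^{s_k}\to f$ uniformly on $\mathbb R$. The almost periodic $\widetilde\lambda$ also has $\widetilde\lambda^{s_k}\to\lambda^{*}$ uniformly along a subsequence. Then $\lambda^{*}$ is almost periodic and $(\lambda^{*})^{2}=f$ on $\mathbb R$. The continuous solutions of $X^{2}=f(t)$ on $\mathbb R$ are exactly $\pm\lambda^{*}$, since $f\neq 0$ everywhere and $\mathbb R$ is connected. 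Hence all solutions of $X^2-f(t)=0$ would be almost periodic, which contradicts the defining property of Zhikov's function.

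The main obstacle is the second step: justifying that $\omega$-limit functions of a remotely almost periodic, positively Lagrange stable function on $\mathbb R_{+}$ are (restrictions of) almost periodic functions on $\mathbb R$, and that the algebraic relation survives the extension. I would first try to handle this directly through the equi-almost periodicity of $\omega_\lambda$ in Lemma \ref{lRAP1} together with the standard fact that an almost periodic function on $\mathbb R_{+}$ extends uniquely to one on $\mathbb R$. If that proves awkward, I would instead work entirely in the two-sided hull, building a two-sided extension from the compact invariant set $\omega_{(\lambda,p)}$ via Theorem \ref{thLS1}.
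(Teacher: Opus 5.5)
Your argument is correct and is essentially the paper's: assume a positively Lagrange stable remotely almost periodic solution exists, use Lemma \ref{lRAP1} to make its $\omega$-limit functions almost periodic, and pass to a limit along $t_k\to+\infty$ to obtain an almost periodic solution of Zhikov's equation $X^{2}-f(t)=0$. The paper differs only in choosing $t_k$ from the start with $f^{t_k}\to f$, which makes your detour through $\widetilde f\in H(f)$ and minimality unnecessary; your extension from $\mathbb R_{+}$ to $\mathbb R$ and your remark that every continuous solution equals $\pm\lambda^{*}$ spell out steps the paper leaves implicit.
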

\begin{proof} Assume that this statement is false, i.e., there
exist a positively Lagrange stable and remotely almost periodic
solution $\widetilde{\lambda}$. By Lemma \ref{lRAP1} the
$\omega$-limits set $\omega_{\widetilde{\lambda}}$ of
$\widetilde{\lambda}$ is a nonempty, compact, invariant and
equi-almost periodic set. In particular, the set
$\omega_{\widetilde{\lambda}}$ consists of almost periodic
functions. Let $\{t_k\}$ be a sequence as in the proof of Lemma
\ref{lVV1}. Consider the sequence $\widetilde{\lambda}^{t_k}$.
Since the function $\widetilde{\lambda}$ is positively Lagrange
stable then without loss of generality we can suppose that the
sequence $\widetilde{\lambda}^{t_k}$ converges. Denote its limit
by $\lambda$. It is clear that $\lambda \in
\omega_{\widetilde{\lambda}}$ and, consequently, the function
$\lambda$ is almost periodic.

By construction of the sequence $\{t_k\}$ we have
\begin{equation}\label{eqVV9}
f^{t_k}\to f,\ p^{t_k}\to f \ \mbox{and}\ \
\widetilde{\lambda}^{t_k}\to \lambda \nonumber
\end{equation}
as $k\to \infty$ (in the space $C(\mathbb R,\mathbb R)$) and,
consequently, $\lambda$ is an almost periodic solution of the
equation $X^{2}-f(t)=0$. The last statement contradicts to the
Zhikov's result \cite[p.122]{Zhi69}. The obtained contradiction
proves our statement. Lemma is proved.
\end{proof}
}
\end{example}

\textbf{Open problem.} The question, is whether Theorem
\ref{thAE1} remains true in the general case when the set
$\omega_{(a_1,\ldots,a_{n})}$ is not minimal, is open.

\section{Funding}

This research was supported by the State Program of the Republic
of Moldova "Remotely Almost Periodic Solutions of Differential
Equations (25.80012.5007.77SE)" and partially was supported by the
Institutional Research Program 011303 "SAT\-GED", Moldova State
University.

\section{Data availability}

No data was used for the research described in the article.

\section{Conflict of Interest}

The author declares that he does not have conflict of interest.

\section{ORCID ID}

\textbf{D. Cheban} https://orcid.org/0000-0002-2309-3823

\end{document}